\newtheorem{thm}{Theorem}[section]
\newtheorem{lem}[thm]{Lemma}
\newtheorem{cor}[thm]{Corollary}
\theoremstyle{definition}
\newtheorem{example}[thm]{Example}
\newtheorem{rem}[thm]{Remark}
\newtheorem{dfn}[thm]{Definition}
\newcommand{\fs}{Fatou-Shishikura inequality}
\newcommand{\fss}{Fatou-Shishikura inequality }
\newcommand{\iU}{U^\infty}
\newcommand{\R}{\mathbb{R}}
\newcommand{\RR}{\mathfrak{R}}
\newcommand{\B}{\mathcal{B}}
\newcommand{\mac}{\mathcal{AC}}
\newcommand{\an}{\mathrm{AN}}
\newcommand{\ol}{\overline}
\newcommand{\Ga}{\Gamma}
\newcommand{\G}{\Gamma}
\newcommand{\U}{\Gamma^*}
\newcommand{\0}{\emptyset}
\newcommand{\sm}{\setminus}
\newcommand{\clam}{\ol{\lam}}
\newcommand{\bd}{\mathrm{Bd}}
\newcommand{\dia}{\mathrm{diam}}
\newcommand{\sh}{\mathrm{Sh}}
\newcommand{\ch}{\mbox{$\mathrm{Ch}$}}
\newcommand{\e}{\varepsilon}
\newcommand{\al}{\alpha}
\newcommand{\ph}{\varphi}
\newcommand{\be}{\beta}
\newcommand{\ga}{\gamma}
\newcommand{\si}{\sigma}
\newcommand{\ta}{\theta}
\newcommand{\om}{\omega}
\newcommand{\da}{\delta}
\newcommand{\Da}{\Delta}
\newcommand{\nin}{\not\in}
\newcommand{\imp}{\mbox{$\mathrm{Imp}$}}
\newcommand{\pr}{\mathrm{Pr}}
\newcommand{\he}{\widehat{E}}
\newcommand{\hq}{\widehat{Q}}
\newcommand{\bw}{\widehat{W}}
\newcommand{\cu}{\mathrm{Cut}}
\newcommand{\C}{\mathbb{C}}
\newcommand{\hc}{\mbox{$\mathbb{\widehat{C}}$}}
\newcommand{\bbd}{\mbox{$\mathbb{D}$}}
\newcommand{\disk}{\mathbb{D}}
\newcommand{\di}{\mathbb{D}}
\newcommand{\diskbar}{\overline{\mathbb{D}}}
\newcommand{\ucirc}{\mathbb{S}^1}
\newcommand{\uc}{\mathbb{S}^1}
\newcommand{\val}{\mathrm{val}}
\newcommand{\eval}{\mathrm{eval}}
\newcommand{\tai}{\mathrm{Tail}}
\newcommand{\orb}{\mathrm{orb\,}}
\newcommand{\lam}{\mathcal{L}}
\newcommand{\iy}{\infty}
\newcommand{\tg}{\widetilde{G}}
\newcommand{\tq}{\widetilde{Q}}
\newcommand{\ts}{\widetilde{S}}
\newcommand{\tz}{\widetilde{Z}}
\newcommand{\lineclear}{\rule{0pt}{0pt}\nopagebreak\par\nopagebreak\noindent}
\newcommand{\hide}[1]{}
\begin{document}

\date{September 27, 2011; revised August 27, 2015}
\title[Extended Fatou-Shishikura inequality]
{An Extended Fatou-Shishikura inequality and wandering branch
continua for polynomials}

\author[Blokh]{Alexander Blokh}

\thanks{The first author was partially
supported by NSF grant DMS-0901038, HCAA of ESF, and ICTS at Jacobs University}

\author[Childers]{Doug Childers}

\author[Levin]{Genadi Levin}

\author[Oversteegen]{Lex Oversteegen}

\thanks{The fourth author was partially  supported
by NSF grant DMS-0906316}

\author[Schleicher]{Dierk Schleicher}

\thanks{The fifth author would like to thank the research networks
CODY and HCAA of the European Union and the European Science Foundations,
respectively, as well as the Deutsche Forschungsgemeinschaft.}

\address[A.~Blokh] {Department of Mathematics\\ University of Alabama at
Birmingham\\ Birmingham, AL 35294-1170}
\email[A.~Blokh]{ablokh@math.uab.edu}

\address[D.~Childers]{CAS \\University of South Florida at St. Petersburg\\
  St. Petersburg, Florida 33701}
\email[D.~Childers]{dchilders@mail.usf.edu}

\address[G.~Levin]{Institute of Mathematics\\ Hebrew University\\
Givat Ram 91904, Jerusalem, Israel}
\email[G.~Levin]{levin@math.huji.ac.il}

\address[L.~Oversteegen]{Department of Mathematics\\ University of Alabama
at Birmingham\\Birmingham, AL 35294-1170}
\email[L.~Oversteegen]{overstee@math.uab.edu}

\address[D. Schleicher]{Jacobs University, Research I, Postfach 750
561, D-28725 Bremen, Germany}
\email{dierk@jacobs-university.de}

\subjclass[2010]{Primary 37F10; Secondary 37B45, 37C25, 37F20, 37F50}

\keywords{Complex dynamics; Julia set; wandering continuum}

\begin{abstract}
Let $P$ be a polynomial of degree $d$ with Julia set $J_P$. Let $\widetilde N$ be the
number of non-repelling cycles of $P$.
By the famous \fss $\widetilde N\le d-1$. The goal of the paper is to improve
this bound. The new count includes \emph{wandering collections of
non-(pre)critical branch continua}, i.e., collections of continua or points
$Q_i\subset J_P$ \emph{all} of whose images are pairwise disjoint, contain no
critical points, and contain the limit sets of $\eval(Q_i)\ge 3$ external rays.
Also, we relate individual cycles, which are either non-repelling or repelling
with no periodic rays landing, to individual critical points that are recurrent
in a weak sense.

A weak version of the inequality reads
\[
\widetilde N+N_{irr}+\chi+\sum_i (\eval(Q_i)-2) \le d-1
\]
where $N_{irr}$ counts repelling cycles with no periodic rays landing at points
in the cycle, $\{Q_i\}$ form a wandering collection $\B_\C$ of non-(pre)critical
branch continua, $\chi=1$ if $\B_\C$ is non-empty, and $\chi=0$ otherwise.
\end{abstract}

\dedicatory{Dedicated to the memory of Adrien Douady.}

\maketitle

\section{Introduction}

In the dynamics of iterated rational maps, it is a frequent
observation that many interesting
dynamical features are largely determined by the dynamics of critical points.
The classical \fss states in the polynomial case that a complex polynomial of
degree $d\ge 2$ has at most $d-1$ non-repelling periodic orbits in $\C$. We
extend this in several ways.

\begin{itemize}

\item \emph{Wandering (eventual) branch continua}, defined below, are included in the
      count (such continua are either proper subsets of periodic components
      of the Julia set or wandering components of the Julia set); note that we allow
continua to be points. In the simplest case,
such a continuum corresponds to a point $z$ in the Julia set that is the landing point of 3
or more external rays so that no point in the forward orbit of $z$ is critical
or periodic.

\item Together with non-repelling periodic orbits, we also count orbits of
      repelling periodic points that are not landing points of \emph{periodic} external
      rays (such points may exist if the Julia set is not connected and then
      must be components of the Julia set).
\item Specific critical points are associated to the aforementioned
      periodic orbits and wandering branch continua: (a) every non-repelling
      periodic orbit and every repelling periodic orbit without periodic rays
      has at least one associated critical point, so that different orbits are
      associated to different critical points, and (b) wandering branch
      continua require other critical points not associated to any
      periodic orbits.
\item The inequality is sharpened by counting not all critical points,
      but certain ``weak equivalence classes of weakly recurrent critical
      points'' (other restrictions on critical points apply as well).
\item The key idea is that various phenomena counted on the left hand side of the
inequality can be associated with critical points counted on the
right. In the case of wandering eventual branch continua the
association is not as direct as in the case of specific periodic
points, but sufficient for our purpose.
\end{itemize}

Let $P$ be a polynomial of degree $d\ge 2$ with Julia set $J_P$. A
\emph{rational ray pair} $\mathcal R$ is a pair of (pre)periodic external rays
that land at a common point, together with their common landing point; $\mathcal R$
\emph{weakly separates}\index{weakly separates} two points $z,w\in\C$ if $z$
and $w$ are in two different components of $\C\sm \mathcal R$. A critical point
$c$ is \emph{weakly recurrent}\index{weakly recurrent} if it belongs to the
filled-in Julia set, never maps to a repelling or parabolic point, and for
every finite collection $\mathcal R_1,\dots, \mathcal R_k$ of rational ray
pairs there is an $n\ge 0$ such that $c$ and $P^{\circ n}(c)$ are not weakly
separated by any ray pair $\mathcal R_i,$ $ 1\le i\le k$. Clearly, a {\it
recurrent} critical point is weakly recurrent.


If $J_P$ is not connected, then some external rays of the polynomial $P$
are non-smooth, namely those that contain
preimages of escaping critical
points or escaping critical points themselves
(see Section~\ref{disc1} for details).

In this text, a \emph{continuum}\index{continuum} is a non-empty
compact connected metric space (we allow it to be a point and call a
continuum that is not a point \emph{non-degenerate}). The
\emph{valence $\val_{J_P}(Q)$ of a continuum $Q\subset
J_P$}\index{valence} is the number of external
rays with limit sets in $Q$ (in case $J_P$ is not connected we allow
to the possibility of non-smooth external rays, see Section~\ref{disc1}. Call $Q$ a \emph{branch continuum} if
its valence is $3$ or greater. A continuum $Q\subset J_P$ is
\emph{wandering}\index{wandering continuum} if $P^k(Q)\cap P^m
(Q)=\emptyset$ for all $m>k\ge 0$.  We show that if $Q$ is wandering
then $\val_{J_P}(Q)$ is finite, and show that there exists $m$ such
that $\val_{J_P}(P^n(Q))=m$ for all sufficiently big $n$. If $m>1$
and $Q$ is contained in a (pre)periodic component $E$ of the Julia
set, then $m$ equals the number of components of $P^n(E)\sm P^n(Q)$,
see Corollary~\ref{wand-fin} and Corollary~\ref{wandiscon}. Set
$\eval_{J_P}(Q)=m$ and call it the \emph{eventual
valence}\index{eventual valence} of $Q$. We call a wandering
continuum $Q$ an \emph{eventual branch continuum}\index{eventual
branch continuum} if $\eval_{J_P}(Q)>2$. A collection of eventual
branch continua is called a \emph{wandering
collection}\index{wandering collection of eventual branch continua}
if all their forward images are pairwise disjoint.

Some of our main results are stated in Theorem~\ref{intro-assoc}. The actual
results proven in the body of the paper are significantly stronger than
Theorem~\ref{intro-assoc}, however their statements require additional notions
that will be introduced later in the paper. Observe, that if $J_P$ is
connected, the results can be stated in topological terms because in this case
by Corollary~\ref{wand-fin} the valence of a wandering
continuum $Q$ 
equals the number of components of $J_P\sm Q$ (i.e., can be defined without
invoking external rays); similarly, non-repelling cycles can be defined in a purely topological way.
Consequently, the main results also hold for polynomial-like mappings with
connected Julia set.

\begin{thm}\label{intro-assoc}
The following facts hold for the polynomial $P$.
\begin{enumerate}
\item Every non-repelling periodic orbit has an associated weakly recurrent
    critical orbit (recurrent in the case of irrationally indifferent
    orbits), so that distinct non-repelling orbits have distinct
    associated critical orbits.


\item Every repelling periodic orbit $L$ consisting of points at which
no periodic external ray lands, has an associated escaping
    critical orbit $H$ (such that $H$ is not weakly separated from $L$) so that
    distinct repelling periodic orbits have different
    associated critical orbits, see \cite{lepr}.

\item If $P$ has a wandering collection of $m\ge 1$ eventual
    branch continua $Q_1, \dots, Q_m$,
    then
\[
1+\sum_{i=1}^m(\eval_{J_P}(Q_i)-2)
\]
is bounded from above by the number of weakly recurrent critical points,
weakly separated from all non-repelling periodic points.
\end{enumerate}
\end{thm}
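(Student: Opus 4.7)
The three parts are largely independent; the delicate point is distinctness of the critical orbits across the parts. For Part (1) I would treat each type of non-repelling cycle separately. Attracting and parabolic cycles carry Fatou's classical associated critical point in the immediate basin: such $c$ lies in the filled-in Julia set, has $\omega$-limit the cycle itself, and for $n$ large $P^{\circ n}(c)$ sits close to the cycle, so one can find $n$ placing $c$ and $P^{\circ n}(c)$ on the same side of every member of any prescribed finite collection of rational ray pairs, yielding weak recurrence. For Siegel and Cremer cycles I would invoke Mañé's theorem to extract a recurrent critical point in the boundary of the Siegel disk or in the accumulation set of the Cremer orbit. Distinctness follows since each basin, rotation domain, or Cremer accumulation set is a dynamically isolated region carrying its own critical point.

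Part (2) is essentially the Levin--Przytycki theorem cited as \cite{lepr}: a repelling cycle at which no periodic ray lands traps an escaping critical orbit in the same component of the complement of a suitable system of periodic ray-pairs, and distinct such cycles use distinct critical orbits. I would simply invoke this result.

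Part (3) is the new content. Given the wandering collection $\{Q_i\}_{i=1}^m$ with eventual valences $v_i=\eval_{J_P}(Q_i)\ge 3$, Corollaries \ref{wand-fin} and \ref{wandiscon} already supply, for all large $n$, exactly $v_i$ external rays whose limit sets lie in $P^{\circ n}(Q_i)$, cutting a neighborhood into $v_i$ sectors. I would interpret $\sum_i(v_i-2)$ as an Euler-characteristic excess: regarding each $P^{\circ n}(Q_i)$ as a vertex and its associated rays as edges in a planar graph on the sphere, the topology of the complement is governed by exactly this sum. The argument then proceeds as follows. Around each sector of each $P^{\circ n}(Q_i)$ I would pull back a carefully chosen rational ray-pair shielding; because the orbits of the $Q_i$ are pairwise disjoint and contain no critical points, these pull-back sectors form nested puzzle-like regions whose diameters shrink. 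Since $P$ is $d$-to-$1$ on the sphere, the topological complexity of the pull-back graph can grow under iteration only by absorbing critical orbits, and a Riemann--Hurwitz-type accounting yields at least $\sum_i(v_i-2)$ distinct critical orbits lodged inside these sectors; the terminal $+1$ reflects one additional critical orbit required to realize the accumulation of the nested puzzle pieces in $\widehat{K}_P$. Weak recurrence of the critical points so constructed follows from the fact that they accumulate on the wandering orbits, which by hypothesis re-enter the complement of any finite rational ray-pair collection. Weak separation from the non-repelling periodic points is automatic since the $Q_i$ are non-(pre)critical and non-periodic, so their pull-back sectors eventually lie in components of the ray-pair complement disjoint from the cycles produced in Part (1), and likewise disjoint from the critical orbits of Part (2).

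\emph{Main obstacle.} The crux is converting the combinatorial excess $1+\sum_i(v_i-2)$ into honest, distinct, weakly recurrent critical orbits that are simultaneously weakly separated from the critical orbits associated in Parts (1)--(2). This requires the invariant-lamination / puzzle-piece machinery developed in the body of the paper, together with sharp control on how pull-backs of rational ray-pair shieldings around a wandering branch continuum shrink and where their nested intersection can accumulate; this is the step I expect to occupy most of the technical work.
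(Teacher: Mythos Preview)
Your treatment of Parts~(1) and~(2) is broadly in line with the paper, though for Part~(1) you underplay the distinctness argument: the claim that ``each basin, rotation domain, or Cremer accumulation set is a dynamically isolated region'' is exactly what needs proof, and the paper obtains it via Kiwi's separation of non-repelling cycles by rational ray pairs together with a hedgehog-based argument (Theorem~\ref{mh1}, using Perez-Marco's theory and a Ma\~n\'e-type result from \cite{bm}) to pin a \emph{recurrent} critical point inside each CS-set $F^\RR_\C(\orb p)$.

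Part~(3) has a genuine gap: your Riemann--Hurwitz/Euler-characteristic sketch is not how the paper proceeds, and as stated it does not produce what is needed. The paper's mechanism is entirely combinatorial on the circle. A wandering non-(pre)critical branch continuum $Q_i$ yields a wandering polygon $\ch(A(Q_i))$ in $\diskbar$; Childers' theorem (Theorem~\ref{doug}, from \cite{chi07}) then furnishes $|A(Q_i)|-1$ \emph{recurrent critical chords} in the limit lamination $L^{G_i}_{\lim}$ with pairwise disjoint orbits, and the inequality $\sum(|G_i'|-2)\le k'-l$ is already a statement about circle dynamics. These critical chords sit on boundaries of all-critical gap-leaves of $\clam^\RR$ (Lemma~\ref{no-crit-leaf-2}, Corollary~\ref{wrinr}), and the associated planar fibers $F^\RR_\C(G)$ contain genuine critical points of $P$ (Lemma~\ref{wandlem1}(2), via \cite{hea96}). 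Weak recurrence of these critical points is inherited from recurrence of the gap-leaf $G$ in the circle, not from any accumulation on the $Q_i$ themselves; your justification ``they accumulate on the wandering orbits, which re-enter the complement of any finite rational ray-pair collection'' is not the right picture.

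The step you flag as the main obstacle---separating these critical points from the ones in Part~(1)---is indeed the crux, but it is not handled by your ``pull-back sectors eventually lie in components disjoint from the cycles.'' The paper proves this as Theorem~\ref{no-wand}: a wandering non-(pre)critical cut-continuum cannot meet any CS-set $F^\RR_\C(\orb p)$. The proof is a delicate fixed-point argument using Theorem~\ref{degimp} (from \cite{bfmot10}) and the structure of periodic leaves in $\bd(\tg)$ versus $\bd(G)$; nothing in your proposal substitutes for this. Without it, the critical points you extract in Part~(3) could a priori coincide with those associated to Cremer or Siegel cycles, and the inequality collapses.
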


The relation between special dynamical features and associated critical orbits
of a polynomial $P$ with Julia set $J_P$ can be reduced to a count; this will
yield an extension of the classical \fs. More precisely, let us use the following
notation.

\begin{tabular}{ll}
$N_{FC}$\index{$N_{FC}$} & is number of different orbits of bounded Fatou domains \\ & plus the number of Cremer
cycles;\\

$N_{irr}$\index{$N_{irr}$} & is the number of repelling cycles without periodic external \\ & rays (the subscript $irr$ stands for rays with \emph{irrational} arguments); \\

$C_{wr}$\index{$C_{wr}$} & is the set of weakly recurrent critical points in periodic \\ & components
of $J_P$; \\

$C'_{wr}$\index{$C'_{wr}$} & is the set of weakly recurrent critical points in wandering \\ & components
of $J_P$; \\

$C_{esc}$\index{$C_{esc}$} & is the set of escaping critical points; \\

$m$ & is the number of eventual branch continua $Q_i$ in a wandering \\
& collection $\{Q_1, \dots, Q_m\}$ such that each $Q_i$ is contained \\ &
in a (pre)periodic component of $J_P$; \\

$m'$ & is the number of eventual branch continua $Q'_j$ \\
& in a wandering collection $\{Q'_1, \dots, Q'_m\}$
such that each $Q'_j$ is \\ & a component of $J_P$;\\

$N_{co}$\index{$N_{co}$} & is the number of cycles of components of $J_P$ that contain \\&
wandering eventual branch continua; \\

$\chi(l)$\index{$\chi(\cdot)$} & is $1$ if $l>0$ and $0$ otherwise.

\end{tabular}

\smallskip

Given a finite (perhaps empty) set of numbers $\{a_i\}^k_{i=1}$, set \linebreak
$\sum_{i=1}^k a_i = 0$ if $k=0$. Also, let $|A|$ denote the cardinality of a set $A$.

\begin{thm}[The Extended Fatou-Shishikura Inequality]\label{intro-ineq}
\lineclear
For the polynomial $P$ the following inequalities hold:

\[
N_{FC}+ N_{co} +\sum_{i=1}^m (\eval_{J_P}(Q_i)-2) \le |C_{wr}|
\]

and

\[
N_{irr}+\chi(m')+\sum_{j=1}^{m'} (\eval_{J_P}(Q'_j)-2) \le \chi(m')|C'_{wr}| + |C_{esc}|
\]

Summing up, we have

\[
N_{FC}+N_{irr}+N_{co}+\sum_{i=1}^m
(\eval_{J_P}(Q_i)-2)+\chi(m')+\sum_{j=1}^{m'} (\eval_{J_P}(Q'_j)-2) \le
\]
\[ |C_{wr}| + \chi(m')|C'_{wr}| +  |C_{esc}| \le d-1
\]

\end{thm}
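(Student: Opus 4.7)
The plan is to derive each of the three displayed inequalities from the association results in Theorem~\ref{intro-assoc} and then combine them using the classical fact that a degree-$d$ polynomial has exactly $d-1$ critical points counted with multiplicity.

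For the first inequality I would proceed in two stages. Theorem~\ref{intro-assoc}(1) assigns to every cycle counted in $N_{FC}$ a distinct weakly recurrent critical orbit; the associated critical points accumulate on a periodic component of $J_P$ and thus contribute $N_{FC}$ distinct elements of $C_{wr}$. Next, for each of the $N_{co}$ cycles of Julia components harbouring wandering branch continua I would apply Theorem~\ref{intro-assoc}(3) to the sub-collection of $\{Q_i\}$ supported in that single cycle of components; each such application supplies one extra weakly recurrent critical point (the ``$+1$'' in the bound) together with $\eval_{J_P}(Q_i)-2$ further critical points for each $Q_i$ in that cycle. The clause in Theorem~\ref{intro-assoc}(3) that the critical points are weakly separated from all non-repelling periodic points prevents overlap with the critical points obtained in the first stage, and the localisation to one cycle of components prevents overlap across different applications, yielding $N_{FC}+N_{co}+\sum_{i=1}^m (\eval_{J_P}(Q_i)-2)\le |C_{wr}|$.

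The second inequality is parallel: Theorem~\ref{intro-assoc}(2) gives a distinct escaping critical orbit for each of the $N_{irr}$ repelling cycles with no periodic ray landing, accounting for $|C_{esc}|$. If $m'>0$, a single application of Theorem~\ref{intro-assoc}(3) to $\{Q'_j\}$ produces $1+\sum_j(\eval_{J_P}(Q'_j)-2)$ weakly recurrent critical points; since each $Q'_j$ is itself a wandering component of $J_P$, every such critical point either lies in $C'_{wr}$ or escapes, which is precisely the right-hand side of the second inequality. To assemble the summed inequality I would only need that $C_{wr}$, $C'_{wr}$, and $C_{esc}$ are pairwise disjoint (critical points in periodic components of $J_P$, in wandering components of $J_P$, and outside $K_P$, respectively), so their combined cardinality is bounded by the total number of critical points, namely $d-1$.

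The hard part will be the bookkeeping attached to the ``$+1$'' terms $N_{co}$ and $\chi(m')$: Theorem~\ref{intro-assoc}(3) as stated gives only one such term per wandering collection, whereas the first inequality needs one per cycle of Julia components. Upgrading to this local count requires a version of the association in which the extra critical point produced for a given cycle of Julia components is itself forced to accumulate on that cycle, so that separate local applications cannot use the same critical point. Making this localisation precise, and simultaneously ruling out interference with the critical points assigned to elements of $N_{FC}$ and $N_{irr}$, is where the genuine work lies; the remaining counting is essentially formal once the disjointness of the various associated critical sets is established.
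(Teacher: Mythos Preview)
Your outline is honest about its own gap, and that gap is real: Theorem~\ref{intro-assoc}(3) as stated gives no localisation of the associated critical points to a particular cycle of Julia components, so applying it separately to the continua in each cycle does not, on its own, prevent the same weakly recurrent critical point from being counted twice. You correctly flag this, but you do not supply the missing ``local'' version of the association, and in fact the paper does not prove one either.

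Instead of strengthening Theorem~\ref{intro-assoc}(3), the paper takes a different route for the first inequality. For each periodic component $E$ of $K_P$, the restriction $P^p|_E$ is a polynomial-like map with connected filled Julia set, hence hybrid equivalent to a polynomial $f$ with connected $J_f$. The conjugacy transports wandering branch continua, Fatou/Cremer cycles, and weakly recurrent critical points, so the full connected-case result (Theorem~\ref{maintech}, which is stronger than Theorem~\ref{intro-assoc}(3)) applies to $f$ and gives $\chi(n_E)+\sum_{Q_i\subset E}(\val(Q_i)-2)+N_{FC}(P^p|_E)\le K(C_{wr}(P^p|_E))$. The localisation is now automatic: critical points of $P^p|_E$ are, by definition, preimages of critical points of $P$ lying in the orbit of $E$. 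Summing over cycles of components yields the first inequality directly, with $N_{co}$ arising as the sum of the $\chi(n_E)$ terms.

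A similar localisation issue arises in your treatment of the second inequality: Theorem~\ref{intro-assoc}(3) does not tell you that the critical points associated to wandering-component branch continua $Q'_j$ themselves lie in wandering components (i.e.\ in $C'_{wr}$ rather than $C_{wr}$), nor that they are disjoint from the escaping critical points assigned to the $N_{irr}$ cycles. The paper handles both by a direct analysis (Theorem~\ref{Theorem 2} shows the relevant critical points sit in wandering components $T_j$, and Corollary~\ref{Corollary 1} checks $C^w_\infty\cap C^p_\infty=\emptyset$), not by invoking Theorem~\ref{intro-assoc}. Your final summation step, using the pairwise disjointness of $C_{wr}$, $C'_{wr}$, $C_{esc}$, is exactly what the paper does.
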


We would like to make a few remarks concerning the above results.
\begin{enumerate}
\item An attracting or rationally indifferent cycle is the limit of at
    least one critical orbit as follows from Fatou~\cite{fa}; this is
    the best known case in all results. It is also well-known that
    every Cremer point and every boundary point of a Siegel disk is a
    limit point of at least one recurrent critical orbit (see
    Ma\~n\'e~\cite{mane93}). The idea to use rational ray pairs to
    associate different indifferent cycles to different critical
    points is due to Kiwi~\cite{kiwi00}. Combining this with a
    version of Ma\~n\'e~\cite{mane93} (see \cite{ts00} or \cite{bm})
    we show that different Cremer or Siegel cycles can be associated
    to different individual {\it recurrent} critical points. This
    implies that $N_{FC}\le |C_{wr}|$ which is a version of the first
    inequality of Theorem~\ref{intro-ineq} implying the classical
    Fatou-Shishikura-inequality for polynomials, i.e. $N_{FC}\le d-1$.

\item Using a recent topological result on fixed points in non-invariant
continua (Chapter 7 of \cite{bfmot10}), we show that the recurrent
critical points associated to Cremer or Siegel cycles \emph{cannot}
be associated to wandering eventual branch continua (should the
latter exist). Together with combinatorial results of
\cite{blolev02a, chi07} this yields the first inequality of of
Theorem~\ref{intro-ineq}. The tools similar to those developed in
\cite{bfmot10} are used in a recent paper \cite{ot08}, devoted to
extending isotopies of plane continua onto the entire plane.

\item If there are no wandering eventual branch continua, the inequalities reduce
    to $N_{FC}\le |C_{wr}|$ and $N_{irr}\le |C_{esc}|$; if there are wandering
    eventual branch continua, then $N_{co}+\chi(m')\ge 1$, so at least one weakly
    recurrent critical point is used for the existence of wandering eventual branch
    continua (more if, e.g., the latter are contained in different cycles of
    components of the Julia set), in addition to the individual count in the
    sum $\sum(\eval_{J_P}(Q_i)-2)$.


\item The initial version of the Fatou-Shishikura inequality is due to
    Fatou~\cite{fa} who proved that any rational map of degree $d$ has at
    most $4d-4$ non-repelling periodic cycles (he proved that any pair of
    indifferent cycles can be perturbed into one attracting and one
    repelling cycle, and every attracting cycle attracts one of the $2d-2$
    critical points).

Shishikura \cite{Mitsu} improved the Fatou bound by proving that there can
be at most $2d-2$ non-repelling cycles: using quasiconformal surgery, he showed
that \emph{every} indifferent cycle can be perturbed so as to become
attracting. His method allows to show that this bound is sharp. Rationally
indifferent periodic orbits may attract more than one critical orbit; this
refines the counts above. For rational maps, this inequality also includes
Herman rings: each periodic cycle of Herman ring counts for two non-repelling
periodic cycles.

For a conceptually different proof of the Fatou-Shishikura
inequality, see Epstein's preprint \cite{Adam}. There push-forwards
of quadratic differentials are used and, in certain cases, the
count of rationally indifferent orbits is refined
(Herman rings are not discussed in the preprint \cite{Adam}).

\item For polynomials, we have that every polynomial of degree $d\ge 2$ has at
most $d-1$ non-repelling periodic orbits in $\C$ (this is because
$\infty$ is a critical point of multiplicity $d-1$, and there are no
Herman rings). A simple proof of this inequality in the polynomial
case is due to Douady and Hubbard~\cite{dh85b}; it is based on
perturbations of polynomial-like maps. Conceptually, our approach is
close to that of Kiwi \cite{kiwi00}, yet we use some additional tools
and push the inequality further.

\item The estimates concerning wandering branch points in the locally
    connected case are obtained in \cite{kiwi02, blolev02a,chi07}. For connections
    between wandering continua and topology of the Julia set, see
    \cite{mil92} (Douady-Hubbard examples), \cite{lev98, roe08}.

\item We do not use perturbations and directly allocate to each ``piece of
    dynamics'' distinct critical points (more precisely, their grand orbits).
    It allows us to include in the count the wandering eventual branch continua
    as well. By \cite{triangles04, blover08} the count of wandering branch
    continua in degree $3$ is sharp: there exist uncountably many cubic
    polynomials with locally connected non-separating Julia sets which contain
    a wandering non-(pre)critical branchpoint $z$ of valence $3$ so that the
    inequality in Theorem~\ref{intro-ineq} becomes equality (we believe
    it is sharp in general too).

\end{enumerate}

The relation between non-repelling periodic orbits and critical
points is well-known. To briefly motivate the relation between
wandering continua and critical points, suppose that $J_P$ is
connected and locally connected. For each $y\in J_P$ let $A(y)$ be
the set of all angles $\al$ such that the external ray $R_\al$
lands at $y$. Now, consider the collection of all hyperbolic geodesics in the
boundaries of the convex hulls (in the hyperbolic metric on $\disk$) of all the sets $A(y), y\in J_P$
taken in the closed unit disk $\diskbar$. The set of all such line
segments in $\diskbar$ forms an \emph{invariant (geometric)
lamination} in the sense of Thurston \cite{thu85}.

Consider a non-(pre)periodic non-(pre)critical point $z$ that is the landing
point of at least three external rays (the number of such external rays is
finite by \cite{kiwi02, blolev02a}). Then the arguments $\mathcal A(z)$ of the external rays
landing at $z$ determine a polygon $Q_0\subset \diskbar$. The image point
$P(z)$ determines  the  polygon  say, $Q_1$, with vertices $\mathcal A(P(z))$ of external rays
landing on  $P(z)$. Note that if $\sigma_d(z)=z^d$ for $z\in \uc$, then $\si_d(\mathcal A(z))=\mathcal A(P(z))$ and $|\mathcal A(z)|=|\mathcal A(P(z))|$.

This yields a sequence of polygons $Q_0, Q_1, \dots\subset \diskbar$ with
disjoint interiors and hence Euclidean areas converging to $0$. If $Q_i$ has a
small area, then either \emph{all} its sides are short, or \emph{two} of its
sides have almost equal length and the \emph{remaining} sides are short. Under
$z^d|_{\uc}$ lengths of short sides of $Q_i$ increase. A side $s$ of $Q_i$ can
have a short image \emph{only} if the endpoints of $s$ have angles that differ
by \emph{nearly} $k/d, k=1,2,\dots,d-1$. So, the sequence $Q_i$ must have sides
that (a) converge subsequentially to a chord $\ell\in \diskbar$ such that (b)
the endpoints of $\ell$ in $\uc$ have angles that differ by \emph{exactly}
$k/d$. By (a) $\ell$ corresponds to two different external rays that land at a
common point $c$, and by (b) these rays have equal images. This implies that
$c$ is a critical point of $P$ and motivates why wandering eventual branch
continua are related to critical points.

\textbf{Acknowledgements}. In this paper we combine estimates
concerning wandering branch points in the locally connected case (see
Subsection~\ref{waga}) with the \fs\, and extend this to all
polynomials. That idea was suggested to us by Mitsuhiro Shishikura;
we acknowledge this here with gratitude. Also, we would like to express our
gratitude to the referee for carefully reading the manuscript and making
a number of thoughtful and useful remarks.

\section{Preliminaries} \label{prelim}

\subsection{Laminations and locally connected models}\label{lami}

\subsubsection{Introductory information} Let $\bbd$ be the open
unit disk and $\hc$ be the complex sphere. For a compactum $X\subset\C$, let
$\iU(X)$\index{$\iU(\cdot)$} be the unbounded component of $\C\sm X$ and let
$T(X)=\C\sm \iU(X)$ be the \emph{topological hull} of $X$. Sometimes we use
$\iU(X)$ for $\hc\sm T(X)$ (including the point at $\infty$). We say that $X$
is \emph{unshielded} if $X=\bd(\iU(X))$. If $X$ is a continuum then $T(X)$ is a
non-separating continuum and there exists a Riemann map
$\Psi_X:\hc\sm\ol\bbd\to \iU(X)$; we always normalize it so that
$\Psi_X(\infty)=\infty$ and $\Psi'_X(z)$ tends to a positive real limit as
$z\to\infty$.

Now consider a polynomial $P$ of degree $d\ge 2$ with Julia set $J_P$ and
filled-in Julia set $K_P=T(J_P)$. Clearly, $J_P$ is unshielded. Extend $z^d:
\C\to \C$ to a map $\ta_d$ on $\hc$. If $J_P$ is connected then
$\Psi_{K_P}=\Psi:\C\sm\ol\bbd\to \iU(K_P)$ is such that $\Psi\circ \ta_d=P\circ
\Psi$ \cite{hubbdoua85,miln00}.

\subsubsection{Laminations in the locally connected case}
Let us suppose for now that $J_P$ is locally connected. Then $\Psi$ extends to
a continuous function $\ol{\Psi}: {\hc\sm\bbd}\to \ol{\hc\setminus K_P}$ and
$\ol{\Psi} \circ\, \ta_d=P\circ \ol{\Psi}$; in particular, we obtain a
continuous surjection $\ol\Psi\colon\bd(\bbd)\to J_P$ (the Carath\'eodory
loop). Identify $S^1=\bd(\bbd)$ with $\ucirc = \mathbb{R}/\mathbb{Z}$.

Let $\si_d=\si=\ta_d|_{\uc}$, $\psi=\ol{\Psi}|_{\uc}$. Define an
equivalence relation $\sim_P$ on $\uc$ by $x \sim_P y$ if and only if
$\psi(x)=\psi(y)$, and call it the \emph{($d$-invariant) lamination (of
$P$)}\index{invariant lamination (of $P$)}\cite{blolev02a}. Clearly,
equivalence classes of $\sim_P$ are pairwise \emph{unlinked} (i.e.,
their Euclidian convex hulls are disjoint). The quotient space
$\uc/\sim_P=J_{\sim_P}$ is homeomorphic to $J_P$ and the map
$f_{\sim_P}:J_{\sim_P}\to J_{\sim_P}$ \emph{induced} by $\si$ is
topologically conjugate to $P|_{J_P}$. The set $J_{\sim_P}$ is a
topological (combinatorial) model of $J_P$ and is called the
\emph{topological Julia set}. The induced map $f_{\sim_P}:J_{\sim_P}\to
J_{\sim_P}$ serves as a model for $P|_{J_{P}}$ and is often called a
\emph{topological polynomial}. Moreover, one can extend the conjugacy
between $P|_{J_{P}}$ and $f_{\sim_P}:J_{\sim_P}\to J_{\sim_P}$ to a
conjugacy on the entire plane. Figure~\ref{fig:dr} shows the Julia set
called ``the Douady rabbit'' and the corresponding lamination.

\begin{figure}
\centerline{\includegraphics{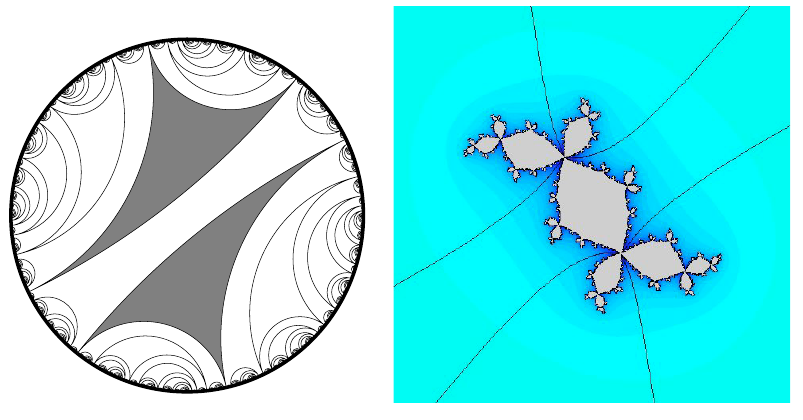}}
\caption{The Douady rabbit and its lamination.}
\label{fig:dr}
\end{figure}

\subsubsection{Laminations in the connected case}
In his fundamental paper \cite{kiwi97} Kiwi extended these ideas to the case of
a polynomial $P$ of degree $d\ge 2$ without irrationally indifferent periodic
points, \emph{not requiring} that $J_P$ be locally connected. In the case when
$J_P$ is connected, he constructed a $d$-invariant lamination $\sim_P$ on
$\ucirc$ such that $P|_{J_{P}}$ is semiconjugate to the induced map
$f_{\sim_P}:J_{\sim_P}\to J_{\sim_P}$ by a monotone map $m:J_P\to J_{\sim_P}$
(a map is \emph{monotone} if all points have connected preimages). Kiwi's
results were extended to \emph{all} polynomials with connected Julia sets in
\cite{bco08}. Equivalences $\sim$ similar to $\sim_P$ can be defined
abstractly, without any polynomials. Then they are called \emph{($d$-invariant)
laminations} and still give rise to similarly constructed \emph{topological
Julia sets $J_{\sim_P}$} and \emph{topological polynomials $f_{\sim_P}$}.

\begin{thm}\cite{bco08}\label{lcmod}
Let $P$ be a polynomial with connected Julia set $J_P$. Then there exists an
essentially unique monotone map $\ph$ of $J_P$ onto a locally connected
continuum which is finest in the sense that for \emph{any} monotone map
$\psi:J_P\to J'$ onto a locally connected continuum there exists a monotone map
$h$ with $\psi=h\circ \ph$. Moreover, there exists an invariant lamination
$\sim_P$ such that $\ph(J_P)=J_{\sim_P}$ and the map $\ph$ semiconjugates
$P|_{J_P}$ and the topological polynomial $f_{\sim_P}|_{J_{\sim_P}}$.
\end{thm}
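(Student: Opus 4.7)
The plan is to construct $\varphi$ as the quotient of $J_P$ by the finest upper semicontinuous (usc) decomposition whose quotient space is locally connected, then to show that this decomposition is forward invariant under $P$ (yielding the semiconjugacy) and that it encodes a Thurston-style lamination via the Carath\'eodory correspondence with external angles.

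First, I would let $\mathcal{F}$ denote the family of all usc decompositions $\mathcal{D}$ of $J_P$ into continua such that the quotient $J_P/\mathcal{D}$ is a locally connected continuum; equivalently, $\mathcal{F}$ records the fibers of monotone surjections $\psi\colon J_P\to Y$ onto locally connected continua. The trivial decomposition $\{J_P\}$ shows $\mathcal{F}$ is non-empty. For each $x\in J_P$ set $F(x)=\bigcap_{\mathcal{D}\in\mathcal{F}}D_{\mathcal{D}}(x)$, the intersection of the unique elements of each $\mathcal{D}\in\mathcal{F}$ through $x$. Each $F(x)$ is a nested intersection of continua, hence a continuum. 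The candidate finest decomposition is $\mathcal{D}^{*}=\{F(x):x\in J_P\}$.

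Second, I would verify $\mathcal{D}^{*}\in\mathcal{F}$. The core step is directedness of $\mathcal{F}$ under refinement: given $\mathcal{D}_1,\mathcal{D}_2\in\mathcal{F}$ with maps $\psi_i\colon J_P\to Y_i$, the image $Y_{12}=(\psi_1,\psi_2)(J_P)\subset Y_1\times Y_2$ is a continuum whose projections onto the factors are locally connected, and one checks that $Y_{12}$ itself is locally connected by stitching together finite $\varepsilon$-covers by connected open sets from the two factors. The associated decomposition of $J_P$ (fibers of the diagonal map) refines both $\mathcal{D}_1$ and $\mathcal{D}_2$ and belongs to $\mathcal{F}$. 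With directedness in hand, $\mathcal{D}^{*}$ is a decreasing inverse limit of usc decompositions with locally connected quotient, hence is itself usc with locally connected quotient (via a Hausdorff-metric compactness argument together with the $\varepsilon$-net characterization of local connectivity). The finest property then follows immediately: any monotone $\psi\colon J_P\to J'$ to a locally connected continuum factors as $\psi=h\circ\varphi$ for a uniquely determined monotone $h$.

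Third, I would bring in the dynamics. The pullback decomposition whose elements are the connected components of $P^{-1}(F(y))$ for $y\in J_P$ is usc, and its quotient is a finite branched pullback of $J_P/\mathcal{D}^{*}$, hence again a locally connected continuum. Therefore this pullback lies in $\mathcal{F}$, so by the finest property it refines $\mathcal{D}^{*}$; equivalently, $\mathcal{D}^{*}$ is forward invariant, and $P$ descends to a continuous $f$ on $J_P/\mathcal{D}^{*}$ with $\varphi\circ P=f\circ\varphi$. To produce the lamination $\sim_P$, I would use the extended Riemann map $\Psi\colon\hc\setminus\overline{\bbd}\to\hc\setminus K_P$, assign to each external angle $\alpha\in\uc$ its impression in $J_P$, and declare $\alpha\sim_P\beta$ when these impressions lie in a common element of $\mathcal{D}^{*}$. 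The functional equation $\Psi\circ\theta_d=P\circ\Psi$ together with forward invariance of $\mathcal{D}^{*}$ yields the axioms for $\sim_P$ as a $d$-invariant lamination, and $J_{\sim_P}$ is canonically identified with $J_P/\mathcal{D}^{*}$, with $f_{\sim_P}$ corresponding to $f$.

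The hard part will be the preservation of local connectivity through the intersection defining $\mathcal{D}^{*}$: local connectedness of quotients is not automatically closed under intersections of usc decompositions, and the naive intersection of two members of $\mathcal{F}$ need not lie in $\mathcal{F}$. Establishing the directed-refinement step via the product map is therefore the technical heart, and a non-trivial Hausdorff-limit argument is needed afterwards to ensure that local connectedness passes to the inverse limit. A secondary subtlety, arising precisely because $J_P$ may fail to be locally connected, is the passage from the decomposition $\mathcal{D}^{*}$ on $J_P$ to an equivalence $\sim_P$ on $\uc$ via impressions of external rays: one needs impressions of $\sim_P$-equivalent angles to track together under $P$, which forces $\sim_P$ to respect $\sigma_d$ and produces a genuine invariant lamination rather than a merely closed equivalence.
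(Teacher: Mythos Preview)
The paper does not prove this theorem: it is quoted from \cite{bco08} and stated without proof, so there is no ``paper's own proof'' to compare against. What follows is an assessment of your sketch on its own terms.

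Your overall architecture---take the lattice of monotone maps onto locally connected continua, extract a finest element, then check invariance and read off a lamination---is the right shape, and indeed this is the spirit of \cite{bco08}. But the directedness step in your second paragraph has a genuine gap. Given two monotone maps $\psi_1\colon J_P\to Y_1$ and $\psi_2\colon J_P\to Y_2$, the fibers of the diagonal $(\psi_1,\psi_2)$ are the sets $\psi_1^{-1}(y_1)\cap\psi_2^{-1}(y_2)$. These are intersections of two subcontinua of $J_P$ and need not be connected, so the diagonal map need not be monotone at all; you cannot simply form ``the associated decomposition'' and stay inside $\mathcal F$. Even if you repair monotonicity by passing to connected components, the assertion that the image $Y_{12}\subset Y_1\times Y_2$ is locally connected because its two coordinate projections are is not a theorem: a subcontinuum of a product of two locally connected continua can easily fail to be locally connected (think of a $\sin(1/x)$-type graph sitting inside $[0,1]^2$). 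Your ``stitching $\varepsilon$-covers'' remark does not supply a proof, and without this step the entire intersection construction of $\mathcal D^*$ is unsupported.

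The actual argument in \cite{bco08} does not proceed by abstract decomposition theory over all monotone maps. It works specifically with the unshielded continuum $J_P$, using the external-ray/impression structure to build the finest decomposition directly: one first constructs a particular closed equivalence on $\uc$ from the topology of impressions, shows the resulting quotient is locally connected, and then proves universality by comparing any given monotone $\psi$ to this explicit model. The polynomial dynamics (and the fact that $J_P$ is the boundary of a simply connected domain) are used from the start, not grafted on at the end. If you want to salvage your approach, you would need a substantially different argument for the existence of a finest locally connected monotone image of an arbitrary continuum---and in fact such a finest model does \emph{not} exist for arbitrary continua, which is why the unshielded hypothesis and the Carath\'eodory theory are essential inputs in \cite{bco08}, not afterthoughts.
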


In this construction, big pieces of $J_P$ may collapse under $\ph$. In fact,
\cite{bco08} contains a criterion for the finest map $\ph$ from
Theorem~\ref{lcmod} to not collapse all of $J_P$ to a point as well as examples
of polynomials for which $\ph(J_P)$ is a point. This shows that the notion of an
invariant lamination cannot be applied to all polynomials, even with connected
Julia set.

\subsubsection{Geometric prelaminations: Thurston's approach}
This shows the limitations of the approach based upon laminations as
equivalences on $\uc$. Therefore, in the present paper, we use Thurston's
original approach \cite{thu85} which was different. Instead of equivalences on
$\uc$, Thurston considered closed families of chords in $\diskbar$ with certain
invariance properties. More precisely, for $A\subset \ucirc\subset \C$, let
$\ch(A)$ be the hyperbolic convex hull of $A$
If $A$ is a $\sim$-class, then call a chord $ab$ on the boundary of $\ch(A)$ a
\emph{leaf}\index{leaf}; we allow for $a=b$ and then call the leaf
\emph{degenerate} (cf.\ \cite{thu85}). Using equivalence classes $A$ of an
equivalence relation $\sim$  we get in this way a collection of leaves
generated by $\sim$. Thurston's idea was to study collections of leaves
abstractly, i.e., without assuming that they are generated by an equivalence
relation with specific properties defined on the circle.

\begin{dfn}[\rm{cf.} \cite{thu85}]\label{geolam}
A \emph{geometric prelamination}\index{geometric prelamination}
$\lam$ is a set of chords in the closed unit disk $\diskbar$ such that any two
distinct chords from $\lam$ meet at most in an endpoint of both of them. Also,
$\lam$ is called a \emph{geometric lamination}
(\emph{geo-lamination})\index{geometric lamination} if $\bigcup \lam$ is
closed.
\end{dfn}

Chords in a geometric prelamination are called \emph{leaves}. If $\lam$
is a geo-lamination then $\lam^+=\bigcup \lam\cup \ucirc$ is a
continuum. A geo-lamination can be obtained if we construct a geometric
prelamination $\lam$ and then add all chords that are limits of
sequences of chords from $\lam$. Denote the new family of chords by
$\ol{\lam}$; it is easy to see that $\ol{\lam}$ is a geo-lamination.

A \emph{gap}\index{gap} of a geometric prelamination $\lam$ is the closure (in
$\C$) of a component of $\disk\sm\bigcup\lam$ that has interior points. The
boundary of a gap consists of leaves in $\clam$ and points in $\uc$. The
\emph{basis}\index{basis of a gap} of a gap or leaf $G$ is $G'=G\cap \ucirc$. A
gap is \emph{finite} if its basis is finite (i.e., if the gap is a polygon), and
\emph{infinite} otherwise. For a closed subset of $\uc$, we call its convex
hull a \emph{(degenerate) leaf or gap} even if it is not coming from any
lamination. Slightly abusing the language, we often identify a gap and its
basis, or a gap and its boundary. Note that gaps and leaves  of an \emph{invariant} lamination have additional
properties specified in Definition~\ref{geolaminv}.

\subsubsection{Geometric prelaminations and dynamics}\label{geoprel}

We extend $\si$ to $\si^*:\ol{\lam}^{^+}\to\ol{\disk}$ by mapping each leaf
$\ell=ab\in\clam$ linearly onto the chord $\si(a)\si(b)$. For a
(degenerate) leaf $\ell$, we define $\si(\ell)$ as $\ch(\si(\ell'))$.

\begin{dfn}[\rm{cf.} \cite{thu85}]\label{geolaminv}
A geometric prelamination $\lam$ of degree $d$ is said to be
\emph{invariant}\index{invariant geometric prelamination} if the
following conditions are satisfied:

\begin{enumerate}

\item (Leaf invariance) For each leaf $\ell\in \lam$, $\si(\ell)$ is
     a (degenerate) leaf in $\lam$ and, if $\ell$ is non-degenerate, there exist $d$ pairwise disjoint
     leaves $\ell_1,\dots,\ell_d$ in $\lam$ such that for each $i$,
     $\si(\ell_i)=\ell$.

\item (Gap invariance) For a gap $G$ of $\lam$, $\ch(\si(G'))$ is {\rm(1)} a
    (degenerate) leaf, or {\rm(2)} the boundary of a gap $H=\ch(\si(G'))$ of
    $\lam$ and $\si^*|_{\bd(G)}:\bd(G)\to \bd(H)$ is a \emph{positively
    oriented composition of a monotone map and a covering map}. We consider
    $\si(G)=\ch(\si(G'))$ as \emph{defined} only if {\rm(1)} or {\rm(2)} is
    satisfied.
\end{enumerate}

\end{dfn}

If a geometric prelamination $\lam$ satisfies conditions (1)--(2) except for
the last part of (1), it is called \emph{forward invariant}. By Thurston
\cite{thu85} if $\lam$ is invariant or forward invariant, then $\ol{\lam}$ is
an invariant or forward invariant geo-lamination. A leaf or gap $G$ is
\emph{critical} if $\si(G)$ is defined and the map $\si^*$ on $\bd(G)$
(equivalently, if $\si|_{G'}$) is not one-to-one.

\begin{dfn}\label{gencol}
Let $\mathcal C$ be a collection of pairwise disjoint leaves and gaps such
that for every
element $G\in \mathcal C$, $\si(G)\in \mathcal C$ is well-defined. Let $\lam$
be the set of all leaves in $\mathcal C$, all boundary leaves of gaps in
$\mathcal C$, and of all points in $\uc$. Then $\lam$ is a forward invariant
geometric prelamination, $\clam$ is a forward invariant geometric lamination,
and $\mathcal C$ is called a \emph{generating family of
$\lam$}\index{generating family (of $\lam$)} (or of $\ol{\lam}$).
\end{dfn}

For an element $G$ of $\lam$ or $\clam$ we can talk about its image as
either $\si(G)$ or $\si^*(G)$, and we will use these notations
interchangeably. A gap is \emph{periodic}\index{periodic gap} if some
iterate of $\si$ maps the basis of the gap \emph{into} itself. If $G\in
\mathcal C$ and $\si^n(G)\subset G$ then it follows from the definition
that $\sigma^n(G)=G$. A leaf of $\clam$ which is the limit of other
leaves of $\lam$ from one or both sides is called a (one-sided or
two-sided) \emph{limit leaf}\index{limit (isolated) leaf}. A leaf that
is not a limit leaf on either side is called \emph{isolated}. If a leaf
is not a two-sided limit leaf, then it is a boundary leaf of a gap. We
use the term \emph{gap-leaf}\index{gap-leaf} for a gap, or a two-sided
limit leaf, or a degenerate leaf that is the limit of non-degenerate
leaves which separate it from the rest of $\uc$. Call a gap-leaf $G$
\emph{all-critical}\index{all-critical gap-leaf} if $\si(G)$ is a
point. Figure~\ref{fig:allcri} shows an all-critical triangle with the
edges which are one-sided limit leaves.

\begin{figure}[ht]
\centerline{\includegraphics{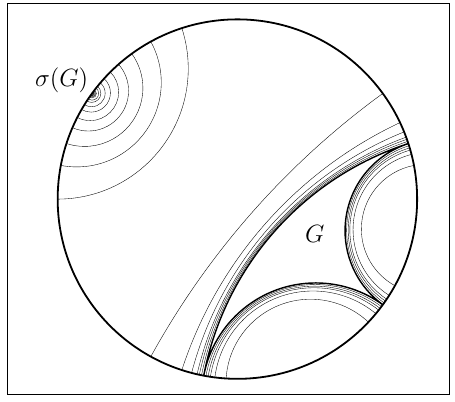}}
\caption{An all-critical triangle.}
\label{fig:allcri}
\end{figure}

\begin{lem}\label{no-crit-leaf}
Let $\mathcal C$ be a generating family of a geometric prelamination $\lam$
with no critical leaves in $\lam$. Then the following claims hold.

\begin{enumerate}

\item Let $\ell$ be a critical leaf of $\ol{\lam}$. Then $\ell$ is a
     boundary leaf of an all-critical gap-leaf $G$ of $\ol{\lam}$ all boundary
     leaves of which are limit leaves, $\si(G)$ is a point not belonging to any
     gap or non-degenerate leaf of $\clam$ and separated from the rest of $\uc$
     by a sequence of leaves of $\lam$, and so $\si^n(G)\cap G=\0$ for every
     $n>0$.

\item If $(\si^*)^n(H)\subset H$ for a leaf or gap $H$ of $\clam$, then
    $(\si^*)^n(H)=H$.

\item If $G$ is a (pre)periodic gap-leaf of $\clam$ that is not
    all-critical for $\si^n$ for any $n$, then all leaves in $\bd(G)$ are
    non-(pre)critical and (pre)periodic (in particular, this holds if $G$
    is infinite). Moreover, there are at most finitely many periodic
    leaves in $\bd(G)$.

\end{enumerate}

\end{lem}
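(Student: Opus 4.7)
The plan is to prove the three parts in sequence, each exploiting the hypothesis that the generating family $\mathcal{C}$ contains no critical leaves.

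For part (1), I would begin with a critical leaf $\ell\in\ol{\lam}$; since $\lam$ itself contains no critical leaves, $\ell\notin\lam$, and hence $\ell$ must be a limit of a sequence $\ell_n\in\lam$. Each $\ell_n$ being non-critical, $\si^*(\ell_n)$ is a non-degenerate leaf of $\lam$, yet $\si^*(\ell_n)\to\si^*(\ell)=\{q\}$, so the images are non-degenerate chords collapsing onto the single point $q$. I would then define $G$ to be the union of $\ell$ together with all chords and points of $\ol{\lam}$ adjacent to $\ell$ whose $\si^*$-image is $q$; a direct verification shows that $G$ is closed and is an all-critical gap-leaf of $\ol\lam$ with $\ell\subset\bd(G)$. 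Every boundary leaf of $G$ must be a limit leaf, for otherwise it would be a critical leaf belonging to $\lam$, violating the hypothesis on $\mathcal{C}$. To see that $q$ lies in no non-degenerate leaf or gap of $\ol\lam$, observe that any such object containing $q$ in its closure would have to be crossed by the chords $\si^*(\ell_n)$ for large $n$, contradicting unlinkedness; the same sequence simultaneously exhibits $q$ as separated from the rest of $\uc$ by leaves of $\lam$. Consequently, for $n\ge 1$ the iterate $\si^n(G)$ is a single point on the forward orbit of $q$, and the separation property forces these points to avoid $G\cap\uc$, yielding $\si^n(G)\cap G=\emptyset$.

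For part (2), I would argue by nested descent on the chain $H\supset (\si^*)^n(H)\supset (\si^*)^{2n}(H)\supset\dots$, each term being a leaf or gap of $\ol{\lam}$. When $H$ is a non-degenerate chord, any chord of $\ol\lam$ contained in $H$ has its endpoints in $H\cap\uc$, which consists of the two endpoints of $H$, forcing $(\si^*)^n(H)=H$. When $H$ is a gap, an image $(\si^*)^n(H)$ that is again a gap of $\ol\lam$ contained in $H$ must coincide with $H$, since distinct gaps of $\ol\lam$ have disjoint interiors; and the image cannot be a leaf lying in the closed region $H$, because leaves and gap interiors of $\ol\lam$ are disjoint, while a boundary-leaf image is excluded by the positive-orientation covering condition in Definition~\ref{geolaminv}(2) once the degrees of the covering maps are tracked.

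For part (3), let $G$ be a (pre)periodic gap-leaf not all-critical for any iterate. By (2), $G_0=(\si^*)^m(G)$ is periodic of some period $r$, and $(\si^*)^r|_{\bd(G_0)}$ is a positively oriented covering of $\bd(G_0)$ onto itself; hence every boundary leaf of $G_0$ is (pre)periodic, and pulling back through the finite chain $G\mapsto\si(G)\mapsto\dots\mapsto G_0$ transfers (pre)periodicity to the leaves in $\bd(G)$. To rule out (pre)critical leaves in $\bd(G)$, suppose some $\ell\subset\bd(G)$ becomes critical at a minimal iterate $k$. By (1), $\si^k(\ell)$ is a boundary leaf of an all-critical gap-leaf $\widetilde G$ of $\ol\lam$, and since each leaf bounds at most two gap-leaves, either $\si^k(G)=\widetilde G$, immediately contradicting the hypothesis, or $\si^k(G)$ and $\widetilde G$ sit on opposite sides of $\si^k(\ell)$, in which case $\si^{k+1}(G)$ inherits the degenerate image $\{q\}=\si^*(\si^k(\ell))$ on its boundary and, by the strong isolation of $q$ established in (1), must itself degenerate, again contradicting the non-all-critical assumption. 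The finiteness of periodic leaves in $\bd(G)$ then reduces via the finite chain to the corresponding statement for $\bd(G_0)$, where a positively oriented covering of positive degree on the cyclically ordered boundary leaves can fix only finitely many leaves under any given iterate, and the structure of invariant geo-laminations forbids an infinite aggregate of periodic leaves in a non-all-critical periodic gap. The most delicate step is the construction in part (1)---specifically verifying that $q=\si^*(G)$ is genuinely isolated from every ambient $\ol\lam$-structure---since this is where the hypothesis on $\mathcal{C}$ does its real work, by forcing critical behavior to arise only as a limit of non-critical approximations rather than as an intrinsic feature of the generating family.
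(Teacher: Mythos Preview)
Your argument for part~(1) is essentially the paper's: since $\lam$ has no critical leaves, a critical $\ell\in\ol\lam$ must be a limit of non-critical $\ell_i\in\lam$, and the images $\si^*(\ell_i)$ shrink to the point $q=\si^*(\ell)$ and witness both the all-criticality of the gap-leaf $G$ bordered by $\ell$ and the isolation of $q$. One small wrinkle: rather than assembling $G$ as a union of chords with image $q$, it is cleaner (and what the paper does) to take $G$ to be the gap-leaf of $\ol\lam$ that $\ell$ borders on the side opposite the approximating $\ell_i$, and then argue that $\si(G)$ must be a point because otherwise the $\si^*(\ell_i)$ would cross $\bd(\si(G))$.

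There is, however, a genuine gap in your part~(2). The dangerous case is when $H$ is a gap of $\ol\lam$ and $(\si^*)^n(H)$ is a \emph{boundary leaf} $\ell$ of $H$. You dismiss this by invoking ``the positive-orientation covering condition in Definition~\ref{geolaminv}(2) once the degrees of the covering maps are tracked,'' but that condition is stated \emph{only} in the case where the image is again a gap; when the image is a (degenerate) leaf the definition imposes nothing, so no degree is available to track. The paper handles precisely this case by a delicate argument that really uses the generating family~$\mathcal C$: one shows $H$ is finite, that $\ell$ is an invariant leaf with fixed endpoints, that the adjacent boundary leaf $\beta\gamma$ of $H$ must be isolated (hence lie in $\lam$), that its image $\si(\beta\gamma)=\ell$ therefore also lies in $\lam$, and finally that the element of $\mathcal C$ containing the three vertices $\alpha,\beta,\gamma$ must coincide with $H$, forcing $\si^*(H)=H$ after all. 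Your outline does not supply any substitute for this step.

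Your part~(3) also needs tightening. The assertion that ``$(\si^*)^r|_{\bd(G_0)}$ is a positively oriented covering, hence every boundary leaf of $G_0$ is (pre)periodic'' is not justified as stated: a covering map on a circle-like boundary does not by itself force (pre)periodicity of every leaf (think of degree~$1$). Likewise the finiteness of periodic leaves is asserted rather than proved. The paper's device is a short length argument: there is $\varepsilon>0$ such that any leaf in $\bd(G_0)$ of length below $\varepsilon$ strictly grows under $\si^r$, and only finitely many boundary leaves have length above $\varepsilon$; hence every boundary leaf eventually lands in that finite set, which gives both (pre)periodicity and the finiteness of periodic leaves at once.
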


\begin{proof}
(1) Since $\lam$ contains no critical leaves, $\ell$ is the limit leaf of a
sequence of leaves $\ell_i$ disjoint from $\ell$. Clearly, $\ell\in \ol{\lam}$
lies on the boundary of a gap-leaf $G$ of $\ol{\lam}$ and $\ell_i\cap G=\0$. If
$\si(G)$ is not a point, then $\si(\ell_i)$ either cross a leaf in the boundary
of $\si(G)$, or intersect the interior of $\si(G)$, a contradiction (this is
where the invariantness of the lamination is used). Hence $G$ is all-critical,
and $\si(G)$ is separated from the rest of the circle by a sequence of leaves
of $\lam$. Since the same argument applies to all leaves in $\bd(G)$, they are
all limit leaves. This implies the rest of the lemma (e.g., if the point
$\si^n(G)$ belongs to a gap or leaf $Q$ of $\clam$, the leaves $\si^n(\ell_i)$
will cross $Q$, a contradiction).

(2) Suppose that $\si(H)\subsetneqq H$ (the arguments for $n>1$ are
similar). If there are critical leaves (of any power of $\si$) in
$\bd(H)$ then by (1) $H$ is all-critical and $\si^n(H)\cap H=\0$ for
all $n$. Hence we may assume that \emph{$H$ is a gap without critical
leaves in its boundary}. Since $\si(H)\subsetneqq H$,
$\si(H)=\al\be=\si(\al\be)$ is an invariant leaf in $\bd(H)$ and so $H$
is finite. We may assume that $\si(\al)=\al, \si(\be)=\be$. If there
are no limit leaves in $\bd(H)$, then $H$ is an invariant gap from
$\mathcal C$, hence $\si(H)=H$. So, there are limit leaves in $\bd(H)$
and $\al\be$, which is their image, is also a limit leaf. If the leaf
$\be\ga\in \bd(H)$, adjacent to $\al\be$, is a limit leaf, then images
of leaves, approaching $\be\ga$, will cross $H$, a contradiction. Hence
$\be\ga$ is isolated in $\clam$ and so $\be\ga\in \lam$. Since there
are no critical leaves in $\bd(H)$, $\si(\be\ga)=\al\be\in \lam$. By
Definition~\ref{gencol} we conclude that there is an element of
$\mathcal C$ which contains vertices $\al, \be, \ga$ and has to
coincide with $H$. This implies that $\si^*(H)=\si(H)=H$, a
contradiction.

(3) If there is a (pre)critical leaf in $\bd(G)$ then by (1) $G$ is
all-critical for a power of $\si$, a contradiction. Consider $n$ with
$(\si^*)^n(G)$ periodic of period $m$. Clearly, it is enough to prove the rest
of lemma for $(\si^*)^n(G)=H$. By (2) and above all leaves in $\bd(H)$ stay in
$\bd(H)$ under $\si^m$ and are all (pre)periodic. We show that they belong to
the backward orbits of finitely many periodic leaves. Indeed, any leaf from
$\bd(H)$ of length less than some $\e(m)=\e>0$ increases its length under
$\si^m$. Since for geometric reasons there are finitely many leaves in $\bd(H)$
of length greater than $\e$ and no leaf ever collapses, then for any leaf
$\ell$ in $\bd(H)$ there is a moment right before the length of the leaf drops,
and by the above at this moment the image of $\ell$ is a leaf $\ell'\in \bd(H)$
of length greater than $\e$. Thus, all leaves in $\bd(H)$ pass through a finite
collection of leaves and are therefore (pre)periodic; moreover, there are at
most finitely many periodic leaves in $\bd(H)$ as desired. The claim about
infinite gaps follows from a Theorem of Kiwi \cite{kiwi02} by which all
infinite gaps are (pre)periodic.
\end{proof}

\subsection{Hedgehogs}\label{hedge}

The contents of the first two paragraphs of this subsection are due to
Perez-Marco \cite{pere94a,pere97}. Consider an irrationally indifferent
periodic point $q$ of period $1$ and let $\Da$ be $q$ (in the Cremer case) or
the maximal open Siegel disk (in the Siegel case). Suppose that $U$ is a simply
connected neighborhood of $\ol\Da$ such that $\ol U$ contains no critical
point. The \emph{hedgehog}\index{hedgehog} $H(U)$ is defined as the component
containing $\Da$ of the set of all points for which the whole orbit stays in
$\ol{U}$ \cite{pere94a,pere97}; it has the property that $H(U)\cap
\bd(U)\ne\0$. If $\Da$ is a Siegel disk with a critical point on the boundary,
then there are no hedgehogs. In the rest of this subsection $H$ denotes a
hedgehog.

It is known that $\bd(H)\subset J_P$. A hedgehog contains no periodic points
other than $q$. Hence if an invariant non-separating continuum contains an
irrationally indifferent periodic point and another periodic point, it contains
a critical point. Also, $P|_H$ is \emph{recurrent}: there is a sequence $m_n\to
\infty$ with $P^{m_n}|_H$ converging uniformly to the identity on $H$.
Moreover, the map $P|_H$ is transitive, i.e. there is a dense $G_\da$-subset of
$H$ consisting of points with dense orbits in $H$.

Two hedgehogs intersect only if they are generated by the same $\Da$; in
this case their union is another hedgehog of the same $\Da$. The \emph{mother
hedgehog}\index{hedgehog!mother} $M_q$ \cite{chi06} is the union of $\ol{\Da}$
and the closure of the union of all hedgehogs containing $\Da$. Thus, $M_q$ is
\emph{always} non-empty -- if there are no true hedgehogs, $M_q=\ol{\Da}$ (this
occurs for a Siegel disk $\Da$ containing critical points in its boundary). In
the Cremer case, $\bd(M_q)=M_q$.

If the period of $q$ is greater than $1$, everything is analogous. Thus, for
each point $y$ of $Q=\orb q$ its mother hedgehog $M_y$ is defined and invariant
under the appropriate power of $P$. The union $M_Q=\cup_{y\in Q} M_y$ is called
the \emph{mother hedgehog} of $Q$; clearly, $P(M_Q)=M_Q$.

\subsection{Continuum theory preliminaries}\label{cont}

Here we introduce a few basic notions of Carath\'eodory's prime end theory (see
\cite{miln00,Pom2}) and state a continuum theory result from \cite{bfmot10}.
Let $X$ be an unshielded
continuum. A \emph{crosscut} of $X$ (or of $T(X)$) is the image $C\subset
\iU(X)$ of $(0, 1)$ under an embedding $\psi:[0, 1]\to \C$ with $\psi(0)\ne
\psi(1)\in X$ and $\psi((0,1))\subset \iU(X)$. Let $\sh(C)$ (the \emph{shadow
of $C$}) be the bounded component of $\iU(X)\sm C$.

As above, let $\Psi_X:\hc\sm\ol\disk\to \iU(X)$ be a conformal isomorphism with
$\Psi_X(\iy)=\iy$ and such that $\Psi'_X(z)$ has a positive real limit as
$z\to\infty$. To each angle $\alpha\in\uc$ we associate the \emph{(conformal)
external ray} $R_\alpha$ as the $\Psi_X$-image of the infinite radial segment
$\{ (1,\infty)e^{2\pi i\alpha}\}$. The \emph{principal set} (or \emph{limit
set}) of the ray $R_\alpha$ is the set $\pr(\alpha):=\ol{R_\alpha}\sm
R_\alpha$. If $\pr(\alpha)=\{z\}$ is a singleton, then we say that the ray
$R_\alpha$ \emph{lands} at $z$. If $X=K_P$ is the filled-in Julia set of a
polynomial $P$ of degree $d\ge 2$ and $K_P$ is connected, then
$P(R_\alpha)=R_{\sigma(\alpha)}$. In this case, every periodic ray lands at a
periodic point of $J_P$, and every repelling or parabolic periodic point in
$J_P$ is the landing point of a positive finite number of rays, all of them
with the same period \cite{hubbdoua85,miln00}.

For any $\al\in\uc$ there exist two sequences
$\be_1<\be_2<\dots<\dots<\ga_2<\ga_1$ of angles-arguments of landing rays with $\lim
\be_i=\lim\ga_i=\al$ such that the landing points of $R_{\al_i}$ and $R_{\ga_i}$ can be
joined by a crosscut $Q_i$ with $\dia(Q_i)\to 0$ \cite[Lemma~17.9]{miln00}.
The \emph{impression of the ray $R_\al$}
(or of the angle $\al$) is defined as the set $\imp(\al)=\bigcap
\ol{\mathrm{Sh}(Q_i)}$; it does not depend on the sequences $\be_i$ and
$\ga_i$. Alternatively, the impression $\imp(\al)$ is the set of all limit
points of sequences $z_i=\Psi_X(y_i)\in U^\iy(X)$ where
$y_i \in \C\sm\ol\disk$ are points with $y_i\to\al\in\uc$.

A point $z\in \bd (X)$ is \emph{accessible} if there exists an injective curve
$l\colon[0,1]\to\C$ with $l([0,1))\subset \iU(X)$ and $l(1)=z$. For any
injective curve $l\colon[0,1)\to\iU(X)$ with $l(t)\to X$ as $t\to 1$, one can
define the principal set $\pr(l)=\ol l\sm l$ as above.

Figure~\ref{fig:primpre} illustrates the above introduced notions. The
ray $R_\al$ lands at the point with coordinates $(0, 1)$, so
$\pr(\al)=\{(0, 1)\}$ and the point $(0, 1)$ is accessible. However it
is easy to see that the impression $\imp(\al)$ of $\al$ is the segment
connecting $(0, 0)$ and $(0, 1)$.

\begin{figure}[ht]
\centerline{\includegraphics{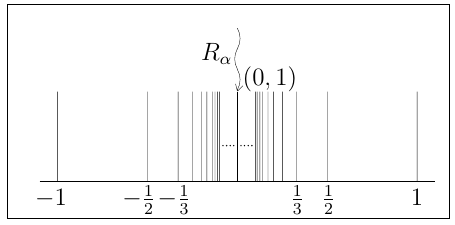}} \caption{The principal
set $\pr(\al)$ and the impression $\imp(\al)$ are not the same.}
\label{fig:primpre}
\end{figure}


\begin{thm}\cite[a short version of Theorem 4.2]{bfmot10}\label{degimp}
Let $X\subset J_P$ be a non-se\-pa\-rating invariant continuum. If all
fixed points in $X$ are repelling or parabolic and \emph{all rays landing at
them are fixed} then $X$ is a fixed point. In particular, if all periodic
points in $X$ are repelling or parabolic and the number of periodic points in
$X$, at which at least two external rays land, is finite, then $X$ is a point.
Also, if $X$ is non-degenerate then either $X$ contains a fixed CS-point, or
$X$ contains a repelling or parabolic fixed point at which non-fixed rays land.
\end{thm}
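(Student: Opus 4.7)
The plan is to pass to the induced dynamics on the circle of prime ends of $\iU(X)$ and run a topological index argument there. Let $\Psi=\Psi_X\colon\hc\sm\ol\disk\to\iU(X)$ be the Riemann map and, using that $X$ is non-separating and invariant, extend the conjugate of $P|_{\iU(X)}$ through $\Psi$ to an orientation-preserving covering $\tau\colon\uc\to\uc$ via Carath\'eodory's prime end theorem. Each fixed point $\al\in\uc$ of $\tau$ is a fixed prime end; when its principal set $\pr(\al)$ is a singleton, that point is a fixed point of $P$ in $X$ and $R_\al$ lands at it. Our hypothesis that every ray landing at a fixed point of $P$ in $X$ is itself fixed by $P$ means that every such ray contributes exactly one fixed point of $\tau$ whose principal set is a single point, and conversely every fixed point of $\tau$ with degenerate principal set is accounted for in this way.

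The main step is an index computation at each fixed point of $\tau$. Near a repelling or parabolic fixed point $z\in X$ of $P$, the map $P$ is repelling from outside $X$, so each fixed prime end with principal set $\{z\}$ is topologically repelling for $\tau$ and carries a specific, uniform sign of ``variation'' in the sense of Chapter~7 of \cite{bfmot10}. If $X$ is non-degenerate, a direct computation of the total variation of $\tau$ around $\uc$ shows this total cannot be accounted for by contributions of topologically repelling fixed points alone: $\tau$ must have at least one additional fixed structure, either (i) a non-trivial fixed gap $G$ of the associated Thurston prelamination of $X$ whose impression inside $X$ contains a fixed point of $P$ that is neither repelling nor parabolic, hence a Cremer or Siegel fixed point, i.e.\ a fixed CS-point, or (ii) a fixed prime end corresponding to a repelling or parabolic fixed point $z\in X$ whose landing ray $R_\al$ is \emph{not} fixed by $P$, i.e.\ a non-fixed ray landing at a fixed point in $X$.

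Alternatives (i) and (ii) of the previous paragraph give precisely the third (``Also\dots'') conclusion. The first conclusion is then its contrapositive: if every fixed point of $P$ in $X$ is repelling or parabolic and every ray landing at such a point is fixed, both alternatives are ruled out, so $X$ must be degenerate, i.e.\ a single fixed point. The second conclusion reduces to the first by replacing $P$ with $P^{\circ n}$ for $n$ large: the finitely many periodic points in $X$ at which $\ge 2$ rays land carry only finitely many landing rays, each of bounded period, so for some $n$ all are fixed by $\si_d^{\circ n}$; the first conclusion applied to $P^{\circ n}$ then forces $X$ to be a point. The main technical obstacle is the variation calculation for fixed prime ends whose principal sets are non-trivial subcontinua of $X$---this is where the implication ``non-trivial fixed gap $\Rightarrow$ CS-point in its impression'' must be proved, and it is precisely the substance of Chapter~7 of \cite{bfmot10}.
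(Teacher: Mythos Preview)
The paper does not contain a proof of this theorem: it is quoted verbatim as a tool from \cite{bfmot10} (``a short version of Theorem~4.2'' there), so there is no in-paper argument to compare your sketch against. Your outline is broadly in the spirit of that reference---an index/variation count forcing, in the non-degenerate case, either a CS-fixed point or a repelling/parabolic fixed point with a non-fixed landing ray---and your reduction of the second conclusion to the first via a suitable iterate $P^{\circ n}$ is correct (a $P^{\circ n}$-fixed point with a single landing ray automatically has that ray $P^{\circ n}$-fixed, and the finitely many bi-accessible periodic points contribute only finitely many rays of bounded period).

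That said, your construction of the circle map $\tau$ has a genuine gap. You write ``extend the conjugate of $P|_{\iU(X)}$ through $\Psi$ to an orientation-preserving covering $\tau\colon\uc\to\uc$'', but $P$ is \emph{not} a self-map of $\iU(X)$: since $P(X)=X$ we have $X\subsetneq P^{-1}(X)$ in general, so the other components of $P^{-1}(X)$ sit inside $\iU(X)$ and are mapped by $P$ into $X$, not into $\iU(X)$. Hence $\Psi^{-1}\circ P\circ\Psi$ is only defined on part of $\hc\sm\ol\disk$ and does not descend to a covering of $\uc$ in the way you describe. The machinery in \cite{bfmot10} is built precisely to handle this: the variation/index theory there is developed for maps of \emph{non-invariant} planar continua, using outchannels and Bell-type arguments rather than a globally defined prime-end circle map. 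Your sentence ``this is precisely the substance of Chapter~7 of \cite{bfmot10}'' is accurate, but it applies to more of your sketch than you acknowledge---the existence and properties of the induced boundary dynamics, not just the final implication about CS-points, require that machinery.
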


\subsection{Wandering gaps}\label{waga}
\hide{A \emph{non-(pre)critical} finite set $B\subset \uc$ is a finite set with
$\si$-images, all of the same cardinality.} Suppose that $A\subset \uc$ is a
finite set with $|A|>2$ such that (1) all sets $A, \si(A), \dots$ have pairwise
disjoint convex hulls, (2) $\si^n\colon A\to\si^n(A)$ is injective for all
$n\ge 1$, and (3) the sets $\ch(\si^n(A))$ satisfy gap invariance so that we
can define images of $\ch(A)$ under powers of $\si$ (see
Definition~\ref{geolaminv}(2)); then the set $\ch(A)$ is called a
\emph{wandering gap}\index{wandering gap} (here we talk about gaps \emph{in the
absence} of a lamination). Thus, in the definition we already assume that $A$
is \emph{non-(pre)critical}. A collection of finite gaps is
\emph{wandering}\index{wandering collection of gaps} if all \emph{all} images
of \emph{all} gaps have disjoint convex hulls. In particular, if $x$ is a
wandering non-(pre)critical branch point of a locally connected Julia set, then
the external angles of the rays that land at $x$ form a wandering gap.

By the No Wandering Triangle Theorem of Thurston \cite{thu85}, in the quadratic
case there are no wandering gaps; Thurston posed the problem of extending this
to the higher degree case and emphasized its importance. The theorem was
instrumental in the construction of a combinatorial model of the
\emph{Mandelbrot set} $\mathcal{M}$ \cite{thu85}. The next result is due to
Kiwi \cite{kiwi02}; it says that in an invariant lamination of degree $d$ a
\emph{wandering gap} consists of \emph{at most $d$ angles}. Then in
\cite{blolev02a} it was proven that for a non-empty wandering collection
$\B_\di$ of gaps $G_i$ we have $\sum_{\B_\di} (|G_i'|-2))+N'\le d-2$ where $N'$
is the number of cycles of infinite gaps in the lamination.

In \cite{blo05} the role of recurrent critical points in the dynamics of
wandering gaps was studied in the cubic case. In \cite{chi07} the results of
\cite{blo05} were generalized. We need a few definitions. Given a wandering gap
$B$, a \emph{limit leaf} of $B$ is a leaf which is a limit of a sequence of
convex hulls of images of $B$. Let $L^B_{\lim}$ be the family of such limit
leaves of $B$. Clearly, $L^B_{\lim}$ is a forward invariant geo-lamination.
Also, a chord $ab$ is called \emph{recurrent}\index{recurrent chord (leaf)} if
at least one of its endpoints is recurrent, and \emph{critical}\index{critical
chord (leaf)} if $\si(a)=\si(b)$.

\begin{thm}\cite{chi07}\label{doug}
Consider  a non-empty wandering collection of gaps $G_1, \dots, G_s$. Then the
following holds.

\begin{enumerate}

\item For each $G_i$ there exist $|G'_i|-1$ recurrent critical chords
    $t^i_j\in L^{G_i}_{\lim}, 1\le j\le |G'_i|-1$ with pairwise disjoint
    infinite orbits and the same limit set $\om_i$.

\item For each leaf $\ell\in L^{G_i}_{\lim}$ 
    we have $\ell\cap \om_i\ne\0$.

\item Let $k'$ be the maximal number of recurrent critical chords from
    $\bigcup_{i=1}^s L^{G_i}_{\lim}$ with pairwise disjoint orbits. Let $l$ be the
    number of their distinct $\om$-limit sets. Then $$\sum_{i=1}^s
    (|G'_i|-2)\le k'-l\le d-1-l\le d-2.$$

\end{enumerate}

\end{thm}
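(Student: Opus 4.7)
The plan is to analyze the asymptotic shape of the iterates $\si^n(G_i)$ in the spirit of Thurston's No Wandering Triangle Theorem and Kiwi \cite{kiwi02}. Since the gaps of the wandering collection have pairwise disjoint iterates inside $\diskbar$, the total area $\sum_{i,n}\mathrm{area}(\si^n(G_i))$ is bounded, forcing $\mathrm{area}(\si^n(G_i))\to 0$. A standard geometric observation then says that an iterate $\si^n(G_i)$ of small area has most of its sides short; but $\si$ expands short chords by a factor close to $d$, so a side can fail to grow only when its endpoints land close to a pair $a,b\in\uc$ with $\si(a)=\si(b)$, i.e., close to a critical chord. Iterating this and extracting Hausdorff limits will supply the critical leaves lying in $L^{G_i}_{\lim}$.

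First, for each $G_i$ I would refine the shape analysis to single out one \emph{long} side that asymptotically controls the shape of the gap together with $|G'_i|-1$ \emph{short} sides that repeatedly shrink along a chosen subsequence. After passing to a Hausdorff-convergent subsequence of iterates, each short side converges to a critical chord $t^i_j\in L^{G_i}_{\lim}$. These chords are recurrent because the sequence of iterates producing them returns arbitrarily close to the same limiting picture infinitely often. The orbits of the $t^i_j$'s are pairwise disjoint: tracking which vertex of $G_i$ is responsible for which $t^i_j$ and invoking the wandering hypothesis, a coincidence of orbits would force two distinct vertices of some $\si^n(G_i)$ to share an iterate, contradicting the injectivity of $\si^n$ on $G'_i$. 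The common $\om$-limit set $\om_i$ arises because the same sequence of iterates accumulates on a single post-critical closure regardless of which short side we follow. This yields part (1).

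For part (2), a general $\ell\in L^{G_i}_{\lim}$ is a Hausdorff limit of entire iterates $\si^{n_k}(G_i)$ that collapse onto the chord $\ell$; as many of their sides must shrink, their endpoints accumulate on the set trapping the short sides, which is exactly $\om_i$. Hence $\ell\cap\om_i\ne\0$. For part (3), the inequality $\sum(|G'_i|-2)\le k'-l$ reduces to combinatorial accounting: each $G_i$ supplies $|G'_i|-1$ disjoint-orbit chords with common $\om_i$, and when we combine the contributions from all $i$ and pass to a maximal pairwise-disjoint-orbit subfamily, each merging of two distinct $\om_i$'s forces us to discard exactly one chord. Hence $k'$ and $l$ drop in lockstep from the maximum possible values $\sum(|G'_i|-1)$ and $s$, giving $\sum(|G'_i|-1)-s\le k'-l$. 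The bound $k'\le d-1$ follows because pairwise non-crossing critical chords $t_j=a_jb_j$ satisfy $b_j-a_j\equiv k_j/d\pmod 1$ with $k_j\in\{1,\dots,d-1\}$, and at most $d-1$ such chords can coexist in $\diskbar$ without crossing; finally $l\ge 1$ since the wandering collection is non-empty.

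The hardest step will be part (1): producing \emph{precisely} $|G'_i|-1$ critical chords with pairwise disjoint orbits and a common $\om_i$. Kiwi's bound $|G'_i|\le d$ only constrains the size of a wandering gap, but identifying a distinguished long side, assigning each short side to a \emph{distinct} critical chord, and ruling out eventual collision of their forward orbits requires the delicate shape analysis developed in \cite{blo05,chi07}. Much of the work lies in excluding degenerate scenarios where several short sides would converge to the same critical chord, where the chosen ``long'' side would itself become critical in the limit, or where the forward orbits of distinct $t^i_j$'s would merge after many iterations.
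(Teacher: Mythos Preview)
The paper does not contain a proof of this theorem: it is stated with the citation \cite{chi07} and used as a black box (see Subsection~\ref{waga}); no argument is given beyond the motivational paragraph at the end of the Introduction. So there is nothing to compare your proposal against in this paper.

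That said, your outline follows the standard route of \cite{kiwi02,blolev02a,blo05,chi07}: the area--shrinking argument forces iterates of each $G_i$ to become thin, short sides must accumulate on critical chords, and recurrence is extracted from repeated returns of the whole picture. This is indeed the correct skeleton, and your own final paragraph acknowledges that the substantive content---singling out the long side, getting $|G'_i|-1$ \emph{distinct} critical chords with pairwise disjoint orbits, and showing they share a common $\om_i$---is precisely the ``delicate shape analysis developed in \cite{blo05,chi07}''. As written, then, your proposal is an honest summary of the strategy rather than a proof; the actual work you would need to reproduce is in those references. One technical point to flag: your argument that $k'\le d-1$ because ``at most $d-1$ non-crossing critical chords can coexist'' is not quite right as stated, since the chords in question need not be simultaneously non-crossing---what is bounded is the number of critical chords with pairwise disjoint \emph{orbits}, and that bound comes from the fact that there are only $d-1$ grand orbits of critical chords under $\si_d$.
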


\section{The tools, or disk to plane and back again}\label{cs-wan}

In Sections~\ref{cs-wan}, ~\ref{wan-theo} and ~\ref{maint}, unless explicitly
stated otherwise, \emph{we consider a polynomial $P$ of degree $d$ with
\textbf{connected} Julia set $J_P$}. 
We use the following terminology and notation. We call irrationally indifferent
periodic points \emph{CS-points}\index{CS-points} (i.e., \emph{Cremer} points
or \emph{Siegel} points). Also, let $\RR$ be the set of all repelling or
parabolic periodic bi-accessible points and their iterated preimages. Let
$Y\subset Z$ be two continua (not necessarily subsets of any Julia set). Define
$\val'_Z(Y)$\index{valence!$\val'_Z(\cdot)$} as the number of components of $Z\sm Y$, and call $Y$ a
\emph{cut-continuum of $Z$}\index{cut-continuum} if $\val'_Z(Y)>1$ (i.e., $Z\sm
Y$ is not connected).

Section~\ref{cs-wan} prepares tools for the rest of the paper. In
Subsection~\ref{wacotop} we show that wandering cut-continua in $J_P$ contain
the principal sets of finitely many rays. This creates cuts of the plane. In
Subsection~\ref{planlam} we consider these cuts, and cuts created by rays
landing at points in $\RR$. We associate to them convex hulls of sets of
arguments of rays with principal sets in a wandering cut-continua \emph{or} in
a  point of $\RR$; the boundary leaves of these convex hulls form a geometric
prelamination. Cuts of the plane allow us to define \emph{fibers}, i.e.
intersections of \emph{closed wedges} created by cuts. This generalizes the
notion of fibers as in \cite{sch98}: in the latter reference, fibers were
defined using pairs of dynamic rays that land at common points, and
intersecting subsets of the filled Julia set that are not separated by such ray
pairs. On the other hand, the parallel construction in the disk allows us to
define subsets of the disk corresponding to such fibers. This correspondence
plays an important role in what follows.

\subsection{Wandering continua and their rays}\label{wacotop}

For a continuum $Z\subset J_P$, let $A(Z)$  \index{$A(Z)$} be the set of all angles whose rays
have principal sets in $Z$. Let $\tai(Z)$ \index{$\tai(Z)$} be the union of $Z$ and all rays with
arguments in $A(Z)$ (thus, if there are no rays with principal sets in $Z$,
then $\tai(Z)=Z$). Clearly, in the case when $A(Z)\ne \0$ the set $\tai(Z)$ \index{$\tai'(Z)$} is an unbounded connected set which is
closed if $A(Z)$ is finite. Also, by $\tai'(Z)$ we denote the union of $Z$ and
long bounded segments of rays with arguments in $A(Z)$ (to get $\tai'(Z)$, on
each ray we choose a point and remove the unbounded segment of this ray to
infinity).
Note that $|A(Z)|=\val_{J_P}(Z)$.

\begin{lem}\label{wand-fin1}
Let $X$ be an unshielded continuum and $K\subset X$ be a cut-continuum of $X$
which does not separate the plane. If $\val'_X(K)\ge n>1$, then there are $n$ distinct
external rays to $X$ with principal sets in $K$. If $\val_X(K)<\iy$, then
$\val'_X(K)=\val_X(K)=m$. If
$A(K)=\{\al_1<\al_2<\dots<\al_m<\al_{m+1}=\al_1\}$ in the sense of the cyclic order, then components $C_j$ of
$X\sm K$ can be numbered so that $C_j$ corresponds to $I_j=(\al_j, \al_{j+1})$
in the sense that for any $\be\in I_j$ we have $\imp(\be)\subset
\ol{C_j}\subset C_j\cup K$, and $\pr(\be)\cap C_j\ne \0$.
\end{lem}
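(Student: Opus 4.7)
The plan is to prove the three assertions in sequence. For the first, I would show that the $n$ components of $X\sm K$ force $n$ distinct external rays with principal sets in $K$. The leverage is that $K$ is non-separating, so $\hc\sm K$ is a topological disk admitting a Riemann map; in this disk model the components $C_i$ of $X\sm K$ are pairwise disjoint connected subsets, and for each pair $C_p\ne C_q$ one can pull back an arc in $\hc\sm K$ to produce a crosscut $Q$ of $X$ in $\iU(X)$ with both endpoints in $K$ that separates $C_p$ from $C_q$. Taking limits of shrinking sequences of such crosscuts surrounding each component yields rays with principal sets in $K$, one on each ``side'' of every $C_i$ in the cyclic prime-end order; this produces $n$ distinct such rays.

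For the second assertion, assume $A(K)=\{\al_1<\cdots<\al_m\}$ cyclically, and set $\tai(K)=K\cup\bigcup_k R_{\al_k}$. Because $\hc\sm K$ is a topological disk and the rays are pairwise disjoint proper arcs from $K$ to $\infty$ meeting only at $\infty$ in $\hc$, the complement $\C\sm\tai(K)$ has exactly $m$ connected components $W_1,\dots,W_m$. Each $C_i$ is connected and disjoint from $\tai(K)$, hence contained in a unique wedge $W_{j(i)}$. Combined with the first assertion (giving $\val_X(K)\ge\val'_X(K)$) and the observation that each $W_j$ must contain at least one component---for any $\be\in I_j$ the set $\pr(\be)$ is connected, not contained in $K$, and lies in $\ol{W_j}\cap X$, so must meet some $C_i\subset W_j$---we obtain $\val_X(K)=\val'_X(K)=m$ together with a bijection $j\leftrightarrow C_j$.

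For the third assertion, number components so that $C_j\subset W_j$. The open sector $\mathrm{Sect}_j=\Psi_X(\{re^{i\ta}:r>1,\ \ta\in I_j\})$ equals $W_j\cap\iU(X)$ and contains $R_\be$ for every $\be\in I_j$. Thus $\pr(\be)\subset\ol{\mathrm{Sect}_j}\cap X\subset \ol{W_j}\cap X=C_j\cup(\bd W_j\cap K)$, and since $\pr(\be)$ is connected but not contained in $K$ it must meet $C_j$; also $\ol{C_j}\subset C_j\cup K$ trivially. For the stronger claim $\imp(\be)\subset\ol{C_j}$, I would write $\imp(\be)=\bigcap_n\ol{\sh(Q_n)}\cap X$ for crosscuts $Q_n$ of $X$ with $\Psi_X^{-1}(Q_n)$ converging to $e^{2\pi i\be}$; each $\sh(Q_n)\subset\mathrm{Sect}_j$, and the $X$-arc on the boundary of each shadow is a continuum in $C_j\cup K$ meeting $C_j$, hence contained in $\ol{C_j}$. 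Passing to the intersection yields $\imp(\be)\subset\ol{C_j}$.

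The main obstacle is the first assertion---constructing enough crosscuts of $X$ with endpoints in $K$ and extracting rays from them as limits---which requires delicate continuum-theoretic work using the non-separating property of $K$ and the density of accessible points in $X$. A secondary difficulty is the impression inclusion $\imp(\be)\subset\ol{C_j}$ in part (iii): one must rule out that points of $\imp(\be)\cap K$ lie outside $\ol{C_j}$, and this is handled by the shadow/crosscut analysis above, exploiting that the $X$-boundary of each small shadow is a continuum inside $C_j\cup K$ that meets $C_j$.
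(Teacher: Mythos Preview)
Your treatment of the second and third assertions is essentially the same as the paper's: both use the wedges cut out by $\tai(K)$, show each wedge contains exactly one component $C_j$, and handle $\imp(\be)\subset\ol{C_j}$ via a fundamental chain of crosscuts whose shadows lie in the $j$-th sector. That part is fine.

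The first assertion is where your route diverges from the paper, and where your proposal is weakest. The paper does \emph{not} try to build crosscuts of $X$ with endpoints in $K$ and then ``take limits'' to manufacture rays. Instead it collapses $K$ to a point $k$ by a monotone map $\psi$ (Moore's theorem), so that $k$ becomes a cutpoint of $\psi(X)$ of valence $\ge n$; then it invokes McMullen's Theorem~6.6 (from \cite{mcm94}) to get at least $n$ external rays of $\psi(X)$ landing at $k$; their $\psi$-preimages are $n$ pairwise non-homotopic curves in $\iU(X)$ with principal sets in $K$; finally Lindel\"of's theorem converts these into $n$ distinct conformal external rays of $X$ with principal sets in $K$. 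This is short and relies only on cited results.

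Your plan for this step---produce crosscuts of $X$ in $\iU(X)$ with both endpoints in $K$ separating $C_p$ from $C_q$, then pass to a limit to obtain a ray---has a genuine gap as written. A crosscut of $X$ lifts under $\Psi_X^{-1}$ to an arc in $\C\sm\ol\disk$ with two radial limits on $\uc$; a shrinking family of such crosscuts may well have both radial limits converging to the \emph{same} angle, and there is no general mechanism by which ``limits of crosscuts'' produce an external ray, let alone one whose principal set lies in $K$. What you would really need is a prime-end argument showing that between any two components $C_p$, $C_q$ (in a suitable cyclic order) there is an angle $\al$ with $\pr(\al)\subset K$; that is exactly the content of McMullen's theorem after the Moore collapse, so your approach, if completed, would amount to reproving that theorem. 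This is not wrong, but it is substantially harder than citing it, and your sketch does not yet supply the missing argument.
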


\begin{proof}
First \emph{we show that if $\val'_X(K)\ge n$ then there are at least $n$
external rays with principal sets in $K$}. Collapse $K$ to a point and denote
the corresponding collapsing map $\psi$. By the Moore Theorem \cite{m62}, the
resulting topological space is still the plane on which $k=\psi(K)$ is a
cutpoint of $\psi(X)$. By a nice result of McMullen (see Theorem 6.6 of
\cite{mcm94}), if there are $n>1$ components of $\psi(X)\sm k$, then there are at
least $n$ external rays to $\psi(X)$ landing at $k$ (
if $n=1$ then
there might exist \emph{no} rays with principal sets in $K$). Their
$\psi$-preimages are curves non-homotopic outside $X$ with principal sets in
$K$. By Lindel\"of's theorem (see, e.g., \cite{Pom}) this implies that there
exist at least $n$ external rays with principal sets in $K$.

\emph{Let us now prove that if there are finitely many rays with principal sets
in $K$ then their number equals $\val'_X(K)$}. Indeed,  in this situation by
the previous paragraph $\val'_X(K)=m<\iy$, and there are \emph{at least} $m$
external rays with principal sets in $K$. Let us show that there are
\emph{exactly} $m$ such rays. Suppose otherwise. Then there must exist two
external rays $R_1$ and $R_2$ with principal sets in $K$ such that one of the
wedges formed by $R_1, R_2$ and $K$ contains no points of $X$ while the other
wedge contains $X\sm K$. This implies that \emph{all} external rays contained
in the first wedge will have their principal sets in $K$. Since there are
infinitely many of them, we get a contradiction with the assumption.

Let us introduce the notation which we need to complete the proof. Namely, let
the set of arguments of the rays with principal sets in $K$ be
$A(K)=\{\al_1<\al_2<\dots<\al_m<\al_{m+1}=\al_1\}$ and set $I_j=(\al_j,
\al_{j+1})$.

\emph{Now we show that there is a unique component $C=C_j$ of $X\sm K$ such
that for any angle $\be\in I_j$ we have $\pr(\be)\cap C\ne \0$ and
$\imp(\be)\subset \ol{C_j}\subset C_j\cup K$}. Denote by $E_j$ the open wedge
formed by the rays $R_{\al_j}, R_{\al_{j+1}}$ and the continuum $K$, such that
$E_j$ contains rays of angles from $I_j$. Then there is at least one component
of $X\sm K$ in $E_j$ (otherwise, as in the second paragraph of the proof,
infinitely many angles from $I_j$ will have principal sets in $K$, a
contradiction). Since $\val'(K)=m$, there is a \emph{unique} component $C_j$ of
$X\sm K$ in $E_j$. Since none of the angles $\be\in I_j$ can have the principal
set inside $K$, $\pr(\be)\cap C_j\ne \0$. To see that $\imp(\be)\subset
\ol{C_j}\subset C_j\cup K$, choose two sequences of angles $\ta_i<\be<\ga_i$
such that the rays $R_{\ta_i}, R_{\ga_i}$ land and connect their landing points
$x_i, y_i\in C_j$ with crosscuts $T_i$ forming a fundamental chain of
crosscuts. It follows that $\imp(\be)= \cap \ol{\mathrm{Sh}(T_i)}\cap X\subset
\ol{C_j}\subset C_j\cup K$.
\end{proof}

Observe that Theorem 6.6 of \cite{mcm94} cannot be extended to show that the
valence of a cutpoint $x$ always equals the cardinality of the number of rays
landing. E.g., a cone over a Cantor set has a vertex of uncountable valence at
which only countably many external rays land. Also, easy examples show that
$\val'_X(K)$ can be finite while $|A(K)|$ is uncountable (for example consider
an arc $I$ containing a non-degenerate subarc $K$ not containing an endpoint of
$I$).

\begin{lem}\label{inters}
Suppose that $X\subset J_P, Y\subset J_P$ are disjoint continua and there are
closed sets $Q\subset A(X), T\subset A(Y)$. Then $Q$ and $T$ are unlinked.
Thus, if $A(X)$ and $A(Y)$ are finite, then they are unlinked.
\end{lem}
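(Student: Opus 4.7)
The plan is to argue by contradiction. Since it suffices to consider any two angles from $Q$ and two from $T$, assume $\alpha_1, \alpha_2 \in A(X)$ and $\beta_1, \beta_2 \in A(Y)$ are cyclically interleaved on $\uc$ in the order $\alpha_1, \beta_1, \alpha_2, \beta_2$. First I would form the compact subsets of $\hc$
\[
\hat X := X \cup \ol R_{\alpha_1} \cup \ol R_{\alpha_2} \cup \{\infty\}, \qquad \hat Y := Y \cup \ol R_{\beta_1} \cup \ol R_{\beta_2} \cup \{\infty\},
\]
each connected because the closure of each external ray meets its target continuum along the principal set. These two sets meet only at $\infty$: $X\cap Y = \emptyset$ by hypothesis, rays with distinct arguments are pairwise disjoint, and every external ray lies in $\iU(K_P)$ and so avoids $J_P \supset X\cup Y$.

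The crux is then a planar separation argument. Pick $R$ large enough that $X\cup Y \subset \{|z| < R\}$ and the four rays exit the circle $\{|z|=R\}$ at distinct points in cyclic order $\alpha_1, \beta_1, \alpha_2, \beta_2$; this is possible because $\Psi'$ tends to a positive real limit at $\infty$, so each ray is asymptotic to a Euclidean ray in the direction of its argument. Writing $D_R = \{|z| \le R\}$, the set
\[
\hat X_R := X \cup (\ol R_{\alpha_1}\cap D_R) \cup (\ol R_{\alpha_2}\cap D_R)
\]
is a continuum in $D_R$ meeting $\partial D_R$ in exactly the two $\alpha$-exit points. I would then invoke a continuum-theoretic separation result --- a continuum in a closed disk meeting the boundary in exactly two points separates the disk into two components, one touching each open arc of $\partial D_R\sm\hat X_R$ --- to conclude that the $\beta_1$- and $\beta_2$-exit points lie in different components of $D_R\sm\hat X_R$. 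But the connected set
\[
\hat Y_R := Y \cup (\ol R_{\beta_1}\cap D_R) \cup (\ol R_{\beta_2}\cap D_R),
\]
which is disjoint from $\hat X_R$ and contained in $D_R$, contains both $\beta$-exit points, a contradiction.

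The principal obstacle is this planar separation step, because $X$ need not be arcwise or locally connected, so the classical Jordan arc theorem does not apply directly. My plan is to close $\hat X_R$ by adjoining one arc $\gamma$ of $\partial D_R\sm\hat X_R$, obtaining a continuum $\hat X_R\cup\gamma \subset \hc$ which carries a nontrivial topological loop (from a point of $X$ out along $\ol R_{\alpha_1}$ to $\partial D_R$, across $\gamma$, and back along $\ol R_{\alpha_2}$ to $X$); Alexander duality then forces $\hc\sm(\hat X_R\cup\gamma)$ to have at least two components, yielding the desired separation. A preliminary sanity check is also needed: $Y$ cannot sit in a bounded complementary component $W$ of $X$, because any such $W$ satisfies $W\subset K_P$ (since $\iU(K_P)\subset\iU(X)$, as $X \subset K_P$), so a ray $R_{\beta_j}$ lying in $\iU(K_P)$ cannot enter $W$, yet would have to in order to accumulate on $Y\subset W$ --- ruling out this pathological configuration and keeping the argument in the ``expected'' planar situation.
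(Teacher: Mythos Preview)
Your approach is exactly the paper's --- reduce to two linked ``chords'' $\ol R_{\alpha_1}\cup X\cup\ol R_{\alpha_2}$ and $\ol R_{\beta_1}\cup Y\cup\ol R_{\beta_2}$ and argue they must meet --- except that the paper compresses the entire separation argument into the single phrase ``for geometric reasons this implies that $X$ and $Y$ intersect, a contradiction.'' You are supplying the details the paper leaves to the reader.

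One point needs tightening. Your Alexander duality step rests on a ``nontrivial topological loop'' in $\hat X_R\cup\gamma$ running from a point of $X$ along $\ol R_{\alpha_1}$, across $\gamma$, and back along $\ol R_{\alpha_2}$ to $X$; but $X$ need not be path-connected (and the principal sets $\pr(\alpha_i)$ need not be single points), so this path need not close up, and you do not get a singular $1$-cycle this way. The desired conclusion that $\hc\sm(\hat X_R\cup\gamma)$ is disconnected is nonetheless correct: apply Mayer--Vietoris in \v Cech cohomology to $A=\hat X_R$ and $B=\gamma$, each connected, with $A\cap B=\{a_1,a_2\}$; the cokernel of $\check H^0(A)\oplus\check H^0(B)\to\check H^0(A\cap B)$ is $\Z$, so $\check H^1(A\cup B)\ne 0$, and Alexander duality gives the separation. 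Alternatively, you can avoid duality entirely by exploiting the specific structure: since each $a_i$ is the outer tip of its ray segment, $\hat X_R\sm\{a_i\}$ remains connected; if an arc $\eta\subset D_R\sm\hat X_R$ with interior in $\mathrm{int}(D_R)$ joined $b_1$ to $b_2$, then for each $i$ the Jordan curve $J_i=\eta\cup(\text{arc of }\partial D_R\text{ through }a_i)$ meets $\hat X_R$ only at $a_i$, so the connected set $\hat X_R\sm\{a_i\}\ni a_{3-i}$ lies in the unbounded component of $\C\sm J_i$; hence $\hat X_R\sm\{a_1,a_2\}$ misses both bounded Jordan regions, forcing $X\subset\hat X_R\sm\{a_1,a_2\}\subset\partial D_R$, which contradicts $X\subset J_P\subset\mathrm{int}(D_R)$.
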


\begin{proof}
Clearly, $Q\cap T=\0$. Hence if $Q, T$ are not unlinked, there must exist
angles $\al, \be\in Q$ and $\al', \be'\in T$ which are pairwise distinct
and such that the chord $\al\be$ intersects the chord $\al'\be'$. For geometric
reasons this implies that $X$ and $Y$ intersect, a contradiction.
\end{proof}

Let us now go back to dynamics. If $Z\subset J_P$ is a point of $\RR$, then, by
\cite{hubbdoua85, el89}, $|A(Z)|=\val(Z)$ is finite. We show that wandering
cut-continua are, as far as providing a tool for separating the plane and the
Julia set, analogous to points of $\RR$. So, assume that $W$ is a wandering
cut-continuum and study its dynamics.

\begin{lem}\label{nonsep}
If $P$ is a polynomial with arbitrary (perhaps, not connected) Julia set and
$W\subset J_P$ is a wandering continuum, then $W$ does not separate the plane.
\end{lem}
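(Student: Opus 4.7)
The plan is to argue by contradiction, using Sullivan's Non-Wandering Domain Theorem. Assume $W$ separates the plane and let $U$ be a bounded complementary component of $\C\sm W$. I will identify $U$ with a bounded Fatou component $F$ of $P$; once this is done, the inclusion $\bd F\subset W$ propagates under $P$ to $\bd P^n(F)\subset P^n(W)$ for every $n$, and periodicity of some $P^N(F)$ forces two distinct forward images of $W$ to meet, contradicting the hypothesis that $W$ wanders.

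To identify $U$ with a Fatou component, first note that for a polynomial the basin of infinity $\iU(K_P)=\C\sm K_P$ is connected (being, since $P^{-1}(\infty)=\infty$, the immediate and hence the full basin of the superattracting fixed point at $\infty$). It is connected, unbounded, and disjoint from $W\subset J_P\subset K_P$, so it lies in the unbounded component of $\C\sm W$; hence $U\cap \iU(K_P)=\emptyset$ and $U\subset K_P$. Openness of $U$ puts it inside $\Int(K_P)$, and therefore inside a single bounded Fatou component $F$. Conversely, $F$ is open, connected, and disjoint from $J_P\supset W$, so $F$ lies in one component of $\C\sm W$; since $F$ meets $U$, we conclude $F\subset U$. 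Thus $U=F$ and $\bd F\subset W$. (If $\Int(K_P)=\emptyset$ there is no bounded Fatou component at all, but then $U\subset K_P$ already contradicts $U$ being nonempty and open, and the lemma is immediate.)

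For the final step, $P(\ol F)=\ol{P(F)}$ since $\ol F$ is compact and $P$ continuous, whence $\bd P(F)\subset P(\bd F)$, and inductively $\bd P^n(F)\subset P^n(\bd F)\subset P^n(W)$. Each $P^n(F)$ is again a bounded Fatou component (as $F\subset K_P$ and $P(K_P)=K_P$), hence a nonempty proper open subset of $\C$ with nonempty boundary. Sullivan's Non-Wandering Domain Theorem yields integers $N\ge 0$ and $K\ge 1$ with $P^N(F)=P^{N+K}(F)$, so their common nonempty boundary is contained in $P^N(W)\cap P^{N+K}(W)$; but the wandering hypothesis on $W$ forces $P^N(W)\cap P^{N+K}(W)=\emptyset$, a contradiction. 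The only substantive tool is Sullivan's theorem; the rest is topology-of-the-plane bookkeeping, with no step presenting a serious obstacle beyond the mild degenerate case $\Int(K_P)=\emptyset$ already handled above.
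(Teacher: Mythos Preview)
Your proof is correct and follows essentially the same approach as the paper's: both argue that a bounded complementary component of $W$ must be a Fatou domain, invoke Sullivan's non-wandering theorem to make it (pre)periodic, and derive a contradiction with $W$ wandering. The paper compresses this into a single sentence (``the set $T(W)$ contains a Fatou domain which must be (pre)periodic, contradicting the fact that $W$ is wandering''), while you have carefully spelled out the identification $U=F$, the boundary inclusion $\partial P^n(F)\subset P^n(W)$, and the degenerate case $\Int(K_P)=\emptyset$.
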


\begin{proof}
If $W$ is separating, the set $T(W)$ contains a Fatou domain which must be
(pre)periodic, contradicting the fact that $W$ is wandering.
\end{proof}

Let us now define the grand orbit of a wandering continuum $W$. Take a forward
image $W'$ of $W$ so that $P^n(W'), n\ge 0,$ contain no critical points. The
pullbacks (i.e. components of $P^{-m}(P^k(W'))$) of sets from the forward orbit
of $W'$ form the \emph{grand orbit $\G(W)$}\index{grand orbit of a continuum}
of $W$. The construction is necessary because of the following. Imagine that a
forward image $P^m(W)$ of $W$ contains a critical point $c$, but is smaller
than the one-step pullback of $P^{n+1}(W)$ containing $P^n(W)$ (i.e. $P^n(W)$
is not ``symmetric'' with respect to the naturally defined ``symmetry'' around
$c$). Then there is an ambiguity in defining the element of the grand orbit of
$W$ containing $P^n(W)$. Our definition allows us to avoid this ambiguity and
is consistent because it does not depend on the choice of $W'$ (as long as it
satisfies the conditions above).

\begin{lem}\label{l-nonpcr} Suppose that $W\subset J_P$ is a cut-continuum from
the grand orbit of a wandering continuum. Then the map $P^n|_{\tai(W)}$ is
not one-to-one if and only if $W$ contains a critical point of $P^n$ (in
this case there are two rays in $\tai(W)$ mapped to one ray).
\end{lem}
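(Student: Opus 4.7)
By Corollary~\ref{wand-fin}, the valence $m = \val_{J_P}(W) = |A(W)|$ is finite, so writing $A(W) = \{\alpha_1, \dots, \alpha_m\}$ I have $\tai(W) = W \cup \bigcup_{j=1}^m R_{\alpha_j}$, where the $m$ rays are pairwise disjoint and disjoint from $W \subset J_P$ (external rays lie in $\C \setminus K_P$). Because $P^n(R_{\alpha_j}) = R_{\sigma^n(\alpha_j)}$, the restriction of $P^n$ to the ray portion of $\tai(W)$ is one-to-one iff $\sigma^n$ is one-to-one on $A(W)$. Thus the problem reduces to relating critical points of $P^n$ in $W$ to non-injectivity of $\sigma^n|_{A(W)}$ and/or of $P^n|_W$.

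Set $W_* = P^n(W)$. Because $W$ lies in the grand orbit of a wandering continuum, $W$ is a full preimage component of $W_*$ under $P^n$; hence $P^n|_W \colon W \to W_*$ is a branched cover of some total degree $k \ge 1$, and $k \ge 2$ iff $W$ contains a critical point of $P^n$ (with $k-1$ equal to the total multiplicity of critical points of $P^n$ in $W$). The key step is to derive from $k \ge 2$ the existence of two distinct rays in $\tai(W)$ with the same $P^n$-image. For this I would work locally near a critical point $c \in W$ of local multiplicity $\ell \ge 2$: $P^n$ is an $\ell$-to-one branched cover of a small neighborhood of $c$ onto a neighborhood of $P^n(c)$. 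Rays of $W_*$ with arguments in $A(W_*)$ whose principal sets approach $P^n(c)$ sufficiently closely lift locally to $\ell$ rays landing at or arbitrarily close to $c$; since $W$ is the connected preimage component of $W_*$ containing $c$ and $A(W)$ is finite, these local preimage rays belong to $\tai(W)$ and produce $\ell \ge 2$ distinct angles in $A(W)$ identified by $\sigma^n$.

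Conversely, if $P^n|_{\tai(W)}$ is not one-to-one, then either two rays collapse -- in which case the corresponding two angles in $A(W)$ are $\sigma^n$-identified, so the branched cover $P^n|_W$ has degree $k \ge 2$, hence a critical point of $P^n$ sits in $W$ -- or else $P^n|_W$ itself fails to be injective, which again forces $k \ge 2$ and a critical point of $P^n$ in $W$. The main obstacle is the local lifting argument in the preceding paragraph: checking that the local preimage rays near $c$ really have principal sets inside $W$ (rather than in a different preimage component of $W_*$) uses that $W$ is a connected preimage component and that by Lemma~\ref{nonsep} $W$ does not separate the plane, so that a small simply-connected neighborhood of $c$ meeting $W$ meets no other preimage component of $W_*$; combined with the finiteness of $A(W)$, this will yield the required pair of distinct rays in $\tai(W)$ mapped to a single ray.
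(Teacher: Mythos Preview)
Your overall strategy differs from the paper's. The paper handles ``not one-to-one $\Rightarrow$ critical point in $W$'' via Heath's theorem \cite{hea96}: since $P^n(\tai'(W))$ is a non-separating (Lemma~\ref{nonsep}) planar continuum with empty interior, if $P^n|_{\tai'(W)}$ were locally one-to-one it would be a homeomorphism; so non-injectivity produces a critical point of $P^n$ in $\tai'(W)$, and connectedness of $J_P$ forces it into $W$. For the converse the paper collapses $W$ and $P^n(W)$ to points (Moore's theorem), obtains an induced map of local degree $k>1$ near the collapsed point, and pulls back a single ray to $k$ rays in $\tai(W)$ with a common image. Your branched-cover framework on neighborhoods is a legitimate alternative route, but the argument as written has a real gap.

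The problem is your local lifting step for ``critical point $\Rightarrow$ two rays collapse''. You want rays in $\tai(W_*)$ whose principal sets lie close to $P^n(c)$ and then lift them inside a small neighborhood of $c$. But $A(W_*)$ is a fixed finite set: nothing forces any of those principal sets to lie near $P^n(c)$ (e.g.\ $W_*$ could be a long arc with $P^n(c)$ at one end and all principal sets at the other), so there may be nothing to lift locally. The repair within your own framework is to argue globally rather than near $c$: choose simply connected neighborhoods $U\supset W$, $V\supset W_*$ so small that $P^n\colon U\to V$ is a proper degree-$k$ branched cover with all branch points in $W$; take any $\beta\in A(W_*)$ (nonempty since $\si^n(A(W))\subset A(W_*)$ and $W$ is a cut-continuum); the terminal segment $R_\beta\cap V$ pulls back to $k$ disjoint arcs in $U$, each lying on a distinct smooth external ray. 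Since $U$ is a component of $(P^n)^{-1}(V)$ and $R_\beta$, once in $V$, never leaves it, each such preimage ray stays in $U$ and hence has principal set in $J_P\cap\ol U=W$. This yields $k\ge 2$ rays in $\tai(W)$ all mapping to $R_\beta$. (Separately: your appeal to Corollary~\ref{wand-fin} is a forward reference in the paper's logical order. The finiteness of $A(W)$ is proved there without using Lemma~\ref{l-nonpcr}, so there is no circularity, but you should either extract that piece of the argument or note that your proof does not actually require finiteness.)
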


\begin{proof}
By Lemma~\ref{wand-fin1} the set $\tai(W)$ includes some rays and is,
therefore, non-degenerate. Suppose that $P^n|_{\tai(W)}$ is not one-to-one.
Note that  $P^{n}(\tai(W))=\tai(P^{n}(W))$. By Lemma~\ref{nonsep},
$P^{n}(\tai'(W))$ is a  non-degenerate continuum which does not separate the plane, and has no
interior in the plane.  Then by \cite{hea96} there is a critical point $c$ of
$P^n$ in $\tai(W)$. Since $J_P$ is connected this implies that in fact $c\in
W$.

Now, suppose that there is a critical point $c$ of $P^n$ in $W$. Collapse $W$
and $P^n(W)$ by a map $\psi$ of the plane to points $a$ and $b$. Consider the
induced map $g$ from a neighborhood of $a$ to a neighborhood of $b$. Since
$c\in W$ is a critical point of $P^n$, the map $g$ is $k$-to-$1$ with $k>1$.
Take a ray $R$ from $\tai(W)$, map it forward by $P^n$, and then take all rays
which are preimages (pullbacks) of $P^n(R)$. Then $\psi(P^n(R))=g(\psi(R))$ has
$k$ preimage-rays which land at $a$.  Hence there are $k$ rays with principal
sets in $W$ and the $P^n$-image of these $k$ rays is a single ray.
\end{proof}

Lemma~\ref{l-nonpcr} allows us to introduce the following notion.

\begin{dfn}\label{d-nonpcr}
A wandering continuum $K\subset J_P$ is said to be
\emph{non-(pre)critical}\index{non-(pre)critical continuum} if $\tai(K)$ has
the following property: for every $n$ the map $P^n|_{\tai(K)}$ is one-to-one.
By Lemma~\ref{l-nonpcr}, $K$ is non-(pre)critical if and only if $\tai(K)$
contains no (pre)critical points.
\end{dfn}

By Lemma~\ref{l-nonpcr}, $\eval_{J_P}(W)$ for a wandering continuum $W$ is
well-defined and equals $\val_{J_P}(P^N(W))$ where $N$ is big enough to
guarantee that $P^N(W)$ is non-(pre)critical. Also, the claim as in
Lemma~\ref{l-nonpcr} holds for disconnected Julia sets too, and so literally
the same definition as Definition~\ref{d-nonpcr} can be given in that case.
However to prove Lemma~\ref{l-nonpcr} in the disconnected case we need to study
in detail the family of external rays in that case, thus we postpone it until
Section~\ref{disc1} (see Lemma~\ref{l-nonpcr-1}).

\begin{cor}\label{wand-fin}
Let $W\subset J_P$ be a wandering cut-continuum. Then $1<m=\val_{J_P}(W)\le
2^d$, and there are exactly $m$ positively ordered angles
$A(W)=\{\al_1<\al_2<\dots<\al_m<\al_{m+1}=\al_1\}$ with principal sets in $W$.
Also, if $W$ is non-(pre)critical, then $m\le d$, $\ch(A(W))$ is wandering
non-(pre)critical, and $|\si^k(A(W))|=m$ for any $k$.

In particular, if $Q$ is a wandering cut-continuum or a point of
$\RR$, then there are finitely many rays with principal sets in $Q$ and $\val_{J_P}(Q)=\val'_{J_P}(Q)$.
\end{cor}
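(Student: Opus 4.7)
The plan is a two-step reduction: first reduce the problem to the non-(pre)critical case by forward iteration, then apply Kiwi's bound on wandering gaps (stated in Subsection~\ref{waga}); finally, promote the bound on cuts to a bound on rays via Lemma~\ref{wand-fin1}. Since $W$ is wandering, its forward orbit consists of pairwise disjoint continua, and since $P$ has only $d-1$ critical points in $\C$ counted with multiplicity, only finitely many iterates $P^n(W)$ can meet the critical set. Choose $N$ so that $P^n(W)$ contains no critical points for $n\ge N$; then $W_0=P^N(W)$ is non-(pre)critical in the sense of Definition~\ref{d-nonpcr}.

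For such $W_0$, Lemma~\ref{l-nonpcr} together with Definition~\ref{d-nonpcr} tells us that $P^k|_{\tai(W_0)}$ is one-to-one for every $k$, hence $\si_d^k$ is injective on $A(W_0)$. By Lemma~\ref{inters} the sets $A(P^n(W_0))$, $n\ge 0$, are pairwise unlinked subsets of $\uc$. Therefore any finite subset $B\subseteq A(W_0)$ with $|B|\ge 3$ satisfies the three conditions defining a wandering gap in Subsection~\ref{waga}, and Kiwi's wandering-gap bound \cite{kiwi02} gives $|B|\le d$. Since $B$ is arbitrary, $|A(W_0)|\le d$, injectivity yields $|\si^k(A(W_0))|=|A(W_0)|$ for all $k$, and $\ch(A(W_0))$ is itself a wandering non-(pre)critical gap; this establishes the second (non-(pre)critical) part of the statement.

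For general $W$, one must bound $|A(W)|$ in terms of $|A(W_0)|$. By (the proof of) Lemma~\ref{l-nonpcr}, each critical point of multiplicity $k$ contained in an iterate $P^n(W)$ contributes a local factor of at most $k+1$ to the number of rays in $\tai(P^n(W))$ that get identified under $P$, so $|A(P^n(W))|\le(1+\sum_{c\in P^n(W)}k_{n,c})\,|A(P^{n+1}(W))|$. Using $\sum_n\sum_c k_{n,c}\le d-1$ and $|A(W_0)|\le d$, the accumulated multiplier is at most $2^{d-1}$ (largest when spread over one critical point per iterate), so $|A(W)|\le 2^d$. Since $W$ is a cut-continuum we have $\val'_{J_P}(W)\ge 2$, so now Lemma~\ref{wand-fin1} applies and gives $\val'_{J_P}(W)=\val_{J_P}(W)=m>1$ together with the cyclic ordering $\al_1<\dots<\al_m$ of $A(W)$. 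Finally, for the ``in particular'' clause, the case of a wandering cut-continuum is done; if $Q$ is a point of $\RR$, then $|A(Q)|$ is finite by the classical results of Douady-Hubbard \cite{hubbdoua85} and Eremenko-Levin \cite{el89}, and Lemma~\ref{wand-fin1} again gives $\val_{J_P}(Q)=\val'_{J_P}(Q)$. The main obstacle is the explicit bound $2^d$ in the general case: the clean idea from Kiwi only applies once one has escaped all critical iterates, so one must carefully account multiplicatively for the finitely many critical folds in the initial segment of the orbit, using the global critical-mass bound $d-1$ to contain the product.
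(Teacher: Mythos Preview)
Your treatment of the non-(pre)critical case and of the ``in particular'' clause is correct and matches the paper. The difference is in how you obtain the bound $2^d$ for a general wandering cut-continuum.

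Your pullback arithmetic does not give $2^d$. Granting the multiplicative estimate $|A(P^n(W))|\le(1+k_n)\,|A(P^{n+1}(W))|$ with $\sum_n k_n\le d-1$, the product $\prod_n(1+k_n)$ is indeed at most $2^{d-1}$, but combining with $|A(W_0)|\le d$ only yields $|A(W)|\le d\cdot 2^{d-1}$, which already exceeds $2^d$ for $d=3$. So your route proves finiteness (which is all that is actually used downstream) but not the stated constant.

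The paper bypasses the pullback entirely. It supposes $|A(W)|\ge 2^d+1$, picks any set $Q$ of $2^d+1$ angles in $A(W)$, observes that $\si^m(Q)\subset A(P^m(W))$ for every $m$, and uses Lemma~\ref{inters} (together with $W$ wandering) to conclude that the sets $\si^m(Q)$ are pairwise disjoint and unlinked. Kiwi's result in \cite{kiwi02} is then invoked in a form that does not require injectivity of $\si$ along the orbit---only that all forward images have pairwise non-crossing convex hulls---to produce a contradiction. This is shorter and delivers the exact bound $2^d$. The version of Kiwi's theorem stated in Subsection~\ref{waga} (at most $d$ vertices for a wandering gap) is the injective case you used for $W_0$; the paper is tacitly using the more general statement from \cite{kiwi02}.
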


Recall that $\val'_{J_P}(Q)$ is the number of components of $J_P\sm Q$.

\begin{proof}
First \emph{let us show that there are at most $2^d$ external rays of $P$ with
principal sets in $W$}. Indeed, otherwise there is a set $Q$ of $2^d+1$
distinct external rays of $J_P$ whose principal sets are contained in $W$. Then
the angles of $\si^m(Q)$ will have principal sets in $P^m(W)$ for every $m\ge
0$. Since $W$ is wandering, Lemma~\ref{inters} now implies that all sets
$\si^m(Q)$ are unlinked. However, by \cite{kiwi02} this is impossible.

By Lemma~\ref{wand-fin1} the existence and the desired properties of
the set of angles $A(W)=\{\al_1<\al_2<\dots<\al_m<\al_{m+1}=\al_1\}$
follow. Suppose that $W$ is non-(pre)critical; then by definition
$\si^N|_{A(W)}$ is one-to-one for any $N$, $\ch(A(W))$ is
non-(pre)critical, and $|\si^k(A(W))|=m$ is constant. By
\cite{kiwi02} this implies that $m\le d$. Finally, the last claim of the corollary
follows from Lemma~\ref{wand-fin1}.
\end{proof}

So, wandering cut-continua in $J_P$ contain the principal sets of finitely many
rays and are in this respect analogous to repelling periodic points.

\subsection{The correspondence between the plane and the disk}\label{planlam}

In this subsection we consider cuts of the plane, generated by wandering
cut-continua and/or by rays landing at points of $\RR$.

\subsubsection{Grand orbits of wandering collections}\label{3.2.1}
We call a collection $\B_\C=\{B^1_\C, \dots, B^k_\C\}$ a \emph{wandering
collection of non-(pre)critical cut-continua}\index{wandering collection of
cut-continua, $\B_\C$} if $P^k(B^i_\C)\cap P^l(B^j_\C)=\0$ unless $k=l$ and
$i=j$. Take grand orbits $\G(B^i_\C)$, as defined right after
Lemma~\ref{nonsep}, of the sets $B^i_\C$ and then the union $\G(\B_\C)=\bigcup
\G(B^i_\C)$, called the \emph{grand orbit}\index{grand orbit of $\B_\C$} of
$\B_\C$. Observe that since the $B^i_\C$'s are non-(pre)critical, the
construction of the grand orbit of $\G(\B^i_\C)$ is simplified in this case.
Let $\U(\B_\C)$ be the union of all sets from $\G(\B_\C)$.

In the case of points of $\RR$ the construction of their grand
orbits is easier than for wandering non-(pre)critical cut-continua;
in fact, by definition the set $\RR$ is fully invariant, hence we
can write $\RR=\G(\RR)=\G^*(\RR)$. Let the collection of sets
$\G(\B_\C)\cup \RR$ be $\G(\B_\C, \RR)$ and the union of all points
of these sets be $\U(\B_\C, \RR)$. For $Q\in \G(\B_\C, \RR)$, set
$G(Q)=\ch(A(Q))$.

\subsubsection{Some important prelaminations}

By Lemma~\ref{inters}, the sets $A(Q)$ with $Q\in \G(\B_\C, \RR)$ are pairwise
unlinked, hence boundary chords of the sets $G(Q)$ with $Q\in \RR$ ($\G(\B_\C,
\RR), \G(\B_\C)$) form a geometric prelamination $\lam^\RR$ ($\lam^{\B_\C,
\RR}, \lam^{\B_\C}$). Say that the sets $G(Q)$ are \emph{elements of the
corresponding prelamination}\index{elements of a prelamination} (even though
formally leaves in the boundaries of the sets $G(Q)$, and not the sets $G(Q)$
themselves, are elements of the prelaminations). The closures of these
prelaminations are the geo-laminations $\ol{\lam^\RR}$, $\ol{\lam^{\B_\C, \RR}},
\ol{\lam^{\B_\C}}$. Observe that by construction all elements $Q$ of the grand
orbit $\G(\B_\C, \RR)$ have valences greater than $1$.

\begin{dfn}
%
If we make a statement about \emph{all} geometric prelaminations
$\lam^\RR,$ $\lam^{\B_\C},$ $\lam^{\B_\C, \RR},$ $\ol{\lam^\RR},$
$\ol{\lam^{\B_\C}},$ $\ol{\lam^{\B_\C, \RR}}$, we may jointly
denote them by $\lam$ or $\clam$. The collections $\G(\RR)=\RR,$
$\G(\B_\C),$ $\G(\B_\C, \RR)$
are sometimes jointly denoted by $\G$ while sets $\RR, \U(\B_\C),
\U(\B_\C, \RR)$
are sometimes jointly denoted by $\U$. If $\RR=\0$, we take
$\ol{\lam^\RR}$ as the empty lamination with all leaves degenerate
and a unique infinite gap coinciding with $\ol{\disk}$.
\end{dfn}

Recall that a gap-leaf is \emph{all-critical} if its $\si$-image is a
singleton.

\begin{lem}\label{no-crit-leaf-2} The following claims hold.

\begin{enumerate}

\item There are no critical leaves in $\lam$; in particular, there are no
all-critical gap-leaves in $\lam$.

\item The only critical leaves of $\clam$ must belong to
    all-critical gap-leaves with all boundary leaves being
    limit leaves.

\item Boundary leaves of any (pre)periodic gap-leaf are
    (pre)periodic.

\end{enumerate}

\end{lem}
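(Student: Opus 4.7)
The overall plan is to prove (1) directly; then (2) and (3) will follow from Lemma~\ref{no-crit-leaf} applied to the generating family $\mathcal{C}=\{G(Q):Q\in\G\}$ of $\lam$.

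For (1), I take $\ell=ab\in\lam$, a boundary edge of some $G(Q)$ with $Q\in\G$, and I must show $\si(a)\ne\si(b)$. If $Q\in\G(\B_\C)$, then $Q$ is a wandering non-(pre)critical cut-continuum, so Definition~\ref{d-nonpcr} and Lemma~\ref{l-nonpcr} give that $P|_{\tai(Q)}$ is injective, and the two distinct rays $R_a,R_b$ with principal sets in $Q$ map to distinct rays. If $Q\in\RR$, I first observe that $\RR$ is forward invariant and every element of it is bi-accessible: bi-accessibility at repelling/parabolic periodic points is preserved under $\si$ by the Goldberg--Milnor theorem, and pulling back through any preimage (critical or not) only \emph{increases} the number of landing rays. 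Hence $|A(P(Q))|\ge 2$. Using the local normal form $z\mapsto z^m$ for $P$ at $Q$, the cyclic order on $A(Q)$ is an $m$-fold lift of the cyclic order on $A(P(Q))$, so consecutive vertices of $G(Q)$ on $\uc$ map to consecutive (hence distinct) vertices of $G(P(Q))$, and no boundary edge of $G(Q)$ is critical.

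Claim (2) is then immediate: a critical leaf of $\clam$ cannot lie in $\lam$ by (1), so it must be a two-sided limit of leaves of $\lam$, and Lemma~\ref{no-crit-leaf}(1) then places it on the boundary of an all-critical gap-leaf of $\clam$ whose boundary leaves are all limit leaves. For (3), any (pre)periodic gap-leaf $G$ eventually iterates to a periodic gap-leaf $G'$; since $G'$ returns to itself under some $\si^n$, it cannot be collapsed to a point by any iterate of $\si$, so neither $G'$ nor $G$ is all-critical for any power of $\si$. Lemma~\ref{no-crit-leaf}(3) therefore applies and yields the (pre)periodicity of all leaves in $\bd(G)$.

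The main obstacle I anticipate is the second case of (1): making rigorous the statement that at a (possibly critical) preimage $Q$ of a bi-accessible point, the rays in $A(Q)$ realize each element of $A(P(Q))$ with the same multiplicity and in cyclically repeating order on $\uc$. This requires a careful local analysis at $Q$ combined with the standard correspondence between local tangent directions of external rays and their arguments on $\uc$, so that one can conclude that the two endpoints of any boundary edge of $G(Q)$ correspond to different preimage branches of different rays at $P(Q)$.
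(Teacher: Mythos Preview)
Your case split in part (1) has a genuine gap. You assert that every $Q\in\G(\B_\C)$ is a wandering \emph{non-(pre)critical} cut-continuum, but this is false: the grand orbit $\G(\B_\C)$ is built from \emph{pullbacks} of the forward orbits of the $B^i_\C$ (see the paragraph after Lemma~\ref{nonsep}), and such pullbacks may very well contain critical points of $P$. For those $Q$, Lemma~\ref{l-nonpcr} says precisely that $P|_{\tai(Q)}$ is \emph{not} one-to-one, so your injectivity argument breaks down. Ironically, the mechanism you describe for the $\RR$ case --- that $|A(P(Q))|\ge 2$ and the preimage angles interleave cyclically --- is exactly what is needed for these critical wandering $Q$ as well, since $P(Q)\in\G$ guarantees $|A(P(Q))|\ge 2$. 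The paper treats both situations uniformly: assuming a critical boundary leaf $\al\be$ of $G(Q)$, one first deduces that $Q$ contains a critical point (trivially if $Q\in\RR$ is a critical preperiodic point; via Lemma~\ref{l-nonpcr} if $Q$ is wandering), then picks $\ga\ne\si(\al)$ in $A(P(Q))$ and observes that its $\si$-preimages separate $\al$ from $\be$ inside $A(Q)$, contradicting adjacency. So the obstacle you anticipated is misplaced: your $\RR$ argument is essentially correct, and the same idea is what rescues the $\G(\B_\C)$ case once you drop the unwarranted non-(pre)critical assumption.

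Parts (2) and (3) are fine and match the paper's reduction to Lemma~\ref{no-crit-leaf}. One minor slip: in (2) a critical leaf of $\clam\sm\lam$ need not be a \emph{two}-sided limit, but Lemma~\ref{no-crit-leaf}(1) applies to any critical leaf of $\clam$ once (1) is known, so your conclusion stands.
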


\begin{proof}
(1) Let us prove that there are no critical leaves in $\lam$.
Suppose that $\ell\in \lam$ is a critical leaf. Then there is a set
$Q\in \G$ with $\ell=\al\be\subset \bd(G(Q))$. If $Q$ is a periodic
point then it cannot be a critical point, hence $\si$ is one-to-one
on $A(Q)$ and so a critical leaf cannot belong to the boundary of
$G(Q)$. Similarly we deal with non-critical preperiodic points.

Let now $Q$ be a wandering continuum or a (pre)\-pe\-ri\-odic
critical point. Then by Lemma~\ref{l-nonpcr}, there is a critical
point in $Q$. On the other hand, by our assumption $|A(P(Q))|\ge 2$.
Hence there is an angle $\ga\ne \si(\al)$ whose ray has a principal
set in $P(Q)$. Then by pulling back we can see that preimages of
$\ga$ separate preimages of $\si(\al)$ in $\mathbb{R}/\mathbb{Z}$. This shows that $\al$
cannot be adjacent to $\be$ in $A(Q)$, a contradiction. The rest of
the lemma is easy if the gap-leaf is finite and follows from
Lemma~\ref{no-crit-leaf} otherwise.
\end{proof}

\subsubsection{Disk to plane and back again}

In this subsection we establish a correspondence between certain subsets on the
plane and of the disk. It is generated by the above introduced sets $Q$ and
$G(Q)$, $Q\in \G$. If need be, we use the superscript $\G$ in our notation to
indicate which family generates the introduced objects, yet mostly $\G$ will be
assumed to be fixed, so if it does not cause confusion we will not use $\G$ in
the notation. First we introduce a family of \emph{planar cuts}.

\begin{dfn}[Planar cuts]\label{placut}
Let $\ell=\al\be\in \lam$ and $\al\ne \be\in G'(Q)$ be adjacent angles from
$G'(Q)$ where $Q\in \G$. Denote the set $R_\al\cup R_\be\cup Q$ by $\cu^\ell$
and call it a \emph{planar cut (centered at $Q$ and generated by
$\ell$)}\index{planar cut, $\cu^\ell$}.
\end{dfn}

Next we define \emph{planar wedges}.

\begin{dfn}[Planar wedges]\label{plawed}
Consider the set $W=\C\sm \cu^\ell$. Clearly, $W$ is an open set
with two components each of which is called a \emph{planar wedge
(centered at $Q$ and generated by $\ell$)}\index{planar wedge,
$W^\ell_\C(\cdot)$} and is denoted $W^\ell_\C$. By a \emph{closed
planar wedge (centered at $Q$ and generated by
$\ell$)}\index{closed planar wedge, $\bw^\ell_\C(\cdot)$}
$\bw^\ell_\C$ we mean the closure of $W^\ell_\C\cup Q$.
\emph{Hence, a closed planar wedge is \textbf{\emph{not}} the
closure of the corresponding (open) planar wedge.} All planar
wedges described above are said to \emph{border} on the cut
$\cu^\ell$ and to have $Q$ as their \emph{center}. If $z\in \C\sm
\cu^\ell$, the closed and open planar wedges defined by $\ell$
and containing $z$ are unique and are denoted by $W^\ell_\C(z)$ and
$\bw^\ell_\C(z)$.

Fix $Q\in \Ga$. Then for $z\in \C\sm \tai(Q)$ the component of
$\C\sm \tai(Q)$ containing $z$ is denoted by $W^Q_\C(z)$ and is
called an \emph{open planar wedge centered at $Q$, containing $z$}
(clearly, this is an open planar wedge centered at $Q$). Similarly,
$\bw^Q_\C(z)$ is the closure of $W^Q_\C(z)\cup Q$ and is called a
\emph{closed planar wedge centered at $Q$, containing $z$}. Thus,
if $z\in K_P\sm\U$ then these wedges are well-defined for any $Q\in \Ga$.
\end{dfn}

Figure~\ref{fig:plawe} shows planar wedges centered at a continuum $Q$
(we assume that $A(Q)=\{\al, \be, \ga\}$ is the set of all angles whose
rays accumulate inside $Q$); it also shows the appropriate leaf $\ell$
on the boundary of the triangle in the unit disk corresponding to $Q$
and the appropriate disk wedge.

\begin{figure}[ht]
{\includegraphics{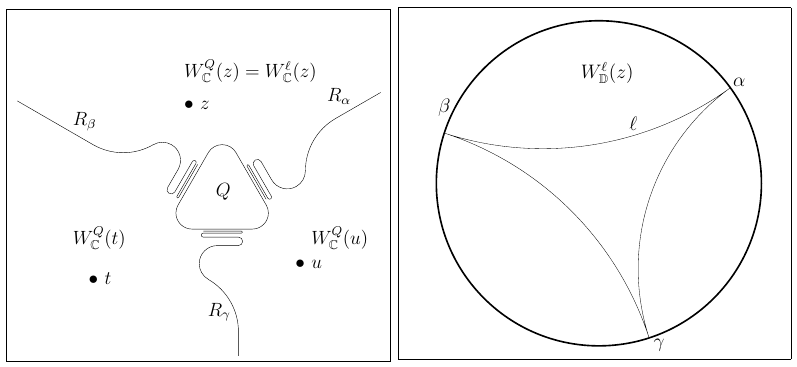}}
\caption{A continuum $Q$ and its planar and disk wedges.}
\label{fig:plawe}
\end{figure}

The definition of a \emph{disk wedge} is similar to that of a planar wedge.

\begin{dfn}[Disk wedges]\label{dwed}
Let $\ell=\al\be\in \lam$ and $\al, \be\in G'(Q)$ where $Q\in \G$. Let
$W^\ell_\di$ be a component of $\disk\sm \ell$, called a \emph{disk
wedge}\index{disk wedge, $W^\ell_\di(\cdot)$}. Also, let $\bw^\ell_\di$
be the closure of $W^\ell_\di$ called a \emph{closed disk
wedge}\index{closed disk wedge, $\bw^\ell_\di(\cdot)$}. These disk wedges are
said to be \emph{centered} at $G(Q)$ (or at $Q$), and to \emph{border} on
$\ell$.
If $z\in \disk\sm \ell$, then closed and open disk wedges defined by
$\ell$ and containing $z$ are unique and are denoted by
$W^\ell_\di(z)$ and $\bw^\ell_\di(z)$.
If $z\in \di\sm G(Q)$, then there exists a unique leaf $m$ in the
boundary of $G(Q)$ which separates $G(Q)\sm m$ from $z$. Then we
define $W^Q_\di(z)$ as $W^m_\di(z)$ and define $\bw^Q_\di(z)$ as $\bw^m_\di(z)$.
\end{dfn}

Depending on what is known about a wedge, a superscript $Q$ or a superscript
$\ell$ is used. Clearly, not only points $z$ but also sets $Y\subset \C$ can
define wedges containing $Y$ which are denoted similarly to the above. 

The correspondence between planar wedges and disk wedges is as follows: a
planar wedge $W^\ell_\C$ and a disk wedge $W^\ell_\di$ are \emph{associated (to
each other)}\index{associated wedges} if $W^\ell_\C$ contains rays with
arguments coming from the boundary \emph{circle} arc of $W^\ell_\di$. Associated planar and
disk wedges will be denoted the same way except for the subscripts $\C$ and
$\di$ respectively.
Clearly, there are countably many planar wedges and countably many
disk wedges. Let us now define \emph{disk blocks and fibers}.

\begin{dfn}[Disk blocks and fibers]\label{difa}
A non-empty intersection of fi\-ni\-te\-ly ma\-ny closed disk wedges is said to be a
\emph{disk block.}\index{disk block} A disk block is said to \emph{border} on
its \emph{boundary leaves} which are defined in a natural way.

\emph{Any} intersection $F_\di$ of closed disk wedges is called a \emph{disk
fiber}\index{disk fiber, $F_\di$} if it is minimal in the following sense: for
any set $Q\in \G$, either $G(Q)$ is disjoint from $F_\di$, or there are two
adjacent angles $\al, \be\in A(Q)$ such that the leaf $\al\be$ is contained in
$\bd(F_\di)$. For a disk fiber $F_\di$ we define its \emph{basis}
$F'_\di=F_\di\cap \uc$ whose points are said to be \emph{vertices} of $F_\di$.
\end{dfn}

Disk fibers are not necessarily disjoint, yet by Lemma \ref{basic2}
it is easy to see that if two non-degenerate disk fibers meet, than
their intersection is a leaf from $\lam$.

\begin{lem}\label{basic2}
Non-degenerate disk fibers are exactly gap-leaves of $\clam$ and
leaves of $\lam$. Also, if $G$ is a disk fiber, then
$G=\bigcap\{\bw^\ell_\di(G)\mid \ell\in\lam\}$. Moreover, the
$\si^*$-image of a disk fiber is a disk fiber.
\end{lem}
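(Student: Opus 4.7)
The plan is to prove the three assertions in order: first classify non-degenerate disk fibers as gap-leaves of $\clam$ and leaves of $\lam$; second, read off the intersection formula as a by-product; third, establish $\si^*$-invariance from Definition~\ref{geolaminv} and Lemma~\ref{no-crit-leaf-2}.

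For the classification I would start with the easy direction. A leaf $\ell\in\lam$ is the intersection of the two closed disk wedges bordering on $\ell$, and the minimality clause of Definition~\ref{difa} is automatic because $\ell$ is a boundary leaf of some $G(Q)$; hence $\ell$ is a disk fiber. For a gap-leaf $G$ of $\clam$, every $\ell\in\lam$ is unlinked with the boundary chords of $G$, so there is a unique closed disk wedge $\bw^\ell_\di(G)$ containing $G$. I then claim $F:=\bigcap_{\ell\in\lam}\bw^\ell_\di(G)=G$: given $z\in\diskbar\setminus G$, the boundary leaf of $G$ facing $z$ is, by the definition of gap-leaf in Subsection~\ref{geoprel}, either itself a leaf in $\lam$ (if isolated in $\clam$) or a limit from the $z$-side of leaves of $\lam$; in either case one obtains some $\ell\in\lam$ separating $z$ from $G$. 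This yields both that $G$ is a disk fiber and the intersection formula $G=\bigcap_{\ell\in\lam}\bw^\ell_\di(G)$ in the second assertion of the lemma.

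For the converse, take a non-degenerate disk fiber $F_\di$. As an intersection of closed hyperbolic half-planes in $\diskbar$, it is closed and hyperbolically convex. If $\Int F_\di\ne\emptyset$ in $\di$, the minimality in Definition~\ref{difa} prevents any leaf of $\lam$ from crossing $\Int F_\di$, and passing to limits no leaf of $\clam$ crosses it either, so $F_\di$ is a gap of $\clam$. If $\Int F_\di=\emptyset$ but $F_\di$ is non-degenerate, then $F_\di$ is a chord; by minimality it either lies in $\lam$ or is approached from both sides by leaves of $\lam$, in which case it is a two-sided limit leaf of $\clam$ and hence a gap-leaf by definition.

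For $\si^*$-invariance I invoke Definition~\ref{geolaminv} together with Lemma~\ref{no-crit-leaf-2}: $\si^*$ sends a leaf of $\lam$ to a (possibly degenerate) leaf of $\lam$, and by gap invariance sends a gap of $\clam$ to a gap of $\clam$; the only degeneration for a gap-leaf is through an all-critical one, which is handled by Lemma~\ref{no-crit-leaf-2}(1) and yields an image point that is separated from the rest of $\uc$ by leaves of $\lam$ and so is itself a degenerate fiber. In every case $\si^*(F_\di)$ is again a disk fiber by the characterization just proved. The main obstacle, I expect, will be the separation step of the second paragraph, where one must pass from the abstract closure operation $\lam\mapsto\clam$ to the concrete geometric fact that any boundary leaf of a gap-leaf of $\clam$ can be approximated from the outside by leaves of $\lam$; everything else is bookkeeping with the definitions.
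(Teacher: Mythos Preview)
Your argument for the first two assertions is essentially the paper's own: you show leaves of $\lam$ and gap-leaves of $\clam$ are fibers by exhibiting the right families of wedges (in particular your separation step---that a boundary leaf of a gap of $\clam$ is either in $\lam$ or a limit from the outside of leaves of $\lam$---is exactly how the paper obtains the intersection formula, which it then leaves to the reader); and your converse, splitting on whether $F_\di$ has interior, matches the paper's short case analysis. The paper phrases the leaf case slightly differently (assuming a one-sided limit leaf not in $\lam$ and showing the adjacent gap $H$ lies in every $\bw^\ell_\di$, so the intersection cannot be the leaf), but this is the same observation you make.

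There is one genuine gap in your $\si^*$-invariance sketch. Gap invariance in Definition~\ref{geolaminv} does \emph{not} say that a gap of $\clam$ maps to a gap of $\clam$; it allows $\ch(\si(G'))$ to be a leaf. You handle the all-critical case (image is a point), but you do not address the possibility that a finite gap $G$ of $\clam$ maps to a leaf that is neither in $\lam$ nor a two-sided limit leaf---which would fail to be a disk fiber. The fix (which the paper in fact carries out only later, inside the proof of Lemma~\ref{dynam}) is: if $\si^*(G)$ is a leaf not in $\lam$, then no boundary leaf of $G$ lies in $\lam$ (else its image would land in $\lam$), so every boundary leaf of $G$ is a one-sided limit leaf from outside $G$; pushing these approximating leaves forward shows $\si^*(G)$ is approached from both sides, hence is a gap-leaf after all. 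You should either add this step or, as the paper does, defer the $\si^*$-statement.
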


\begin{proof}
A leaf $\ell\in \lam$ is a disk fiber because it is the intersection of the two
closed wedges generated by $\ell$. Let $G$ be a gap-leaf of $\clam$ which is
not an element of $\lam$. Then, if $G$ is a leaf approximated from both sides
by leaves of $\lam$, the appropriate disk wedges generated by these leaves will
have $G$ as their intersection. Suppose that $G$ is a gap of $\clam$. For each
leaf $\ell\subset \bd(G)$ which belongs to $\lam$ choose $W^\ell_\di(G)$. For each
$\ell\subset \bd(G)$ which does not belong to $\lam$ we can choose a sequence of
leaves of $\lam$ converging to $\ell$ from outside of $G$ and then the sequence
of closed disk wedges generated by these leaves, all containing $G$. The
intersection of the just constructed family of closed disk wedges is $G$, and
clearly $G$ satisfies all the necessary properties, hence $G$ is a disk fiber.

On the other hand, let $G$ be a disk fiber which is neither a leaf
of $\lam$ nor a gap-leaf of $\clam$. Suppose that $G$ is a leaf.
Since $G$ is not a gap-leaf of $\clam$, $G$ is a boundary leaf of a gap $H$ of
$\clam$. Since $G$ is not a leaf of $\lam$, it is the limit leaf of
a sequence of leaves from outside of $H$. Again, since $G$ is not a
leaf of $\lam$, it follows that $H\subset \bw^\ell_\di(G)$ for
every $\ell\in \lam$, a contradiction with the assumption that $G$ is a fiber.
 Finally, assume that $G$ is
not a leaf. Since by definition $G$ cannot contain any leaves of
$\clam$ in its interior, $G$ must be a gap of $\clam$. The proof of
the remaining two statements of the lemma is left to the reader.
\end{proof}

Now, to define \emph{the planar fiber} of a point, we first define
\emph{planar blocks}.

\begin{dfn}[Planar blocks]\label{plabl}
A non-empty intersection of finitely many closed planar wedges is
said to be a \emph{planar block.}\index{planar block} In
particular, a planar wedge is a planar block. A planar cut whose
rays are contained in the boundary of a planar block, is called a
\emph{boundary cut (of the block)}, and the block is then said to
\emph{border} on its planar cuts.
\end{dfn}

\begin{dfn}[Planar fibers]\label{plafi}
If $G$ is a disk fiber, then by Lemma~\ref{basic2}
$G=\bigcap\{\bw^\ell_\di(G)\mid \ell\in\lam\}$. If
$\{\bw^{\ell}_\C(G)\}$ is the sequence of associated closed planar
wedges, then we say that $F_\C(G)=\bigcap \bw^{\ell}_\C(G)$ is the
\emph{planar fiber of $G$ (or associated to $G$)}\index{planar
fiber of $G$, $F_\C^\Gamma(G)$}. Observe that if $G$ is a leaf
$\ell\in\lam$, then $F_\C(\ell)=\cu^\ell$ and if $Q\in\Gamma$ and
$G=G(Q)$, then $F_\C(G)=\tai(Q)$.

Given a point $z\in\C\sm \bigcup_{E\in \Gamma} \tai(E)$ and
$Q\in\Gamma$, there exists a unique planar wedge $\bw^Q_\C(z)$
which contains $z$. For such $z$ we denote by $F_\C(z)$ the
\emph{planar fiber of $z$}\index{planar fiber of a point $z$,
$F_\C^\Gamma(z)$}, the intersection of \emph{all} the wedges
$\bw^Q_\C(z)$. Moreover, for every planar wedge $\bw^Q_\C(z)$ let
$\widehat{\bw}^Q_\di(z)$ be the associated disk wedge. Then it is easy to see
that $F_\di(z)=\bigcap \widehat{\bw}^Q_\di(z)$ is a disk fiber and we call it
the \emph{disk fiber of $z$}\index{disk fiber of $z$,
$F^\Gamma_\di(z)$}. We will also say that the fibers $F_\C(z)$ and
$F_\di(z)$ are \emph{associated to each other}.
\end{dfn}

Figure~\ref{fig:pladifi} shows a planar fiber $F_\C$ and its associated
disk fiber $F_\di(z)$ together with some sets $\tai(Q), Q\in \Gamma$ and
corresponding sets $G(Q)$.

\begin{figure}[ht]
{\includegraphics{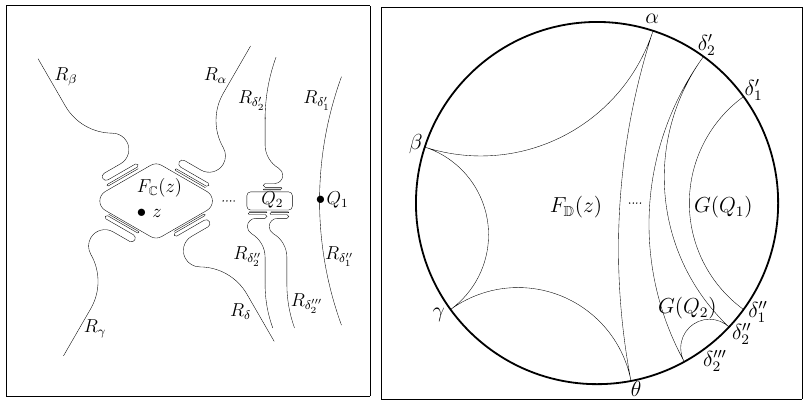}}
\caption{A planar fiber and its associated disk fiber.}
\label{fig:pladifi}
\end{figure}

A planar fiber can be represented as a countable intersection of a
nested sequence of planar blocks. Clearly, $z\in F_\C(z)$. Also, by
definition the fiber $F_\C^\RR(z)\cap K_P$ consists exactly of all
points of $K_P$ which are weakly non-separated from $z$. The
relation between other types of fibers may be more complicated.

\begin{lem}\label{fibincl}
For a point $z\in \C\sm\bigcup_{E\in \Gamma} \tai(E)$ let $F_\C(z)$
be the planar fiber of $z$, and let $G=F_\di(z)$ be the associated
disk fiber of $z$. Then $F_\C(G)\subset F_\C(z)$.
\end{lem}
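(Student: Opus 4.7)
The plan is to show that every closed planar wedge appearing in the intersection defining $F_\C(z)$ also appears (up to equality) as one of the wedges in the intersection defining $F_\C(G)$; the inclusion then follows at once. Recall $F_\C(z)=\bigcap_{Q\in\Gamma}\bw^Q_\C(z)$, while $F_\C(G)=\bigcap_{\ell\in\lam}\bw^\ell_\C(G)$, with the latter intersection ranging over \emph{all} leaves of $\lam$. The plan is therefore to associate to each $Q\in\Gamma$ a canonical leaf $m_Q\in\lam$ and verify $\bw^{m_Q}_\C(G)=\bw^Q_\C(z)$.

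First I would identify $m_Q$. Because $z\notin\tai(Q)$, there is a unique component of $\C\sm\tai(Q)$ containing $z$; its boundary consists of $Q$ together with two rays $R_\al,R_\be$ whose arguments are cyclically adjacent vertices of $G'(Q)$. Let $m_Q=\al\be$, a boundary leaf of $G(Q)$, hence a leaf of $\lam$. By Definition~\ref{plawed} the closed planar wedge $\bw^Q_\C(z)$ coincides with $\bw^{m_Q}_\C(z)$, and by the associated-wedge convention its associated closed disk wedge $\widehat\bw^Q_\di(z)$ is precisely the closed disk wedge generated by $m_Q$ whose boundary circle arc consists of the arguments of the rays lying in $\bw^Q_\C(z)$.

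The key step is the following transfer. By definition of $F_\di(z)$ and of $G$, we have $G\subset\widehat\bw^Q_\di(z)$ for every $Q\in\Gamma$. In particular, $G$ lies in the \emph{same} of the two closed disk wedges generated by $m_Q$ as $\widehat\bw^Q_\di(z)$ does, so $\bw^{m_Q}_\di(G)=\widehat\bw^Q_\di(z)$. Hence the associated planar wedge $\bw^{m_Q}_\C(G)$ used in the intersection defining $F_\C(G)$ is exactly $\bw^Q_\C(z)$.

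Restricting the intersection defining $F_\C(G)$ to the subfamily $\{m_Q\}_{Q\in\Gamma}\subset\lam$ then gives
\[
F_\C(G)=\bigcap_{\ell\in\lam}\bw^\ell_\C(G)\subset\bigcap_{Q\in\Gamma}\bw^{m_Q}_\C(G)=\bigcap_{Q\in\Gamma}\bw^Q_\C(z)=F_\C(z),
\]
which is the desired inclusion. There is no real obstacle here: the argument is bookkeeping with the definitions, and the only point that must be stated cleanly is that ``$G$ and $z$ lie on the same side of $m_Q$ in $\disk$'' — a tautology from $G=\bigcap_Q\widehat\bw^Q_\di(z)$ — so that the disk wedge containing $G$ and the disk wedge containing $z$ coincide, and their associated planar wedges therefore agree.
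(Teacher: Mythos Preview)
Your proof is correct and follows essentially the same approach as the paper's. The paper's argument is simply a terser version of yours: it fixes $Q\in\Gamma$, notes that the associated disk wedge $\widehat{\bw}^Q_\di(z)$ equals $\bw^\ell_\di(G)$ for some $\ell\in\lam$ (your $m_Q$), concludes $\bw^\ell_\C(G)=\bw^Q_\C(z)$, and deduces the inclusion---exactly the bookkeeping you spell out in more detail.
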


\begin{proof}
Consider a planar wedge $\bw^Q_\C(z)$ and its associate disk wedge
$\widehat{\bw}^Q_\di(z)$. Then there exists $\ell\in \lam$ such that
$\widehat{\bw}^Q_\di(z)=\bw^\ell_\di(G)$. If now 
$F_\C(G)$ is the
associated planar fiber, then $\bw^\ell_\C(G)=\bw^Q_\C(z)$. Hence
$F_\C(G)\subset F_\C(z)$ as desired.
\end{proof}

Lemma~\ref{basic1} is a simple corollary of the definitions.

\begin{lem}\label{basic1}
A planar fiber $F_\C$ is the union of the non-se\-pa\-ra\-ting
in the plane continuum $F_\C\cap K_P$ and rays with angles in the associated
disk fiber. Let $G$ be a disk fiber such that there exists a point
$z\in F_\C(G)\sm \bigcup_{E\in \Gamma} \tai(E)$. Then $G\ne G(Q)$ for
any $Q\in\G$, and $G$ is not a leaf of $\lam$. Moreover,
$F_\C(G)=F_\C(z)$ and $G=F_\di(z)=\ch\{\al\in\uc\mid R_\al\subset
F_\C(z)\}$.
\end{lem}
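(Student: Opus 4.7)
The plan is to prove the four pieces of the statement in sequence, using the wedge correspondence and the preceding lemmas (especially Lemma~\ref{basic2} and Lemma~\ref{fibincl}) as the main engine.

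First I would establish the structural description (1). By Definition~\ref{plafi}, $F_\C=F_\C(z)$ (or $F_\C(G)$) is the intersection of a countable descending family of closed planar blocks, each of which is a finite intersection of closed planar wedges. A closed planar wedge $\bw^\ell_\C$ is bounded by a center continuum $Q$ and two external rays; its intersection with $K_P$ is a non-separating continuum in the plane because $K_P$ is full and $Q\subset K_P$ splits $\bw^\ell_\C\cap K_P$ into bounded ``full'' pieces. A nested intersection of non-separating continua in the plane is again a non-separating continuum, so $F_\C\cap K_P$ is non-separating. A ray $R_\al$ is contained in $F_\C$ if and only if $R_\al$ is contained in every $\bw^\ell_\C$ occurring in the intersection; passing through the associated wedges via Lemma~\ref{basic2}, this is equivalent to $\al$ being a vertex of the associated disk fiber.

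Next I would prove the two exclusions. Suppose $G=G(Q)$ for some $Q\in\G$. Then, by the remarks just after Definition~\ref{plafi}, $F_\C(G)=\tai(Q)\subset\bigcup_{E\in\G}\tai(E)$, contradicting $z\notin\bigcup_{E\in\G}\tai(E)$. Suppose instead that $G=\ell\in\lam$; then $F_\C(\ell)=\cu^\ell=R_\al\cup R_\be\cup Q\subset\tai(Q)$, again a contradiction. So $G$ is neither a leaf of $\lam$ nor an element $G(Q)$.

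For the equality $F_\C(G)=F_\C(z)$, Lemma~\ref{fibincl} already gives $F_\C(G)\subset F_\C(z)$. For the reverse, fix any $Q\in\G$. Since $z\in F_\C(G)$ and $z\notin\tai(Q)$, there is a unique planar wedge $\bw^Q_\C(z)$ containing $z$; its associated disk wedge is $\bw^\ell_\di(z)$ for the leaf $\ell$ on $\bd(G(Q))$ separating $z$ from $G(Q)\sm\ell$. Since $z\in F_\C(G)\subset\bw^\ell_\C(G)$, the same leaf $\ell$ separates $G$ from $G(Q)\sm\ell$, i.e.\ $G\subset\bw^\ell_\di(G)=\bw^\ell_\di(z)$, so $\bw^\ell_\C(G)=\bw^Q_\C(z)$. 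Ranging over all $Q\in\G$ exhausts the wedges occurring in the definition of $F_\C(G)$ via Lemma~\ref{basic2}, so
\[
F_\C(z)=\bigcap_{Q\in\G}\bw^Q_\C(z)=\bigcap_{\ell}\bw^\ell_\C(G)=F_\C(G).
\]

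Finally, once $F_\C(z)=F_\C(G)$ is known, the wedge matching above also shows that the associated disk wedges coincide, whence $F_\di(z)=\bigcap\widehat{\bw}^Q_\di(z)=\bigcap\bw^\ell_\di(G)=G$ by Lemma~\ref{basic2}. By part (1), the rays contained in $F_\C(z)=F_\C(G)$ are exactly those $R_\al$ with $\al\in G\cap\uc=G'$, and since $G$ is a disk fiber it equals $\ch(G')$, giving the last formula.

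The main obstacle I anticipate is the careful bookkeeping in step three: matching each planar wedge $\bw^Q_\C(z)$ with the correct boundary leaf $\ell$ of $G(Q)$ and verifying that ``$z$ and $G$ lie on the same side of $\ell$'' — this is where the hypothesis $z\in F_\C(G)$ is crucial and where Lemma~\ref{basic2} is used essentially. A secondary subtlety is confirming in step one that $F_\C\cap K_P$ is truly non-separating rather than merely closed; this requires the nested intersection argument plus the fact that each block, being the intersection of finitely many full wedges with $K_P$, already is non-separating.
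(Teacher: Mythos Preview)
Your proof is correct and follows the same approach as the paper: both establish the structural description via nested non-separating intersections, rule out $G=G(Q)$ and $G\in\lam$ by computing $F_\C(G)$ in those cases, and then argue that the families of closed planar wedges defining $F_\C(z)$ and $F_\C(G)$ coincide. The paper compresses the third step into a single sentence (``the two families of closed planar wedges \dots\ are identical''), whereas you spell out the wedge-matching explicitly; your version is the more informative one.

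One small correction: your invocation of Lemma~\ref{fibincl} for the inclusion $F_\C(G)\subset F_\C(z)$ is not quite legitimate, since that lemma assumes $G=F_\di(z)$ as a \emph{hypothesis}, which is exactly what you are in the process of proving. Fortunately this citation is redundant: your explicit argument in the ``reverse'' paragraph actually establishes that each wedge $\bw^Q_\C(z)$ equals $\bw^{\ell}_\C(G)$ for a suitable boundary leaf $\ell$ of $G(Q)$, and since $\bw^{\ell}_\C(G)\supset\bw^{m}_\C(G)$ for every other boundary leaf $m$ of the same $G(Q)$, the two intersections agree. So simply drop the reference to Lemma~\ref{fibincl} and let your wedge-matching do all the work. (Also, in that paragraph you write $\bw^\ell_\di(z)$ with $z$ a planar point; you mean the associated disk wedge $\widehat{\bw}^Q_\di(z)$.)
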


\begin{proof}
Note that $F_\C\cap K_P$ is the intersection of
planar continua which do not separate the plane (which are the intersections of the appropriate
closed planar wedges and $K_P$). Hence $F_\C\cap K_P$ is a
 continuum which does not separate the plane. By definition, rays of angles from the
associated disk fiber are contained in $F_\C$ while all other says
are disjoint from $F_\C$. This proves the first part of the lemma.

To prove the rest of the lemma, observe first that it easily
follows if $G$ is degenerate. Now, let $G$ be a non-degenerate disk
fiber such that there exists a point $z\in F_\C\sm \bigcup_{E\in
\Gamma} \tai(E)$. By definition this implies that $G\ne G(Q)$ for
any $Q\in\G$, and $G$ is not a leaf of $\lam$. Since $G$ is a disk
fiber, it now follows from Lemma~\ref{basic2} that $G$ is either a
double sided limit leaf in $\clam\sm\lam$ or a gap of $\clam$ such
that $G\ne G(Q)$ for all $Q\in\G$. The required equality
$F_\C(G)=F_\C(z)$ follows since the two families of closed planar
wedges whose intersections are, respectively, $F_\C(z)$ and
$F_\C(G)$, are identical. The last claim of the lemma is left to
the reader.
\end{proof}

\subsubsection{Dynamics and correspondence between sets}

Notice, that by a Theorem of Kiwi \cite{kiwi02} all infinite gaps
of $\clam$ are (pre)periodic. Mark a point in each periodic parabolic
Fatou domain, and let $\an$ \index{$\an$} (``attracting and
neutral'' points) be the set of all attracting, Siegel, Cremer, or
marked points; given $p\in \an$, let $c(p)$ be the period of $p$ or
(for a marked point) the period of its parabolic domain. The next
lemma is an application of the tools developed so far.
Recall that the linear extension $\si^*$ was defined in the
beginning of Subsection~\ref{geoprel}. Note that if $p\in \an$,
then $p\in K_P\sm \U$ and both $F_\C(p)$ and $F_\di(p)$ are
defined. We now show that the correspondence between disk fibers
and planar fibers is dynamical.

We will need the following definition. Let $X$ be a connected
topological space. Then $X$ is unicoherent provided that for any closed
connected subsets $A$ and $B$ of $X$, if $X=A\cup B$, then  $A\cap B$
is connected. Thus, an interval is unicoherent while the circle is not.

\begin{lem}\label{dynam}
Let $F_\di(z)$ and $F_\C(z)$ be the disk fiber and the planar fiber of a point
$z\in\C\sm\Gamma^*$. Then $P(F_\C(z))=F_\C(P(z))$ and
$\si^*(F_\di(z))=F_\di(P(z))$ are the planar fiber and the disk fiber of the
point $P(z)$. Moreover, if $G$ is a disk fiber and $H=\ch(\si(G'))$, then $H$
is a disk fiber and $P(F_\C(G))=F_\C(H)$.
\end{lem}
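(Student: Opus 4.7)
The plan is to establish the three assertions of the lemma in a specific order: first the ``moreover'' part about a general disk fiber $G$, then the equality $\si^*(F_\di(z)) = F_\di(P(z))$, and finally the planar equality $P(F_\C(z)) = F_\C(P(z))$. Two preliminary facts feed all three parts: (i) the collection $\Gamma$ is fully $P$-invariant, since both $\RR$ and the grand orbit $\G(\B_\C)$ are closed under $P^{\pm 1}$ by construction; hence $\Gamma^*$ is $P$-invariant in both directions and $P(z) \in \C \setminus \Gamma^*$, so that $F_\C(P(z))$ and $F_\di(P(z))$ are well-defined. (ii) By Lemma~\ref{no-crit-leaf-2}(1), no leaf of $\lam$ is critical, so for each $Q \in \Gamma$ and each boundary leaf $\ell = \al\be$ of $G(Q)$, the image $\si(\ell) = \si(\al)\si(\be)$ is a non-degenerate boundary leaf of $G(P(Q)) \in \lam$, and $P(\cu^\ell) = \cu^{\si(\ell)}$; dually, each preimage ray of a ray in $\cu^{\ell'}$ is accounted for by some $\cu^{\tilde\ell}$ with $\tilde\ell \in \lam$ a boundary leaf of some $G(Q)$ satisfying $P(Q) = Q'$.

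For the moreover part, Lemma~\ref{basic2} classifies the disk fiber $G$ as a leaf of $\lam$, a gap-leaf of $\clam$, or a singleton in $\uc$. Leaf invariance and gap invariance (Definition~\ref{geolaminv}) applied to the forward-invariant $\clam$ then yield that $H = \ch(\si(G'))$ is again of the same type, hence a disk fiber. For the equality $P(F_\C(G)) = F_\C(H)$, I would split into cases: if $G = \ell \in \lam$, both sides equal $\cu^{\si(\ell)}$; if $G = G(Q)$ for $Q \in \Gamma$, both equal $\tai(P(Q))$; otherwise I pick a free point $z \in F_\C(G) \setminus \Gamma^*$, which exists by Lemma~\ref{basic1} and satisfies $F_\C(G) = F_\C(z)$, thereby reducing the remaining case to the planar equality for $z$ proved below.

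The planar equality $P(F_\C(z)) = F_\C(P(z))$ is proved by two inclusions. For $\subset$, let $w \in F_\C(z)$ and fix $\ell' \in \lam$, a boundary leaf of $G(Q')$. For every $\tilde\ell \in \lam$ with $\si(\tilde\ell) = \ell'$ one has $w \in \bw^{\tilde\ell}_\C(z)$, so $z$ and $w$ lie in the same closed planar block $B$ cut out by the $\cu^{\tilde\ell}$; by preliminary fact (ii), $P(B) \subset \bw^{\ell'}_\C(P(z))$, giving $P(w) \in \bw^{\ell'}_\C(P(z))$. For $\supset$, let $V_z$ denote the connected component of $P^{-1}(F_\C(P(z)))$ containing $z$; since $F_\C(P(z))$ is connected (Lemma~\ref{basic1}) and $P$ is a proper branched covering, $P|_{V_z}\colon V_z \to F_\C(P(z))$ is surjective, while the symmetric component-tracking argument shows $V_z \subset F_\C(z)$. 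Finally, the equality $\si^*(F_\di(z)) = F_\di(P(z))$ is obtained by applying the moreover part with $G = F_\di(z)$: combined with Lemma~\ref{basic1} (which gives $F_\C(F_\di(z)) = F_\C(z)$ and $F_\C(F_\di(P(z))) = F_\C(P(z))$), one sees that the disk fibers $\si^*(F_\di(z))$ and $F_\di(P(z))$ share the same planar fiber $F_\C(P(z))$; the injectivity clause of Lemma~\ref{basic1}, that disk fibers admitting a free point are determined by their planar fibers, then forces them equal.

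The principal obstacle is the reverse inclusion $F_\C(P(z)) \subset P(F_\C(z))$ in the planar equality, which is essentially a lifting statement for a branched cover: one must show that every $y \in F_\C(P(z))$ has a preimage in $F_\C(z)$. The connectedness of $F_\C(P(z))$ and its description as a non-separating Julia-set continuum augmented by accessible rays (Lemma~\ref{basic1}) are what make the lift over $V_z$ coherent. A secondary technical point in the forward inclusion is ensuring that the block $B$ is genuinely mapped by $P$ into a single side of $\cu^{\ell'}$: this relies on the fact that every preimage ray of a ray in $\cu^{\ell'}$ lies in $\Gamma^*$, so no ``extraneous'' cutting occurs outside the intended family $\{\cu^{\tilde\ell}\}$.
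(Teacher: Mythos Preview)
Your overall architecture is close to the paper's: both establish the planar equality $P(F_\C(z))=F_\C(P(z))$ by two inclusions, reduce the ``moreover'' clause to that equality in the non-trivial case via Lemma~\ref{basic1}, and deduce the disk statement from these. The forward inclusion you sketch (pulling back a cut $\cu^{\ell'}$ to the family $\{\cu^{\tilde\ell}:\si(\tilde\ell)=\ell'\}$) is essentially the paper's unicoherence argument in different clothing; the paper simply says: if $\tai(Q)$ separates $P(z)$ from $P(x)$ then some component of $P^{-1}(\tai(Q))$, which equals $\tai(Q')$ for some $Q'\in\Gamma$, separates $z$ from $x$.

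The genuine gap is in your reverse inclusion. You set $V_z$ to be the component of $P^{-1}(F_\C(P(z)))$ containing $z$, note that $P|_{V_z}$ is onto, and then assert $V_z\subset F_\C(z)$ by a ``symmetric component-tracking argument''. But there is no symmetry here: the forward inclusion gives $F_\C(z)\subset V_z$, not the other way around, and showing $V_z\subset F_\C(z)$ is exactly the hard direction you are trying to prove. If some $x\in V_z$ were separated from $z$ by $\tai(Q)$, you only know that the connected set $V_z$ meets $\tai(Q)$, hence $F_\C(P(z))$ meets $\tai(P(Q))$; this does not yield a contradiction, since $F_\C(P(z))$ legitimately contains $P(Q)$ and two boundary rays whenever $G(P(Q))$ abuts the associated disk fiber. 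The paper sidesteps this by lifting not the whole fiber but a single arc: given $v\in F_\C(P(z))$ with (for contradiction) all preimages $u_1,\dots,u_d$ separated from $z$ by sets $\tai(Q_j)$, one chooses an arc $A\subset\C\setminus\bigcup_j\tai(P(Q_j))$ from $P(z)$ to $v$, takes the component $A'$ of $P^{-1}(A)$ through $z$, uses openness and properness of $P$ to see $P(A')=A$, and finds some $u_j\in A'$ lying in the same component of $\C\setminus\tai(Q_j)$ as $z$ --- a contradiction. That finite, targeted lift is the missing idea.

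Two smaller remarks. First, your stated order is circular as written: the ``otherwise'' branch of the moreover clause invokes the planar equality, so logically the planar equality must come first (as in the paper). Second, in your forward inclusion you implicitly use that the block $B=\bigcap_{\tilde\ell}\bw^{\tilde\ell}_\C(z)$ is connected; this is true but deserves a word (it follows because $P^{-1}(\cu^{\ell'})$ equals $\bigcup_{\tilde\ell}\cu^{\tilde\ell}$ and $z$ lies in a single complementary component).
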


\begin{proof}
Suppose that $F_\C(z)$ is the planar fiber of a point $z\in\C\sm\Gamma^*$.
Clearly $w=P(z)\in \C\sm\bigcup_{E\in \Gamma} \tai(E)$ and the fiber $F_\C(w)$
is well-defined.

\emph{We will show first that $P(F_\C(z))\subset F_\C(w)$.} Suppose
that $x\in F_\C(z)$ and $P(x)\not\in F_\C(w)$. Then there exists
$Q\in\Gamma$ such that $w$ and $P(x)$ are in distinct components of
$\C\sm \tai(Q)$. If $C= P^{-1}(\tai(Q))$ does not separate $z$ and
$x$, there exists an arc $A\subset\C\sm C$ joining $x$ and $z$. But
then $P(A)$ is a continuum in $\C\sm \tai(Q)$ joining $w$ and
$P(x)$, a contradiction. Hence $C$ separates $x$ and $z$ and, since
$\C$ is unicoherent and locally connected, a component $C'$ of $C$
must separate $x$ and $z$. Since $C'=\tai(Q')$ for some component
$Q'$ of $P^{-1}(Q)$, we get a contradiction with the fact that
$x\in F_\C(z)$. Hence we have shown that $P(F_\C(z))\subset
F_\C(w)$.

\emph{We show next that $P(F_\C(z)))=F_\C(w)$.} Suppose  that $v\in
F_\C(w)$ and $P^{-1}(v)\cap F_\C(z)=\0$. Since
$P^{-1}(v)=\{u_1,\dots,u_d\}$ is finite, there exists for each $j$
a set $Q_j\in\Gamma$ such that $u_j$ and $z$ are in distinct
components of $\C\sm \tai(Q_j)$. Since $v\in F_\C(w)$, there exists
an arc $A\subset \C\sm \bigcup \tai(P(Q_j))$ joining $w$ and $v$.
Since $P$ is an open map, the component $A'$ of $P^{-1}(A)$
containing the point $z$ contains some point $u_j$. Since $A'\cap
\tai(Q_j)=\0$, $u_j$ and $z$ are in the same component of $\C\sm
\tai(Q_j)$, a contradiction $P^{-1}(v)\cap F_\C(z)=\0$. Hence
$P(F_\C(z))=F_\C(w)$ as desired.

\emph{We show next that $\si^*$ maps disk fibers to disk fibers.}
Let $G$ be a disk fiber. If $G$ is degenerate, the lemma easily
follows. So we can assume that $G$ is a non-degenerate disk fiber.
By Lemma~\ref{basic2}, $G$ is a gap-leaf of $\clam$ or a leaf of
$\lam$. If $\si^*(G)$ is a leaf of $\lam$ or a gap-leaf of $\clam$,
we are done. Otherwise $\si^*(G)$ is either a leaf of $\clam\sm
\lam$ or a point. Clearly, this implies that $G$ is a finite gap or
a single leaf. If $\bd(G)$ contains a leaf of $\lam$, then
$\si^*(G)$ will be a leaf of $\lam$, a contradiction. Hence $G$ is
a finite gap all of whose boundary leaves are one-sided limit
leaves. Therefore $\si^*(G)$ is a gap-leaf, a contradiction. Note
that by the above, $P(F_\C(z))=F_\C(w)$. It now follows easily
that $\si^*(F_\di(z))=F_\di(w)$.

Let $G$ be a disk fiber. By the above, $\si^*(G)=H$ is also a disk
fiber and $H=\ch(\si(G'))$. Let $F_\C(G)$ and $F_\C(H)$ be the
associated planar fibers. \emph{We will show that
$P(F_\C(G))=F_\C(H)$}. If $G=G(Q)$ for some $Q\in \Gamma$ or
$G=\ell$ for some $\ell\in\lam$, then it follows easily that
$P(F_\C(G))=F_\C(H)$ and we are done. Hence we may assume that
there exists a point $z\in F_\C(G)\sm\bigcup_{E\in \Gamma}
\tai(E)$. Now, by Lemma~\ref{basic1} $F_\C(z)=F_\C(G)$. By the
first part of this lemma, $P(F_\C(z))=F_\C(P(z))$.

\emph{Let us show that $F_\C(H)=F_\C(P(z))$.} To this end, let us show
that the arguments of external rays contained in both sets, are the
same. Indeed, by the first, already proven, claim of this lemma,
$P(F_\C(z))=F_\C(P(z))$. Hence the arguments of the rays inside the
set $F_\C(P(z))$ form a set $\si(G)'=H'$ and $F_\di(P(z))=H$. Since
$G\ne G(Q)$ for any $Q\in\G$ and $G$ is not a leaf of $\lam$, the
same holds for $H$. Hence by Lemma~\ref{basic1},
$F_\C(P(z))=F_\C(H)$ and $P(F_\C(G))=F_\C(H)$ as desired.
\end{proof}

\begin{lem}\label{finmany}
If $F$ is a planar fiber and $G$ is its associated disk fiber, then the
following holds.

\begin{enumerate}

\item Let $\al\be$ be a leaf in $\bd(G)$ such that the circular
    arc $(\al,\be)$ is disjoint from $G'$, and $\ga\in (\al,
    \be)$. If $\al\be\nin \lam$ then $\imp(\ga)\cap F=\0$. On
    the other hand, if $\al\be\in \bd(G(Q))$ for some $Q\in
    \Ga$ then $\imp(\ga)\cap F\subset Q$. Moreover, there are
    at most finitely many angles $\ga\in (\al, \be)$ with
    $\pr(\ga)\subset F$.

\item If $G$ is finite, there are finitely many angles
    $\ga$ with $\pr(\ga)\subset F$.

\item There are at most finitely many repelling or parabolic periodic
    points in $F$ at which two or more external rays land.

\item If $x\in F$ is a preimage of a repelling or a parabolic point, then
    there exists $\al\in G'$ with $R_\al\subset F$ landing at $x$.

\end{enumerate}

\end{lem}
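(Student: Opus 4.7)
I prove the four parts in the order (1), (2), (4), (3); part (4) is needed in (3), and the ``finitely many $\ga$'' portion of (1) drives both (2) and (3).

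For part (1), fix $\ga\in(\al,\be)$ and realize $\imp(\ga)=\bigcap_i \overline{\sh(T_i)}$ for a fundamental chain of crosscuts $T_i$ joining landing points of rays $R_{\mu_i}$, $R_{\nu_i}$ with $\al<\mu_i<\ga<\nu_i<\be$ (landing rays are dense, so such $\mu_i,\nu_i$ exist). When $\al\be=\ell\in\bd(G(Q))$ for some $Q\in\G$, the crosscuts can be chosen inside the open planar wedge on the $\ga$-side of $\cu^\ell$, so $\imp(\ga)$ lies in the closure of that wedge; since $F\subset\bw^\ell_\C$ (the opposite closed wedge), $\imp(\ga)\cap F$ sits in the common boundary $R_\al\cup R_\be\cup Q$ and, being contained in $J_P$, must lie in $Q$. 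When $\al\be\notin\lam$, $\al\be$ is a boundary leaf of $G$ that is a limit leaf, and since the interior of $G$ contains no leaves of $\lam$ the approximating leaves come from the $\ga$-side: there exist $\ell_n=\al_n\be_n\in\lam$ with $\ell_n\to\al\be$ and $\ell_n\in\bd(G(Q_n))$. Because each $A(Q)$ is finite (Corollary~\ref{wand-fin}), only finitely many $\ell_n$ can share a given $Q$, so the $Q_n\in\G$ are eventually pairwise distinct and hence pairwise disjoint. Applying the first case to $\ell_n$ for $n$ large enough that $\ga\in(\al_n,\be_n)$ yields $\imp(\ga)\cap F\subset Q_n$, forcing $\imp(\ga)\cap F=\emptyset$. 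The ``finitely many $\ga$'' claim follows: if $\al\be\notin\lam$ there are no such $\ga$ (using $\pr(\ga)\subset\imp(\ga)$), and if $\al\be\in\bd(G(Q))$ then $\pr(\ga)\subset F$ combined with $\pr(\ga)\subset\imp(\ga)\cap F\subset Q$ forces $\ga\in A(Q)$, contradicting the adjacency of $\al,\be$ in $A(Q)$.

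For part (2), if $G$ is finite then $G'$ and the set of boundary leaves of $G$ are both finite; every $\ga$ with $\pr(\ga)\subset F$ lies either in $G'$ or in one of the finitely many complementary arcs, each of which contributes finitely many (in fact zero) such $\ga$ by (1).

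For part (4), pick a ray $R_\al$ landing at $x$ (which exists since $x$ is a preimage of a repelling or parabolic point). For each $Q\in\G$, $F\subset\bw^\ell_\C$ for a specific $\ell=\mu\nu\in\bd(G(Q))$, so $x\in\bw^\ell_\C$. Because distinct rays do not meet and $R_\al\subset U^\infty$ while $Q\subset K_P$, the ray $R_\al$ cannot cross $R_\mu$, $R_\nu$, or $Q$; hence $R_\al$ lies entirely on one side of $\cu^\ell$, and terminating at $x\in\bw^\ell_\C$ forces $R_\al\subset\bw^\ell_\C$ (in the borderline case $x\in Q$, one has $\al\in A(Q)$, and adjacency of $\mu,\nu$ in $A(Q)$ prevents $\al\in(\mu,\nu)$, so $R_\al$ still sits on the $G$-side). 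Intersecting over $Q\in\G$ gives $R_\al\subset F$, whence Lemma~\ref{basic1} yields $\al\in G'$.

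For part (3), suppose $p\in F$ is a repelling or parabolic periodic point with at least two rays landing. Each ray $R_\al$ landing at $p$ has $\pr(\al)=\{p\}\subset F$; the ``finitely many'' analysis of (1) forces $\al\in G'$, so $A(p)\subset G'$, and the finite polygon $G(p)=\ch(A(p))$ lies in $G=\ch(G')$. Because $G$ is a disk fiber its interior contains no leaves of $\clam$ (Lemma~\ref{basic2}), so every edge of $G(p)$ (a leaf of $\lam\subset\clam$) lies in $\bd(G)$. Distinct periodic $p$ yield disjoint polygons $G(p)$, and two disjoint convex sub-polygons of $G$ cannot share a boundary leaf of $G$ (both would lie on the same $G$-side of the shared leaf and therefore overlap), so distinct $p$ contribute pairwise distinct periodic boundary leaves of $G$. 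If $G$ is finite, $\bd(G)$ is finite; otherwise $G$ is an infinite gap, hence (pre)periodic by Kiwi~\cite{kiwi02} and not all-critical (else $G(p)$ would be all-critical, contradicting injectivity of $\si$ on $A(p)$), so Lemma~\ref{no-crit-leaf}(3) bounds the periodic boundary leaves. The principal difficulty here is establishing $G(p)\subset G$ with $\bd(G(p))\subset\bd(G)$; this relies on convexity of gaps together with the no-interior-leaves property of disk fibers.
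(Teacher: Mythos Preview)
Your arguments for parts (1), (2), and (3) are correct and follow essentially the same route as the paper; your version of (1) is in fact slightly sharper, establishing that there are \emph{zero} (rather than merely finitely many) angles $\ga$ in each complementary arc with $\pr(\ga)\subset F$, which you then reuse cleanly in (3).

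There is, however, a genuine error in your handling of the borderline case in (4). Suppose $Q=\{x\}\in\RR\subset\G$ with $|A(x)|\ge 3$, and suppose $G\ne G(x)$ --- for instance $G$ is an infinite gap of $\clam$ meeting the finite polygon $G(x)$ only in the common boundary leaf $\ell=\mu\nu$. Then the side of $\ell$ containing $G$ is the side of the arc $(\mu,\nu)$ (the arc containing \emph{no} other vertices of $A(x)$), while the remaining vertices of $A(x)$ lie on the \emph{opposite} side. Hence if you pick $\al\in A(x)\setminus\{\mu,\nu\}$, the ray $R_\al$ lies on the $G(x)$-side of $\cu^\ell$, not on the $G$-side, so $R_\al\not\subset\bw^\ell_\C(G)$ and $R_\al\not\subset F$. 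Your inference ``adjacency of $\mu,\nu$ in $A(Q)$ prevents $\al\in(\mu,\nu)$, so $R_\al$ still sits on the $G$-side'' is thus exactly backwards in this configuration: $\al\notin(\mu,\nu)$ places $R_\al$ on the wrong side.

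The fix is straightforward and is what the paper does: when $x\in\RR$, do not pick $\al$ arbitrarily but take $\al\in\{\mu,\nu\}$, the arguments of the two boundary rays of the wedge at $x$ containing $F$. Those two rays lie in $\bw^\ell_\C(G)$ (as boundary rays of the cut) and, by your own non-borderline argument, in every other $\bw^{\ell'}_\C(G)$ as well; hence they lie in $F$ and witness the required $\al\in G'$.
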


\begin{proof}
If $G=G(Q)$ for some $Q\in\G$, or a leaf of $\lam$, or a two-sided
limit leaf, the lemma follows easily. Thus, by Lemma~\ref{basic2}
we may assume that $G$ is a gap of $\clam$ that is not an element
of $\lam$.

(1) If $\al\be$ is a limit of leaves in $\lam$ with endpoints in
$(\al,\be)$ then for all $\ga\in(\al,\be)$, $\imp(\ga)\cap F=\0$.
Otherwise $\al\be$ is a boundary leaf of an element $H$ of $\lam$
corresponding to $Q\in\G$. Clearly, then $\imp(\ga)\cap F\subset Q$
as desired. This proves the first part of (1). Now, by the above if
$\al\be$ is a limit of leaves in $\lam$ with endpoints in
$(\al,\be)$ then there are no angles $\ga$ with $\pr(\ga)\subset
F$. If $\al\be$ is a boundary leaf of an element $H$ of $\lam$
corresponding to $Q\in\G$ then, again by the above, the fact that
$\pr(\ga)\subset F$ would imply that $\pr(\ga)\subset Q$. Since
there are finitely many angles $\ga'$ with $\pr(\ga')\subset Q$,
then there are at most finitely many angles $\ga\in(\al,\be)$ with
$\pr(\ga)\subset F$.

(2) Follows from (1). 

(3), (4) \emph{Let us show that if $x\in F$ is a repelling or
parabolic point or a preimage of it, then there is at least one (if
$x\nin \RR$) and at least two (if $x\in \RR$) rays landing at $x$
and contained in $F$.} Indeed, if $x\nin
\RR$ let $C=R\cup \{x\}$ where $R$ is a ray landing at $x$. 
If $x\in \RR$ let $\bw_x(F)$ be the closed wedge at $x$ containing
$F$, and let $C$ be the union of two boundary rays of $\bw_x(F)$
and $\{x\}$. Then in either of these cases by definition $C\subset
F$.

Now, the previous paragraph immediately implies (4). To prove (3),
observe that by the previous paragraph each point of $\RR$ in $F$
corresponds to a boundary leaf of $G$. However by
Lemma~\ref{no-crit-leaf} there are at most finitely many periodic
leaves in $\bd(G)$. Hence there are finitely many points of $\RR$
in $F$ which implies (3).
\end{proof}

\begin{dfn}
We will call a attracting or parabolic Fatou domain an \emph{parattracting
domain}\index{parattracting domain}.
\end{dfn}

Lemma~\ref{sep-gap} relates periodic planar fibers and disk fibers.

\begin{lem}\label{sep-gap}
Let $G$ be a disk fiber which maps into itself by $(\si^*)^n$ and
let $F=F_\C(G)$ be the associated planar fiber. Then in fact
$(\si^*)^n(G)=G$ and the following claims hold.

\begin{enumerate}
\item $P^n(F)=F$.

\item If $G'$ is finite then $F$ is a periodic point. If in addition
    $|G'|>1$ then there exists $x\in \RR$ such that $G=G(x)$.

\item If $G'$ is infinite, then there exists $p\in \an$ such
    that $F_\di(p)=G$. Conversely, for each $p\in \an$,
    $F_\di(p)$ is an infinite periodic gap.  
    If $\RR\subset \G$, this correspondence
     between $\an$ and all infinite periodic gaps of $\clam$ is one-to-one,
     and $p$ is a unique point of $\an$ in $F_\C(p)$.

\end{enumerate}

\end{lem}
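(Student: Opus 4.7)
The starting point is to promote the hypothesis $(\si^*)^n(G)\subset G$ to an equality. By Lemma~\ref{basic2}, $G$ is either a leaf of $\lam$ or a gap-leaf of $\clam$ (the degenerate case is trivial), and Lemma~\ref{no-crit-leaf-2}(1) supplies the absence of critical leaves in $\lam$ needed to invoke Lemma~\ref{no-crit-leaf}(2); this yields $(\si^*)^n(G)=G$. Claim~(1) then follows by iterating $n$ times the identity $P(F_\C(H))=F_\C(\si^*(H))$ from Lemma~\ref{dynam}.

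For claim~(2), I would start from the observation that when $G'$ is finite, every angle in $G'$ is periodic under $\si^n$. By Lemma~\ref{no-crit-leaf}(3) the boundary leaves of $G$ are periodic and non-critical, so each ray $R_\al$ for $\al\in G'$ is a periodic ray landing at a repelling or parabolic periodic point of $P$. By Lemma~\ref{basic1}, $F\cap K_P$ is a $P^n$-invariant non-separating continuum, and by Lemma~\ref{finmany}(3) only finitely many periodic points in $F$ have two or more rays landing at them. Provided CS-points in $F\cap K_P$ can be excluded, Theorem~\ref{degimp} forces $F\cap K_P$ to be a single periodic point (which is the intended sense of ``$F$ is a periodic point''). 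If $|G'|>1$, all the periodic rays $R_\al,$ $\al\in G'$, land at this common point, which is thus biaccessible and lies in $\RR$; by the construction of $\lam^\RR$ (or $\lam^{\B_\C,\RR}$), one then has $G=G(x)$ for that $x\in\RR$.

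For claim~(3), suppose $G'$ is infinite. Then $F\cap K_P$ cannot be a single point: otherwise, infinitely many rays $R_\al,$ $\al\in G'$, would land there, forcing $|A(x)|=\iy$ for a periodic biaccessible $x$ and contradicting Corollary~\ref{wand-fin}. Hence $F\cap K_P$ is a non-degenerate $P^n$-invariant non-separating continuum with only finitely many multiply-accessible repelling or parabolic periodic points (Lemma~\ref{finmany}(3)). Theorem~\ref{degimp} then yields a fixed CS-point of a suitable iterate, i.e., a Cremer or Siegel periodic point $p\in\an$ inside $F\cap K_P$. Because $p\notin\bigcup_{E\in\G}\tai(E)$, Lemma~\ref{basic1} gives $F_\C(p)=F$ and therefore $F_\di(p)=G$. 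Conversely, for $p\in\an$ the disk fiber $F_\di(p)$ is $(\si^*)^{c(p)}$-invariant; if it were finite, then~(2) would force $F\cap K_P$ to be a periodic biaccessible point, contradicting $p\in\an$. Assuming $\RR\subset\G$, distinct cycles in $\an$ have disjoint planar fibers, which gives both the bijection with infinite periodic gaps and the uniqueness of $p\in\an$ inside $F_\C(p)$.

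The main obstacle is the interlocking of (2) and (3): the proof of~(2) as sketched uses the absence of CS-points in $F\cap K_P$, which is essentially the converse direction of~(3), while~(3) relies on~(2) to exclude finite disk fibers for $p\in\an$. I plan to break this circularity by first proving directly that $F_\di(p)$ is infinite for every $p\in\an$--using that near a CS-point $p$ the map is recurrent, so a neighborhood of $p$ cannot be trapped inside the (bounded, essentially a ray-pair) planar fiber of a finite periodic gap--then use this inside~(2), and finally complete the main direction of~(3). A secondary technical point is making sure that the CS-point produced by Theorem~\ref{degimp} indeed satisfies $F_\C(p)=F$ rather than giving a strictly smaller fiber; this is where the equivalence $F_\C(z)=F_\C(G)$ from Lemma~\ref{basic1} plays the crucial role.
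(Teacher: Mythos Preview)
Your overall architecture matches the paper's, and the opening paragraph plus claim~(1) are exactly right. The real problem is your plan for breaking the circularity between (2) and (3).

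You propose to show directly that $F_\di(p)$ is infinite for $p\in\an$ by arguing that ``near a CS-point $p$ the map is recurrent, so a neighborhood of $p$ cannot be trapped inside the planar fiber of a finite periodic gap.'' This does not work as stated: recurrence on a hedgehog is perfectly compatible with the hedgehog sitting inside a non-degenerate invariant continuum, so nothing prevents it from lying in $F\cap K_P$ \emph{a priori}. What you actually need---and what the paper does---is to prove (2) head-on by excluding both parattracting Fatou domains and CS-points from $F$ when $G'$ is finite. The parattracting case (which you omit entirely) uses that $\bd(U)$ contains infinitely many biaccessible repelling periodic points \cite{przdu94}, contradicting Lemma~\ref{finmany}(3). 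The CS case is substantially harder: one first uses Perez-Marco's hedgehog theory to force a critical point into $F$ (if $F\cap K_P$ contained a CS-point but no critical point, it would lie in a hedgehog, which has no other periodic points---contradicting the periodic landing points of the rays $R_\al$, $\al\in G'$). Then a case split on whether the critical point is in $J_P$ or not leads, via Lemma~\ref{finmany} and a branched-covering argument, to a contradiction with the periodicity of all angles in $G'$. Only after this is Theorem~\ref{degimp} invoked.

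Two smaller points. In (3), Theorem~\ref{degimp} does not hand you a CS-point; it gives a periodic point that is not repelling or parabolic, which could be attracting---so the point $p$ you obtain is in $\an$ but need not be Cremer or Siegel. And for the bijection when $\RR\subset\G$, you need an external input (the paper cites \cite{gm93,kiwi02}) to know that rational ray-pair cuts separate distinct points of $\an$; this is not automatic from the fiber machinery.
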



\begin{proof}
Assume that $n=1$. By Lemma~\ref{basic2} $G$ is either a gap-leaf, or a leaf of
$\lam$; by Lemma~\ref{no-crit-leaf-2}, there are no critical leaves in $\lam$.
Thus, if $G$ is a gap-leaf, then Lemma~\ref{no-crit-leaf}(2) implies
$\si^*(G)=G$ and $\si(G')=G'$, and if $G$ is a leaf of $\lam$ then $\si^*(G)=G$
and $\si(G')=G'$ too.

(1) Follows immediately from Lemma~\ref{dynam}.

(2) We consider only the case when $G$ is non-degenerate and $|G'|>1$; if $G$
is a degenerate gap-leaf (i.e., a point in $\uc$ which is separated from the
rest of $\uc$ by a sequence of leaves converging to it), the arguments are
almost literally the same and are left to the reader.

\emph{We first prove that $F$ cannot contain a parattracting Fatou domain}.
Indeed, otherwise by \cite{przdu94} there are infinitely many repelling
periodic points in $\bd(U)$ which contradicts Lemma~\ref{finmany}.

\emph{Let us show that a CS-point $p$ cannot belong to $F$}.
Indeed, since $G$ is periodic under $\si^*$, all the angles in $G'$
are periodic. Hence, since $F$ is closed, their principal sets
(i.e., in this case landing points) are contained in $F$. Now,
suppose that a CS-point $p$ belongs to $F$. Then by
\cite{pere94a,pere97} (see Subsection~\ref{hedge}) there is a
critical point in $F$ (if not, $F$ is a subset of a
\emph{hedgehog} and cannot contain periodic landing points of
angles of $G'$). Consider two cases.

(i) Suppose that there exists a critical point $c\in F\cap J_P$.
Choose $\al\in \uc$ so that $c\in \imp(\al)$ and $\al$ is not
periodic (this is possible because, due to the symmetry of the map
$P$ around $c$, the set of all angles whose impressions contain $c$
must contain pairs of angles mapping to the same angle). Then
$\al\nin G'$ (because all angles in $G'$ are periodic) and there
exists a boundary leaf $\ell=\be\ga$ of $G$ with $\al\in (\be,
\ga)$ and $G'\cap (\be, \ga)=\0$.

By Lemma~\ref{finmany} the fact that $c\in \imp(\al)\cap F$ implies
that $\be\ga$ is a boundary leaf of some element $H$ of $\lam$
corresponding to $Q\in \G$, and $c\in Q$. Since $\be\ga$ is periodic,
this implies that $Q$ is a point of $\RR$, a contradiction with $c\in Q$.

(ii) Suppose that $F\cap J_P$ contains no critical points. Let $E$
be the component of $P^{-1}(P(F))$ containing $F$. \emph{We claim that in
this case $E=F$.} Indeed, suppose that $F$ is a proper subset of
$E$. Then there exists a sequence $z_i\in E\sm F$ converging to a
point $z\in F$. We may assume that one of the following two
possibilities holds.

(a) There exists a leaf $\ell=\be\ga$ of $\lam$ such that the cut
$\cu^\ell$ separates points $z_i$ from $F\sm \cu^\ell$. Then $z$
is a periodic point from $\RR$. Therefore $P$ is one-to-one in a
small neighborhood of $z$. Choose points $x_i\in F$ such that
$P(x_i)=P(z_i)$ for all $i$'s. Then all these points must be
positively distant from $z$. Assuming that $x_i\to x\in F$ we see
that $x\neq z$. By continuity $P(x)=P(z)$ and so $x$ is a
preperiodic point from $\RR$. However, by Lemma~\ref{finmany}(4)
there exists a periodic angle $\ta\in G'$ whose ray lands at $x$, a
contradiction.

(b) There exists a sequence of sets $Q_i\in \G$ and boundary leaves
$\ell\in \bd(G(Q_i))$ such that $z_1$ is separated from $z$ by cuts
$\cu^{\ell_i}$. Then it follows that each $Q_i$ intersects $E$ and
hence $P(Q_i)$ intersects $P(F)$ for every $i$. However, by
Lemma~\ref{dynam} $P(F)$ coincides with the planar fiber
$F_\C(\si(G))$ associated with the \emph{finite} disk fiber
$\si(G)$, a contradiction.

So, we have proved that $E=F$. On the other hand, by the above
there is a critical point in $F$. Hence $P|_F$ is a non-trivial
branched covering map onto $P(F)$. Choose an angle $\al\in G'$ and
let $y$ be the landing point of $R_\al$. Then there exists a point
$y'\ne y$ in $F$ such that $P(y)=P(y')$. Hence again by
Lemma~\ref{finmany}(4) we have a contradiction. This shows that
there are no CS-points in $F$.

By the above, $F$ contains no parattracting Fatou domains and no
CS-points. By Lemma~\ref{basic1}, $F\cap K_P$ is a
continuum which does not separate the plane. By (1), $P(F\cap K_P)=F\cap K_P$. By
Lemma~\ref{finmany}(3) and Theorem~\ref{degimp}, $F\cap K_P$ is a
periodic point $x$. If $|G'|>1$ this implies that $x\in \RR$.

(3) \emph{We claim that if $G'$ is infinite and periodic, then $\an\cap F\ne
\0$.} Indeed, otherwise $G'$ is infinite and periodic with neither a CS-point
nor a Fatou domain in $F$. As above, by (1), Theorem~\ref{degimp} and
Lemma~\ref{finmany}(3), $F\cap K_P$ is a point $x\in \RR$ which is impossible.
Notice that by Lemma~\ref{basic1} $G=F_\di(p)$ as desired.

Now, \emph{let $p\in \an$ and $P(p)=p$ (otherwise the proof is
similar), and prove the second statement of (3)}. By
Lemma~\ref{dynam}, $P(F_\C(p))=F_\C(p)$ (because $n=1$ by the
assumption). By Lemma~\ref{dynam}, $\si^*(F_\di(p))=F_\di(p)$. By
(2), $F'_\di(p)$ is infinite.

Finally, let $\RR\subset \G$. By \cite{gm93, kiwi02} sets $\tai(x),
x\in \RR$ separate CS-points, attracting points and marked points
from each other. Hence the just defined association between
infinite periodic gaps of $\clam$ and points of $\an$ is
one-to-one.
\end{proof}

Observe that even if $p$ is an attracting or marked point, the corresponding set
$F_\C(p)$ is not necessarily the closure of the corresponding Fatou domain. Indeed, suppose
that $p$ is a fixed attracting point, $E$ is its Fatou domain, and all periodic
points on its boundary are not cutpoints of the Julia set $J_P$. Suppose that
there is a non-(pre)periodic critical point on its boundary. Then there exists
a pullback $E'$ of $E$, attached to $E$ at $c$. As follows from the definition,
$E'$ must be contained in $F_\C(p)$ too. Moreover, appropriate pullbacks of
$E'$ will also have to be contained in $F_\C(p)$ because they will not be
separated from $p$ by a cut generated by a point of $R$. Thus, in this case
$F_\C(p)$ includes not only $E$ but the entire family of attached to it
pullbacks of $E$.

\section{Non-repelling cycles and wandering continua}\label{wan-theo}

If $p$ is a CS-point, the orbit of the set $F_\C^\RR(p)$ is called a
\emph{CS-set} and is denoted by $F_\C^\RR(\orb p)$. In this section we
use the tools developed in Section~\ref{cs-wan} in order to study CS-sets in
connection with wandering non-(pre)critical cut-continua as well as recurrent
critical points. This is necessary for our study because it is through CS-sets
that both phenomena which we are interested in - wandering non-(pre)critical
cut-continua and recurrent critical points - are related.

\subsection{Limit behavior of orbits of wandering non-(pre)critical cut-continua}

In this subsection we show that wandering cut-continua cannot live in CS-sets
(Theorem~\ref{no-wand}). This is used in Corollary~\ref{wrinr} which relates
geometric prelaminations $\lam^\RR, \clam^\RR$ and $\lam^{\B_\C},
\clam^{\B_\C}$.

\begin{thm}\label{no-wand} Let $p\in \an$ and let $Q\subset J_P$ be a wandering
non-(pre)\-cri\-tical cut-continuum. Then the CS-set $F_\C^\RR(\orb p)$ is disjoint
from $Q$.
\end{thm}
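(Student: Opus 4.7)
My plan is to argue by contradiction. Suppose some forward image $P^i(Q)$ meets $F_\C^\RR(P^j(p))$ for some $i,j \ge 0$; replacing $Q$ by $P^i(Q)$ and $p$ by $P^j(p)$ (both substitutions preserve the relevant hypotheses), I may assume $Q \cap F_\C^\RR(p) \ne \emptyset$, with $p$ of period $k$. I first upgrade this intersection to containment. By construction, the topological boundary of $F_\C^\RR(p) = \bigcap_{x\in \RR}\bw^x_\C(p)$ in $\C$ lies inside $\bigcup_{x\in \RR}\tai(x)$, which consists of the points of $\RR$ together with the external rays landing at them. Since $Q\subset J_P$ meets no external rays, and since $Q$ is wandering non-(pre)critical and hence disjoint from the (pre)periodic set $\RR$, the continuum $Q$ cannot touch this boundary; being connected and meeting $F_\C^\RR(p)$, it must satisfy $Q\subset F_\C^\RR(p)$.

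Next I pass to the disk side. Let $G = F_\di^\RR(p)$; by Lemma \ref{sep-gap}(3), $G$ is an infinite periodic gap of $\clam^\RR$ with $\si^{*k}(G)=G$. For each $\al\in A(Q)$, the ray $R_\al$ is disjoint from every $\tai(x)$, $x\in\RR$ (distinct external rays do not meet, and $Q$ omits $\RR$), so $R_\al$ lies in the same component of $\C\sm \tai(x)$ as $p$ for every such $x$; hence $R_\al\subset F_\C^\RR(p)$ and $\al\in G'$. By Lemma \ref{sep-gap}(1), $P^k(F_\C^\RR(p)) = F_\C^\RR(p)$, so the same reasoning applied to every $P^{nk}(Q)$ yields $\si^{nk}(A(Q)) = A(P^{nk}(Q)) \subset G'$ for all $n\ge 0$. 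Because the continua $P^{nk}(Q)$ are pairwise disjoint, Lemma \ref{inters} gives that the finite sets $\si^{nk}(A(Q))$ are pairwise unlinked, so their convex hulls are pairwise disjoint. Together with $|A(Q)|\ge 2$ from Corollary \ref{wand-fin}, this exhibits an infinite wandering family $\{\ch(\si^{nk}(A(Q)))\}_{n\ge 0}$ of non-degenerate finite subgaps sitting inside the single infinite periodic gap $G$ of $\clam^\RR$.

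Finally I derive a contradiction from the existence of this family by analyzing the dynamics of $\si^k|_{G'}$. If $p$ is a CS-point, then $\si^k|_{G'}$ is monotone and semiconjugate to an irrational rotation $R_\ta$; the recurrence of $R_\ta$ forces $\si^{nk}(A(Q))$ to be linked in the cyclic order with $A(Q)$ for infinitely many $n$, contradicting the pairwise disjointness of the convex hulls. If instead $p$ is attracting or parabolic, then $G$ is a Fatou-type infinite gap and $\si^k|_{G'}$ acts as a covering-type map; I combine this with the fact that $F_\C^\RR(p)\cap K_P$ is a $P^k$-invariant non-separating continuum containing only finitely many periodic landing points of multi-ray bouquets (Lemma \ref{finmany}(3)), invoking Theorem \ref{degimp} together with the hedgehog-free recurrence structure of the associated Fatou cycle to rule out persistent wandering finite subgaps of $G$. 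The main obstacle is this last step: the CS-subcase reduces cleanly to a recurrence-versus-wandering incompatibility on $G'$, while the attracting/parabolic subcase demands a careful combinatorial argument to reconcile the covering-type dynamics on $G'$ with the wandering hypothesis on $Q$.
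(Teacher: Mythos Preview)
Your setup through the second paragraph is fine and matches the paper's opening: the containment $Q\subset F_\C^\RR(p)$, the inclusion $A(Q)\subset G'$, and the fact that the convex hulls $\ch(\si^{nk}(A(Q)))$ are pairwise unlinked inside $G$ are all established essentially the same way in the paper. The divergence, and the gap, is in your contradiction step.

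Your claim that for a CS-point $p$ the map $\si^k|_{G'}$ is monotone and semiconjugate to an irrational rotation is unjustified and in general false. The gap $G=F_\di^\RR(p)$ is a gap of the coarse lamination $\clam^\RR$, not of the finest lamination for $P$; the planar fiber $F_\C^\RR(p)$ typically contains critical points of $P^k$ (indeed Theorem~\ref{mh1} guarantees a recurrent critical point there), and when that happens the boundary map $\si^{*k}|_{\bd(G)}$ is a positively oriented covering of degree strictly greater than one, not a rotation. So your recurrence-versus-wandering argument collapses even in the CS case. For the attracting/parabolic case you essentially admit you do not have an argument; the invocation of Theorem~\ref{degimp} and ``hedgehog-free recurrence structure'' does not produce a contradiction with the existence of wandering finite subgaps of $G$.

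The paper's proof avoids any rotation-number or degree-one hypothesis by passing to the enlarged lamination $\lam^{\B_\C,\RR}$ with $\B_\C=\{Q\}$. Applying Lemma~\ref{sep-gap}(3) again in this finer lamination yields a strictly smaller infinite invariant gap $\tg\subsetneqq G$. A structural claim (Claim~A) shows that $\tg$ is the \emph{only} periodic disk fiber of $\clam^{\B_\C,\RR}$ in the interior of $G$. One then picks a boundary leaf $\ell\subset\bd(\tg)$ inside $G$, arranges it to have $\si$-fixed endpoints, takes a nearby element $\tq$ of the grand orbit of $Q$ on the far side of $\ell$ from $\tg$, and builds a compact region $\tz\subset G$ cut off from $\tg$ and from the finitely many fixed boundary leaves/points of $G$. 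A retraction $r:G\to\tz$ composed with $\si^*$ gives a self-map of $\tz$; its Brouwer fixed point is forced to be $\si^*$-fixed and to lie in the interior, contradicting Claim~A. This fixed-point mechanism works uniformly for all $p\in\an$ and does not require any control on the degree of $\si^k|_{G'}$.
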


\begin{proof}
Consider $\B_\C=\{Q\}$ as a wandering ``collection'' of cut-continua. Assume
that $p\in\an$ and $P(p)=p$.  Set $F=F_\C^\RR(p)$ and $G=F_\di^\RR(p)$. Then by
Lemma~\ref{dynam} $P(F)=F$ and, by Lemma~\ref{sep-gap}, $\si^*(G)=G$. By
Lemma~\ref{wand-fin} $|A(Q)|=\val(Q)>1$ is finite. By way of contradiction
assume that $F$ is not disjoint from $Q$. Consider an element $\hq$ of the
grand orbit $\G(Q)$ of $Q$ and prove a few claims.

\emph{First we show that if $F\cap \hq\ne \0$ then $\hq\subset F$}. Indeed, let
$\hq\not\subset F$ and $z\in \hq\sm F$. Then by definition there is a point
$y\in \RR$ such that the planar wedges $W^y_\C(p)$ and $W^y_\C(z)$ are distinct
and therefore disjoint. Since $y$ is (pre)periodic, it cannot belong to $\hq$
(which is wandering), hence $\hq\subset W^y_\C(z)$ which implies that $\hq\cap
F=\0$, a contradiction.

\emph{Next we prove that $A(\hq)\subset G'$ and no point of $A(\hq)$ is an
endpoint of a circle arc complementary to $G'$}. By Lemma~\ref{finmany}(1) if
there exists $\al\in A(\hq)\sm G'$ then $\pr(\al)\cap F$ is a (pre)periodic
point which contradicts $\pr(\al)\subset \hq$ and $\hq$ being wandering. Also,
if $\al\in A(\hq)$ is an endpoint of a boundary leave of $G$ then by
Lemma~\ref{sep-gap}(3) and Lemma~\ref{no-crit-leaf}(3) $\al$ is (pre)periodic,
contradicting that $\hq$ wanders. This proves the claim.

\emph{Now we prove that $\hq\subset F$ cuts $F$ into at least $\val(\hq)$
components}. Indeed, by the above, no point of $A(\hq)$ is an endpoint of an
arc complementary to $G'$. Hence, for adjacent angles $\al, \be\in A(\hq)$ (so
that $(\al, \be)\cap $ contains no points of $A(\hq)$), there is an angle
$\ga\in G'\cap (\al, \be)$ which is an endpoint of an arc complementary to
$G'$, (pre)periodic by Lemma~\ref{no-crit-leaf}(3). The landing point $z$ of
$R_\ga$ does not belong to $\hq$ and can be associated to the arc $(\al, \be)$.
Clearly, two points associated to such distinct arcs are separated in $F$ by
the set $\tai(\hq)$. Hence $\hq\subset F$ cuts $F$ into at least $\val(\hq)$
components.

Consider the laminations $\lam=\lam^{\B_\C, \RR}, \clam=\clam^{\B_\C, \RR}$ and
the set $\G=\G(\B_\C, \RR)$. Set $\tg=F_\di^\G(p)$. By Lemma~\ref{dynam}
$P(F_\C^{\G}(p))=F_\C^{\G}(p)$ and, by Lemma~\ref{sep-gap} (3),
$\si^*(\tg)=\tg$. Also, by Lemma~\ref{sep-gap} $\tg$ is an infinite invariant
gap, and by Lemma~\ref{no-crit-leaf}(3) all leaves on the boundary of $\tg$ are
(pre)periodic. By the construction $F_\C^{\G}(p)\subsetneqq F_\C^{\RR}(p)$ and
$\tg\subsetneqq G$.

\smallskip

\noindent \textbf{Claim A.} \emph{Except for $\tg$ and leaves from
$\bd(G)$, there are no fixed or
periodic disk fibers of $\clam$ contained in $G$. All periodic points or leaves in $\bd(G)$ which are not contained in
$\bd(\tg)$, are limits of elements of $\lam$ from within $G$ which separate
these periodic points or leaves from the rest of $G$. Moreover, all periodic
leaves in $\bd(G)$ that are not contained in  $\bd(\tg)$, are pairwise
disjoint.}

\smallskip

\noindent\emph{Proof of Claim A.} Let us first show that \emph{if $E\subset G,
E\ne \tg$ is a periodic disk fiber of $\clam$, then $E$ is a leaf from
$\bd(G)$}. Indeed, by Lemma~\ref{basic2} $E$ is either a leaf of $\lam$ or a
gap-leaf of $\clam$. In the first case the claim follows since
$\lam=\lam^{\B_\C, \RR}$ and $\B_\C$ is formed by a wandering cut-continuum
$Q$. So we may assume that $E$ is a gap-leaf of $\clam$ which is not a leaf of
$\lam$. If $E$ is infinite, then by Lemma~\ref{sep-gap}(3) $F_\C^\G(E)$
contains a point $p'\in \an$. Since $E\ne \tg$, then $p'\ne p$. However, by the
construction $F_\C^\G(E)\subset F$ and by Lemma~\ref{sep-gap}(3) the set
$F_\C^{\RR}(p)$ contains a unique point of $\an$, namely $p$. This
contradiction implies that $E$ is finite. Then by Lemma~\ref{sep-gap}(2) there is a
periodic point $x\in \RR$ such that $E\subset G(x)$. Since $E\subset G$ and  by
the construction it is easy to see that $E$ is  a boundary leaf of $G$.

Suppose that a periodic leaf $\ell\subset \bd(G)$ or a periodic point $\al\in
G'$, which is not contained in  $\bd(\tg)$, is not a limit of elements of
$\lam$ from within $G$. Then there must exist a periodic gap of $\clam$
contained in $G$ and containing $\ell$ (or $\al$) in its boundary. This
contradicts the previous paragraph and shows that all periodic points or leaves
in $\bd(G)$ which do not come from $\bd(\tg)$, are limits of elements of $\lam$
from within $G$.

Finally, it is easy to see that all periodic leaves in $\bd(G)$ which do not
come from $\bd(\tg)$, are pairwise disjoint; indeed, elements of $\lam$, which
approach a periodic leaf in $\bd(G)$, cut it off other leaves in $\bd(G)$, that
implies the desired and proves Claim A. \hfill \qed

Since $\tg\subsetneqq G$, there are leaves of $\bd(\tg)$ \emph{inside} $G$. By
Lemma~\ref{no-crit-leaf-2} they are (pre)periodic. Let $\ell\subset \bd(\tg)$ be a
(pre)periodic leaf inside $G$; \emph{we show that $\ell$ can be assumed to have
fixed endpoints}. Indeed, $\ell$ is a limit leaf of sets $\ch(A(\hq_i))$ where
$\hq_i$ are elements of the grand orbit of $Q$. By the properties of such sets,
established in the beginning of the proof, all sets $A(\hq_i)\subset G'$
consist of non-endpoints of complementary to $G'$ arcs. Therefore and by
continuity of $\si$, $\ell$ can never be mapped to the boundary leaves of $G$.
Replacing $\ell$ by its appropriate image and using a power of $\si$, we may
assume that $\ell$ has fixed endpoints.

Let $\tq\in \G(Q)$ be such that the convex hull $\ch(A(\tq))\subset G$ is close
to $\ell$. Then $A(\tq)$ is repelled away from $\tg$ to a component of $G\sm
\ch(A(\tq))$ disjoint from $\tg$. Denote by $Z$ this component united with
$\ch(A(\tq))$. Let us now construct a set $\tz$. Denote by $Y_1, \dots, Y_k$
the fixed leaves in $\bd(Z)$ and the fixed points in $\bd(Z)$ which are not
endpoints of complementary to $G'$ arcs. By Claim A the sets $\{Y_i\}$ are pairwise
disjoint. Choose pairwise disjoint elements $\ts_i$ of $\lam$ contained in $Z$
very close to each $Y_i$ (this is possible by Claim A). Let $F_i$ be the
component of $Z\sm \ts_i$ containing $Y_i$. Set $\tz=Z\sm \bigcup F_i$. By
choosing $\ts_i$ very close to $Y_i$, we may assume that all $F_i$ are pairwise
disjoint with each other and with $\ch(A(\tq))$ and that their images are
contained in $\tz$.

Let $r:G\to \tz$ be a retraction. Define a new map $g=r\circ \si^*:\tz\to \tz$.
Let $a\in \tz$ be a $g$-fixed point. Then it is easy to see that by the
construction $a\nin \bd(\tz)$. Therefore $a$ is actually $\si^*$-fixed. If $a$
belongs to the interior of a gap of $\clam$, then this gap must be
$\si^*$-invariant which contradicts Claim A. If $a$ belongs to a leaf of
$\clam$, then, since by the construction this leaf cannot belong to $\lam$, it
follows that there exists a fixed gap-leaf of $\clam$ containing $a$. This
again contradicts Claim A.
\end{proof}

Now assume that $\B_\C$ is a wandering collection of non-(pre)critical
cut-continua. Lemma~\ref{no-wand} implies the next corollary.

\begin{cor}\label{wrinr}
Every element of $\lam^{\B_\C}$ is contained in a finite wandering gap of
$\clam^\RR$. The infinite gaps of $\clam^\RR$ and $\clam^{\B_\C, \RR}$ are the
same. Gap-leaves of $\clam^\RR$, and gap-leaves of $\clam^{\B_\C, \RR}$
disjoint from leaves of $\lam^{\B_\C}$, are the same. Any limit leaf of
$\clam^{\B_\C}$ is a limit leaf of $\clam^\RR$ from the same side. All-critical
gap-leaves of $\clam^{\B_\C}$ are all-critical gap-leaves of $\clam^\RR$.
\end{cor}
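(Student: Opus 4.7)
The proof breaks into five assertions. The first is the key technical statement and rests on Theorem~\ref{no-wand}; the remaining four are combinatorial bookkeeping consequences of it together with the unlinking properties from Lemma~\ref{inters} and the rigidity of $\lam^\RR$ from Lemma~\ref{no-crit-leaf-2}.

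For the first assertion, fix $\ell\in\lam^{\B_\C}$, which by construction is a boundary chord of $G(Q)$ for some $Q\in\G(\B_\C)$, a wandering non-(pre)critical cut-continuum. Since $Q$ is disjoint from every point $y\in\RR$ (as $Q$ wanders and $y$ is (pre)periodic), Lemma~\ref{inters} shows $A(Q)$ is unlinked with $A(y)$ for every such $y$, so $G(Q)$ lies in a gap $H$ of $\clam^\RR$. I then rule out two cases. If $H$ were finite and (pre)periodic, the vertex set $G'_H$ would consist of (pre)periodic angles, forcing $A(Q)\subset G'_H$ to be (pre)periodic, contradicting the wandering of $Q$. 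If $H$ were infinite, by Kiwi's theorem some iterate $\si^{*n}(H)$ is periodic, and by Lemma~\ref{sep-gap}(3) equals $F_\di^\RR(p)$ for some $p\in\an$; the correspondence developed in Subsection~\ref{planlam} between disk wedges and planar wedges then forces $P^n(Q)\subset F_\C^\RR(p)\subset F_\C^\RR(\orb p)$, a wandering non-(pre)critical cut-continuum intersecting a CS-set, contradicting Theorem~\ref{no-wand}. Therefore $H$ is a finite wandering gap of $\clam^\RR$, proving the first assertion.

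The second and third assertions are now routine. Since $\lam^{\B_\C,\RR}=\lam^\RR\cup\lam^{\B_\C}$, each gap of $\clam^{\B_\C,\RR}$ is contained in a gap of $\clam^\RR$. By the first assertion no leaf of $\lam^{\B_\C}$ meets any infinite gap of $\clam^\RR$, so infinite gaps of $\clam^\RR$ survive unchanged to $\clam^{\B_\C,\RR}$, and conversely an infinite gap of $\clam^{\B_\C,\RR}$ can come only from an infinite gap of $\clam^\RR$. For the third assertion, a gap-leaf of $\clam^\RR$ disjoint from $\lam^{\B_\C}$ is untouched when $\lam^{\B_\C}$ is adjoined, while a gap-leaf of $\clam^{\B_\C,\RR}$ disjoint from $\lam^{\B_\C}$ has its boundary in $\clam^\RR$ and its interior disjoint from $\lam^\RR$, hence is a gap-leaf of $\clam^\RR$.

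For the fourth assertion, let $\ell$ be a limit leaf of $\clam^{\B_\C}$ from side $S$ and pick $\ell_i\in\lam^{\B_\C}$ with $\ell_i\to\ell$ from $S$; by the first assertion each $\ell_i$ bounds a polygon $G(Q_i)$ lying in some finite wandering gap $H_i$ of $\clam^\RR$. The key observation is that only finitely many distinct $Q_i$ can sit inside a single $H$: $A(Q_i)\subset G'_H$, the vertex set $G'_H$ is finite, and the sets $A(Q_i)$ are pairwise distinct because two disjoint wandering continua cannot share a principal set $\pr(\al)$. Passing to a subsequence the $H_i$ are distinct, hence the $H_i$ themselves accumulate on $\ell$ from $S$; their boundary leaves on the $S$-side lie in $\clam^\RR$, and each such boundary leaf is in turn accumulated from outside $H_i$ by elements of $\lam^\RR$, producing leaves of $\lam^\RR$ converging to $\ell$ from $S$. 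For the fifth assertion, let $G$ be an all-critical gap-leaf of $\clam^{\B_\C}$; by Lemma~\ref{no-crit-leaf}(1) its boundary leaves are limit leaves of $\clam^{\B_\C}$, and by the fourth assertion they are limit leaves of $\clam^\RR$. If some $m\in\lam^\RR$ lay in the interior of $G$, then $\si^*(m)\subset\si^*(G)$ would be a single point and $m$ would be a critical leaf of $\lam^\RR$, contradicting Lemma~\ref{no-crit-leaf-2}(1); hence $G$ is a gap-leaf of $\clam^\RR$, and $\si(G)$ remains a point, so $G$ is all-critical in $\clam^\RR$ as well. The main obstacle throughout is the first assertion, specifically the precise translation of the disk-side inclusion $G(Q)\subset H$ into the planar-side inclusion $P^n(Q)\subset F_\C^\RR(p)$ needed to apply Theorem~\ref{no-wand}.
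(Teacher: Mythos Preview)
Your proof is correct and follows essentially the same route as the paper's. Your treatment of the first assertion is in fact more explicit than the paper's: where the paper simply writes ``By Theorem~\ref{no-wand} this gap of $\clam^\RR$ is finite'' and ``Obviously, the finite gaps \dots\ are wandering themselves,'' you carry out the case analysis (infinite versus finite (pre)periodic) and correctly identify the disk-to-plane translation $G(Q)\subset H\Rightarrow P^n(Q)\subset F_\C^\RR(p)$ as the step that needs Lemma~\ref{basic1} and the wedge correspondence. One small point: in your third assertion you assert that a gap-leaf of $\clam^{\B_\C,\RR}$ disjoint from $\lam^{\B_\C}$ ``has its boundary in $\clam^\RR$''; a boundary leaf could a priori be a limit of leaves of $\lam^{\B_\C}$, and you need the first assertion (those leaves sit in gaps of $\clam^\RR$, so their limit lies in $\clam^\RR$) to close this---the paper inserts exactly this sentence, and your fourth assertion supplies it as well.
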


\begin{proof}
First we show that \emph{every element of $\lam^{\B_\C}$ is contained in a
finite wandering gap of $\clam^\RR$}. Clearly, every element of $\lam^{\B_\C}$
is contained in a gap of $\clam^\RR$. By Theorem~\ref{no-wand} this gap of
$\clam^\RR$ is finite. Thus, if we add $\lam^{\B_\C}$ to $\clam^\RR$, we can
possibly break some finite gaps of $\clam^\RR$ into smaller gaps but otherwise
we will not change $\clam^\RR$. Obviously, the finite gaps of $\clam^\RR$,
containing wandering gaps from $\lam^{\B_\C}$, are wandering themselves. This
implies that the \emph{infinite gaps of $\clam^\RR$ and $\clam^{\B_\C, \RR}$
are the same}.

\emph{Let us prove that gap-leaves of $\clam^\RR$ disjoint from leaves of
$\lam^{\B_\C}$, and gap-leaves of $\clam^{\B_\C, \RR}$ disjoint from leaves of
$\lam^{\B_\C}$, are the same}. Clearly, a gap-leaf of $\clam^\RR$, disjoint
from leaves of $\lam^{\B_\C}$, remains a gap-leaf of $\clam^{\B_\C, \RR}$. Now,
let $G$ be a gap-leaf of $\clam^{\B_\C, \RR}$ disjoint from leaves of
$\lam^{\B_\C}$. Then its boundary leaf $\ell$ is either from $\clam^\RR$, or is
a limit leaf of $\lam^{\B_\C}$. In the latter case the elements of
$\lam^{\B_\C}$ which approach $\ell$ are contained in gaps or leaves of
$\clam^\RR$ (by the already proven). Hence in any case $\ell\in \clam^\RR$. So,
\emph{all} leaves in $\bd(G)$ belong to $\clam^\RR$ and $G$ is a gap-leaf of
$\clam^\RR$.

\emph{Next we show that any limit leaf $\ell'$ of $\clam^{\B_\C}$ is a limit leaf
of $\clam^\RR$ from the same side}. By the above, leaves of $\lam^{\B_\C}$,
converging to $\ell'$, are contained in finite gap-leaves of $\clam^\RR$; we
may assume that these gap-leaves of $\clam^\RR$ are all distinct. Hence $\ell'$
can be approximated from this side by distinct gap-leaves of $\clam^\RR$, and
therefore it can be approximated from the same side by leaves of $\lam^\RR$.
The last claim of the lemma concerning all-critical gap-leaves now follows from
this and Lemma~\ref{no-crit-leaf}.
\end{proof}

Thus, with the help of Theorem~\ref{no-wand} we have established a relation
between the geometric prelaminations $\lam^\RR, \clam^\RR$ and $\lam^{\B_\C},
\clam^{\B_\C}$.

\subsection{Recurrent critical points in CS-sets}\label{recinhe}

In this subsection we show that each CS-set contains a recurrent critical point
whose limit set contains the mother hedgehog associated to the CS-set. To this
end we need a result of \cite{bm} (in \cite{bm} it was used to study Milnor
attractors of rational functions with dendritic critical limit sets).

Let $g$ be a rational function. For a Jordan disk $V$ with a pullback $W$, let
the \emph{recurrent} criticality of $W$ be the number of \emph{recurrent}
critical points (with multiplicities) in the pullbacks of $V$ all the way to
$W$. Given two concentric round disks $D_1\subset D_2$ of radii $r_1<r_2$ say
that $D_1$ is \emph{$k$-inside $D_2$} if $r_1/r_2<k$. Let $\e>0, 0<k<1,
\gamma>0, r\in \mathbb N$. Then by Theorem 3.5~\cite{bm} there exists $\da>0$
with the following properties. Let $V'$ be a round disk of diameter less than
$\da$, $\ga$-distant from parabolic and attracting points. If the recurrent
criticality of a $g^N$-pullback $V''$ of $V'$ is $r$, then for any disk
$U'\subset V'$ which is $k$-inside $V'$, the diameter of any $g^N$-pullback
$U''\subset V''$ of $U'$ is less than $\e$ and the criticality of $g^N|_{U''}$
is at most $d+r$. A standard argument, based upon the Shrinking
Lemma~\cite{lm}, then implies that \emph{the diameter of pullbacks $U''$ of
$U'$ tends to zero uniformly with respect to $N$}.

Theorem~\ref{mh1} uses notation from Subsection~\ref{hedge} and ideas of
\cite{chi06}. It implies Theorem~\ref{intro-assoc}(1) for connected Julia sets.

\begin{thm}\label{mh1}
Let $p$ be a CS-point and $\orb p$ be its cycle. Then there exists a recurrent
critical point $c_{\orb p}$, weakly non-separated from a point $q\in \orb p$,
such that $\bd(M_p)\subset \om(c_{\orb p})$. Distinct CS-cycles correspond to
distinct recurrent critical points so that the number of CS-cycles is less than
or equal to the number of recurrent critical points of $P$.
\end{thm}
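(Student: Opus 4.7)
The plan is to produce $c_{\orb p}$ via a transitive point on $\bd(M_p)$ combined with Ma\~n\'e's theorem, to show that this critical point must lie in the CS-set $F_\C^\RR(\orb p)$ (hence is weakly non-separated from some $q\in\orb p$), and to deduce injectivity of the assignment $\orb p\mapsto c_{\orb p}$ from Kiwi's rational ray pair separation of distinct indifferent cycles.

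First I would use the recurrence and transitivity of $P$ on $M_p$ from Subsection~\ref{hedge} to pick a point $x\in\bd(M_p)$ whose forward orbit is dense in $\bd(M_p)$. Ma\~n\'e's theorem (in the form given in \cite{ts00,bm}) then supplies a recurrent critical point $c:=c_{\orb p}$ with $x\in\om(c)$. Since $\om(c)$ is closed and forward invariant and contains the forward orbit of $x$, we obtain $\bd(M_p)\subset\om(c)$.

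Second I would verify $c\in F_\C^\RR(\orb p)$, which by the fiber description in Subsection~\ref{planlam} is equivalent to $c$ being weakly non-separated from some $q\in\orb p$. The mother hedgehog $M_p$ is a connected subset of $K_P$ whose only periodic point is the CS-point $p$; in particular $M_p$ contains no point of $\RR$, while the rays comprising any $\RR$-cut lie outside $K_P$. Hence no $\RR$-cut meets $M_p$, so $M_p$ sits on a single side of every such cut, and therefore $M_p\subset F_\C^\RR(\orb p)$. Since $\om(c)\supset M_p$, the orbit of $c$ accumulates on the fiber; combining the forward invariance $P(F_\C^\RR(\orb p))=F_\C^\RR(\orb p)$ coming from Lemma~\ref{dynam} with the uniform shrinking-of-pullbacks estimate of Theorem~3.5 of \cite{bm} applied to small round disks around a transitive point of $M_p$, sufficiently deep iterates of $c$ actually land inside (not merely near) the CS-set. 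Forward invariance then propagates this to the entire tail of the critical orbit, and recurrence of $c$ together with the closedness of $F_\C^\RR(\orb p)$ finally forces $c$ itself into the CS-set.

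Third I would argue distinctness. If the same recurrent critical point $c$ were associated to two distinct CS-cycles $\orb p_1$ and $\orb p_2$, then by Kiwi's theorem \cite{kiwi00} some rational ray pair $\mathcal R$ weakly separates $\orb p_1$ from $\orb p_2$; but then $c$ would need to lie on the same side of $\mathcal R$ as some $q_1\in\orb p_1$ and as some $q_2\in\orb p_2$, which is impossible since $q_1$ and $q_2$ are on opposite sides. Hence distinct CS-cycles are assigned distinct recurrent critical points, and the number of CS-cycles does not exceed the number of recurrent critical points of $P$.

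The main obstacle is the second step: upgrading Ma\~n\'e's accumulation statement into the genuine containment $c\in F_\C^\RR(\orb p)$. The fiber typically has empty interior, so mere accumulation of the orbit on $M_p\subset F_\C^\RR(\orb p)$ does not by itself place iterates inside the fiber; the quantitative shrinking of pullbacks from \cite{bm} is what makes it possible to conclude that iterates landing close enough to $M_p$ are trapped inside the CS-set, after which forward invariance and recurrence complete the argument.
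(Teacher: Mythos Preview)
Your second step has a genuine gap, and the patch you propose does not close it. Applying Ma\~n\'e's theorem globally produces \emph{some} recurrent critical point $c$ with $x\in\om(c)$, but gives no control over which fiber $c$ lies in. Your attempt to force $c\in F_\C^\RR(\orb p)$ via Theorem~3.5 of \cite{bm} does not work: that theorem bounds pullback diameters only when the recurrent criticality along the pullback is bounded, and precisely because $c$ is recurrent with $\om(c)\ni x$, pullbacks of small disks around $x$ taken along the orbit of $c$ have \emph{unbounded} recurrent criticality. Even setting this aside, shrinking pullbacks would at best show that certain preimage-disks are small; it would not place any iterate of $c$ inside the fiber, which typically has empty interior. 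So the chain ``accumulation $\Rightarrow$ some iterate lands inside $\Rightarrow$ forward invariance $\Rightarrow$ recurrence forces $c$ in'' never gets started.

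The paper avoids this entirely by reversing the order of operations: it \emph{first} localizes. Using finitely many rational cuts $\tai(x)$, $x\in\RR$, and their preimages, it carves out a component $A$ of $\C\sm P^{-n}(X)$ containing $p$ with two properties: every critical point of $P$ in $A$ already lies in $F_\C^\RR(p)$, and $A$ contains no other Cremer point or Fatou component (this is where Kiwi's separation \cite{kiwi00} enters). Because $A$ is cut out by pieces of the backward orbit of $\RR$, pullbacks of small disks near $M_p$ stay inside $A$; the version of Ma\~n\'e in \cite{bm} can then be applied \emph{relative to $A$}, so the only recurrent critical points that enter the argument are those in $A$, hence automatically in the fiber. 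No post-hoc placement is needed. There is also a smaller issue in your first step: transitivity from Subsection~\ref{hedge} is asserted on each hedgehog $H$, not on $\bd(M_p)$ itself, so a single transitive point on $\bd(M_p)$ need not exist; the paper handles this with a separate pigeonhole-and-union argument over hedgehogs once the critical points are already known to lie in the finite set $E\subset F_\C^\RR(p)$.
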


\begin{proof}
Assume that $P(p)=p$ and, by abuse of notation, $\orb p=p$. By definition the
set of all points which are weakly non-separated from $p$ is $F^\RR_\C(p)\cap
K_P$, so we need to find the desired critical point in $F^\RR_\C(p)$. Clearly,
$F^\RR_\C(p)$ contains $M_p$: if there are hedgehogs, it follows from the fact
that $F^\RR_\C(p)$ contains all hedgehogs at $p$ (recall, that hedgehogs
contain no periodic points distinct from $p$), and if $M_p=\ol{\Da}$ is the
closure of a Siegel disk $\Da$, then it follows from the fact that $\Da\subset
F^\RR_\C(p)$.

We need the following construction which begins with the choice of constants.
Choose $N$ so that if $X$ is the union of sets $\tai(x)$ over the set of
periodic points of $P$ of period less than $N$ then there exists $n$ such that
the following holds:
\begin{enumerate}

\item if $A$ is the component of $\C\sm P^{-n}(X)$ containing $p$ then all
    critical points of $P|_A$ belong to $F^\RR_\C(p)$; and

\item each component of $\C\sm P^{-n}(X)$ contains at most one Cremer point
    or Fatou component.

\end{enumerate}
Clearly, if $N$ is big then (1) follows by the definition of $F^\RR_\C(p)$
while (2) follows from \cite{kiwi00}. By definition, $P$ is a proper map of
$A$ onto $P(A)$. Moreover, $\RR$ is invariant, and so if $U\subset A$ is a Jordan disk, then its pullbacks are
either contained in $A$, or disjoint from $A$. Thus, if we choose a backward
orbit of $x\in A$ which consists of points of $A$, then all corresponding
pullbacks of $U$ are contained in $A$.

Let the set of recurrent critical points of $P$ in $A$ be $E$; then by (1) we
have $E\subset F^\RR_\C(p)$. Let the union of their limit sets be $\om(E)$. By
way of contradiction suppose that $\bd(M_p)\not\subset \om(E)$. Choose a
non-parabolic point $x\in \bd(M_p) \sm \om(E)$. By Theorem~3.5 \cite{bm},
described in the beginning of Subsection~4.2, this implies that a small
neighborhood of $x$ has pullbacks inside $A$ which converge to $0$ in diameter
uniformly with respect to the order of the pullback (alternatively, one can
refer here to a similar result of \cite{ts00}).

However, this contradicts the fact that $P$ on a hedgehog (or, in the case when
$M_p=\ol{\Da}$ is the closure of a Siegel disk, on the closed invariant Jordan
disk contained in $\ol{\Da}$) is a recurrent diffeomorphism (see
Subsection~\ref{hedge}). The contradiction implies that $\bd(M_p)\subset
\om(E)$.

\emph{Let us show that then there exists at least one critical point $c_p$ with
$\bd(M_p)\subset \om(c_p)$}. Consider first the case when there are no true
hedgehogs and $M_p=\ol{\Da}$ where $\Da$ is a Siegel disk. Then there exists a
point $x\in \bd(\Da)$ with a dense orbit in $\bd(\Da)$ (see, e.g.,
\cite{her85}). It is now enough to choose a point $c_p\in E$ such that $x\in
\om(c_p)$. Now, suppose that there are true hedgehogs. Since the map is
transitive on each hedgehog, similarly to the above for each hedgehog $H$ there
exists at least one critical point $c_H\in E$ such that $H\subset \om(c_H)$.
By way of contradiction assume that there is no critical point $c\in E$ such that
$\bd(M_p)\subset \om(c)$. This means that for each critical point $c\in E$ there exists
a hedgehog $H_c\ni p$ such that $H_c\not\subset \om(c)$. Consider the set
$H'=\bigcup_{c\in E}H_c$ contained (by construction) in $\bd(M_p)$. Since all hedgehogs are
invariant and by the Maximum Principle, the set $H'$ is forward invariant and onto.

We claim that there exists a hedgehog $H\supset H'$. Indeed, consider the case when $p$ is a Cremer
fixed point (the case when $p$ is Siegel is similar). Then we claim that the set $H'$
is a  continuum which does not separate the plane. Indeed, suppose otherwise. Since
each hedgehog is non-separating, this can only happen if there exists a bounded Fatou domain $U$
complementary to $H'$. By Sullivan \cite{sul85} we may assume that $U$ is periodic.
Then by Kiwi \cite{kiwi00} there exists a repelling or parabolic point $z$ and
two rays landing at $z$ such that their union separates $U$ from $p$. However this would imply
that $p\in H'$ contradicting the fact that hedgehogs do not contain repelling or
parabolic periodic points. Thus, $H'$ is a   continuum which does not separate the plane.
We then can choose a tight topological disk $V$ containing $H'$ and not containing any
critical points. Clearly, the hedgehog $H=H(U)$ generated by $U$ contains $H'$ as desired.

It remains to observe that by the above there exists a critical point $c\in E$ such that
$E\supset H\supset H'$ while on the other hand the construction implies that this is impossible.
This contradiction shows that we can find a
critical point $c_p\in E\subset F^\RR_\C(p)$ with $M_p\subset \om(c_p)$.
Observe that by (2) distinct fixed CS-points $p$ correspond to distinct
recurrent critical points. The result for periodic CS-points can be proven
similarly. Summing up over all CS-cycles we get the last claim of the theorem.
\end{proof}

\section{Main theorem for connected Julia sets}\label{maint}

Section~\ref{maint} contains the proof of the main theorem in the connected
case (see Theorem~\ref{maintech}). We find an upper bound on the number of
dynamical phenomena such as non-repelling cycles and wandering
non-(pre)critical branch continua which inevitably has to depend on the degree
of the polynomial. We also suggest a bound which depends on specific types of
critical points of a map. This is reflected in Theorem~\ref{intro-assoc} and
Theorem~\ref{intro-ineq}, where we speak \emph{only} of weakly recurrent
critical points and escaping critical points (the latter does not apply in the
case of  connected Julia sets). As we will see, the critical points which we
need to use can be drawn from an even more narrow class.

Below we first study all-critical recurrent gap-leaves of $\clam^{\RR}$. Note
that a disk fiber with a critical leaf on its boundary cannot be a leaf of
$\lam^{\B_\C, \RR}$ because there are no critical leaves in $\lam^{\B_\C, \RR}$.
Hence by Lemma~\ref{basic2} a disk fiber of $\clam^{\B_\C, \RR}$ with a critical
leaf on its boundary is a gap-leaf of $\clam^{\B_\C, \RR}$; by
Lemma~\ref{no-crit-leaf} this disk fiber is an all-critical gap-leaf.

\begin{lem}\label{wandlem1}
Let $G_1, \dots, G_l$ be the all-critical gap-leaves of $\ol{\lam^\RR}$. Then the
following properties hold.

\begin{enumerate}

\item For each $i$ and $m$, the sets $F^\RR_\C((\si^*)^m(G_i))$ are
    disjoint from impressions of all angles not from $(\si^*)^m(G_i')$ and
    do not contain preimages of points of $\an$; moreover,
    $P^k(F^\RR_\C(G_i))\cap F^\RR_\C(G_i)=\0$ for any $k>0$. In particular:

    \begin{enumerate}

    \item[(a)] $F^\RR_\C(G_i)$ contains no periodic points, and if
        $G_i$ is (pre)periodic then $F^\RR_\C(G_i)\cap K_P$ is
        degenerate;

    \item[(b)] any point of any image of $F^\RR_\C(G^i)$ is
        weakly separated from any point outside that image.

    \end{enumerate}

\item For each $i$ there is at least one  critical point in $F^\RR_\C(G_i)$.

\item $G_i$ is recurrent if and only if any $x\in F_\C^\RR(G_i)$ is weakly
    recurrent (in particular, in this case $x$ is not (pre)periodic).
    If $G_i, G_j$ have distinct grand orbits then any two points
    from the grand orbits of $F_\C^\RR(G_i), F_\C^\RR(G_j)$ can be separated
    by a set $\tai(a), a\in \RR$.

\item If $x_i\in F_\C^\RR(G_i), x_j\in F_\C^\RR(G_j)$ with $G_i, G_j$
    recurrent, then we have $\om(G'_i)=\om(G'_j)$ if and only if for any
    $a\in\om(x_i)$ there is $b\in\om(x_j)$ such that $a$ and $b$ are
    weakly non-separated and vice versa (in this case call $\om(x_i)$ and
    $\om(x_j)$ \emph{weakly equivalent}).
\end{enumerate}

\end{lem}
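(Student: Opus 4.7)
The plan is to prove all four items via the plane/disk correspondence of Lemma~\ref{dynam}, together with the structural consequences for an all-critical gap-leaf supplied by Lemma~\ref{no-crit-leaf}(1) (namely that $\si(G_i)$ is a single point not in any gap or non-degenerate leaf of $\clam^\RR$, that $(\si^*)^n(G_i)\cap G_i=\emptyset$ for $n\ge 1$, and that every boundary leaf of $G_i$ is a two-sided limit leaf of $\clam^\RR$ and therefore \emph{not} in $\lam^\RR$ by Lemma~\ref{no-crit-leaf-2}(1)) and the wedge/impression bookkeeping of Lemma~\ref{finmany}.

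For item~(1), the first clause is Lemma~\ref{finmany}(1) applied with the fact that no boundary leaf of $G_i$ belongs to $\lam^\RR$; the general $m$ case follows by Lemma~\ref{dynam}. For the absence of preimages of $p\in\an$: if $P^n(z)=p$ with $z$ in the fiber, then $p\in F^\RR_\C((\si^*)^{m+n}(G_i))$ by Lemma~\ref{dynam}, but $F^\RR_\di(p)$ is an infinite periodic gap by Lemma~\ref{sep-gap}(3), while $(\si^*)^{m+n}(G_i)$ is a single point, which by Lemma~\ref{basic1} means the ambient planar fiber contains only one ray, contradicting that $F^\RR_\C(p)$ contains infinitely many. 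Disjointness $P^k(F^\RR_\C(G_i))\cap F^\RR_\C(G_i)=\emptyset$ then follows from $(\si^*)^k(G_i)\cap G_i=\emptyset$: any common point lies on a ray (whose angle would be in both disjoint bases) or in $K_P$ (hence in an impression of an angle, again forced into both disjoint bases by the first clause). Item~(a) is immediate from this: no periodic points by $P$-wandering, and if $G_i$ is (pre)periodic then some iterate is a single periodic angle whose planar fiber meets $K_P$ in a point (Lemma~\ref{sep-gap}(2)), forcing the continuum $F^\RR_\C(G_i)\cap K_P$, contained in a finite $P$-preimage set, to be degenerate. Item~(b) follows from the definition of a planar fiber: any $w$ outside $F^\RR_\C((\si^*)^m(G_i))$ is excluded by some closed planar wedge $\bw^\ell_\C$ with $\ell\in\lam^\RR$, so the rational ray pair $\cu^\ell$ weakly separates $z$ and $w$.

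For item~(2), I apply Lemma~\ref{l-nonpcr} to $W=F^\RR_\C(G_i)\cap K_P$. By Lemma~\ref{basic1}, $W$ is a non-separating continuum; it contains the principal sets of $|G_i'|\ge 2$ distinct rays, so by Lemma~\ref{wand-fin1} it is a cut-continuum of $J_P$; by~(1) its forward orbit is pairwise disjoint, and since there are only finitely many critical points in $K_P$, some $P^N(W)$ is critical-free onward, so the grand orbit $\G(W)$ is well-defined. Now the $|G_i'|$ rays in $\tai(W)$ all map under $P$ to the single ray at angle $\si(G_i')$, so $P|_{\tai(W)}$ is not injective, and Lemma~\ref{l-nonpcr} provides a critical point in $W\subset F^\RR_\C(G_i)$. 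This is the main obstacle: checking the two non-trivial hypotheses (wandering cut-continuum whose grand orbit is defined), which is exactly where item~(1) together with the finiteness of critical points in $K_P$ does the work.

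For items~(3) and~(4), Definitions~\ref{placut} and~\ref{plawed} together with Lemma~\ref{dynam} give that ``$P^n(x)$ and $x$ are not weakly separated by a leaf $\ell\in\lam^\RR$'' is equivalent to ``$(\si^*)^n(G_i)$ is not separated from $G_i$ by $\ell$.'' Thus the natural recurrence condition on $G_i$ (some vertex of $G_i'$ is $\si$-recurrent, equivalently $\{(\si^*)^n(G_i)\}$ accumulates on $G_i'$) translates exactly into weak recurrence of any $x\in F^\RR_\C(G_i)$; non-(pre)periodicity of $x$ comes from~(1a). Distinct disk grand orbits of $G_i,G_j$ are separated by some $\ell\in\lam^\RR$, so the associated cut $\tai(a)$, $a\in\RR$, separates the corresponding planar grand orbits. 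For~(4), Lemma~\ref{dynam} places $\om(x_i)\subset \bigcup_{\theta\in\om(G_i')}F^\RR_\C(\theta)$, and two points of the distinct $\om$-limit sets lying in the same $F^\RR_\C(\theta)$ is precisely weak non-separation; hence $\om(G_i')=\om(G_j')$ iff $\om(x_i)$ and $\om(x_j)$ are weakly equivalent in the sense stated.
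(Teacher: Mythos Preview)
Your argument is correct and follows the same route as the paper: Lemma~\ref{no-crit-leaf}(1) plus Lemma~\ref{finmany}(1) for item~(1), Heath's theorem \cite{hea96} for item~(2), and the disk/plane wedge correspondence via Lemma~\ref{dynam} for items~(3)--(4). The one packaging difference worth noting is in item~(2): the paper applies \cite{hea96} directly to $P\colon\tai'(F^\RR_\C(G_i))\to\tai'(F^\RR_\C(\si^*(G_i)))$, whereas you route through Lemma~\ref{l-nonpcr}, which technically requires that $W=F^\RR_\C(G_i)\cap K_P$ be an element of its own grand orbit---this is true (a wandering continuum in $K_P$ cannot cross any cut $\cu^\ell$, $\ell\in\lam^\RR$, since the center lies in $\RR$), but you skipped the verification and the direct citation avoids it.
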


\begin{proof}
(1) \emph{We show that, for any $i$ and $m$, $F^\RR_\C((\si^*)^m(G_i))$
contains no preimages of points of $\an$}.  By Lemma~\ref{no-crit-leaf} all
boundary leaves of $G_i$ are limit leaves and all $\si^*$-images of $G_i$
(which are points because $G_i$ is all-critical) are separated from the rest of
the circle by sequences of leaves of $\lam^\RR$. By Lemma~\ref{finmany}(1)
$F_\C^\RR((\si^*)^m(G_i))$ is disjoint from impressions of all angles not from
$(\si^*)^m(G_i')$. So, if $F^\RR_\C((\si^*)^m(G_i))$ contains a point of $\an$,
then by Lemma~\ref{sep-gap}(3) there are infinitely many angles with principal
sets in $F^\RR_\C((\si^*)^m(G_i))$, a contradiction. Sets
$F^\RR_\C((\si^*)^m(G_i))$ are non-separating with no preimages of Cremer
points.

\emph{Let us prove the rest of \rm{(1)}}. By Lemma~\ref{no-crit-leaf}(1)
$\si^s(G_i)\cap G_i=\0, s>0$. By the above, Lemma~\ref{finmany}(1), and
Lemma~\ref{dynam}, $P^k(F_\C^\RR(G_i))\cap F_\C^\RR(G_i)=\0$ for any $k>0$. The
claims (1a) and (1b) now follow easily.

(2) Since by (1) $\tai'(F_\C^\RR(\si^*(G_i)))$ is a tree-like continuum, and
$P|_{\tai'(F_\C^\RR(G_i))}:\tai'(F_\C^\RR(G_i))\to \tai'(F_\C^\RR(\si^*(G_i)))$
is not one-to-one, by \cite{hea96} there are critical points in
$F_\C^\RR(G_i)$.

(3) Both claims follow easily from the definitions and
Lemma~\ref{no-crit-leaf}.

(4) \emph{Let us prove that if $\om(G'_i)=\om(G'_j)$ then the sets $\om(x_i),
\om(x_j)$ are weakly equivalent.} If $a\in \om(x_i)$, then $P^{s_n}(x_i)\to a$
for a sequence $n_i\to \iy$. Assume that $\si^{s_n}(G'_i)\to \al\in
\om(G'_i)=\om(G'_j)$ and choose a sequence $t_n$ such that $\si^{t_n}(G'_j)\to
\al$. Since $G_i$ is recurrent, $\si(G'_i)\in \om(G'_j)$ and hence we may
assume that $\si^{t_n}(G'_j)$ approach $\al$ from the same side as
$\si^{s_n}(G'_i)$. By compactness we may assume that $P^{t_n}(x_j)\to b$. Let
us show that $a$ and $b$ are weakly non-separated. Indeed, otherwise there
exists a cut $\cu^\ell, \ell=\be\ga\in \lam^\RR$ which separates $a$ from $b$.
Choose $N$ so large that $P^{s_n}(x_i)\in W^\ell_\C(a)$ and $P^{t_n}(x_j)\in
W^\ell_\C(b)$ for $n\ge N$. Since $a$ and $b$ are separated by $\cu^\ell$, the
open planar wedges $W^\ell_\C(a)$ and $W^\ell_\C(b)$ are disjoint, and hence
disk wedges $W^\ell_\di(a)$ and $W^\ell_\di(b)$ are disjoint.

Since all points of $P^{s_n}$-images of rays with arguments from $G'_i$ are
weakly non-separated from $P^{s_n}(x_i)$, the entire set $P^{s_n}(F_\C(G_i))$
is contained in $\bw^\ell_\C(a)$ and hence $\si^{s_n}(G'_i)$ belongs to the
disk wedge $W^\ell_\di(a)$. Analogously, the angles $\si^{t_n}(G'_j)$ belong to
the disk wedge $W^\ell_\di(b)$. This contradicts the fact that
$\si^{t_n}(G'_j)$ approach $\al$ from the \emph{same} side as $\si^{s_n}(G'_i)$
and proves that sets $\om(x_i), \om(x_j)$ are weakly equivalent.

\emph{Let us now prove that if the limit sets $\om(x_i), \om(x_j)$ are weakly
equivalent then $\om(G'_i)=\om(G'_j)$.} Suppose that $\om(G'_i)\not \subset
\om(G'_j)$ while sets $\om(x_i), \om(x_j)$ are weakly equivalent. Since both
$G_i$ and $G_j$ are recurrent, $\om(G'_i)\not \subset \om(G'_j)$ implies that
$\si(G'_i)\nin \om(G'_j)$. Choose a sequence $\Sigma$ of leaves of $\lam^\RR$
which converge to $\si(G'_i)$ separating it from the rest of $\uc$. Then leaves
of $\Sigma$ eventually separate $\si(G'_i)$ from $\om(G'_j)$ which implies that
$P(x_i)$ cannot be weakly non-separated from a point of $\om(x_j)$, a
contradiction with $\om(x_i), \om(x_j)$ being weakly equivalent.
\end{proof}

\begin{dfn}\label{ctilwr}
We introduce the following sets of critical points.

\begin{enumerate}

\item Let $C_{at}$ \index{$C_{at}$} be the set of critical points, belonging to
    parattracting periodic Fatou domains.

\item Let $C_{cs}$ \index{$C_{cs}$} be the set of \emph{recurrent} critical
    points $c$ belonging to CS-sets.

\item If $c\in F_\C^\RR(G)$ is a critical point, where $G$ is an
    all-critical recurrent gap-leaf of $\lam^\RR$, then $c$ is called
    \emph{all-critical (associated to $G$)} \index{all-critical point} ;
    denote by $C^{ac}_{wr}$ \index{$C^{ac}_{wr}$} the union of all such
    critical points.

\end{enumerate}

\end{dfn}

Clearly, $C_{at}\cap C_{cs}=\0$ and $C_{at}\cap C^{ac}_{wr}=\0$. By
Theorem~\ref{no-wand} (see also Lemma~\ref{wandlem1}), $C_{cs}\cap
C^{ac}_{wr}=\0$. It is clear that $C_{at}\cup C_{cs}\subset C_{wr}$.
Since the all critical gap-leaf $G$ is recurrent,
 $C^{ac}_{wr}\subset C_{wr}$. Now we define an equivalence relation among the limit sets of
critical points from $C_{at}\cup C_{cs}\cup C^{ac}_{wr}$ (for points of
$C^{ac}_{wr}$ it is already introduced in Lemma~\ref{wandlem1}).

\begin{dfn}\label{weakeq}
Limit sets $\om(c), \om(d)$ of critical points $c, d$ are called
\emph{weakly equivalent} \index{weakly equivalent} if (1) $c, d\in
C_{at}$ belong to the same cycle of parattracting Fatou domains, or (2)
$c, d\in C_{cs}$ belong to the same CS-set, or (3) $c, d\in
C^{ac}_{wr}$ so that for any $a\in \omega(c)$, there is $b\in
\omega(d)$ weakly non-separated from $a$, and vice versa.
\end{dfn}

By  Lemma~\ref{wandlem1} the weak equivalence is indeed an
equivalence relation.

\begin{lem}\label{characwr}
A critical point $c$ belongs to $C^{ac}_{wr}$ if and only if all its images are
weakly separated from points of $\an$, $c$ is weakly recurrent, and $P(c)$ does
not belong to a wandering cut-continuum.
\end{lem}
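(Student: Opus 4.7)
The plan is to prove both implications using the fiber machinery of Section~\ref{cs-wan}, Lemma~\ref{wandlem1}, and the classification of periodic disk fibers in Lemma~\ref{sep-gap}.

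For the forward direction, I assume $c\in C^{ac}_{wr}$, so $c\in F_\C^\RR(G)$ for some all-critical recurrent gap-leaf $G$ of $\clam^\RR$. Weak recurrence of $c$ is immediate from Lemma~\ref{wandlem1}(3). By Lemma~\ref{wandlem1}(1), each fiber $F_\C^\RR((\si^*)^m(G))$ contains no preimages of $\an$; combined with $P^n(c)\in F_\C^\RR((\si^*)^n(G))$ from Lemma~\ref{dynam}, this forces every image $P^n(c)$ to be weakly separated from every point of $\an$. For the third condition, suppose by contradiction that $P(c)\in Q$ for some wandering cut-continuum $Q$; note $P(c)\in F_\C^\RR(\si^*(G))$, and $\si^*(G)$ is a single angle since $G$ is all-critical. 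Any $q\in Q\setminus F_\C^\RR(\si^*(G))$ would be weakly separated from $P(c)$ by Lemma~\ref{wandlem1}(1b), via some cut $\cu^\ell$ centered at a point $y\in\RR$; the connected set $Q\subset J_P$ could only meet $\cu^\ell$ at $y$ (since external rays are disjoint from $J_P$), placing the (pre)periodic point $y$ in $Q$ and contradicting wandering. Hence $Q\subset F_\C^\RR(\si^*(G))$, and Lemma~\ref{basic1} then forces $A(Q)\subset \si^*(G)$, giving $|A(Q)|\le 1$ against $|A(Q)|\ge 2$ from Corollary~\ref{wand-fin}.

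For the backward direction, I assume $c$ is weakly recurrent, every image of $c$ is weakly separated from $\an$, and $P(c)$ lies in no wandering cut-continuum. First, $c\in J_P$: attracting or parabolic basins would give orbits converging to an $\an$-point (violating weak separation), Siegel disks contain no interior critical points, and Cremer domains do not exist. Setting $G:=F_\di^\RR(c)$, the goal is to show that $G$ is an all-critical recurrent gap-leaf of $\clam^\RR$, whereupon Definition~\ref{ctilwr}(3) yields $c\in C^{ac}_{wr}$.

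The main step is to show $G$ is all-critical by ruling out every non-degenerate possibility for $H:=\si^*(G)=F_\di^\RR(P(c))$. If $H$ is infinite, Kiwi's theorem makes some iterate a periodic infinite gap, whose planar fiber contains a point of $\an$ by Lemma~\ref{sep-gap}(3), forcing an iterate of $c$ into the same fiber as an $\an$-point. If $H$ is a leaf of $\lam^\RR$, then $F_\C^\RR(H)\cap K_P$ reduces to the associated point of $\RR$, forcing $P(c)\in\RR$ and contradicting weak recurrence. If $H$ is a finite gap-leaf or two-sided limit leaf with at least two vertices, let $K:=F_\C^\RR(H)\cap K_P$; then $K$ is a non-degenerate continuum containing $P(c)$ and principal sets of at least two rays, hence a cut-continuum. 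If the iterates $(\si^*)^n(H)$ are pairwise distinct then the $P^n(K)$ are pairwise disjoint (any shared point would lie in $\RR$ and force $H=G(x)$ for some $x\in\RR$, making $H$ preperiodic and contradicting the assumption), so $K$ is a wandering cut-continuum through $P(c)$, contradicting the hypothesis. Otherwise $H$ is preperiodic, and Lemma~\ref{sep-gap} classifies the periodic part as (i) infinite (the $\an$-case), (ii) a finite gap with $\ge 2$ vertices equal to $G(x)$ for some $x\in\RR$ (so an iterate of $c$ equals $x$), or (iii) a single periodic angle whose planar fiber is a periodic point---necessarily attracting, Siegel, Cremer, marked, or a repelling/parabolic periodic point, each subcase contradicting weak recurrence or weak separation from $\an$. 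Hence $G$ is all-critical, and recurrence of $G$ then follows from the weak recurrence of $c$ via Lemma~\ref{wandlem1}(3).

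The main obstacle will be this last case analysis, and in particular the preperiodic subcase (iii) with a one-vertex periodic part: one must carefully distinguish the set $\RR$ (bi-accessible repelling/parabolic periodic points and their preimages) from the broader class of repelling, parabolic, and Cremer periodic points entering the hypotheses on $c$, and verify that every possible periodic landing behaviour at a one-vertex fiber is incompatible with the hypotheses on $c$.
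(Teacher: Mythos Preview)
Your approach is essentially the paper's, but the backward direction is over-engineered and contains a genuine gap. The paper first establishes once and for all that $c$ is not (pre)periodic (weak recurrence excludes mapping to repelling/parabolic points; weak separation from $\an$ excludes mapping to attracting or CS points). Then for $H=F_\di^\RR(P(c))$ the paper rules out $H$ infinite (via Kiwi and Lemma~\ref{sep-gap}(3)) and $H$ finite (pre)periodic (Lemma~\ref{sep-gap}(2) would force $c$ (pre)periodic). This disposes of your subcases (i)--(iii) in one stroke; in particular, your flagged ``main obstacle'' subcase (iii) is immediate once non-(pre)periodicity of $c$ is in hand, and the case ``$H$ a leaf of $\lam^\RR$'' is vacuous by Lemma~\ref{basic1}.

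The real gap is your justification that $K:=F_\C^\RR(H)\cap K_P$ is wandering. You write that a shared point in $P^n(K)\cap P^m(K)$ ``would lie in $\RR$ and force $H=G(x)$ for some $x\in\RR$''. The first clause is correct, but the second does not follow: a point $y\in\RR$ lies in $F_\C^\RR((\si^*)^n(H))$ precisely when $G(y)$ and $(\si^*)^n(H)$ share a boundary leaf, and this says nothing like $H=G(x)$. The correct argument is that boundary leaves of $G(y)$ lie in $\lam^\RR$ and are therefore (pre)periodic; if some $(\si^*)^n(H)$ had such a boundary leaf $\ell$, then the eventual periodic image of $\ell$ would be a boundary leaf of infinitely many distinct iterates $(\si^*)^{n+jp}(H)$, impossible since a leaf borders at most two gap-leaves. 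Hence the wandering $H$ has no $\lam^\RR$-boundary leaves, $K$ contains no points of $\RR$, and the $P^n(K)$ are pairwise disjoint. (The paper's proof is terse on this point as well, but your attempted justification is actively wrong.) A minor citation issue: in the forward direction, $A(Q)\subset (\si^*(G))'$ follows from Lemma~\ref{wandlem1}(1) (impressions of angles outside $(\si^*(G))'$ miss the fiber), not from Lemma~\ref{basic1}.
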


\begin{proof}
Suppose that $c\in C^{ac}_{wr}$. By Lemma~\ref{wandlem1}(1) and by
Lemma~\ref{sep-gap}(3), all images of $c$ are weakly separated from $\an$. By
the above $c$ is weakly recurrent. Now, by definition there exists an
all-critical recurrent gap-leaf $G$ with $c\in F_\C^\RR(G)$. If $P(c)\in W$
where $W$ is a wandering cut-continuum, then by Lemma~\ref{wand-fin1} there are
at least two rays with principal sets in $W$. Since by Lemma~\ref{wandlem1}
$\si(G')$ is the \emph{only} angle whose impression is non-disjoint from
$F_\C^\RR(\si(G'))$, then $W\not\subset F_\C^\RR(\si(G'))$. Hence $W$ connects
the point $P(c)\in F_\C^\RR(\si(G'))$ to points outside this fiber which
implies that $W$ cannot be wandering, a contradiction.

Suppose now that $c$ is a weakly recurrent critical point with all images
weakly separated from points of $\an$, and $P(c)$ does not belong to a
wandering cut-continuum. Then $c$ does not map to an attracting or CS-cycle.
By definition of weak recurrence $c$ does not map to a parabolic or repelling cycle and
$c$ is not (pre)periodic. Let us show that the disk fiber $F^\RR_\di(P(c))$ is a point
separated from the rest of the circle by leaves of $\lam^\RR$. Indeed,
otherwise there are the several cases. First, by Lemma~\ref{wandlem1}
$F^\RR_\di(P(c))$ cannot be an infinite gap because then some image of $c$ will
be weakly non-separated from a point of $\an$. Second, $F^\RR_\di(P(c))$ cannot
be (pre)periodic since otherwise some image of $F^\RR_\di(P(c))$ is a finite
periodic disk fiber which by Lemma~\ref{sep-gap}(2) implies that $c$ is
(pre)periodic, a contradiction.

Hence $F^\RR_\di(P(c))$ is a finite wandering disk fiber. If there are
more than one angles in its basis, then the associated planar fiber
$F^\RR_\di(P(c))$ is a wandering cut-continuum. Indeed, choose a
rational angle in each circle arc adjacent to the basis of
$F^\RR_\di(P(c))$. The corresponding rays have landing points which
belong to distinct components of $J_P\sm F^\RR_\C(P(c))$, and so
$F^\RR_\C(P(c))$ is a wandering cut-continuum, a contradiction.
\end{proof}

By Lemma~\ref{characwr} the set $C^{ac}_{wr}$ can be defined in pure
topological terms (without the system of external rays). It is easy to see that
the same applies also to the sets $C_{at}$, $C_{cs}$. Thus, in terms of
formulations, our results can be viewed as having a topologically dynamical
nature. However, of course, the proofs heavily rely upon the combinatorics of
the map $\si$ and do require constant usage of the system of external rays
which allows one to relate this combinatorics and the dynamics of $P$.

Now we prove Theorem~\ref{maintech} which implies Theorem~\ref{intro-assoc} and
Theorem~\ref{intro-ineq} in the connected case. The relation between wandering
non-(pre)critical branch continua and weak equivalence classes of weakly
recurrent critical points is more complicated than that between non-repelling
cycles and associated critical points, hence Theorem~\ref{maintech} is more
quantitative than Theorem~\ref{mh1}. We use the following notation. For
$H\in\{C^{ac}_{wr},C_{at},C_{cs}\}$, let $K(H)$ \index{$K(H)$} be the number of
classes of weak equivalence of grand orbits of points of $H$ and $L(H)$
\index{$L(H)$} be the number of classes of weak equivalence of the limit sets
of points of $H$.

\begin{thm}\label{maintech}
Consider a non-empty wandering collection $\B_\C$ of non-(pre)critical branch
continua $\{Q_i\}$. Then

$$\sum_{\B_\C} (\val_{J_P}(Q_i)-2)\le K(C^{ac}_{wr})-L(C^{ac}_{wr})\le K(C^{ac}_{wr}) - 1 \le |C^{ac}_{wr}|-1$$

and

$$N_{FC}=K(C_{at})+K(C_{cs})$$

which implies that

$$\sum_{\B_\C} (\val_{J_P}(Q_i)-2)+N_{FC}\le K(C_{wr})-1\le |C_{wr}|-1\le d-2.$$

\end{thm}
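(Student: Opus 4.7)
The plan is to reduce the statement to a purely combinatorial count on the circle via the prelamination machinery, then apply Theorem~\ref{doug} of \cite{chi07} to the wandering collection coming from $\B_\C$. More precisely, for each $Q_i\in\B_\C$ set $G_i=\ch(A(Q_i))$; by Corollary~\ref{wand-fin} these are finite wandering non-(pre)critical gaps with $|G_i'|=\val_{J_P}(Q_i)$, and by construction they form a wandering collection in the sense of Theorem~\ref{doug}. The recurrent critical chords $t_j^i\in L^{G_i}_{\lim}$ produced by that theorem are limits from one side of leaves of $\lam^{\B_\C}$, hence boundary leaves of all-critical gap-leaves of $\ol{\lam^{\B_\C}}$ (by Lemma~\ref{no-crit-leaf} and Lemma~\ref{no-crit-leaf-2}). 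By the last statement of Corollary~\ref{wrinr} these are also all-critical gap-leaves of $\ol{\lam^{\RR}}$, so each critical chord $t_j^i$ sits in a unique all-critical recurrent gap-leaf $G_j^i$ of $\ol{\lam^{\RR}}$.

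Next, I would transfer the count back to the plane using Lemma~\ref{wandlem1}. For each $G_j^i$, part (2) of that lemma provides a critical point $c_j^i\in F^\RR_\C(G_j^i)$, which by Definition~\ref{ctilwr} belongs to $C^{ac}_{wr}$. Parts (3) and (4) of Lemma~\ref{wandlem1} identify, respectively: the \textbf{disjointness of grand orbits} of the chords with the weak-separation of the corresponding $c_j^i$'s (hence with distinct weak-equivalence classes of grand orbits in $C^{ac}_{wr}$); and the \textbf{equality $\om(t_j^i)=\om(t_{j'}^{i'})$} with the weak equivalence of $\om(c_j^i)$ and $\om(c_{j'}^{i'})$. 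Thus the quantities $k'$ and $l$ of Theorem~\ref{doug} are, respectively, bounded above by $K(C^{ac}_{wr})$ and equal to $L(C^{ac}_{wr})$, and Theorem~\ref{doug}(3) yields
\[
\sum_{\B_\C}(\val_{J_P}(Q_i)-2)=\sum_i(|G_i'|-2)\le k'-l\le K(C^{ac}_{wr})-L(C^{ac}_{wr}).
\]
Since $\B_\C\ne\emptyset$ forces at least one weak-equivalence class of limit sets to appear, $L(C^{ac}_{wr})\ge 1$, giving the next inequality; the bound $K(C^{ac}_{wr})\le |C^{ac}_{wr}|$ is trivial.

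For the equality $N_{FC}=K(C_{at})+K(C_{cs})$, I would invoke Fatou's classical theorem: every cycle of parattracting Fatou components contains a critical orbit, and by Definition~\ref{weakeq} such critical points are weakly equivalent iff they belong to the same cycle of domains. This gives $K(C_{at})$ equal to the number of parattracting cycles. For Cremer/Siegel cycles, Theorem~\ref{mh1} produces a recurrent critical point in each CS-set, and weak equivalence on $C_{cs}$ is defined precisely as ``lying in the same CS-set,'' so $K(C_{cs})$ equals the number of CS-cycles. The sum is $N_{FC}$.

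To conclude, observe that $C_{at}, C_{cs}, C^{ac}_{wr}$ are pairwise disjoint subsets of $C_{wr}$ (noted after Definition~\ref{ctilwr}, using Theorem~\ref{no-wand}), and their weak-equivalence classes remain distinct inside $C_{wr}$ because the definitions of weak equivalence on each piece are compatible and points from different pieces cannot be weakly non-separated (attracting/CS-points are weakly separated from every image of a $C^{ac}_{wr}$-point by Lemma~\ref{wandlem1}(1)(b), and $C_{at}$ vs.\ $C_{cs}$ is immediate). Hence $K(C_{wr})\ge K(C_{at})+K(C_{cs})+K(C^{ac}_{wr})$, and combining with the two inequalities above yields
\[
\sum_{\B_\C}(\val_{J_P}(Q_i)-2)+N_{FC}\le K(C_{wr})-1\le |C_{wr}|-1\le d-2,
\]
the final bound using that there are only $d-1$ critical points of $P$ in $\C$. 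The main technical obstacle is the third step: carefully tracking the correspondence between (one-sided) limits of leaves of $\lam^{\B_\C}$, all-critical gap-leaves of $\ol{\lam^{\RR}}$, and actual planar critical points, and verifying via Lemma~\ref{wandlem1} that grand-orbit disjointness and coincidence of $\omega$-limit sets at the lamination level translate faithfully into weak-equivalence data on $C^{ac}_{wr}$—especially ensuring no double-counting occurs when several critical chords $t_j^i$ attached to different $G_i$ produce the same all-critical gap-leaf of $\ol{\lam^{\RR}}$.
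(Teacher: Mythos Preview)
Your approach is essentially the paper's own: pass to the wandering gaps $G_i=\ch(A(Q_i))$, apply Theorem~\ref{doug}, identify the resulting recurrent critical chords with all-critical recurrent gap-leaves of $\ol{\lam^\RR}$ via Corollary~\ref{wrinr}, and translate back to the plane through Lemma~\ref{wandlem1}. The treatment of $N_{FC}$ and the final summation are also the same.

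There is, however, a genuine gap at the step $k'-l\le K(C^{ac}_{wr})-L(C^{ac}_{wr})$. You assert that $l$ (the number of distinct $\omega$-limit sets of the critical chords coming from $\bigcup_i L^{G_i}_{\lim}$) \emph{equals} $L(C^{ac}_{wr})$. This is not justified and need not hold: the chords produced by Theorem~\ref{doug} see only those all-critical recurrent gap-leaves in the subcollection $\mac_l$ arising as limits of the given $G_i$'s, whereas $L(C^{ac}_{wr})$ counts limit sets over the full collection $\mac$ of \emph{all} all-critical recurrent gap-leaves of $\ol{\lam^\RR}$. In general only $l\le L(C^{ac}_{wr})$ holds, and from $k'\le K(C^{ac}_{wr})$ together with $l\le L(C^{ac}_{wr})$ one cannot conclude $k'-l\le K(C^{ac}_{wr})-L(C^{ac}_{wr})$.

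The paper closes this gap with a monotonicity argument you are missing: one passes from $\mac_l$ to $\mac$ by adjoining all-critical recurrent gap-leaves one grand orbit at a time; each such step increases the grand-orbit count by exactly $1$ and the limit-set count by \emph{at most} $1$, so the difference $(\text{grand orbits})-(\text{limit sets})$ is nondecreasing along the way, yielding $m-l\le K(C^{ac}_{wr})-L(C^{ac}_{wr})$. You should replace the unjustified equality $l=L(C^{ac}_{wr})$ by this incremental argument.
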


\begin{proof}
By Subsection~\ref{planlam}, $\B_\C=\{Q_i\}$ gives rise to a wandering
collection of gaps $\ch(A(Q_i))=G(Q_i)$, all non-(pre)critical by
Lemma~\ref{no-crit-leaf-2}. Therefore Theorem~\ref{doug} applies to the
collection $\{G(Q_i)\}=\B_\di$. By Theorem~\ref{doug}(1) there are critical
leaves which are limits of forward orbits of the sets $G(Q_i)$. By
Lemma~\ref{no-crit-leaf-2} and Corollary~\ref{wrinr}, these leaves come from
the boundaries of all-critical gap-leaves of $\ol{\lam^\RR}$, recurrent by
Theorem~\ref{doug}(1). Denote the collection of these gap-leaves by $\mac_l$.

If $m$ is the number of distinct grand orbits of elements of $\mac_l$, then by
Lemma~\ref{wandlem1}(3),(4) $m$ equals the number of classes of weak
equivalence of grand orbits of all-critical weakly recurrent points from sets
$F_\C^\RR(H_j), H_j\in \mac_l$. If $l$ is the number of distinct limit sets of
elements of $\mac_l$, then by Lemma~\ref{wandlem1} (3),(4) $l$ equals the number
of classes of weak equivalence of limit sets of all-critical weakly recurrent
points from sets $F_\C^\RR(H_j), H_j\in \mac_l$. By Theorem~\ref{doug}(3)
$\sum_{\B_\C} (\val_{J_P}(Q_i)-2)\le m-l$.

Now, denote by $\mac$ the collection of \emph{all} all-critical recurrent
gap-leaves. By definition and Lemma~\ref{wandlem1} (2), (3) \emph{each}
all-critical recurrent gap-leaf corresponds to all-critical weakly recurrent
point(s) in $J_P$. Again, by Lemma~\ref{wandlem1} (3),(4) the number of distinct
grand orbits of these gap-leaves equals $K(C^{ac}_{wr})$ and the number of
distinct limit sets of these gap-leaves equals $L(C^{ac}_{wr})$. The collection
$\mac$ can be obtained by adding new elements to the collection $\mac_l$.
Adding one class of weak equivalence of the grand orbit of an all-critical
weakly recurrent point to $\mac_l$ increases $m$ by \emph{exactly} $1$ and
increases the current count for $l$ by \emph{at most} $1$. Hence, $\sum_{\B_\C}
(\val_{J_P}(Q_i)-2)\le m-l\le K(C^{ac}_{wr})-L(C^{ac}_{wr})$ as desired. The
rest follows from $K(C^{ac}_{wr})\le |C^{ac}_{wr}|$ and $L(C^{ac}_{wr})\ge 1$.

The equality $N_{FC}=K(C_{at})+K(C_{cs})$ follows by definition. Thus,

$$K(C_{at})+K(C_{cs})+K(C^{ac}_{wr})-L(C^{ac}_{wr})\le K(C_{wr})-1\le |C_{wr}|-1\le d-2;$$

\noindent obtained by adding the preceding two inequalities and observing that
$C_{at}, C_{cs}$ and $C^{ac}_{wr}$ are pairwise disjoint subsets of $C_{wr}$.
\end{proof}

Let us show how Theorem~\ref{intro-assoc} and Theorem~\ref{intro-ineq} for
connected Julia sets follow from our results (except the parts dealing with
disconnected Julia sets). Clearly, Theorem~\ref{mh1} implies
Theorem~\ref{intro-assoc}(1) for connected Julia sets (observe that by
Lemma~\ref{l-nonpcr} we can talk about a wandering collection of
non-(pre)critical branch continua $Q_i$ and $\val_{J_P}(Q_i)$ instead of
eventual continua $\hq_i$ and $\eval_{J_P}(\hq_i)$). Since points from
$C^{ac}_{wr}$ are weakly recurrent and weakly separated from all non-repelling
periodic points, then the first inequality of Theorem~\ref{maintech} implies
Theorem~\ref{intro-assoc}.

The statement of Theorem~\ref{intro-ineq} includes an inequality for connected
Julia sets, an inequality concerning phenomena which can happen only in
disconnected Julia sets, and their sum. Thus, now it suffices to consider only
the first inequality of Theorem~\ref{intro-ineq}. If $J_P$ is connected and
there are no wandering non-(pre)critical branch continua, the constants from
Theorem~\ref{intro-ineq} are $N_{co}=0$ and $m=0$. In this case
Theorem~\ref{intro-ineq} claims that $N_{FC}\le |C_{wr}|$ and follows from the
fact that $N_{FC}=K(C_{at})+K(C_{cs})\le |C_{at}|+|C_{cs}|\le |C_{wr}|$. If
there is a non-empty wandering collection $\B_\C$ of non-(pre)critical branch
continua $\{Q_i\}$, then $N_{co}=1, m>0$ and Theorem~\ref{intro-ineq} claims
that $N_{FC}+1 +\sum_{i=1}^m (\val_{J_P}(Q_i)-2) \le |C_{wr}|$ which is what
Theorem~\ref{maintech} proves.

In the rest of the paper we deal with  disconnected Julia sets. Before we
switch to them, we would like to comment on an important difference between the
connected and the disconnected cases. As was mentioned in remark (5) in the
Introduction, in the connected case the objects involved in the inequality are
all of topological nature and can be defined with no regards to the system of
external rays. That system plays a crucial role in the proofs, but can be
avoided as one states the results in the connected case.

This is not so in the disconnected case. More precisely, there are two notions
which simply cannot be defined without invoking the system of external rays.
These are the notion of the valence of a wandering component of $J_P$ and the
notion of a periodic repelling point at which infinitely many rays land.
E.g., the fact that a component $A$ of $J_P$ is wandering, is independent of
the system of rays. However the number of rays accumulating in $A$ cannot be
defined in a way which does not depend on the system of rays (as the valence in
the connected case)

\section{External rays to periodic components of the Julia set}\label{disc1}

This section enables us to use the results for connected Julia sets on
$p$-periodic non-degenerate components $E$ of a disconnected Julia set. We
relate the (polynomial-like) map $P^p$ on a neighborhood of $E$ to a polynomial
$f$, with connected Julia set $J_f$, such that $P^p|_E$ and $f|_{J_f}$ are
conjugate, and establish a connection between external rays of $P$, with
principal sets in $E$, and external rays of $J_f$.

Fix an \emph{arbitrary} polynomial $P$ of degree $d$, with not necessarily
connected Julia set. Set $\iU=\iU(J_P)$. The \emph{equipotential} containing a
point $z\in \iU$ is defined as the closure of the union of all preimages
$P^{-n}(P^n(z))$, $n=1,2,...$ \cite{sul}. Then $\iU$ is foliated by
equipotentials defined by the dynamics of $P$.
Critical points $c\in \iU$ are called \emph{escaping}. Denote by $C_*$ the
set of all preimages $P^{-n}(c)$, $n=0,1,2,...$, of escaping critical points $c$.
A component of an equipotential is a smooth curve if and only if it does not
contain a point of $C_*$.


The \emph{flow of external rays} of $P$ is defined as the gradient flow to the
equipotentials. More precisely, by an \emph{external ray} $R_t$ of $P$ we mean
an unbounded curve $R$, such that either $R$ is smooth, crosses every
equipotential orthogonally and terminates in the Julia set of $P$,\index{smooth
external ray} or $R$ is a one-sided limit of such smooth rays (then the ray is
called \emph{non-smooth} \index{non-smooth external ray} or
\emph{one-sided}).\index{one-sided external ray} An external ray is smooth if
and only if it is disjoint with $C_*$. Every point of $\iU$ belongs to an
external ray, and smooth external rays are dense in $\iU$. \emph{Every}
external ray, whether smooth or not, accumulates in one component of $J$.

The argument $t\in \mathbb{R}/\mathbb{Z}$ of $R_t$ is defined uniquely as the
angle at which $R_t$ goes asymptotically to infinity. If the ray is non-smooth,
then there is precisely one more (non-smooth) external ray with the same
argument. Nevertheless, this will not cause ambiguity, because we will be
speaking about external rays rather than their arguments. Observe that if a
ray is periodic then its argument must be periodic. Vice versa, if an argument
of a ray is periodic, then the ray must be periodic. For the general theory of
external rays, see \cite{arjo}, and for the theory of external rays of
polynomials with disconnected Julia sets, see, e.g., \cite{gm93, leso, lepr}.

The equipotentials and external rays for the polynomial $P_0(z)=z^d$ are
standard circles $|z|=\exp(a)$, $a>0$, and rays $\{r\exp(2\pi it): r>1\}$,
$t\in \mathbb{R}/\mathbb{Z}$, respectively. A more traditional way to define
equipotentials and external rays for an arbitrary polynomial $P$ is as follows.
The map $P$ is conjugate to $P_0$ in a neighborhood of infinity by a univalent
change of coordinates $B$ (the \emph{B\"{o}ttcher coordinates}). Then the
equipotentials and rays of $P$ near infinity are the preimages by $B$ of the
standard circles and rays respectively near infinity. By applying branches of
the inverse function $P^{-n}$, the equipotentials and rays are spread over the
entire basin of infinity $\iU$.

The \emph{level} of a point $z\in \iU$ is a positive number $a=a(z)$
defined as follows. If $|z|$ is large enough, then $B(z)$ is well defined, and
$a(z)$ is said to be the number $\log|B(z)|$. For any other $z$, we choose
$n>0$, such that $|P^n(z)|$ is large, and set $a(z)=d^{-n}
a(P^n(z))=d^{-n}\log|B(P^n(z))|$. It is easy to see that $a(z)$ is well defined
(in fact, $a(z)$ is the so-called \emph{Green's function} of $\iU$). The
levels of two points are equal if and only if they belong to the same
equipotential. Therefore, one can define the \emph{level of an equipotential}
as the level of a point of the equipotential.

The level function also defines the direction from infinity to $J_P$ on every
external ray. For any external ray $R$, the function $a$ restricted to $R$
decreases monotonically from $+\infty$ near $\infty$ to $0$ near the Julia set.
In particular, every external ray is homeomorphic to the standard (open) ray
$\R_+=\{x>0\}$. Every subarc of an external ray starts either at infinity or at a
finite point of $\iU$, and either ends at another point of $\iU$ or accumulates in
the Julia set.

The equipotential of level $a_0$ splits the plane into finitely many open
components, so that the level of a point in the unbounded component is strictly
bigger than $a_0$, and the level of a point in the bounded components is
strictly smaller than $a_0$. If two points $z_1$, $z_2$ lie in different
bounded components of the complement of an equipotential of a given level,
then the subarcs of the external rays through these points between $z_1$ and
$J_P$, and between $z_2$ and $J_P$ respectively, are disjoint (even their
closures are disjoint).

Obviously, \emph{all} equipotentials as well as external rays are smooth
if and only if the Julia set is connected, or, equivalently, the set
$C_*$ is empty. In this case $B$ extends
to a Riemann map from $\iU$ onto the complement
of the unit disk, and one can define   equipotentials
and  rays of $P$ directly by taking preimages by $B$
of the standard circles and rays outside  the unit disk.

In the rest of Section~\ref{disc1}, we assume that $J_P$ is {\it not}
connected. Then $J_P$ has infinitely many components. Consider
$\mathbb{S}^1=\mathbb{R}/\mathbb{Z}$, always understanding it as a circle at
infinity (e.g., arguments of external rays belong to $\uc$). Denote by
$\mathbb{D^*}$ the exterior of the closed unit disk, and let $S^1$ be its
boundary, always understood as a subset of the plane. As usual, we consider the
map $\sigma: z\mapsto z^d$ for $z\in S^1$. We also denote  the map $t\mapsto
dt$ of $\mathbb{S}^1=\mathbb{R}/\mathbb{Z}$ to itself by $\si$. The following
lemma, though simple, serves as a useful tool in what follows.

\begin{lem}\label{cap}
If two different rays $R, R'$ have a common point, then $R, R'$ are both
non-smooth. The intersection $L=R\cap R'$ is connected and can contain a
point of $C_*$ only as an endpoint. Furthermore, one and only one of the
following cases holds:

\begin{enumerate}

\item[(i)] $L$ is a smooth curve joining infinity and
a point of $C_*$,

\item[(ii)] $L$ is a single point of $C_*$,

\item[(iii)] $L$ is a smooth closed arc between two points of $C_*$,

\item[(iv)] $L$ is a smooth curve from a point of $C_*$
to $J_P$ and, moreover, the rays $R, R'$ are not periodic.

\end{enumerate}

Except for the last case, the rays $R, R'$ have their principal sets in
different components of $J_P$.
\end{lem}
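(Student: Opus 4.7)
The plan is to exploit that on $\iU\sm C_*$ the Green's function $a$ has nonvanishing gradient, so its gradient flow is a smooth foliation of $\iU\sm C_*$ whose integral curves are precisely the smooth external rays. First I would observe that if $R$ were smooth then every point of $R$ would be a smooth point of the gradient flow; by local uniqueness of integral curves, a common point $z\in R\cap R'$ would force $R'$ to coincide with $R$ in a neighborhood of $z$, and extending along the integral curve from $\infty$ down to $J_P$ then gives $R'=R$, contradicting $R\ne R'$. So both rays are non-smooth. The same local-uniqueness argument shows that at any common point $z\in L\sm C_*$ the rays coincide locally as the same gradient trajectory; hence $L$ is a union of maximal shared subarcs of the gradient flow whose endpoints can only be $\infty$, points of $J_P$, or points of $C_*$.

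For connectedness, I would parametrize $R$ and $R'$ by the strictly decreasing level $a$ and study $S=\{a>0:R(a)=R'(a)\}$. Closedness of $S$ is immediate, and the local-uniqueness argument shows that any boundary point $a_0$ of $S$ in $(0,\infty)$ must satisfy $R(a_0)\in C_*$. Assume for contradiction $S$ has two components with $\sup S_2<\inf S_1$, and set $c_1=R(\inf S_1)$, $c_2=R(\sup S_2)$, both in $C_*$. On the gap between them, $R$ and $R'$ descend from $c_1$ along two distinct outgoing separatrices. The key structural fact is: at a critical point of $a$ of order $k$, the $k$ outgoing separatrices enter $k$ distinct local components of the sublevel set $\{a<a(c_1)\}$, and components of sublevel sets only refine as the level decreases, since splittings happen only at critical levels. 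Hence the two separatrices never meet again and in particular cannot both pass through $c_2$, a contradiction. So $L$ is connected; enumerating the possibilities for the two level-endpoints (with $a_1=\infty,a_2=0$ excluded because it forces $R=R'$) yields exactly the four cases (i)--(iv). The same refinement argument applied at the lowest $C_*$-endpoint of $L$ in cases (i)--(iii) shows that below this endpoint $R$ and $R'$ follow distinct outgoing separatrices, hence accumulate in distinct components of $J_P$.

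It remains to show that in case (iv) neither $R$ nor $R'$ is periodic. Suppose $R$ is periodic of period $n$. By the standard landing theorem for periodic polynomial rays (valid also for disconnected $J_P$), $R$ lands at a repelling or parabolic periodic point $z$, and since $R'$ shares with $R$ the entire arc below $c$, $R'$ accumulates at $z$ too. Now pass to a multiple $N=kn$ so large that $d^N a(c)$ exceeds the level of every escaping critical point of $P$; then $P^N(c)\notin C_*$. Since $P^N(c)\in R=P^N(R)$ and $P^N(c)\in P^N(R')$, and $P^N(c)$ is a smooth point of the gradient flow, local uniqueness at $P^N(c)$ forces $P^N(R')=R$. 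For the arguments $t,t'$ of $R,R'$ this reads $\si^N(t')=t$ with $t'\ne t$, and any $\si$-periodicity of $t'$ would iterate to $t'=t$, a contradiction; so $t'$ is not periodic. But any ray accumulating at a repelling or parabolic periodic point has periodic argument by the usual landing rigidity. This contradiction shows $R$ is not periodic, and by symmetry neither is $R'$. The main obstacle I anticipate is making precise the ``sublevel components can only refine'' statement at higher-order critical points of $a$, and invoking the correct form of landing rigidity for periodic points in the possibly disconnected setting.
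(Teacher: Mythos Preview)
Your overall structure is sound and close in spirit to the paper's, but the two proofs diverge in execution. For connectedness of $L$ the paper gives a one-line ``lake'' argument: if $L$ were disconnected, some bounded complementary component of $R\cup R'$ would be unreachable by smooth rays, contradicting their density in $\iU$. Your sublevel-set refinement argument is correct but heavier; the statement that distinct descending separatrices at a critical point of the Green's function enter distinct components of $\{a<a(c_1)\}$, and that such components only split as the level drops, does hold for polynomial Green's functions, so your concern there is unwarranted. The same refinement also cleanly yields the ``different components'' conclusion in cases (i)--(iii), which the paper leaves implicit.

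The real gap is in your treatment of case (iv). Your final step asserts that ``any ray accumulating at a repelling or parabolic periodic point has periodic argument by the usual landing rigidity.'' This is \emph{false} in the disconnected setting: by Theorem~\ref{Theorem LP1}, a repelling periodic point can be a component of $J_P$ with $A(z)$ an infinite Cantor set containing no periodic angle at all. Your argument can be repaired---since you are assuming $R$ itself is periodic, its argument $t\in A(z)$ is periodic, whence Theorem~\ref{Theorem LP1}(4) forces $A(z)$ to be finite and every angle in it periodic, giving the needed periodicity of $t'$---but as written the step does not stand. The paper's route is far shorter and avoids landing theory entirely: if $R$ is periodic and contains $c\in C_*$, then pulling $c$ back along the periodic ray produces infinitely many points of $C_*$ on $R$ with levels tending to $0$; all but finitely many of these lie in $L$, contradicting the already-established fact that $C_*$ meets $L$ only at endpoints.
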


\begin{proof}
A smooth ray is disjoint from all other rays. Now, assume that two
\emph{different} non-smooth rays $R, R'$ are not disjoint. Since rays fill up
$\iU$ and smooth rays are dense in $\iU$, the intersection $L$ of $R$
and $R'$ is a connected set (otherwise there is a ``lake'', i.e. a component of
$\C\sm [R\cup R']$, unreachable by smooth rays). Hence $L$ is either (i) a
smooth curve from infinity to a point in $\iU$, or a (ii) single point, or
(iii) a smooth closed arc between two points in $\iU$, or (iv) a smooth curve
from a point of $\iU$ to $J_P$ (a smooth curve from infinity to $J_P$ is
impossible as $R\ne R'$).

Let us show next that $L$ can only contain points of $C_*$ as endpoints.
Let $q\in C_*$ be a point of $L$. Suppose by way of contradiction that $q$ is
not an endpoint of $L$. Consider the component $\gamma$ of the equipotential
through the point $q$. Then $q$ is a \emph{singular point} (\emph{branch
point}) of $\gamma$, and $\mathbb{C}\setminus \gamma$ contains at least two
bounded components with the only joint point on their boundaries to be $q$.

Let $U_1, \dots, U_m$ be the bounded components of $\C\sm \gamma$ containing
$q$ in their closures. Let $U_1$ be the component containing points of $L$.
Choose a neighborhood $W$ of $q$ such that $W\sm \ol{\bigcup U_i}$ consists of
$m$ open components $V_1, \dots, V_m$. Since $q$ is not an endpoint of $L$, $L$
intersects \emph{only} one of the sets $V_1, \dots, V_m$, say, $V_1$. However, as
$L$ is approached by smooth rays converging to $R, R'$ from two \emph{distinct}
sides (of $L$), the smooth rays located on distinct sides of $L\cap V_1$ must
enter distinct sets $U_i$, a contradiction with $U_1$ being the component
containing points of $L$. So, $q$ is an endpoint of $L$.

Note that if $z\in R$ (resp., $z\in R'$) and $z\notin C_*$, then, in a
neighborhood of $z$, $R$ (resp., $R'$) is a \emph{smooth} curve. Hence,
(i) if $L$ is a smooth curve from infinity to a point in $\iU$, then it joins
infinity and a point of $C_*$, (ii) if $L$ is a single point, then it is a
point of $C_*$, and (iii) if $L$ is a smooth closed arc, then its endpoints
belong to $C_*$. The remaining possibility is that $L$ is a smooth curve
joining a point of $C_*$ and $J_P$. Let us show that in this case neither $R$
nor $R'$ can be periodic. Indeed  if $R$ is periodic and contains a point $q\in
C_*$, then $R$ contains infinitely many preimages of $q$ converging to $J_P$.
Hence $L$ would contain infinitely many preimages of $q$, a contradiction.
\end{proof}

\begin{example}
The cases (i) - (iii) are already possible for quadratic polynomials $z^2+c$
with $c>1/4$. Case \rm{(i)} is realized for the two one-sided rays
$R_{0^+}=\lim_{t\to 0^+}R_t$, $R_{0^-}=\lim_{t\to 1^-}R_t$, so that the
intersection of $R_{0^+}$ and $R_{0^-}$ is the positive real axis. Case
\rm{(ii)} happens for the rays $R_{0^+}$ and $R_{1/2^+}=\lim_{t\to 1/2^+}R_t$,
with $R_{0^+}\cap R_{1/2^+}=\{0\}$. Case \rm{(iii)} holds if there are two
points from $C_*$ on the same ray, e.g., the intersection of $R_{0^-}$ and
$R_{1/2^+}$ is an arc joining $0$ and the first preimage of $0$ in the lower
half plane. Finally, if $P(z)=z^2+c$ with $c>1/4$, then any non-smooth ray is
(pre)periodic which by Lemma~\ref{cap} makes case (iv) impossible for $P$.
But it is realized for any $z^2+c$ with $c$ outside of the Mandelbrot set,
for which the external arguments of $0$ are not periodic.
\end{example}




Given $E\subset K_P$, let $A(E)$ \index{$A(E)$} be the set of the arguments of all external
rays with principal sets in $E$ (clearly, these principal sets are in fact
contained in $\bd(E)\subset J_P$). Similarly, for $z\in \iU$ let $A(z)$ be
the set of the arguments of all external rays containing $z$ (since for every
ray its argument is well-defined, the definition is consistent). For $z\in
\iU$ any angle from $A(z)$ is said to be an \emph{(external) argument} of
$z$. Lemma~\ref{Lemma 1} is simple and well-known; we add it here for the sake
of completeness.


\begin{lem}\label{Lemma 1} For a component $E$ of the filled-in Julia set
$K_P$ of $P$, the set $A(E)$ is a non-empty compact
subset of $\mathbb{S}^1$.
\end{lem}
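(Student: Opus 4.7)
The plan is to prove $A(E)\ne\0$ and closedness of $A(E)$ in $\mathbb{S}^1=\mathbb{R}/\mathbb{Z}$ separately; since $\mathbb{S}^1$ is compact, closedness at once yields compactness of $A(E)$. Both steps will rest on the same mechanism: a trapping argument for external rays using the level (Green's) function $a$, followed by a limit passage in the argument parameter.

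For non-emptiness, I would use that, as a connected component of the compact metric space $K_P$, the set $E$ equals its quasi-component, so it can be written as a nested intersection $E=\bigcap_n V_n$ of clopen-in-$K_P$ neighborhoods. For each $n$, the disjoint compacta $V_n$ and $K_P\sm V_n$ can be separated in $\C$ by an open set $W_n\supset V_n$ with $\ol{W_n}\cap K_P=V_n$. Then $\bd(W_n)\subset\iU$ is compact, and the Green's function $a$, which is continuous and strictly positive on $\iU$, attains a positive minimum $\e_n>0$ on $\bd(W_n)$. Because $E\subset\partial\iU$ and $E\subset W_n$, I can pick $z_n\in W_n\cap\iU$ with $a(z_n)<\e_n$. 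The inward part of the external ray $R_{t_n}$ through $z_n$ has monotonically decreasing level bounded above by $a(z_n)<\e_n$, so it cannot cross $\bd(W_n)$; starting inside $W_n$, it stays in $\ol{W_n}$, and hence its principal set is contained in $\ol{W_n}\cap K_P=V_n$.

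The step I expect to be the main obstacle is the limit passage $t_n\to t^*$ and the verification that the principal set of a limiting ray actually lies in $E$. After passing to a subsequence $t_n\to t^*\in\mathbb{S}^1$, I would argue that some ray of argument $t^*$ arises as a subsequential geometric limit of the $R_{t_n}$: away from the set $C_*$ of escaping critical points and their preimages, the flow of external rays depends continuously on the argument (near $\iy$ via the B\"ottcher coordinate $B$, and elsewhere via the gradient flow of $a$, which is smooth off $C_*$). Hence, up to a further subsequence and possibly choosing a side (since non-smooth rays come in one-sided pairs with the same argument), the $R_{t_n}$ converge to some ray $R$ of argument $t^*$ on every compact subset of $\iU\sm C_*$. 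For each fixed $n_0$, the inward parts of $R_{t_n}$ past level $\e_{n_0}$ lie in $V_{n_0}$ for all $n\ge n_0$, and this trapping property passes to the geometric limit, so $\pr(R)\subset V_{n_0}$; letting $n_0\to\infty$ yields $\pr(R)\subset\bigcap_n V_n=E$, giving $t^*\in A(E)$.

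Closedness of $A(E)$ is then immediate from the very same limit-of-rays argument: if $t_n\in A(E)$ with $t_n\to t^*$, a subsequential limit ray $R$ of argument $t^*$ has $\pr(R)\subset\limsup_n\pr(R_{t_n})\subset E$, because each $\pr(R_{t_n})\subset E$ and $E$ is closed. Hence $t^*\in A(E)$, completing the proof.
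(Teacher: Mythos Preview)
Your proof has the right instincts but contains a genuine gap in the limit-of-rays step, and it is considerably more involved than the paper's argument.

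The gap is the inclusion $\pr(R)\subset\limsup_n\pr(R_{t_n})$, which you assert but do not prove, and which does not follow from mere geometric convergence of $R_{t_n}$ to $R$ on compacta of $\iU\sm C_*$. Points of $\pr(R)$ are limits of points on $R$ at positive (though small) Green level; nearby points on $R_{t_n}$ at the same level are not in $\pr(R_{t_n})$, and the tail of $R_{t_n}$ below that level could a priori wander before reaching $E$. The same issue surfaces in your non-emptiness argument: you claim that ``the inward parts of $R_{t_n}$ past level $\e_{n_0}$ lie in $V_{n_0}$ for all $n\ge n_0$'', but what you established is only trapping of $R_{t_n}$ in $W_n$ past level $\e_n$; since $\e_n$ may be much smaller than $\e_{n_0}$, you have no control on $R_{t_n}$ between those levels (and $V_{n_0}\subset K_P$ cannot contain any part of a ray anyway---you mean $W_{n_0}$). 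To close this gap you need precisely the separating property of equipotentials: once a ray drops below level $a$, it is permanently confined to a single bounded component of the complement of the level-$a$ equipotential.

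The paper exploits this directly and avoids both the limit-of-rays machinery and the topological separation via clopen $V_n$. For each level $a>0$, let $A_a(E)\subset\uc$ be the set of arguments of external rays that cross the component $\ga$ of the level-$a$ equipotential surrounding $E$ and enter the bounded complementary region containing $E$. Each $A_a(E)$ is manifestly non-empty and compact (a finite union of closed arcs, determined by where the ray crosses $\ga$); these sets are nested as $a\downarrow 0$; and $A(E)=\bigcap_{a>0}A_a(E)$. Non-emptiness and compactness of $A(E)$ then follow at once from the finite intersection property, with no limit argument needed. Your separating sets $W_n$ are in effect an ad hoc substitute for the natural equipotential sublevel sets; replacing them by the latter both repairs the gap and collapses the proof to a few lines.
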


\begin{proof}
Take the arguments of all external rays that cross a component $\gamma$ of the
equipotential of a given level $a>0$ and enter the bounded component of
$\mathbb{C}\setminus \gamma$ which contains $E$.
It is a non-empty compact subset $A_a(E)$
of $\mathbb{S}^1$. As $a\to 0$, these compacta shrink to a non-empty
compact set, which is the set $A(E)$.
\end{proof}


\begin{thm}[Theorem 1 of \cite{lepr}]\label{Theorem LP1}
Let $z$ be a repelling or parabolic periodic point of $P$ of period $m$. Then
the following claims hold.

\begin{enumerate}

\item $A(z)$ is a non-empty compact subset of $\mathbb{S}^1$,
     invariant under $\sigma^m$.

\item If $A(z)$ is infinite, then the point $\{z\}$ is a periodic component
     of $K_P$. The set $A(z)$ contains external arguments $t_q, t_q'$ of a
     critical point $q\in \iU$ of $P^m$. Moreover, the set $A(z)$ is a
     Cantor set, and every forward $\sigma^m$-orbit in $A(z)$ is dense in
     $A(z)$.

\item If $\{z\}$ is not a component of $K_P$, then $A(z)$ is finite.

\item The set $A(z)$ is finite if and only if it contains a periodic point.
     In this case every $t\in A(z)$ is periodic under $\sigma^m$, all with
     the same period.

\end{enumerate}

\end{thm}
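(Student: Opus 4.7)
The plan is to establish the four claims by reducing to the connected Julia case whenever $\{z\}$ fails to be a component of $K_P$, and by exploiting the local dynamics of $P^m$ at $z$ otherwise. For (1), non-emptiness of $A(z)$ is the classical Douady--Yoccoz landing theorem (in the repelling case) and its parabolic variant. Compactness follows from the standard fact that a Hausdorff limit of external rays landing at $z$ is again an external ray landing at $z$. Invariance under $\sigma^m$ is a direct consequence of $P^m(R_t)=R_{\sigma^m(t)}$ together with $P^m(z)=z$; the reverse inclusion comes from pulling back a ray landing at $z$ along one of the finitely many local $P^m$-inverse branches at $z$ and noting that one such branch fixes $z$.

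For (3), suppose $\{z\}$ is not a component of $K_P$ and let $E$ be the component of $K_P$ containing $z$. Then $E$ is a non-degenerate periodic continuum, and by the polynomial-like setup that is the subject of Section~\ref{disc1}, a suitable iterate of $P$ near $E$ is hybrid-equivalent to a polynomial $f$ with connected Julia set $J_f$; $z$ corresponds to a repelling or parabolic periodic point $\tilde z\in J_f$. Douady's theorem applied to $f$ yields only finitely many rays of $f$ landing at $\tilde z$, and the correspondence (developed in the rest of Section~\ref{disc1}) between external rays of $P$ with principal sets in $E$ and external rays of $f$ landing in $J_f$ transports this finiteness back to $A(z)$.

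For (4), work in a local linearizing coordinate for $P^m$ at $z$ (or local Fatou coordinates in the parabolic case). If $A(z)$ is finite, each $t\in A(z)$ determines a well-defined tangent direction in which the tail of $R_t$ approaches $z$, and $\sigma^m:A(z)\to A(z)$ acts on these directions as a combinatorial rotation induced by the multiplier of $P^m$ at $z$; the action is therefore a bijection that permutes $A(z)$ cyclically with a common period. Conversely, if $A(z)$ contains a periodic argument $t_0$, then $R_{t_0}$ lands at a periodic point, which must coincide with $z$; local expansion of $P^m$ at $z$ combined with the finiteness argument already used for the connected case in (3) forces $A(z)$ itself to be finite.

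For (2), assume $A(z)$ is infinite; by (3) this forces $\{z\}$ to be a component of $K_P$, and by (4) no element of $A(z)$ is periodic. The map $\sigma^m$ on $A(z)$ is continuous, surjective by (1), and locally expanding because $P^m$ is expanding near $z$. A closed, perfect, invariant subset of $\mathbb{S}^1$ on which a locally expanding map has no periodic point is a Cantor set with all forward orbits dense, by a standard minimality/Markov argument. To locate the critical point $q$, observe that if $\sigma^m$ were locally injective on $A(z)$ at every iterate, then pulling back along a small arc of $A(z)$ would produce a genuine combinatorial tree structure reproducing the connected-case analysis of (3), forcing $A(z)$ to be finite. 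Hence there is a first backward stage where two branches of $A(z)$ merge; that merge point is an external argument of a critical point $q\in\iU$ of $P^m$, and the two branches give the two distinct rays $R_{t_q},R_{t_q'}$ through $q$, with $t_q,t_q'\in A(z)$. The main obstacle lies in part (2): upgrading the pointwise local expansion at $z$ into a Markov-type structure on $A(z)$ strong enough to yield simultaneously the Cantor property, the minimality of the $\sigma^m$-action, and the precise identification of an escaping critical point in the backward orbit.
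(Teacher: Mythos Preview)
The paper does not prove this theorem: it is quoted verbatim as Theorem~1 of \cite{lepr} (Levin--Przytycki) and used as a black box throughout Section~\ref{disc1}. There is therefore no ``paper's own proof'' to compare your proposal against.

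On the substance of your sketch, the most serious issue is circularity in part~(3). You invoke ``the correspondence (developed in the rest of Section~\ref{disc1}) between external rays of $P$ with principal sets in $E$ and external rays of $f$'' to transport finiteness from the connected case. But that correspondence is the content of Theorems~\ref{Theorem 1'} and~\ref{Theorem 1}, whose proofs explicitly use Theorem~\ref{Theorem LP1} (for instance, the last claim of Lemma~\ref{disconper} and the final paragraph of the proof of Theorem~\ref{Theorem 1'} both appeal to it). So you cannot use those results to establish~(3). The original argument in \cite{lepr} works directly with the expanding circle map $g$ on $S^1$ and the structure of $E$-related rays, without presupposing the almost-injective correspondence $\lambda$.

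Your discussion of~(2) is honest in flagging the difficulty, but ``a standard minimality/Markov argument'' is doing a great deal of work. The actual proof in \cite{lepr} builds a specific Markov partition on $A(z)$ from the arcs cut out by the escaping critical values, and the identification of the critical point $q$ is tied to that construction rather than to an abstract ``first backward merge''. Your sketch of~(1) also underplays non-emptiness in the disconnected setting: the Douady--Yoccoz argument assumes a connected Julia set, and extending it requires exactly the kind of machinery (e.g.\ the expanding external map $g$) that the paper imports from \cite{lepr}.
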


From now on assume that $E$ is a periodic \emph{non-degenerate} component of
$K_P$ of period $p$. It happens if and only if $P^p$ has a critical point in
$E$. Since $P$ is a polynomial, by the Maximum Principle, $E$ does not separate
the plane. Fix such $E$, and denote by $\psi: \mathbb{C}\setminus E\to
\mathbb{D^*}$ the Riemann map of the exterior of $E$ onto the exterior of the
unit disk, with $\psi(z)\sim k z$ as $z\to \infty$, for some $k>0$.

For a non-closed curve $l$ from infinity or a finite point in $\C$ to a bounded
region in $\C$, we can define its \emph{principal set $\pr(l)$}
\index{principal set of a curve} analogously to how it is done for conformal
external rays (see Subsection~\ref{cont}). For a continuum $M$, a curve $l$
with $\pr(l)\subset M$ is called a \emph{curve to $M$}
(e.g., this terminology applies to some rays). If $R$ is an external ray of $P$
then $\psi(R)$ is a curve in $\mathbb{D}^*$; the \emph{argument} of $\psi(R)$
is set to be the argument of $R$.
An external ray $R$ (of $P$) to $E$ has $\psi$-image $\hat R:=\psi(R)$.
Then  $\hat R$ is called
an {\it $E$-related ray} (see \cite{lepr}) if and only if $\pr(\hat R)\subset S^1$.
Each $E$-related
ray is a curve from $\infty$ to $S^1$.

The $E$-related ray $\hat R$ is called {\it (non-)smooth} if and only if the
external ray $R$ is (non-)smooth. Fix a simply-connected neighborhood $V$ of
$E$ bounded by an equipotential. Choose a component $U$ of $P^{-p}(V)$, that
is also a neighborhood of $E$. One can assume further that $P^p$ has no
critical points in $\bar U\setminus E$. Denote $\hat V=\psi(V\setminus E), \hat
U=\psi(U\setminus E)$. Note that $\hat V, \hat U$ are ``annuli'' with the inner
boundary $S^1$. Call the intersections of $E$-related rays with $\hat V$
{\it $E$-related arcs} (of $E$-related rays).

The Riemann map $\psi$ induces a conjugated map $g: \hat U\to \hat V$ as
follows: $g=\psi\circ P^p\circ \psi^{-1}$.
It is well known that $g$ extends through $S^1$ to an analytic map in a
neighborhood of $S^1$, and, moreover, $g$ is expanding: there are $n>0$ and
$\lambda>1$, such that, $|(g^n)'(w)|>\lambda$ provided $g^n(w)$ lies in the
closure of $\hat U$, see \cite{pr}, \cite{dh85b} (Proof: by the Reflection
Principle \cite{Ahl1}, $g$ extends to a holomorphic (unbranched) covering map
$g: A\to B$, where $A\subset B$ are ``annuli'' containing $S^1$ in their
interiors, and $A$ is compactly contained in $B$. Then $g$ is lifted to a
univalent map $\hat g: \hat A\to \hat B$ where $\hat A\subset \hat B$ and $\hat
B$ is the universal cover of $B$. It follows that the inverse map $\hat g^{-1}$
strictly contracts the hyperbolic metric on $\hat B$ which implies the
expanding property of $g$.)

Now, $g$ maps intersections of $E$-related rays with $\hat U$ onto
$E$-related arcs. Abusing the notation, say that $g$ \emph{maps $E$-related
rays to $E$-related rays} (i.e., $g$ maps an $E$-related ray of argument $t$ to
an $E$-related ray of argument $\sigma^p(t)$). A curve $l:\R\to \di^*$ with
$\lim_{t\to \iy} l(t)=\{w\}\subset S^1$ approaches $w$ \emph{non-tangentially}
if for some $T$ the set $l([T, \iy))$ is contained in a sector of angle less
than $\pi$ with the vertex at $w$ symmetric with respect to the standard ray
through $0$ and $w$.

\begin{lem}[Lemma 2.1 of \cite{lepr}]\label{Proposition 1}
The following claims hold.

\begin{enumerate}

\item Every $E$-related arc has a finite length, and hence lands at a
    unique point of $S^1$.

\item Every point $w\in S^1$ is a landing point of at least one $E$-related
    ray, and the arguments of the $E$-related rays landing at $w$ form a
    compact subset of $\mathbb{S}^1$.

\item An $E$-related arc $l$ goes to a point $w_l\in S^1$ non-tangentially.

\end{enumerate}

\end{lem}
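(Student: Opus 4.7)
The plan is to exploit three features of the map $g=\psi\circ P^p\circ\psi^{-1}$ recalled in the excerpt: by Schwarz reflection $g$ extends analytically across $S^1$, on a neighborhood of $S^1$ it is uniformly expanding (so that $|(g^{n_0})'|\ge\lambda>1$ for some $n_0$ and $\lambda$), and on its domain it sends $E$-related arcs to $E$-related arcs (of argument $\sigma^p(t)$). All three assertions of the lemma will follow from a telescoping decomposition of $E$-related arcs through iterated pullbacks of a fixed compact fundamental region.

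\emph{Proof of (1).} I would set $A_0:=\hat V\setminus\hat U$, a compact annular region at positive distance from $S^1$, and define $A_n$ inductively as the union of all local $g^{-1}$-pullbacks of $A_{n-1}$ lying in $\hat U$; these are disjoint annular shells telescoping toward $S^1$ and tiling a neighborhood of $S^1$ in $\hat V$. For any $E$-related arc $l\subset\hat V$, set $l_n:=l\cap A_n$; then $g^n$ maps $l_n$ injectively into some $E$-related arc intersected with $A_0$. The key analytic input is a uniform bound $M:=\sup_{l'}\mathrm{length}(l'\cap A_0)<\infty$ taken over all $E$-related arcs $l'$. I would establish $M<\infty$ by observing that $\psi^{-1}(A_0)$ is a compact subset of $\iU\setminus E$ in which the Green's function has bounded gradient outside arbitrarily small disks around the finitely many points of $C_*$ present there: smooth rays contribute uniformly bounded lengths by compactness, and the finitely many rays through escaping critical points contribute only a finite additive correction. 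Combined with the expansion estimate, $\mathrm{length}(l_n)\le M\lambda^{-\lfloor n/n_0\rfloor}$, so $l$ has finite total length and therefore lands at a unique point of $S^1$.

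\emph{Proof of (3).} Near any point $w\in S^1$, the Schwarz-reflection extension of $g$ is holomorphic with $|g'(w)|>1$, and admits a local linearization in which it acts as multiplication by $g'(w)$. I would forward-iterate an $E$-related arc $l$ landing at $w_l$ until the image $g^n(l)$ meets $A_0$, which has definite angular aperture at $g^n(w_l)$ in the linearizing coordinate. Pulling back by $g^n$ and using that the linearization is conformal (hence preserves angles at $w_l$), the angular aperture of $l$ at $w_l$ is bounded away from $\pi$, so $l\to w_l$ non-tangentially.

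\emph{Proof of (2).} Every $z\in\hat V$ corresponds to $\psi^{-1}(z)\in V\setminus E$, which lies on some external ray $R$ of $P$; since $V$ is bounded by an equipotential and the level function strictly decreases along $R$, once $R$ enters $V$ it cannot leave, so its principal set lies in $V\cap K_P=E$ and $R$ is $E$-related. Given $w\in S^1$, I would pick $z_n\to w$ in $\hat V$, and let $l_n$ be an $E$-related arc through $z_n$; by (1) each lands at some $w_n\in S^1$, and by (3) $l_n$ lies near $S^1$ inside a sector of fixed opening at $w_n$, forcing $w_n\to w$. A Hausdorff-convergent subsequence of $\{l_n\}$ produces a limit $E$-related arc landing at $w$. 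Compactness of the argument set follows similarly: if $t_j\to t$ with each $t_j$ an argument of an $E$-related ray landing at $w$, then continuous dependence of external rays on their arguments away from $C_*$, together with the length bound from (1), guarantees that the $E$-related ray of argument $t$ also lands at $w$. The main obstacle is the uniform bound $M<\infty$ in (1), because $A_0$ may contain critical points of the Green's function at which rays bifurcate; this is handled by excising small disks around the finitely many such points, bounding lengths of smooth rays outside them by compactness of the gradient, and absorbing the remaining critical rays as a finite correction.
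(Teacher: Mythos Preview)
Your argument for (1) is the same expansion/contraction idea the paper sketches, just fleshed out with the fundamental-shell decomposition $A_n$; this is fine.

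For (2) you take a genuinely different route. The paper starts from a single $E$-related ray (which exists by Lemma~\ref{Lemma 1}), pulls it back by all branches of $g^{-k}$, and uses expansion to see that landing points of such pullbacks fall in every arc of $S^1$; an intersection-of-compacta argument then produces a ray landing at any prescribed $w$. Your approach---take points $z_n\to w$, pass to a Hausdorff limit of the arcs through them---also works, but carries more overhead: you must argue that a Hausdorff limit of $E$-related arcs is again (or at least contains) an $E$-related arc, which is delicate exactly at arguments in $\sigma^{-\mathbb N}(A(C_*))$ where two one-sided rays share an argument. The paper's backward-iteration argument sidesteps this entirely.

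There is one genuine slip in your proof of (3): you invoke ``a local linearization in which $g$ acts as multiplication by $g'(w)$'', but a linearizing coordinate exists only at a \emph{periodic} point of $g$, and $w_l$ is typically not periodic. The fix is already implicit in your (1): from the shell decomposition, the tail of $l$ beyond $A_n$ has length $O(\lambda^{-n/n_0})$ while $A_n$ sits at distance $\gtrsim\lambda^{-n/n_0}$ from $S^1$ (since $g^{-n}$ contracts uniformly). Hence for $z\in l\cap A_n$ one has $|z-w_l|\le C\,\mathrm{dist}(z,S^1)$, which is exactly the Stolz-cone condition. No linearization is needed---just conformality of the inverse branches and the length bounds you already established.
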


\noindent \emph{Sketch of the proof}. Part (1) holds as $g$ is uniformly
expanding, so the local branches of inverses $g^{-k}$ are uniformly
exponentially contracting as $k\to \infty$. For part (2) notice, that by
Lemma~\ref{Lemma 1}, there is at least one $E$-related ray. If we take
preimages of an $E$-related ray by all branches of $g^{-k}$, we see (since $g$
is expanding) that $E$-related rays land inside every arc on $S^1$. By the
intersection of compacta we get a non-empty compact set of $E$-related rays
landing at a given point of $S^1$. \hfill \qed

\begin{thm}[Theorem 2 of \cite{lepr}]\label{Theorem LP2}
If $a\in E$ is accessible from the complement
of $E$, then $a$ is accessible by an external ray of $P$.
More precisely, if a curve $l\in \mathbb{C}\setminus E$
converges to $a$, then there exists an external ray $R$
of $P$, which lands at $a$ and is such that $l$ and $R$ are homotopic
among the curves in $\mathbb{C}\setminus E$ which land at $a$.
\end{thm}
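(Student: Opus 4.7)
The strategy is to transfer the question from $\C\setminus E$ to $\mathbb{D}^*$ via the Riemann map $\psi$, to use Lemma~\ref{Proposition 1} in order to produce an $E$-related ray with the correct landing behaviour, and then to pull back in order to obtain the sought external ray of $P$.

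First, consider $\hat l := \psi\circ l$, a curve in $\mathbb{D}^*$. Since $E$ is a non-separating continuum, $\hc\setminus E$ is a topological disk, and prime-end theory for the Riemann map $\psi$ (normalized by $\psi(\infty)=\infty$) shows that $\hat l$ lands at a single point $w\in S^1$ which represents the prime end of $\hc\setminus E$ determined by the access to $a$ along $l$. By Lemma~\ref{Proposition 1}(2) there is an $E$-related ray $\hat R$ landing at $w$; let $R$ be the corresponding external ray of $P$. By Lemma~\ref{Proposition 1}(3), $\hat R$ approaches $w$ non-tangentially.

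Next, I would apply Lindel\"of's theorem to $\psi^{-1}$. In a small neighbourhood of $w$ in $\mathbb{D}^*$ the values of $\psi^{-1}$ lie near the bounded set $E$, so $\psi^{-1}$ is bounded there; along $\hat l$, which lands at $w$, we have $\psi^{-1}(\hat l(t))=l(t)\to a$. Hence $\psi^{-1}$ has non-tangential limit $a$ at $w$, and since $\hat R$ approaches $w$ non-tangentially, the curve $\psi^{-1}(\hat R)$ converges to $a$. Thus $R$ lands at $a$, which proves the accessibility statement.

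For the homotopy part, I would observe that $\hat l$ and $\hat R$ are curves in $\mathbb{D}^*$ landing at the common point $w$. Regarding the compactification $\mathbb{D}^*\cup\{\infty\}$ as a topological disk, any two such curves are homotopic through curves landing at $w$, and one can arrange the homotopy so that every intermediate curve eventually approaches $w$ non-tangentially; Lindel\"of's theorem then guarantees that each $\psi^{-1}$-image lands at $a$. Pulling the homotopy back by $\psi^{-1}$ produces the required homotopy between $l$ and $R$ in $\C\setminus E$ through curves landing at $a$. The most delicate point is exactly here: matching the homotopy class of $\hat R$ and $\hat l$ in $\mathbb{D}^*$ relative to the landing point $w$ (i.e.\ matching their windings around $\overline{\bbd}$), and making sure the intermediate curves in the homotopy still have $\psi^{-1}$-image converging to $a$ and not to some other point of the impression of the prime end at $w$. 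The non-tangential approach granted by Lemma~\ref{Proposition 1}(3), combined with Lindel\"of's theorem, is precisely what keeps the pull-back of the homotopy well-behaved.
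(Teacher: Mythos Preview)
Your proposal is correct and follows essentially the same route as the paper's sketch: push $l$ forward by $\psi$ to get a curve landing at $w\in S^1$, invoke Lemma~\ref{Proposition 1}(2),(3) to obtain a non-tangentially landing $E$-related ray at $w$, and apply Lindel\"of's theorem to $\psi^{-1}$ to conclude that the corresponding external ray lands at $a$. You supply more detail on the homotopy claim than the paper does (the paper's sketch stops after the Lindel\"of step), but the strategy is identical.
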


\noindent \emph{Sketch of the proof.} Indeed, if a point $a$ of $E$ is
accessible by a curve $l$ from outside of $E$, then the curve $\psi(l)$ lands
at a point $w$ of $S^1$ and $a\in\bd(E)$. Consider an $E$-related ray $L$
landing at $w$. By Proposition~\ref{Proposition 1} (2), it exists, and by
Proposition~\ref{Proposition 1} (3), it tends to $w$ non-tangentially. Hence,
by Lindel\"{o}f's theorem (see, e.g., Theorem 2.16 of \cite{Pom}),
$\psi^{-1}(L)$ and $l$ tend to the same point $a$. \hfill \qed

Lemma~\ref{disconper} studies periodic points of $g|_{S^1}$ and $P^p|_E$.

\begin{lem}\label{disconper}
Let $w\in S^1$ be a periodic point of $g|_{S^1}$. Then the non-tangential limit
of $\psi^{-1}$ at $w$ exists and is a repelling or parabolic periodic point of
$P$ in $\bd(E)$. Moreover, the set of $E$-related rays landing at
$w$ is finite, and each of them is periodic of the same period.
\end{lem}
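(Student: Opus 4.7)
The plan is to establish the lemma in three steps. First, I would produce the non-tangential limit $a$ and verify that it is a periodic point of $P$ in $\bd(E)$. By Lemma~\ref{Proposition 1}(2) there is at least one $E$-related ray $\hat R$ landing at $w$, and by Lemma~\ref{Proposition 1}(3) it approaches $w$ non-tangentially inside $\mathbb{D}^*$. Its preimage $R=\psi^{-1}(\hat R)$ is an external ray of $P$ whose principal set is a non-empty connected subset of $\bd(E)\subset J_P$. Picking any $a$ in this principal set, Lindel\"of's theorem applied to $\psi^{-1}$ (as in the proof of Theorem~\ref{Theorem LP2}) forces the non-tangential limit of $\psi^{-1}$ at $w$ to exist and equal $a$; in particular the principal set is the singleton $\{a\}$. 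The functional equation $\psi\circ P^p=g\circ\psi$ combined with non-tangential continuity then yields $P^{pk}(a)=a$, where $k$ denotes the $g$-period of $w$.

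Second, I would show that the compact set $A(w)\subset\uc$ of arguments of $E$-related rays landing at $w$ is finite and that $g^k$ permutes them cyclically. The driving tool is the uniform expansion of $g$: after replacing $k$ by a suitable multiple we may assume $|(g^k)'(w)|>1$, and since $g$ preserves $S^1$ the multiplier $\mu=(g^k)'(w)$ is real with $|\mu|>1$. In Koenigs linearizing coordinates near $w$ the map $g^k$ is multiplication by $\mu$, and the $E$-related rays landing at $w$ correspond to a closed $g^k$-invariant family of curves emanating from $w$. The standard Douady--Hubbard argument for external rays at repelling periodic points, transferred through $\psi$ (and worked out in \cite{lepr}), then shows that such a family must be finite and permuted in cyclic order; hence every $t\in A(w)$ is $\sigma^{pk}$-periodic of one common period, so each $E$-related ray landing at $w$ is periodic of the same period.

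Finally, I would invoke the classical Snail Lemma (Lemma 18.11 of \cite{miln00}) to conclude that $a$ must be repelling or parabolic. By the previous step some external ray of $P$ landing at $a$ is periodic, and the Snail Lemma forbids a periodic external ray from landing at a Cremer or Siegel periodic point; together with the fact that $a\in J_P$ rules out the attracting case, this leaves only the repelling and parabolic possibilities. The main obstacle is the second step: the expansion argument of Douady--Hubbard must be transferred through $\psi$ in a setting where the rays in question may be non-smooth and their approach to $w$ is controlled only non-tangentially, so some care is needed to see that the usual sector-opening argument survives.
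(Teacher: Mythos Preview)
Your step~1 contains a genuine gap. Lindel\"of's theorem requires that $\psi^{-1}$ already have an asymptotic value along some curve approaching $w$; only then does it upgrade this to a non-tangential limit. You take $\hat R$ landing at $w$ non-tangentially and look at $R=\psi^{-1}(\hat R)$, but a priori $R$ need not land: its principal set could be a non-degenerate continuum in $\bd(E)$. In that case $\psi^{-1}$ has no asymptotic value along $\hat R$, the hypothesis of Lindel\"of fails, and the sentence ``picking any $a$ in this principal set, Lindel\"of forces the non-tangential limit to exist and equal $a$'' is circular---it presupposes exactly the landing you are trying to prove. The reference to Theorem~\ref{Theorem LP2} does not help: there one \emph{starts} from a curve $l$ known to land at a point of $E$, which supplies the needed asymptotic value.

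The paper fills this gap with a different idea. Since $g$ is expanding and $w$ has $g$-period $m$, one can easily build a $g^m$-invariant curve $\gamma\subset\mathbb{D}^*$ landing at $w$ (e.g.\ by iterated pullback of a short arc near $w$). Then $l=\psi^{-1}(\gamma)$ is a curve in $\C\setminus E$ with principal set in $E$ that is $P^{pm}$-invariant, and Fatou's classical argument (a $P^k$-invariant curve to the Julia set must land at a periodic point) forces $l$ to land at some periodic $a\in\bd(E)$. The Snail Lemma then makes $a$ repelling or parabolic, and \emph{now} Lindel\"of applies to give the non-tangential limit. Your step~2 is also a detour: once $a$ is a repelling or parabolic periodic point in the non-degenerate component $E$, Theorem~\ref{Theorem LP1}(3)--(4) immediately gives that $A(a)$ is finite with all angles periodic of the same period, and the $E$-related rays at $w$ inject into $A(a)$ via Lindel\"of. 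So the ``main obstacle'' is not your step~2 but the missing invariant-curve/Fatou landing argument in step~1.
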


\begin{proof}
Let $l$ be a curve in $\mathbb{C}\setminus E$ with its principal set in $E$,
invariant under some iterate $P^k$ of $P$. Then $l$ lands at a periodic point
$a\in \bd(E)$ of $P$ (the proof goes back to Fatou, see \cite{fa}, p.81, and
also \cite{Pom2}, \cite{pr2}). By the Snail Lemma (see, e.g., \cite{miln00}),
$a$ is repelling or parabolic. If $w$ is of period $m$, it is easy to find a
$g^m$-invariant curve $\gamma$ landing at $w$; then the curve
$l=\psi^{-1}(\gamma)$ is $P^m$-invariant and, by the above, accumulates on a
repelling or parabolic point $a\in \bd(E)$ of $P$. By Lindel\"{o}f's theorem,
$a$ is the non-tangential limit of $\psi^{-1}$ at $w$. The remaining claim of
the lemma follows from Theorem~\ref{Theorem LP1}(3).
\end{proof}

The map $G:=P^p: U\to V$ is a polynomial-like map of degree $m\ge 2$, such that
$E$ is the (connected) filled-in Julia set $K_G=\{z: G^n(z)\in U,
n=0,1,2,...\}$ of $G$. The map $g$ defined above is called in \cite{dh85b} the
\emph{external map} of $G$. By \cite{dh85b}, $G: U\to V$ is \emph{hybrid
equivalent} to a polynomial $f$ of degree $m$, i. e. there is a quasiconformal
homeomorphism $h$ defined on $V$, which is conformal a.e. on $E$, such that
$f\circ h=h\circ G$ in $U$. The map $h$ is called the \emph{straightening map}.
The filled-in Julia set $K_f=h(E)$ of $f$ is connected. Hence, the B\"{o}ttcher
coordinate $B$ of $f$ is well defined in the basin of attraction of infinity
$\mathbb{C}\setminus K_f$ of $f$. We have there that $B(f(z))=(B(z))^m$.

Since $K_f$ is connected, external rays of $f$ are smooth. For an external ray
$R^f_\tau$ of $f$ of argument $\tau$, its $h^{-1}$-image
$l_\tau:=h^{-1}(R^f_\tau)$ in $V$ is called the {\it polynomial-like ray (to
$E$)} \index{polynomial-like ray} of argument $\tau$. Fix the straightening map
$h$; then the polynomial-like rays are well-defined.
As $h: V\to h(V)$ is a homeomorphism,
$\pr(l_\tau)=h^{-1}(\pr(R^f_\tau))$. Below we refer to different planes and
objects in them by the names of maps acting in them. Thus, $E$-related rays lie
in the $g$-plane, external rays of $P$ and polynomial-like rays are in the
$P$-plane, etc.

The main results of the present section are Theorems~\ref{Theorem 1'} and
\ref{Theorem 1} below. They complete Proposition~\ref{Proposition 1} and
Theorem~\ref{Theorem LP2}.

\begin{thm}\label{Theorem 1'}
Any point $w\in S^1$ is the landing point of precisely one $E$-related ray,
except for when one and only one of the following holds:

\begin{enumerate}

\item[(i)] $w$ is the landing point of exactly two $E$-related rays,
which are non-smooth and have a common arc that goes from
a point of $\psi(C_*)$ to the point $w$;

\item[(ii)] $w$ is a landing point of at least two disjoint rays in which
    case $w$ is a (pre)periodic point of $g$ and some iterate $g^n(w)$
    belongs to a finite (and depending only on $E$) set $\hat Y(E)$ of
    $g|_{S^1}$-periodic points each of which is the landing point of
    finitely many, but at least two, $E$-related rays, which are
    smooth and periodic of the same period depending on the landing point.
\end{enumerate}

Moreover, if $w$ is periodic then {\rm (i)} cannot hold.
\end{thm}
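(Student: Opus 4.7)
My plan is to classify, for each $w\in S^1$, the distinct $E$-related rays landing at $w$. The strategy is to transfer the question to the $P$-plane via $\psi$, invoke Lemma~\ref{cap} to control ray intersections, and then use the uniform expansion of the external map $g$ on $S^1$ together with the polynomial-like structure near $E$ to treat the recurrent case.

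I would first apply Lemma~\ref{cap}. Two distinct $E$-related rays $\hat R_1,\hat R_2$ landing at $w\in S^1$ are $\psi$-images of distinct external rays $R_1,R_2$ of $P$ whose principal sets lie in $E$, so $L=R_1\cap R_2$ is a non-empty connected set of one of the four types in Lemma~\ref{cap}. Types (i)--(iii) either contain $\infty$ or are compact smooth arcs with endpoints in $C_*$; in none of these can $\psi(L)$ accumulate at $w\in S^1$. Only two possibilities remain: either $L$ is a single point of $K_P$, so $\hat R_1\cap\hat R_2=\{w\}$ (case (ii) of the theorem), or $L$ is a smooth arc from a point of $C_*$ to $\bd(E)$ as in Lemma~\ref{cap}(iv), whose $\psi$-image is a smooth arc from $\psi(C_*)$ to $w$ (case (i) of the theorem). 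In case (i), the maximal shared terminal arc emanating from $w$ is intrinsically determined, giving at most one such pair and hence exactly two $E$-related rays; Lemma~\ref{cap}(iv) further forbids these rays from being periodic, which, once case (ii) is shown to produce periodic rays, proves the last clause of the theorem.

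For case (ii) I would first show that $w$ must be $g$-(pre)periodic. Two disjoint $\hat R_1,\hat R_2$ landing non-tangentially at $w$ (Proposition~\ref{Proposition 1}(3)) together with a short arc $I\subset S^1$ at $w$ bound a Jordan region in $\mathbb{D^*}\cup S^1$ attached to $S^1$ only at $w$. Since $g=\psi\circ P^p\circ\psi^{-1}$ carries $E$-related rays to $E$-related rays and is uniformly expanding in a neighborhood of $S^1$, such configurations transport along forward $g$-orbits and the lengths $|g^n(I)|$ grow exponentially until $g^n(I)$ wraps around $S^1$. Combined with the uniform length bound on $E$-related arcs in $\hat V$ from Proposition~\ref{Proposition 1}(1), a standard expansion argument shows that the forward $g$-orbit of $w$ has finite $\omega$-limit, so $w$ is $g$-(pre)periodic. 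Reducing to the periodic case, Lemma~\ref{disconper} identifies the non-tangential limit $a=\lim\psi^{-1}$ at $w$ with a repelling or parabolic periodic point of $P$ in $\bd(E)$, and by Lindel\"of's theorem (as in the proof of Theorem~\ref{Theorem LP2}) the $E$-related rays landing at $w$ are in bijection with the external rays of $P$ landing at $a$. Since $\{a\}$ is not a component of $K_P$ (it lies in the non-degenerate $E$), Theorem~\ref{Theorem LP1}(3)--(4) forces $A(a)$ to be finite with all its elements $\sigma$-periodic of a common period; translating back, finitely many smooth $E$-related rays land at $w$, all $g$-periodic of the same period.

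The main obstacle is the finiteness of $\hat Y(E)$. To obtain it I would pass to the polynomial $f$ of degree $m$ hybrid equivalent to the polynomial-like restriction $G=P^p:U\to V$. Each $g$-periodic $w$ with at least two disjoint $E$-related rays landing corresponds, via $\psi$ and the straightening $h$, to a repelling or parabolic periodic point of $f$ on $\bd(K_f)$ with a biaccessibility portrait that lifts through $\psi$ to at least two distinct rays of $P$ at the corresponding point $a\in\bd(E)$; a careful dynamical argument at the level of $G$, using that $G$ has only $m-1$ critical points and the resulting bounded combinatorics of periodic ray portraits, bounds the number of distinct $g$-orbits of such $w$, yielding a finite set $\hat Y(E)$ of orbit representatives. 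Any (pre)periodic $w$ of case (ii) is then sent into $\hat Y(E)$ by some iterate of $g$, completing the classification.
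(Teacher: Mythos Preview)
Your overall strategy---split via Lemma~\ref{cap} into the ``shared terminal arc'' case and the ``disjoint rays'' case, then show the latter forces (pre)periodicity---matches the paper's. Two points deserve correction.

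First, a small slip: you write that $L=R_1\cap R_2$ is non-empty because $\hat R_1,\hat R_2$ land at the same $w$. That is not justified; $\psi$ is only a conformal map of $\C\setminus E$, and two disjoint curves in $\C\setminus E$ can have $\psi$-images landing at the same boundary point. The correct dichotomy is simply: either $R_1\cap R_2\ne\emptyset$ (then Lemma~\ref{cap}, combined with the fact that both principal sets lie in the same component $E$, forces case~(iv), giving theorem case~(i)), or $R_1\cap R_2=\emptyset$ (theorem case~(ii)).

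The substantive gap is your argument for the finiteness of $\hat Y(E)$. Passing to the hybrid-equivalent polynomial $f$ does not help: the phenomenon you must bound---periodic $a\in\bd(E)$ at which \emph{several external rays of $P$} land---is not the same as biaccessibility in $f$. A periodic point $h(a)\in\bd(K_f)$ may have a single $f$-ray while $a$ has many $P$-rays; the extra $P$-rays are an artifact of the disconnected $J_P$, invisible to $f$. So ``bounded combinatorics of periodic ray portraits for $f$'' says nothing here, and your sketch is not an argument. The paper's device is different and much more direct: to each pair of disjoint $E$-related rays landing at $w$ it associates the arc of \emph{arguments} in $\mathbb{S}^1$ (the circle at infinity) between them that contains no arguments of rays to $E$ other than those landing at $w$. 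Such arcs for distinct landing points $w$ are pairwise disjoint, while under $\sigma^p$ their lengths are essentially multiplied by $d^p$; hence only finitely many points $w$ can carry an arc of length $\ge 1/(2d^p)$, and every $w$ in case~(ii) is eventually mapped into that finite set. This simultaneously gives (pre)periodicity of $w$ and the finiteness of $\hat Y(E)$, replacing both your expansion sketch on $S^1$ and your unfinished finiteness argument.
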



\begin{proof}
Assume that there are two $E$-related rays landing at a point $w\in S^1$ and
that (i) does not hold. We need to prove that then (ii) holds. If (i) does not
hold, then there exist disjoint $E$-related rays landing at $w$. Let us study
this case in detail.

Associate to any such pair of rays an open arc $(\hat R_t, \hat R_{t'})$ of
$\mathbb{S}^1$ ($\uc$ is viewed as the circle at infinity in the $g$-plane) as
follows. Two points of $\mathbb{S}^1$ with the arguments $t, t'$ split
$\mathbb{S}^1$ into two arcs. Let the arc $(\hat R_t, \hat R_{t'})$ be the one
of them that contains no arguments of $E$-related rays except for possibly
those that land at $w$. Geometrically, it means the following. The $E$-related
rays $\hat R_t, \hat R_{t'}$ together with $w\in S^1$ split the plane into two
domains. The arc $(\hat R_t, \hat R_{t'})$ corresponds to the one of them,
disjoint from $S^1$. Let $L(\hat R_t, \hat R_{t'})=\da$ be the angular length of
$(\hat R_t, \hat R_{t'})$. Clearly, 
$0<\da<1$. Now we make a few observations.

(1) \emph{If $E$-related disjoint rays of arguments $t_1, t_1'$ land at a
common point $w_1$ while $E$-related disjoint rays of arguments $t_2, t_2'$
land at a point $w_2\not=w_1$, then the arcs $(\hat R_{t_1}, \hat R_{t_1'})$,
$(\hat R_{t_2}, \hat R_{t_2'})$ are disjoint.}

This follows from the definition of the arc
$(\hat R_{t}, \hat R_{t})$.

(2) \emph{If disjoint $E$-related rays $\hat R_t, \hat R_{t'}$ of arguments $t,
t'$ land at a common point $w$, then $E$-related rays $g(\hat R_t), g(\hat
R_{t'})$ are also disjoint and land at the common point $g(w)$. Moreover,}
$$L(g(\hat R_t), g(\hat R_{t'}))\ge \min\{d^p \da(\text{mod}\, 1),
1-d^p \da(\text{mod}\, 1)\}>0.$$

Indeed, the images $g(\hat R_t), g(\hat R_{t'})$ are disjoint near $g(w)$,
because $g$ is locally one-to-one. By Lemma~\ref{cap}, $g(\hat R_t)\cap g(\hat
R_{t'})=\0$.
Since the argument of $g(\hat R_t)$ is $\si^p(t)=d^pt(\text{mod}\, 1)$, we get
the inequality of (2).

Let us consider the following set $\hat Z(E)$ of points in $S^1$: $w\in \hat
Z(E)$ if and only if there is a pair of disjoint $E$-related rays $\hat R, \hat
R'$, which both land at $w$, and such that $L(\hat R, \hat R')\ge 1/(2d^p)$.
Denote by $\hat Y(E)$ a set of periodic points which are in forward images of
the points of $\hat Z(E)$.

(3) \emph{If the set $\hat Z(E)$ is non-empty, then it is finite,
and consists of (pre)periodic points.}

Indeed, $\hat Z(E)$ is finite by (1). Assume $w\in \hat Z(E)$. Then, by (2)
some iterate $g^n(w)$ must hit $\hat Z(E)$ again.

To complete the proof, choose disjoint $E$-related rays $\hat R_t, \hat R_{t'}$
landing at $w\in S^1$ and use this to prove that all claims of (ii) hold.

{\it We show that the orbit $w, g(w),\dots$ cannot be infinite.} Indeed,
otherwise by (1)-(2), we have a sequence of non-degenerate pairwise disjoint
arcs $(g^n(\hat R_t), g^n(\hat R_{t'}))\subset \uc$, $n=0,1,...$. By (2), some
iterates of $w$ must hit the finite set $\hat Z(E)$ and hence $\hat Y(E)$
(which are therefore non-empty), a contradiction.

{\it Hence for some $0\le n<m$, $g^n(w)=g^m(w)$; let us verify that other
claims of {\rm(ii)} holds.} Replacing $w$ by $g^n(w)$, we may assume that $w$ is a
(repelling) periodic point of $g$ of period $k=m-n$. By (2), $w\in \hat Y(E)$.
By Theorem~\ref{Theorem LP1}, the set of $E$-related rays landing at $w$ is
finite, and each $E$-related ray landing at $w$ is periodic with the same
period. By Lemma~\ref{cap}, each such ray is also smooth. 
Hence, (ii) holds. Finally, the last claim of the lemma follows from by
Lemma~\ref{cap}.
\end{proof}


Let the set $Y(E)$ be the set of non-tangential limits of $\psi^{-1}$ at the
points of $\hat Y(E)$; by Lemma~\ref{disconper} $Y(E)$ is a well-defined finite
set of repelling or parabolic periodic points of $P$ in $\bd(E)$. By
Theorem~\ref{Theorem 1'} all external rays landing at points in $Y(E)$ are
smooth, and at each point finitely many, but at least two, land. All rays
landing at the same point in $Y(E)$ have the same period.

\begin{thm}\label{Theorem 1}
For each external ray $R$ to $E$ 
there is exactly one po\-ly\-no\-mial-like ray $l=\lambda(R)$ with $\pr(l)=\pr(R)$
and the curves $l$ and $R$ homotopic in $\mathbb{C}\setminus E$ among curves
with the same limit set.


Moreover, $\lambda: R\mapsto l$ maps the set of external rays to $E$
\textbf{onto} the set of polynomial-like rays to $E$, and is ``almost injective'':
$\lambda$ is one-to-one except for when one and only one of the following
holds. Suppose that $\lambda^{-1}(\ell)=\{R_1,\dots,R_k\}$ with $k>1$. Then either:

\begin{enumerate}

\item[(i)] $k=2$ and both rays $R_1, R_2$ are non-smooth and share a common arc
to $E$, or


\item[(ii)] there is a (pre)periodic point $z$ such that $\pr(R_i)=\{z\},
    i=1,\dots, k$, at least two of the rays $R_1, \dots, R_k$ are disjoint,
    and, for some $n\ge 0$, $P^{pn}(z)$ belongs to $Y(E)$.

\end{enumerate}

\end{thm}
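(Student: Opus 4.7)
My plan is to construct $\lambda$ via the Douady--Hubbard straightening map $h\colon V\to h(V)$, which is a quasiconformal conjugacy between $G=P^p$ and a polynomial $f$ with connected filled-in Julia set $K_f=h(E)$. Given an external ray $R$ of $P$ to $E$ landing at $a=\pr(R)$, the terminal arc $R\cap V$ is a curve in $V\setminus E$ accessing $a$; applying $h$ yields a curve in $h(V)\setminus K_f$ accessing $h(a)$. Since $K_f$ is connected, $h(a)$ is accessible with a well-defined prime end represented by $h(R\cap V)$, and by Lindel\"of's theorem (as in the sketch of Theorem~\ref{Theorem LP2}) there is a unique $f$-external ray $R^f_\tau$ landing at $h(a)$ within this homotopy class. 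I then set $\lambda(R)=l_\tau=h^{-1}(R^f_\tau)$. By construction $\pr(\lambda(R))=h^{-1}(h(a))=a=\pr(R)$, and the homotopy in $h(V)\setminus K_f$ between $R^f_\tau$ and $h(R\cap V)$ pulls back under $h^{-1}$ to the required homotopy between $\lambda(R)$ and $R$ in $\mathbb{C}\setminus E$ among curves with limit set $\{a\}$. For surjectivity, given any polynomial-like ray $l_\tau$ landing at $b\in E$, the curve $l_\tau$ witnesses that $b$ is accessible from $\mathbb{C}\setminus E$, so by Theorem~\ref{Theorem LP2} there is an external ray $R$ of $P$ landing at $b$ and homotopic to $l_\tau$; then $\lambda(R)=l_\tau$.

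The bulk of the work lies in the injectivity analysis. Suppose $\lambda(R_1)=\lambda(R_2)=l$ for distinct $R_1,R_2$; then both land at $a=\pr(l)$ and represent the same accessible prime end of $E$ at $a$. If $R_1\cap R_2\neq\emptyset$, Lemma~\ref{cap} tells us the intersection is a connected set of one of four types; the sharing of the prime end at $a$ forces the intersection to contain the germ of both rays at $a$, which rules out types (i)--(iii) and leaves case (iv): $R_1,R_2$ are non-smooth and non-periodic, sharing a common smooth arc from a point of $C_*$ to $E$. A local analysis at the branching point in $C_*$ (a finite-degree critical ramification) shows no third ray can share this terminal germ, so $k=2$, yielding case (i). If instead $R_1,R_2$ are disjoint, then $\psi(R_1),\psi(R_2)$ are disjoint $E$-related rays; they must land at the same point $w\in S^1$, for otherwise two distinct non-tangential accesses at $a$ via Lindel\"of's theorem would contradict the shared prime end. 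Applying Theorem~\ref{Theorem 1'}(ii), $w$ is (pre)periodic under $g$ with $g^n(w)\in\hat Y(E)$ for some $n$, and via the non-tangential limit of $\psi^{-1}$ (Lemma~\ref{disconper}) the point $a$ is (pre)periodic under $P^p$ with $P^{pn}(a)\in Y(E)$, yielding case (ii).

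The principal obstacle is the careful bookkeeping of homotopy classes of accesses to $a$ across three charts: the $P$-plane directly, the $\mathbb{D}^*$-chart via $\psi$, and the $f$-plane via $h$, especially in the non-smooth regime where rays may share pre-terminal arcs through $C_*$. A secondary subtlety is verifying that the definition of $\lambda$ is independent of the auxiliary choices of $V$ and $U$; this should follow from the uniqueness of the prime-end correspondence and the stability of homotopy classes under shrinking of $V$.
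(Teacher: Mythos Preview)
Your construction of $\lambda$ has a genuine gap: you assume each external ray $R$ to $E$ lands at a point $a=\pr(R)$, but in general $\pr(R)$ is only a subcontinuum of $\partial E$, not a singleton. Phrases like ``accessing $a$'', ``$h(a)$ is accessible'', and ``landing at $h(a)$'' then lose meaning, and the invocation of Theorem~\ref{Theorem LP2} (which is about accessible \emph{points}) is unjustified. The same defect infects your surjectivity argument (``$l_\tau$ landing at $b\in E$'') and the opening of your injectivity analysis.

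The paper circumvents this precisely by passing through the Riemann map $\psi$ to the $g$-plane: there, every $E$-related ray $\hat R=\psi(R)$ \emph{does} land at a single point $w_0\in S^1$, and does so non-tangentially (Proposition~\ref{Proposition 1}). The paper then forms $\Psi=\psi\circ h^{-1}\circ B^{-1}\colon\mathbb{D}^*\to\mathbb{D}^*$, a quasiconformal self-map, and extends it by Schwarz reflection across $S^1$; this extension (Claim~A in the proof) shows that each image $L_c=\Psi(r_c)$ of a standard ray lands non-tangentially at a unique $w_0$, and that $c\mapsto w_0$ is a bijection of $S^1$. With $\lambda(R)=\psi^{-1}(L_c)$ for the unique $c$ whose $L_c$ lands at $w_0$, Lindel\"of's theorem (applied to the conformal map $\psi^{-1}$, with both $\hat R$ and $L_c$ approaching $w_0$ non-tangentially) gives $\pr(\lambda(R))=\pr(R)$ as \emph{sets}, without any landing hypothesis. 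Even if you recast your argument in prime-end language, you would still need non-tangential approach in the B\"ottcher chart of $f$ to invoke Lindel\"of and match principal sets; that step---the quasiconformal extension in Claim~A---is the missing ingredient in your proposal.

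Your injectivity analysis, by contrast, is essentially correct and in fact more explicit than the paper's (which simply says the result is a direct consequence of Theorem~\ref{Theorem 1'}); once $\lambda$ is properly defined via $\psi$, your case split on whether $R_1\cap R_2$ is empty, together with Lemma~\ref{cap} and Theorem~\ref{Theorem 1'}, goes through.
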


\begin{proof}
Let $h$ be a quasiconformal homeomorphism defined on a neighborhood of $E$
which conjugates $P$ (restricted on a smaller neighborhood) to a polynomial $f$
with connected Julia set $h(E)$ restricted to a neighborhood of $h(E)$. We can
extend the map $h$ onto the entire $\C$ as a quasiconformal homeomorphism even
though the conjugacy between $P$ and $f$ will only hold on a neighborhood of
$E$. Let $B:\C\to \disk^*$ be the B\"{o}ttcher uniformization map of $f$.

Consider the map $\Psi:=\psi\circ h^{-1}\circ B^{-1}: \disk^*\to \disk^*$ from
the uniformization plane of the polynomial $f$ to the $g$-plane. It is a
quasiconformal homeomorphism
which leaves $S^1$ invariant. For $c\in S^1$, let $L_c=\Psi(r_c\cap
\mathbb{D^*})$ where $r_c=\{tc: t>0\}$ is a standard ray in the uniformization
plane of $f$.


\smallskip

\noindent \textbf{Claim A.} \emph{The curve $L_c$ tends non-tangentially to a
unique point $w_0$ of the unit circle $S^1$. Moreover, for every $w\in S^1$
there exists a unique $c$ such that $L_c$ lands on $w$.}

\smallskip

\noindent \emph{Proof of Claim~A.} This follows from properties of
quasiconformal mappings \cite{Ahl2}. Extend $\Psi$ to a quasiconformal
homeomorphism $\Psi^*$ of $\C$, symmetric with respect to $S^1$, by the
symmetry $\zeta\mapsto 1/\ol{\zeta}$ with respect to $S^1$. Consider the curve
$L^*_c=\Psi^*(r_c)$. It is an extension of the curve $L_c$, which crosses $S^1$
at the point $w_0=\Psi^*(c)$. As a quasiconformal image of the straight line,
the curve $L^*_c$ has the following property \cite{Ahl2}: there exists $C>0$,
such that $|w-w_0|/|w-1/\ol{w}|<C$, for every $w\in L^*_c$. Therefore, $L^*_c$
tends to $w_0$ non-tangentially. The last claim follows from the fact that
$\Psi^*$ is a homeomorphism. \hfill \qed

Let $R$ be an external ray to $E$. By Proposition~\ref{Proposition 1}(3), the
$E$-related ray $\hat R=\psi(R)$ tends to a point $w_0\in S^1$
non-tangentially. By Clam A there exists a unique $L_c$ which lands at $w_0$.
Set $\lambda(R)=  \psi^{-1}(L_c)$. By Lindel\"{o}f's theorem,
$R=\psi^{-1}(\hat R)$ and $l=\psi^{-1}(L_c)$ have the same limit set in $E$.
Since $\hat R$ and $L_c$ are homotopic among the curves which land at $w_0$
non-tangentially, the claim about homotopy follows. By Claim A, the map
$\lambda$ is onto. Observe that the conditions that $\pr(R)=\pr(\ell)$ and that
$R$ and $\ell$ are homotopic outside $E$ among curves with the same limit set,
uniquely determine the polynomial-like ray $\lambda(R)$.

It remains to prove the ``almost injectivity'' of $\lambda$. This is a direct
consequence of Theorem~\ref{Theorem 1'} and the construction above.
\end{proof}

Now we study wandering continua in the disconnected case. Let us
make some remarks. If a wandering continuum $W$ is contained in a
(pre)periodic component of $J_P$, the situation is like  the
connected case, thanks to Theorem~\ref{Theorem 1}; otherwise, the
entire component of $J_P$ containing $W$ wanders. A continuum
$W\subset J_P$ is called \emph{a wandering cut-continuum (of $J_P$)}
\index{wandering cut-continuum} if (1) $W$ is a wandering
\emph{component} of $J_P$ with at least two external rays
accumulating in $W$, or (2) $W\subset E$, where $E$ is a
(pre)periodic component of $J_P$ and $W$ is a wandering
cut-continuum of $E$. The set $\tai(W)$ can be defined in the
disconnected case as in the connected case (only now some rays
accumulating in $W$ may be non-smooth).

Let us now reprove Lemma~\ref{l-nonpcr} in the disconnected case. For
convenience we restate it here with necessary amendments.

\begin{lem}\label{l-nonpcr-1}
If $W$ is a wandering cut-continuum of $J_P$, then $P^n|_{\tai(W)}$ is not
one-to-one if and only if $\tai(W)$ contains a critical point of $P^n$ (in this
case there are two rays in $\tai(W)$ mapped to one ray).
\end{lem}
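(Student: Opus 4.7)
The plan is to adapt the proof of Lemma \ref{l-nonpcr} from the connected case, using Heath's theorem \cite{hea96}, with the only new difficulty being the more complicated topology of $\tai(W)$ when external rays may be non-smooth. The easy direction (critical point in $\tai(W)$ implies non-injectivity) goes through unchanged: collapse $W$ and $P^n(W)$ to points by Moore's theorem, observe that the induced local branched cover near the collapsed image has degree $k\ge 2$, and pull back an image-ray to obtain $k$ distinct preimage-rays in $\tai(W)$ all mapping to a single ray in $\tai(P^n(W))$.

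For the reverse direction, assume $P^n|_{\tai(W)}$ is not one-to-one. Following Section~\ref{wacotop} I would form the truncation $\tai'(W)$ by replacing each external ray accumulating in $W$ by a subarc from its principal set in $W$ out to a suitably chosen finite point; the truncation can be chosen far enough out that the two points witnessing failure of injectivity both lie in $\tai'(W)$, and that any point of $C_*$ shared between two non-smooth rays (as in Lemma~\ref{cap}) is contained in $\tai'(W)$. The goal is then to invoke \cite{hea96}, which requires verifying that $\tai'(W)$ is a non-degenerate, non-separating planar continuum with empty interior.

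Non-separation is the one place where the disconnected case is subtler than the connected case. The continuum $W$ itself does not separate the plane: in the case where $W$ is a wandering component of $J_P$ this is Lemma~\ref{nonsep}, while in the case where $W\subset E$ for a (pre)periodic component $E$ it follows from the Maximum Principle applied to $E\supset W$ (or from $W$ being wandering and hence not containing a bounded Fatou domain). By Corollary~\ref{wand-fin} applied in the appropriate sense, only finitely many external rays have principal sets in $W$. By Lemma~\ref{cap}, any two such distinct rays either are disjoint or meet in a connected arc whose endpoints lie in $C_*\cup J_P\cup\{\infty\}$; in particular the union of the truncated rays has the combinatorial structure of a finite tree, possibly branching at points of $C_*$, with leaves in $W$ and at the chosen outer endpoints. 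Attaching this tree to the non-separating continuum $W$ yields a non-separating continuum $\tai'(W)$. Empty interior is immediate since $W\subset J_P$ and the rays are nowhere dense.

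With these properties established, \cite{hea96} applied to the polynomial $P^n$ restricted to $\tai'(W)$ produces a critical point of $P^n$ in $\tai'(W)\subset\tai(W)$. The ``two rays mapped to one'' conclusion in case of non-injectivity then follows from the easy-direction argument applied in reverse. The main obstacle, as noted, is the topological bookkeeping needed to guarantee that $\tai'(W)$ remains non-separating in the presence of non-smooth rays; this is handled entirely by the classification of pairwise intersections in Lemma~\ref{cap}, which ensures a tree-like structure rather than any loop formation.
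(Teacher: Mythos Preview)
Your hard direction (non-injectivity $\Rightarrow$ critical point, via Heath's theorem) is essentially the paper's approach and is sound. One minor slip: Corollary~\ref{wand-fin} is stated only for connected $J_P$, so ``finitely many rays'' is not yet available in the disconnected setting; but this does no harm, since for Heath you only need the subcontinuum $W$ together with the one or two truncated ray-arcs containing the witnessing points, and that is manifestly non-separating.

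There is, however, a genuine gap in your easy direction. The Moore--collapse argument from Lemma~\ref{l-nonpcr} that you invoke requires the critical point $c$ to lie in $W$ itself: only then does collapsing $W$ and $P^n(W)$ produce a local branched cover of degree $\ge 2$ at the image point. The new phenomenon in the disconnected case---which you do not address---is that $c$ may be an \emph{escaping} critical point lying on a ray $R\subset\tai(W)\setminus W$. In that situation collapsing $W$ gives an unbranched map near the collapsed point, and your argument yields nothing. The paper treats this case separately: when $c\in R$ with $c\in C_*$, the ray $R$ is non-smooth, and by the local structure at $c$ (cf.\ Lemma~\ref{cap}) there is a second non-smooth ray $R'$ sharing with $R$ its arc from $c$ down to $J_P$; hence $R'\subset\tai(W)$ as well, and $P$ sends the pair $R,R'$ to a single ray. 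You invoke Lemma~\ref{cap} only to check tree-likeness in the hard direction, but it is precisely what is needed to close the easy direction here.

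A smaller structural difference: the paper splits into cases, reducing the case $W\subset E$ (a (pre)periodic component) to the already-proven connected-case Lemma~\ref{l-nonpcr} via the ray correspondence of Theorem~\ref{Theorem 1}, and arguing directly with Heath only when $W$ is itself a wandering component. Your unified direct approach is viable, but it requires the $c$-on-ray analysis above in both cases.
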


\begin{proof}
If $W$ is contained in a (pre)periodic component $E$ of $J_P$, then the claim
follows from the proof of the original Lemma~\ref{l-nonpcr} and
Theorem~\ref{Theorem 1} (recall, that Lemma~\ref{nonsep} holds for arbitrary
Julia sets).

Let $W$ be a wandering component of $J_P$. By Lemma~\ref{Lemma 1} the set
$A(W)$ of arguments of all rays accumulating in $W$ is non-empty and compact.
If $P^n|_{\tai(W)}$ is not one-to-one, then, as before, by \cite{hea96} and
Lemma~\ref{nonsep}, there is a critical point $c$ of $P^n$ in $\tai(W)$. Now,
let $c\in \tai(W)$ be a critical point of $P^n$. If $c\in W$, then using
$A(W)\ne \0$ and repeating the arguments from Lemma~\ref{l-nonpcr} we complete
the proof. If, however, $c\nin W$ then $c$ belongs to a ray included in
$\tai(W)$, and Lemma~\ref{cap} completes the proof.
\end{proof}

Lemma~\ref{l-nonpcr-1} shows that Definition~\ref{d-nonpcr} can be
given in the disconnected case in  literally the same way, as in the
connected case. That is, a wandering continuum $W\subset J_P$ is
said to be \emph{non-(pre)critical} if $\tai(W)$ is such that for
every $n$ the map $P^n|_{\tai(W)}$ is one-to-one. Equivalently, $W$
is non-(pre)critical if and only if $\tai(W)$ contains no
(pre)critical points (or if and only if no iterate of $\tai(W)$
contains a critical point of $P$); then, clearly, each ray $R$ with
$\pr(R)\subset W$ is smooth. If $W$ is contained in a (pre)periodic
component of $J_P$, this component (which must be non-degenerate) is
denoted by $\he(W)$, the corresponding component of $K_P$ is denoted
by $E(W)$, and $\val'_{J_P}(W)$
\index{valence!$\val'_{J_P}(\cdot)$} is defined as
$\val'_{\he(W)}(W)$, i.e. the number of components of
$\he(W)\setminus W$. Recall also, that we define $\val_{J_P}(W)$ as the number of
external rays of $P$ with principal sets in $W$.

\begin{cor}\label{wandiscon}
Let $W\subset J_P$ be a wandering non-(pre)critical cut-continuum contained in
a periodic component $\he(W)$ of $J_P$. Then $\val'_{J_P}(W)=\val_{J_P}(W)=|A(W)|=M<\iy$. The
polygon $B_W$, whose basis is $A(W)$, is wandering and non-(pre)critical under
$\sigma$, and if $W_1, W_2$ are two continua as above with disjoint orbits,
then the $\sigma$-orbits of the polygons $B_{W_1}, B_{W_2}$ are pairwise
unlinked.

Moreover, $M$ equals the number of components of $E(W)\sm W$.
Also, if $W'$ is any non-(pre)critical element of the grand orbit $\G(W)$,
then $\eval(W')= \val_{\he(W)}(W)=\val_{P^n(\he(W))}(P^n(W))$ for all $n\ge 0$.
\end{cor}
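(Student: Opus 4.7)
The plan is to reduce to the connected Julia set case via the Douady--Hubbard straightening theorem. Let $p$ be the period of $\he(W)$, so that $P^p$ restricts to a polynomial-like map on a neighbourhood of $E(W)$ with filled-in Julia set $E(W)$. Straightening yields a polynomial $f$ with connected filled-in Julia set $K_f$ and a quasiconformal homeomorphism $h$ conjugating $P^p$ to $f$ on a neighbourhood of $E(W)$, with $h(E(W))=K_f$ and $h(\he(W))=J_f$. Set $W^*:=h(W)$. Since $h$ is a homeomorphism and $W$ is wandering under $P$ (hence under $P^p$) and non-(pre)critical, $W^*$ is a wandering non-(pre)critical cut-continuum of $J_f$.

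I would then apply Corollary~\ref{wand-fin} to $W^*$ in $f$, obtaining $|A_f(W^*)|=\val_{J_f}(W^*)=\val'_{J_f}(W^*)=M<\iy$, that the polygon with basis $A_f(W^*)$ is wandering and non-(pre)critical under $\sigma_{d_f}$, and that $|\sigma_{d_f}^k(A_f(W^*))|=M$ for every $k\ge 0$. Transfer back to $P$ is achieved via Theorem~\ref{Theorem 1}: the map $\lambda$ sends external rays of $P$ with principal sets in $W$ onto polynomial-like rays to $W$, which correspond via $h$ to external rays of $f$ with principal sets in $W^*$. Both exceptional cases of Theorem~\ref{Theorem 1} in which $\lambda$ fails to be injective are excluded here. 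Case~(i) demands non-smooth rays, impossible since $\tai(W)$ is disjoint from $C_*$ (as $W$ is non-(pre)critical); case~(ii) demands a (pre)periodic limit point in $W$, but if $z\in W$ were preperiodic then $P^n(z)=P^m(z)$ for some $n<m$ would give $P^n(W)\cap P^m(W)\ne\0$, contradicting wandering. Hence $\lambda$ is a bijection and $|A(W)|=|A_f(W^*)|=M$.

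Smoothness of every ray to $W$ gives $\val_{J_P}(W)=|A(W)|=M$, and $h$ carries components of $\he(W)\sm W$ onto components of $J_f\sm W^*$, yielding $\val'_{J_P}(W)=\val'_{\he(W)}(W)=\val'_{J_f}(W^*)=M$. For the polygon claims, the iterates $P^n(W)$ are pairwise disjoint continua with $A(P^n(W))=\si^n(A(W))$, each of size $M$ (injectivity of $\si^n$ on $A(W)$ follows from non-(pre)criticality, because each smooth ray to $W$ maps to a distinct smooth ray to $P^n(W)$). By Lemma~\ref{inters} these argument sets are pairwise unlinked, and since a smooth ray has its principal set in a single component of $J_P$ they are pairwise disjoint; therefore the polygons $\ch(\si^n(A(W)))$ have pairwise disjoint convex hulls, and the same reasoning applies across the orbits of $W_1,W_2$ with disjoint $P$-orbits.

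For the ``moreover'' claim, $h$ transfers components of $E(W)\sm W$ to components of $K_f\sm W^*$. The $M$ external rays of $f$ accumulating in $W^*$ divide the plane into $M$ sectors; by Lemma~\ref{wand-fin1} each sector meets $J_f$ in a single component of $J_f\sm W^*$. Each bounded Fatou component of $f$ lies entirely within one sector and, not being wandering by Sullivan's theorem, has connected boundary not contained in $W^*$, and therefore attaches only to the unique $J_f$-component in that same sector. Hence no cross-sector merging occurs and $K_f\sm W^*$ has exactly $M$ components. Finally, $\val_{J_P}(P^n(W))=|\si^n(A(W))|=M$ for all $n\ge 0$ gives $\eval(W')=M$ for any non-(pre)critical $W'\in\G(W)$. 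The principal technical obstacle is verifying the exclusion of the two exceptional cases of Theorem~\ref{Theorem 1}; once that is done, the rest is a transparent translation of the connected-case results through the straightening $h$.
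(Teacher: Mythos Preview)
Your proof follows essentially the same route as the paper: straighten $P^p|_{E(W)}$ to a polynomial $f$ with connected Julia set, use Theorem~\ref{Theorem 1} to obtain a bijection between external rays of $P$ to $W$ and polynomial-like rays (equivalently, $f$-rays) to $W^*$ by excluding cases~(i) and~(ii) via non-(pre)criticality and wandering respectively, and then invoke the connected case (Corollary~\ref{wand-fin}).

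The only substantive difference is in the ``moreover'' step. The paper proves the stronger intermediate fact that $W$ is disjoint from the boundary of every Fatou domain, using a dynamical argument: if $W\cap\bd(U)\ne\0$ for a fixed Fatou domain $U$, the wedges $T_i$ cut off by $P^i(\tai(W))$ on the side away from $U$ are pairwise disjoint, eventually critical-point-free, hence satisfy $P(T_i)=T_{i+1}$, contradicting expansion at infinity. You instead argue topologically via sectors. Your argument is correct, but the appeal to Sullivan is misplaced: the reason $\bd(U)\not\subset W^*$ is simply Lemma~\ref{nonsep} --- were $\bd(U)\subset W^*$, then $U$ would be a bounded complementary component of the wandering continuum $W^*$, impossible. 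With that correction your sector argument goes through cleanly: $U$ lies in a single open wedge, so $\bd(U)\subset\ol{C_j}$ by Lemma~\ref{wand-fin1}, and $\ol{U}\sm W^*$ attaches only to $C_j$. The paper's dynamical argument yields slightly more (actual disjointness of $W$ from all Fatou boundaries); your topological version is shorter and suffices for the stated conclusion.
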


\begin{proof}
Let $M=\val_{J_P}(W)=|A(W)|$. Let us consider the relation between
polynomial-like rays to $\he(W)$ and external rays to $\he(W)$. To
each polynomial-like ray $T$ to $\he(W)$ we associate by
Theorem~\ref{Theorem 1} a \emph{unique} external ray $R$ homotopic
to $T$ outside $\he$; the ray $R$ is unique because $W$ is
non-(pre)critical (and hence the case (i) from Theorem~\ref{Theorem
1} is impossible) and wandering (and hence the case (ii) from
Theorem~\ref{Theorem 1} is impossible). Since by
Theorem~\ref{Theorem 1} this describes \emph{all} external rays
whose principal sets are in $W$, we see that there is the same
number of external rays to $\he(W)$ and polynomial-like rays to
$\he(W)$. Thus, there are $M$ polynomial-like rays to $\he(W)$. By
Corollary~\ref{wand-fin}, $M$ equals $\val'_{J_P}(W)$, the number of
components of $\he(W)\setminus W$, as desired. Moreover, since $W$
is non-(pre)critical, then by Lemma~\ref{l-nonpcr-1},
$P^n_{\tai(W)}$ is one-to-one and
$M=\val_{\he(W)}(W)=\val_{P^n(\he(W))}(P^n(W))$ for all $n\ge 0$.
This implies that $\eval(W')= \val_{\he(W)}(W)$ for any
non-(pre)critical element $W'$ of the grand orbit $\G(W)$.

\emph{We claim that $W$ is disjoint from the boundary of any Fatou
domain}. Indeed, suppose otherwise. Then we may assume that $W\cap
\bd(U)\ne \0$ where $U$ is a fixed Fatou domain. Consider two rays
$R_1, R_2$ with principal sets in $W$; define $T(R_1, R_2)=T_0$ as the
component of $\C\sm [R_1\cup R_2\cup W]$ disjoint from $U$ (we will
call such components \emph{wedges}). We can define similar wedges
$T(f^i(R_1), f^i(R_2))=T_i$. Note that $T_i$'s are pairwise disjoint
because $W$ is wandering.
It follows that there exists $N$ such that for every $n>N$ the wedge
$T_n$ contains no critical points. Then $f(T_i)=T_{i+1}$ for all $i>N$.
Clearly, this contradicts the expansion on the circle at infinity.

\emph{We claim that $M$ equals the number of
components of $E(W)\sm W$.} Indeed, let $U$ be a Fatou domain of $E(W)$. Then
$\bd(U)$ is a connected set disjoint from $W$. Hence $\bd(U)$ is contained in
exactly one component of $E(W)\sm W$. Hence $\ol{U}$ is contained in this component,
and therefore the number of components of $\he(W)\sm W$ does not change if we add
all Fatou components of $E(W)$ to $\he(W)$ as desired.
\end{proof}

By Corollary~\ref{wandiscon}, for a wandering branch continuum $W\subset J_P$,
$\eval_{J_P}(W)$ is well-defined. We will use the following notation.

\begin{dfn}
A \emph{valence stable wandering collection $\B_\C$
\index{valence stable wandering collection}
of  continua} is
a finite collection of wandering continua $\{W_1,\dots,W_n\}$ with pairwise disjoint grand orbits
such that for each $j$ and $n\ge 0$, $P^n|_{\tai(p^n(W_j))}$ is one-to-one and $|A(W_j)|\ge 3$.
Denote by $\B^\infty_\C$ elements of $\B_\C$ which are wandering components of $J_P$ and by
$\B^p_\C$ elements of $\B_\C$ which are contained in a (pre)periodic component of $J_P$.
\end{dfn}

Note that if $W\in\B^p_\C$ then 
$|A(W)|=\val'_{\he(W)}(W)=\val_{\he(W)}(W)=\eval(W)$.





\section{The \fss for polynomials with disconnected Julia sets}\label{sect2}

In Section~\ref{sect2} we prove Theorem~\ref{intro-ineq}. Throughout the
section we deal with a valence stable wandering collection $\B_\C$ as
introduced above.

Let $W\in\B_\C$ be a wandering component of $J_P$. Then no iterate of $W$
intersects a periodic component of $K_P$. It has recently been shown
\cite{kozstr, qiuyin} that every wandering component of $J_P$ is a point.
However we will not rely on this in our paper. Let $\omega_P(W)=\limsup P^n(W)$
be the set of all limit points of $P^n(W)$.

\begin{lem}\label{Lemma 2}
If $W$ is a wandering component of $J_P$ then $\omega_P(W)$ cannot be contained
in a finite union of cycles of components of $K_P$.
\end{lem}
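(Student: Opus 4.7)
The plan is to derive a contradiction. Suppose $\omega_P(W)\subset\mathcal{F}=E_1\cup\cdots\cup E_r$, a finite union of periodic components of $K_P$. I would first record the structural fact at the heart of the argument: since $P\colon K_P\to K_P$ is surjective and maps each component of $K_P$ onto a component of $K_P$, and since components of $J_P$ are exactly boundaries of components of $K_P$, the polynomial $P$ also maps components of $J_P$ onto components of $J_P$. In particular each iterate $P^n(W)$ is itself a component of $J_P$, and these iterates are pairwise disjoint by the wandering hypothesis.

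Next I would localize near $\mathcal{F}$. For each $E_i$ of period $p_i$, Section~\ref{disc1} furnishes a topological disk neighborhood $V_i$ bounded by an equipotential and a polynomial-like restriction $G_i=P^{p_i}\colon U_i\to V_i$ with filled-in Julia set $K_{G_i}=E_i$. Shrinking the $V_i$ if necessary, I would arrange that (a) the $V_i$ are pairwise disjoint, (b) $P(V_i)\subset V_{\pi(i)}$ where $\pi$ is the permutation of $\mathcal{F}$ induced by $P$, and (c) $V_i\cap P^{-p_i}(V_i)=U_i$. Since $\omega_P(W)\subset\mathcal{F}$ and the $V_i$ are pairwise disjoint, for all sufficiently large $n$ the connected set $P^n(W)$ lies inside a unique $V_{i(n)}$; condition (b) then forces $i(n+1)=\pi(i(n))$, so the sequence $i(n)$ is eventually periodic. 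Hence there exist $n_0$ and a component $E=E_{i(n_0)}\in\mathcal{F}$ of period $p$, with associated $V=V_{i(n_0)}$ and $U=U_{i(n_0)}$, such that $P^{n_0+kp}(W)\subset V$ for every $k\ge 0$.

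Now I would run the polynomial-like trapping argument. Since $P^{n_0+(k+1)p}(W)=P^p\bigl(P^{n_0+kp}(W)\bigr)\subset V$, the connected set $P^{n_0+kp}(W)$ lies in $V\cap P^{-p}(V)=U$ for every $k\ge 0$. Hence $P^{n_0}(W)$ stays in $U$ under all iterates of $G$, yielding $P^{n_0}(W)\subset K_G=E$. Since $P^{n_0}(W)$ is a component of $J_P$ and $J_P\cap E=\bd(E)$ is itself a single component of $J_P$, we conclude $P^{n_0}(W)=\bd(E)$. But $\bd(E)$ is periodic of period $p$ because $E$ is, so $P^{n_0+p}(W)=P^{n_0}(W)$, contradicting the wandering of $W$. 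The hardest step will be condition (c): isolating the polynomial-like pair $(U_i,V_i)$ requires that no component of $P^{-p_i}(V_i)$ besides $U_i$ meet $V_i$. I would secure this by noting that the components of $K_P$ mapped onto $E_i$ by $P^{p_i}$ but differing from $E_i$ are finitely many and lie at positive distance from $E_i$, so for a sufficiently low equipotential bounding $V_i$ the other pullbacks of $V_i$ are disjoint from $V_i$.
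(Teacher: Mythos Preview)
Your approach is essentially the paper's --- trap the orbit in equipotential neighborhoods of $F$ and use the filled-Julia-set characterization of the polynomial-like restriction --- but spelled out in more detail. The paper compresses your conditions (a)--(c) into the single assertion that one can choose a neighborhood $U$ of $F$ bounded by equipotentials of a small level so that any point whose entire forward orbit stays in $U$ must lie in $F$; since $W$ never maps into $F$, its orbit leaves $U$ infinitely often, and those excursions stay a definite distance from $F$. Your condition (c) and the polynomial-like trapping step are exactly what justifies that assertion.

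One correction: condition (b), $P(V_i)\subset V_{\pi(i)}$, cannot be arranged. If $V_i$ is the component of the sub-level set $\{G<a\}$ containing $E_i$ (where $G$ is the Green function), then $P(V_i)$ is the component of $\{G<da\}$ containing $E_{\pi(i)}$, which \emph{contains} $V_{\pi(i)}$ rather than being contained in it; varying the levels $a_i$ around a cycle of period $p$ would force $a_i\ge d^p a_i$, which is impossible. What you actually need --- and what does hold for small $a$ --- is that $P(V_i)\cap V_j=\emptyset$ for $j\ne\pi(i)$. This suffices to conclude $i(n+1)=\pi(i(n))$ once you already know (from $\omega_P(W)\subset\mathcal F$) that $P^{n+1}(W)$ lands in some $V_j$. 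With this fix the remainder of your argument goes through.
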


\begin{proof} Let $F$ be such a union.
Choose a neighborhood $U$ of $F$ bounded by a finite union of equipotentials of
the same (small) level, such that, if $P^n(x)\in U$ for all $n\ge 0$, then
$x\in F$. Since $W$ never maps in $F$, iterates of $W$ leave  $U$ infinitely
many times and $\omega_P(W)$ is not contained in $F$ as desired.
\end{proof}

In Theorem~\ref{Theorem 2} we associate to a wandering non-(pre)critical
component $W$ of $J_P$ specific sets of external arguments and critical
points.

\begin{thm}\label{Theorem 2}
If $W\in\B_\C$ is a  wandering component with $M=|A(W)|$, then
$B:=\ch(A(W))$ is a wandering non-(pre)critical $M$-gon under the
map $\sigma$ and there exist $M-1$ critical points $c_1, c_2, \dots,
c_{M-1}$ with disjoint orbits such that for every $j=1,\dots,M-1$
there exist a component $T_j$ of $J_P$, external arguments $t_j\ne
t_j'$, (possibly, one-sided) external rays $R_{t_j}, R_{t_j'}$ of
arguments $t_j, t_j'$ with $\sigma(t_j)=\sigma(t_j')$, and the
following claims hold.

\begin{enumerate}

\item[(a)] The leaf $\ell_j=t_jt_j'$ is a limit leaf of a sequence
of $\sigma$-iterates of $B$.

\item[(b)] Either $c_j\in R_{t_j}\cup R_{t_j'}$, or $c_j\in T_j$. Moreover,
    principal sets of $R_{t_j}$ and $R_{t_j'}$ are contained in $T_j$ and
    one of the following holds:

\begin{enumerate}

\item[(b1)] if $c_j\in T_j$, then $\{t_j, t_j'\}\subset A(T_j)$ and $A(T_j)$
is all-critical;

\item[(b2)] if $c_j\in R_{t_j}\cup R_{t_j'}$, then $R_{t_j}$ and $R_{t_j'}$
are one-sided rays having a common arc from $c_j$ to $T_j$, and $A(T_j)=\{t_j,
t_j'\}$. Also, $P(T_j)$ is a component of $J_P$ and $P(R_{t_j}) =
P(R_{t_j'}) = R_{\si(t_j)}$ is a unique (smooth) ray which accumulates in
$P(T_j)$.

\end{enumerate}

\item[(c)] $T_j$ is a wandering component of $J_P$.

\end{enumerate}

\end{thm}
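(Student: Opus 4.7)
\medskip

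\noindent\textbf{Proof proposal.} The plan is to mimic the Thurston-style area argument used in the connected case (as in Theorem~\ref{doug} and the proof of Theorem~\ref{maintech}), but inside the disk-picture associated to $\B_\C$ and the circle map $\si$, and then to transfer the conclusions back to the dynamical plane using Lemma~\ref{cap} and Theorem~\ref{Theorem 1'}.

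\emph{Step 1: $B$ is a wandering non-(pre)critical $M$-gon.} Since $W$ is a wandering component with $|A(W)|=M\ge 3$, the sets $A(P^n(W))$ are finite and by Lemma~\ref{inters} their convex hulls are pairwise unlinked; distinct forward images of $W$ lie in distinct components of $J_P$, so the interiors of $\sigma^n(B)=\ch(A(P^n(W)))$ are pairwise disjoint. Because $W$ is non-(pre)critical, Lemma~\ref{l-nonpcr-1} gives that $\si^n|_{A(W)}$ is injective for every $n$, so $|\si^n(A(W))|=M$ for all $n$ and $B$ is a wandering non-(pre)critical $M$-gon.

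\emph{Step 2: extracting $M-1$ critical limit chords.} The Euclidean areas of $\sigma^n(B)$ sum to at most the area of $\ol\disk$, hence $\mathrm{area}(\si^n(B))\to 0$. A Thurston-type analysis (as in Theorem~\ref{doug}(1) and the proof of Lemma~\ref{wandlem1}) then shows that a thin polygon has at most one long side, so the remaining $M-1$ sides are short; because non-short sides can only arise as images of chords whose endpoints differ by $\approx k/d$, a sub-sequential limit of appropriately chosen sides of $\sigma^{n_k}(B)$ produces $M-1$ distinct limit chords $\ell_j=t_jt_j'$ of $\{\sigma^n(B)\}$ with $\sigma(t_j)=\sigma(t_j')$, and, by a selection argument identical to the one in Theorem~\ref{doug}(1), with pairwise disjoint forward orbits (this is exactly part~(a)).

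\emph{Step 3: identifying $c_j$ and $T_j$ via Lemma~\ref{cap}.} For each $j$, the rays $R_{t_j}$ and $R_{t_j'}$ satisfy $P(R_{t_j})=P(R_{t_j'})=R_{\si(t_j)}$ after passing through some critical level. Since $\sigma(t_j)=\sigma(t_j')$, either the two rays are disjoint (and accumulate on the same component $T_j$ of $J_P$, which must then contain a critical point by the local degree of $P$ and Lemma~\ref{l-nonpcr-1}), producing case (b1); or the two rays meet, in which case Lemma~\ref{cap} forces them to be non-smooth one-sided rays whose intersection is a smooth arc from a point $c_j\in C_*$ to a common principal set $T_j\subset J_P$ (case~(iv) of Lemma~\ref{cap}; cases (i)--(iii) are ruled out because the rays must separately accumulate on $J_P$), producing case (b2). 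In case (b1), the fact that $A(T_j)$ is all-critical, i.e.\ $\si|_{A(T_j)}$ is constant, follows by iterating: if some $\tau\in A(T_j)$ had $\si(\tau)\ne\si(t_j)$, then the limit chord structure would be violated at the next step, contradicting $\ell_j$ being a limit of iterates of $B$.

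\emph{Step 4: $T_j$ is wandering.} By Theorem~\ref{Theorem LP1}(4) applied to the component of $K_P$ containing $T_j$, if $T_j$ were (pre)periodic then $A(T_j)$ would consist of (pre)periodic angles of a common period. But each $t_j$ arises as a subsequential limit of vertices of $\sigma^{n_k}(B)$, whose orbits are pairwise disjoint, so the $t_j$ cannot be (pre)periodic, and Theorem~\ref{Theorem 1'} combined with Lemma~\ref{disconper} similarly prevents $T_j$ from being (pre)periodic; hence $T_j$ wanders, proving (c). The main obstacle, and the step where I expect the most care to be needed, is Step~3: turning the purely combinatorial critical chord $\ell_j$ into the correct dynamical dichotomy (b1) vs.\ (b2) using Lemma~\ref{cap}, and simultaneously ensuring that the critical points $c_j$ obtained across different $j$'s have pairwise disjoint orbits (this piggybacks on the disjoint-orbit conclusion from Step~2, but requires checking that the association $\ell_j\mapsto c_j$ is orbit-injective in both cases of the dichotomy).
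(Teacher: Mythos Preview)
Your overall strategy coincides with the paper's: construct the prelamination $\lam^B$ generated by the grand orbit of $B$, apply Theorem~\ref{doug} to extract $M-1$ recurrent critical limit leaves $\ell_j=t_jt_j'$ with pairwise disjoint infinite orbits, and then read off the dynamical conclusions in the plane. Steps~1, 2 and 4 are essentially right (though in Step~4 the paper argues more directly: if $T_j$ were (pre)periodic then so would $P(T_j)$, but by (ii) below $R_{\si(t_j)}$ is the \emph{unique} ray to $P(T_j)$, forcing $\si(t_j)$ to be (pre)periodic, contradicting the infinite orbit of $\ell_j$).

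The real gap is in Step~3, exactly where you flagged it. You split into ``disjoint rays'' vs.\ ``meeting rays'' and then assert that in the disjoint case the two rays accumulate on the \emph{same} component $T_j$. This is not automatic: two disjoint rays with equal $\si$-image could a priori accumulate on different components. The paper supplies the missing idea: since $\ell_j$ is a limit of $\si^{n_i}(B)$, the set $\limsup_i P^{n_i}(W)$ is connected and contains both principal sets, so they lie in one component $T_j$. A second consequence of this limit picture is that the polygons $\si^{n_i+1}(B)$ separate the point $\si(\ell_j)$ from the rest of $\uc$; hence $R_{\si(t_j)}$ is the \emph{only} ray to $P(T_j)$ (and is smooth). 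This single observation gives you both the ``all-critical'' clause in (b1) (since $\si(A(T_j))\subset A(P(T_j))=\{\si(t_j)\}$) and the uniqueness clause in (b2), replacing your heuristic ``limit chord structure would be violated'' argument.

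For the critical point itself, the paper does not run your dichotomy first. It forms the non-separating, nowhere-dense continuum $Z'=T_j\cup A_j\cup A_j'$ (bounded arcs of the two rays together with $T_j$), observes that $P|_{Z'}$ is not one-to-one, and invokes Heath's theorem \cite{hea96} to locate a critical point $c_j\in Z'$. Only \emph{then} does one split: if $c_j\in T_j$ you are in (b1); if $c_j$ lies on a ray, Lemma~\ref{cap}(iv) forces the shared-arc picture of (b2), and a separation argument (the arcs $Q_j\cup Q_j'$ from $\infty$ to $c_j$, together with the approaching $P^{n_i}(\tai(W))$, fence off $T_j$ from every other ray) yields $A(T_j)=\{t_j,t_j'\}$. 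Your appeal to ``local degree of $P$ and Lemma~\ref{l-nonpcr-1}'' is circular here, since that lemma concerns wandering cut-continua and you do not yet know $T_j$ wanders. Finally, disjointness of the orbits of the $c_j$ follows from disjointness of the orbits of the points $\si(\ell_j)$ (granted by Theorem~\ref{doug}): if $P^n(c_i)=c_j$ then $P^n(T_i)=T_j$, hence $\si^{n+1}(t_i)\in\si(A(T_j))=\{\si(t_j)\}$, contradicting orbit disjointness.
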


\begin{proof}
Set $B':=A(W)$. Then $\sigma^n(B')\cap \sigma^m(B')=\emptyset$ if $m\ne n$.
Indeed, otherwise let $\al\in \sigma^n(B')\cap \sigma^m(B')$. Since $W$ is
non-(pre)critical, the ray $R_{\al}$ is smooth and $\pr(R_\al)\subset
P^n(W)\cap P^m(W)\ne \0$, a contradiction.

As in Lemma~\ref{inters}, $B$ is a wandering non-(pre)critical $M$-gon under
the map $\sigma$. Take the grand orbit $\Gamma(W)$ (see
Subsection~\ref{3.2.1}), and associate to each $W'\in\Gamma(W)$ the sets of
arguments $A(W')$ of  rays to $W'$ and the polygons $\ch(A(W'))$. Then the
set $\tai(W)$ is wandering. Indeed, since $W$ is wandering and by the previous
paragraph $\tai(W)$ is non-wandering only if two distinct forward images of
rays from $\tai(W)$ intersect. By Lemma~\ref{cap} then there are non-smooth
rays to some image of $W$, and $W$ is \emph{not} non-(pre)critical, a
contradiction.

The pullbacks of sets from the forward orbit of $\tai(W)$ form the \emph{grand
orbit $\G(\tai(W))$} of $\tai(W)$. We consider the set $\tai(W)$ instead of $W$
because in the disconnected case there are critical points outside $J_P$, hence
to catch \emph{all} criticality which shows along the orbit of $W$ we have to
consider $W$ \emph{together with} external rays to $W$.

Associate to all sets from $\G(\tai(W))$ the sets of the arguments of rays in
them. As in Subsection~\ref{planlam} this collection $\lam^B$ of polygons is a
geometric prelamination without critical leaves (see
Lemma~\ref{no-crit-leaf-2}).

Consider the family $L_{\lim}^B$ of limit leaves of polygons from $\lam^B$
(including degenerate leaves). By Theorem~\ref{doug} there exist at least $M-1$
recurrent critical leaves $\ell_1,...,\ell_{M-1}$ in $L_{\lim}^B$ with pairwise
disjoint infinite orbits and the same $\omega$-limit set $X$ (since
$\si$-images of $\ell_i$'s are points on the circle, $X\subset \uc$), such that
\emph{$X$ intersects every leaf in $L_{\lim}^B$.}

By Lemma~\ref{no-crit-leaf}(1), applied to the geometric prelamination
$\lam^B$, for each $j, 1\le j\le M-1$ the leaf $l_j=t_jt'_j$ is contained in an
all-critical gap-leaf $C_j$ of $\clam^B$. If $\si^{n_i}(B)$ approach $\ell_j$,
then the gaps $\si^{n_i+1}(B)$ separate the point $\si(\ell_j)$ from the rest
of the circle. Now we prove a few claims.

(i) \emph{The rays $R_{t_j}, R_{t_j'}$ have limit sets in the same
component $T_j$ and hence are included in $\tai(T_j)$.} This follows
from the connectedness 
of the set $\limsup (P^{n_i}(W))$,
where $\limsup$ is taken over a sequence $n_i$ of iterations of
$\si$, along which images of $B$ converge to $l_j$.

(ii) \emph{$P(T_j)$ is a component of $J_P$ and $P(R_{t_j}) = P(R_{t_j'}) =
R_{\si(t_j)}$ is a (unique) smooth ray which accumulates in $P(T_j)$.} Since
$\si^{n_i+1}(B)$ separate the point $\si(\ell_j)$ from the rest of the circle,
$R_{\si(\ell_j)}$ is the \emph{only} ray accumulating in $P(T_j)$.
Moreover, from the properties of the system of external rays described in
Section~\ref{disc1} it follows that $R_{\si(\ell_j)}$ is smooth.

(iii) \emph{Since $l_j$ has an infinite orbit, $t_j, t'_j$ are not (pre)periodic.}

(iv) \emph{$T_j$ is wandering.} Indeed, otherwise $P(T_j)$ is a (pre)periodic
component of $J_P$ such that $R_{\si(\ell_j)}$ is a unique ray accumulating in
it. This implies, that $\si(\ell_j)=\si(t_j)$ is (pre)periodic, a contradiction
with $\ell_j$ having infinite orbit by Theorem~\ref{doug}. This proves (c).

Choose points $u_j\in R_{t_j}$  ($u'_{j}\in R_{t'_j}$) so that the closed rays
$[u_j,\infty)$ ($[u'_j,\infty)$) from $u_j$ ($u'_j$, respectively) to infinity
contain no points from $C^*$. Let $A_j=R_{t_j}\sm (u_j,\infty)$,
$A'_j=R_{t'_j}\sm (u'_j,\infty)$ and $Z'=T_j\cup A'_j\cup A_j$. By (i) $P$ is
not one-to-one on $Z'$. By (iv) and Lemma~\ref{nonsep} $Z'$  is a
non-separating continuum with no interior in the plane. As before, by
\cite{hea96}, $Z'$ contains a critical point of $P$. Denote it by $c_j$. Then
there are two possibilities.

(b1) $c_j\notin R_{t_j}\cup R_{t_j'}$; then $c_j\in T_j$ as required.

(b2) $c_j\in R_{t_j}\cup R_{t_j'}$; by Lemma~\ref{cap}, $R_{t_j}$ and
$R_{t_j'}$ are one-sided rays sharing an arc from $c_j$ to $T_j$. We
show that $A(T_j)=\{t_j, t_j'\}$. Let the closed arcs of $R_{t_j}, R_{t_j'}$
from infinity to $c_j$ be $Q_j$ and $Q'_j$. Then $Q_j\cup Q'_j$ separates $\C$
into components $U$ and $V$ with $U\supset T_j$. An external ray in $V$ has the
principal set in $\ol{V}$, disjoint from $T_j$.
Also, the closure of an external ray in $U$ is separated from $T_j$ by
a forward image $P^{n_i}(W)$ of $W$ with its associated external rays
(see Figure~\ref{fig:72}). Hence $A(T_j)=\{t_j, t'_j\}$ proving (b).

\begin{figure}
\centerline{\includegraphics{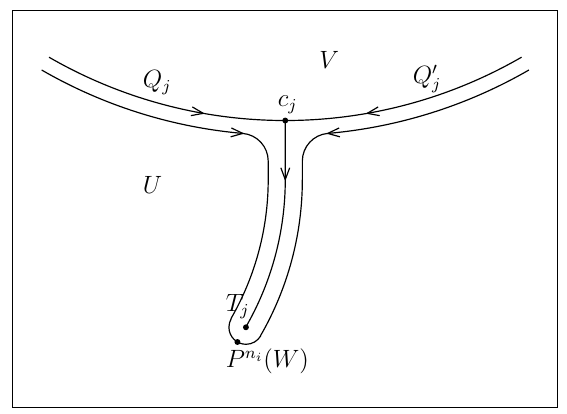}}
\caption{An illustration for the proof of Theorem~\ref{Theorem 2}.}
\label{fig:72}
\end{figure}

\end{proof}

\begin{dfn}[The set $C^w_\infty$]\label{cwinf}
\emph{A critical point $c$ of $P$ lies in $C^w_\infty$} \index{$C^w_\infty$} if and only if there
exists a wandering component $T_c$ of $J_P$ and two external arguments $t_c, t_c'$, such
that:

(i) $t_c, t_c'\in A(T_c)$, and $\sigma(A(T_c))$ is a point (thus,
$\si(t_c)=\si(t'_c)$);

(ii) either $t_c$ or $t_c'$ is recurrent under the map $\sigma$;
and

(iii) (a) $c$ belongs to the connected set $R_{t_c}\cup T_c\cup R_{t_c'}$, (b)
$P(c)\in R_{\si(t_c)}\cup P(T_c)$, (c) $R_{\si(t_c)}$ is a unique ray whose
closure is non-disjoint from $P(T_c)$ (moreover, $\pr(R_{\si(t_c)})\subset
P(T_c)$ and $R_{\si(t_c)}$ is a smooth ray).
\end{dfn}

By $C'_{wr}$ \index{$C'_{wr}$} we denote the number of weakly recurrent
critical points in wandering components of $J_P$. By Definition~\ref{cwinf},
$C^w_\infty\subset C'_{wr}$. Denote by $K(C^w_\infty)$ \index{$K(C^w_\infty)$}
the number of different grand orbits of $t_c$ ($t_c'$), and by  $L(C^w_\infty)$
\index{$L(C^w_\infty)$}  the number of different limit sets of $t_c$ under the
map $\sigma$, for $c\in C^w_\infty$. Theorem~\ref{doug}, Theorem~\ref{Theorem
2}, and the inequality $K(C^w_\infty)\le |C^w_\iy|\le |C'_{wr}|$ imply
Theorem~\ref{Theorem 3full} (if $\B^\infty_\C\ne \0$, then $C^w_\infty\ne \0$).

\begin{thm}\label{Theorem 3full}
Consider valence stable wandering collection of $m'\ge 0$ components $Q'_j$ of
$J_P$. If $m'>0$, then

$$\sum^{m'}_{j=1} (\val_{J_P}(Q'_j)-2) \le K(C^w_\infty)-L(C^w_\infty)
\le |C^w_\iy|-1\le |C'_{wr}|-1\le d-2.$$

\noindent So, with $\chi(m')$ defined as $1$ for $m'>0$ and $0$ otherwise, we have

\begin{equation}\label{discineq11}
\chi(m')+\sum^{m'}_{j=1} (\val_{J_P}(Q'_j)-2)\le \chi(m')|C'_{wr}|
\end{equation}

\end{thm}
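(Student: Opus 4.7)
The plan is to deduce Theorem~\ref{Theorem 3full} by applying the combinatorial estimate of Theorem~\ref{doug} to the collection of polygons $B_j=\ch(A(Q'_j))$ arising from the wandering components $Q'_1,\dots,Q'_{m'}$, and then translating the resulting statement about recurrent critical leaves on the circle into a statement about critical points of $P$ via Theorem~\ref{Theorem 2}.

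First I would verify that each $B_j$ is a wandering, non-(pre)critical $M_j$-gon under $\sigma$ with $M_j=|A(Q'_j)|=\val_{J_P}(Q'_j)\ge 3$: this is asserted at the beginning of Theorem~\ref{Theorem 2} and uses valence stability of $\B_\C$ to rule out that distinct forward images of $B_j$'s are linked (exactly as in Lemma~\ref{inters}). Thus $\{B_1,\dots,B_{m'}\}$ forms a wandering collection of gaps in the sense of Theorem~\ref{doug}. Applying Theorem~\ref{doug}(3) to this collection yields
\[
\sum_{j=1}^{m'} (\val_{J_P}(Q'_j)-2)\;=\;\sum_{j=1}^{m'} (|B'_j|-2)\;\le\;k'-l,
\]
where $k'$ is the maximal number of recurrent critical chords in $\bigcup_j L^{B_j}_{\lim}$ with pairwise disjoint orbits, and $l$ is the number of distinct $\omega$-limit sets of these chords.

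Next I would use Theorem~\ref{Theorem 2} to convert the $k'$ critical chords into critical points of $P$ contributing to $C^w_\infty$. For each such chord $\ell_j=t_jt_j'\in L^{B_i}_{\lim}$, Theorem~\ref{Theorem 2} supplies a wandering component $T_j$ of $J_P$ and a critical point $c_j$ lying in $R_{t_j}\cup T_j\cup R_{t_j'}$, with $\sigma(t_j)=\sigma(t_j')$ and with $R_{\sigma(t_j)}$ the unique ray whose closure meets $P(T_j)$; moreover $t_j$ (or $t_j'$) is recurrent because the chord is recurrent. These are precisely the conditions defining $C^w_\infty$, so each $c_j\in C^w_\infty$. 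Since the critical chords have pairwise disjoint orbits, the associated external arguments $t_j$ have pairwise disjoint grand orbits, and hence the $c_j$'s lie in $k'$ distinct grand orbits, giving $k'\le K(C^w_\infty)$. Similarly, two chords with the same $\omega$-limit set produce critical points whose associated recurrent arguments share a $\sigma$-limit set, so the $l$ distinct $\omega$-limit sets inject into the $L(C^w_\infty)$ distinct limit sets, giving $l\le L(C^w_\infty)$. Combining, and using $L(C^w_\infty)\ge 1$ together with the chain $K(C^w_\infty)\le |C^w_\infty|\le |C'_{wr}|\le d-1$:
\[
\sum_{j=1}^{m'}(\val_{J_P}(Q'_j)-2)\le k'-l\le K(C^w_\infty)-L(C^w_\infty)\le |C^w_\infty|-1\le |C'_{wr}|-1\le d-2.
\]

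Finally, the displayed inequality \eqref{discineq11} is just a repackaging: if $m'=0$ both sides vanish, and if $m'>0$ then $\chi(m')=1$ and adding $1$ to both sides of $\sum (\val_{J_P}(Q'_j)-2)\le |C'_{wr}|-1$ yields the desired bound. The main obstacle is the bookkeeping in the second step: one must carefully match recurrent critical chords on $\ucirc$ with members of $C^w_\infty$ so that distinct grand orbits of chords correspond to distinct grand orbits of critical points, and so that equal $\omega$-limit sets of chords force the weak equivalence relation defining $L(C^w_\infty)$. This is exactly what Theorem~\ref{Theorem 2} (together with Definition~\ref{cwinf}) is designed to provide, so the argument is conceptually the disconnected-case analogue of the $C^{ac}_{wr}$ bookkeeping carried out in Theorem~\ref{maintech}.
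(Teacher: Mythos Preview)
Your approach is essentially the same as the paper's: invoke Theorem~\ref{doug} on the polygons $B_j=\ch(A(Q'_j))$ and then use Theorem~\ref{Theorem 2} together with Definition~\ref{cwinf} to interpret the resulting recurrent critical chords as elements of $C^w_\infty$. The paper states this in one line; your unpacking of the correspondence between the conditions in Theorem~\ref{Theorem 2}(b1)--(b2) and those of Definition~\ref{cwinf} is accurate.

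There is, however, a genuine gap in your bookkeeping. From $k'\le K(C^w_\infty)$ and $l\le L(C^w_\infty)$ you conclude $k'-l\le K(C^w_\infty)-L(C^w_\infty)$, but this implication is false in general (for instance $k'=5$, $K=6$, $l=1$, $L=5$). What you need is the monotonicity argument used in the proof of Theorem~\ref{maintech}: the $k'$ grand orbits of arguments produced by Theorem~\ref{doug} form a subcollection of the $K(C^w_\infty)$ grand orbits, and one passes from the former to the latter by adjoining one new grand orbit at a time. Each such adjunction increases the grand-orbit count by exactly one while increasing the limit-set count by at most one, so the difference $K-L$ is nondecreasing along the way, whence $k'-l\le K(C^w_\infty)-L(C^w_\infty)$. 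Once this is fixed, the rest of your chain of inequalities and the deduction of \eqref{discineq11} are correct.
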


Recall, that by $N_{irr}$ we denote the total number of repelling cycles $O$
such that the set $A(O)$ of arguments of external rays landing at points of $O$
contains no periodic angles; by Theorem~\ref{Theorem LP1}(2) such cycles $O$ are
exactly the cycles for which $A(O)$ is infinite. Also, by Theorem~\ref{Theorem
LP1} (2), each point of $O$ is a component of $J_P$.

\begin{dfn}[The set $C^{p}_\infty$]\label{cpinf}\index{$C^{p}_\infty$}
\emph{A critical point $c$ of $P$ belongs to $C^{p}_\infty$} if and only if
there exists a repelling cycle $O$ with infinite set $A(O)$, $c\in \iU$
has two external non-(pre)periodic arguments $t_c, t'_c\in A(O)$ with
the same $\si$-image (by \cite{lepr} we may assume that there are
no periodic external rays landing at $O$ and $t_c, t'_c$ are recurrent with the same infinite
minimal limit set).
\end{dfn}

Recall, that $C_{esc}$ is the set of all escaping critical points; then
$C^{p}_\infty\subset C_{esc}$. As always, denote by $K(C^p_\iy)$
\index{$K(C^p_\iy)$} the number of distinct grand orbits of points of
$C^p_\iy$. Theorem~\ref{Theorem LP1}(2) and the obvious inequality
$K(C^p_\iy)\le |C^p_\iy|\le C_{esc}$ imply Theorem~\ref{ninftyfull}.

\begin{thm}\label{ninftyfull}
The following inequality holds.

\begin{equation}\label{discineq1a}
N_{irr}\le K(C^p_\iy)\le |C^p_\iy|\le |C_{esc}|\le d-1
\end{equation}
\end{thm}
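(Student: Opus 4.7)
The chain of four inequalities decomposes into three essentially immediate bounds and one substantive step. The rightmost bound $|C_{esc}|\le d-1$ will follow because a polynomial of degree $d$ has exactly $d-1$ critical points (with multiplicity) in $\C$, and by definition $C_{esc}$ is a subset of those lying in $\iU$. The bound $|C^{p}_\iy|\le |C_{esc}|$ is the set containment $C^{p}_\iy\subset C_{esc}$ recorded just before Definition~\ref{cpinf}. The bound $K(C^{p}_\iy)\le |C^{p}_\iy|$ is trivial, since distinct grand orbits of $P$ through points of $C^{p}_\iy$ are disjoint nonempty subsets of $C^{p}_\iy$.

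The substantive step is $N_{irr}\le K(C^{p}_\iy)$; the plan is to define an injection from cycles $O$ counted by $N_{irr}$ to grand orbits of critical points in $C^{p}_\iy$. Given such an $O$ of period $m$, by definition no periodic external ray lands at any point of $O$, which by Theorem~\ref{Theorem LP1}(4) is equivalent to $A(O)$ being infinite; then Theorem~\ref{Theorem LP1}(2) will supply two external arguments $t_q\ne t_q'\in A(O)$ of a common critical point $q\in \iU$ of $P^m$, with $A(O)$ a Cantor set and every forward $\si^m$-orbit in $A(O)$ dense. By the chain rule some $P^j(q)$ is a critical point of $P$; then I would follow the forward $\si$-iterates of $t_q$ and $t_q'$ and pick the least $k\ge 0$ with $\si^{k+1}(t_q)=\si^{k+1}(t_q')$, setting $c_O:=P^k(q)$. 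Then $c_O$ is a critical point of $P$ in $\iU$, its two distinct external arguments $\al:=\si^k(t_q)$ and $\be:=\si^k(t_q')$ lie in $A(O)$ with $\si(\al)=\si(\be)$, and they are non-(pre)periodic since $A(O)$ contains no periodic angles; the recurrence and common infinite minimal limit set required by Definition~\ref{cpinf} will follow from the density of forward $\si^m$-orbits together with the corresponding statement in \cite{lepr}, placing $c_O$ in $C^{p}_\iy$.

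For injectivity on grand orbits, suppose $O_1\ne O_2$ but $c_{O_1}$ and $c_{O_2}$ lie in a common $P$-grand orbit, say $P^i(c_{O_1})=P^{i'}(c_{O_2})$. Because a ray $R_t$ with $\pr(t)\subset O$ is mapped by $P$ to $R_{\si(t)}$ with principal set in $P(\pr(t))\subset O$, each $A(O)$ is forward invariant under $\si$. Hence $\si^i(\al_1),\si^i(\be_1)\in A(O_1)$ and $\si^{i'}(\al_2),\si^{i'}(\be_2)\in A(O_2)$ are all external arguments of the single point $P^i(c_{O_1})\in\iU$; since $O_1\ne O_2$ are cycles of components of $K_P$, the compact sets $A(O_1)$ and $A(O_2)$ are disjoint, which is the configuration ruled out by the result of \cite{lepr} cited in Theorem~\ref{intro-assoc}(2). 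Combined with the construction of the previous paragraph this yields $N_{irr}\le K(C^{p}_\iy)$.

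The main obstacle is precisely this final injectivity step. A priori, a single escaping critical point of high local degree could carry distinct external arguments in several different Cantor sets $A(O_i)$, so a purely combinatorial ``arguments cannot lie in both $A(O_1)$ and $A(O_2)$'' reasoning needs care; one must verify that the particular critical point $c_O$ extracted above is intrinsically tied to its cycle $O$ and not shared by a competing cycle. The plan is to invoke the corresponding theorem of \cite{lepr} rather than reprove it; otherwise the task would reduce to a detailed analysis of how the cuts $\tai(O)$ separate the plane and of the combinatorics of arguments of escaping critical points, which is essentially the content of that reference.
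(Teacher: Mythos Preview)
Your proposal and the paper's proof follow the same route: the three rightmost inequalities are immediate, and the bound $N_{irr}\le K(C^p_\iy)$ is obtained by citing Theorem~\ref{Theorem LP1}(2) and \cite{lepr}. The paper's proof is in fact a single sentence to this effect, with the existence and properties of the associated critical point already folded into Definition~\ref{cpinf} (note the parenthetical ``by \cite{lepr} we may assume\dots'' there).

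Where your proposal goes beyond the paper is in attempting an explicit construction of $c_O$, and this step has a gap. You set $c_O:=P^k(q)$ with $k$ the least index for which $\si^{k+1}(t_q)=\si^{k+1}(t_q')$, and then assert that $c_O$ is a critical point of $P$. But the index $j$ from the chain rule (where $P^j(q)$ is a critical point of $P$) and your $k$ are unrelated a priori. With $\al=\si^k(t_q)\ne\be=\si^k(t_q')$ and $\si(\al)=\si(\be)$, both rays $R_\al,R_\be$ land at the same point of $O$, so by Lemma~\ref{cap}(iv) they share an arc from some point $\tilde c\in C_*$ down to $J_P$; the point $P^k(q)$ lies somewhere on this common arc and need not equal $\tilde c$, nor need it be a critical point of $P$. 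The natural candidate for $c_O$ is the splitting point $\tilde c$ itself, and showing that $\tilde c$ is a genuine critical point of $P$ (rather than a strict preimage of one) requires an additional argument using $\si(\al)=\si(\be)$ and the connectedness of ray intersections from Lemma~\ref{cap}. Since you correctly plan to invoke \cite{lepr} for the substantive content anyway, this gap is not fatal to the overall plan; but as written, the sentence ``Then $c_O$ is a critical point of $P$ in $\iU$'' is unjustified, and you would do better either to take $\tilde c$ instead and justify that, or simply to cite \cite{lepr} directly for both existence and injectivity, exactly as the paper does.
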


\begin{rem}
If $P_v(z)=z^2+v$ is a quadratic polynomial with disconnected Julia set, then it can have
at most one cycle $O_v$ with an infinite set $A(O_v)$. If this happens, the set
$A(O_v)$ contains two external arguments $t_0, t_0'$ as above of the critical
point $0\in U^\infty$ sharing the same image $t_*=\sigma(t_0)=\sigma(t_0')$.
Let $m$ be the period of $O_v$. Assume that the base-$2^m$ representation
of $t_*\in (0,1)$ contains only two digits.
(For example, it obviously holds, if $m=1$.) Then, by~\cite{lev94}, as $v$ approaches the Mandelbrot set $M$
along the external ray of $M$ of argument $t_*$, the multiplier of $O_v$ tends to some point
$e^{2\pi \nu}$ of the unit circle, where $\nu$ is irrational. Hence, the ray ends at a point $v_*$ of the boundary of a hyperbolic component of $M$
and $O_v$ tends to either a Cremer or a Siegel cycle of $P_{v_*}$.
Converse statement is also true and follows essentially from
Yoccoz's result about the local connectivity of the Mandelbrot set
at the boundaries of hyperbolic components, see e.g.~\cite{sch04}.
Note that by Theorem~\ref{mh1} the critical point of $P_{v_*}$ is
recurrent and weakly non-separated from a point of the (Cremer or
Siegel) cycle.
\end{rem}

\begin{cor}\label{Corollary 1}
Consider a valence stable wandering collection of $m'\ge 0$ components $Q'_j$
of $J_P$. Then we have
\begin{equation}\label{discineq2}
N_{irr}+\chi(m')+\sum^{m'}_{j=1} (\val_{J_P}(Q'_j)-2)\le \chi(m')|C'_{wr}| +|C_{esc}|
\end{equation}
\end{cor}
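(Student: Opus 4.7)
The plan is to obtain Corollary~\ref{Corollary 1} as an immediate consequence of combining the two inequalities already established, namely (\ref{discineq11}) from Theorem~\ref{Theorem 3full} and (\ref{discineq1a}) from Theorem~\ref{ninftyfull}. No additional construction is needed; the two theorems have been arranged so that their right-hand sides involve \emph{a priori disjoint} families of critical points, and Corollary~\ref{Corollary 1} is simply their sum.

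More precisely, I would first invoke Theorem~\ref{Theorem 3full} to write
\[
\chi(m')+\sum^{m'}_{j=1}(\val_{J_P}(Q'_j)-2)\le \chi(m')|C'_{wr}|,
\]
then Theorem~\ref{ninftyfull} to write
\[
N_{irr}\le |C_{esc}|,
\]
and finally add these two inequalities term-by-term. The resulting estimate is exactly the desired
\[
N_{irr}+\chi(m')+\sum^{m'}_{j=1}(\val_{J_P}(Q'_j)-2)\le \chi(m')|C'_{wr}|+|C_{esc}|.
\]

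The only thing worth verifying in the proof is that it is legitimate to add these bounds without worrying about double-counting critical points. But this is automatic: weakly recurrent critical points lie in $K_P$ by the definition given in the introduction, so $C'_{wr}\subset K_P$, whereas $C_{esc}\subset \iU=\C\setminus K_P$ by definition. Hence $C'_{wr}\cap C_{esc}=\emptyset$, and the two allocations of critical points on the right-hand sides of the two prior inequalities draw from disjoint reservoirs; there is no obstruction to summing. There is no real obstacle to overcome here — the work was done in establishing Theorems~\ref{Theorem 3full} and \ref{ninftyfull}, and Corollary~\ref{Corollary 1} is merely the packaging of these two results into the single form that appears in the extended Fatou--Shishikura inequality of Theorem~\ref{intro-ineq}.
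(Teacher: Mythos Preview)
Your proof is correct and follows essentially the same approach as the paper: both simply add inequality~(\ref{discineq11}) from Theorem~\ref{Theorem 3full} and inequality~(\ref{discineq1a}) from Theorem~\ref{ninftyfull}. The only cosmetic difference is that the paper verifies $C^w_\infty\cap C^p_\infty=\emptyset$ (which requires a short argument from Definitions~\ref{cwinf} and~\ref{cpinf}), whereas you observe the more trivially true $C'_{wr}\cap C_{esc}=\emptyset$; in fact neither disjointness is strictly needed to add the two inequalities as stated, so both remarks are supplementary rather than essential.
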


\begin{proof}
Let us show that $C^w_\infty\cap C^p_\infty=\0$. Indeed, let (i) $c\in
C^w_\infty$ and (ii) $c\in C^p_\infty$. Then, because of (i), by
Definition~\ref{cwinf} there is a smooth ray $R$ accumulating in a component
$T$ of $J_P$ such that $P(c)\in R\cup T$ and $R\cup T$ is disjoint from
closures of all rays other than $R$. However, because of (ii), by
Definition~\ref{cpinf}, there must also exist a ray $R'$ such that $c\in R'$
and $\pr(R')$ is a periodic point at which infinitely many other rays land.
Thus, we get a contradiction which shows that $C^w_\infty\cap C^p_\infty=\0$.
Now we can add inequalities (\ref{discineq11}) and (\ref{discineq1a}) which
implies the desired inequality.
\end{proof}

Corollary~\ref{Corollary 1} proves the second inequality of
Theorem~\ref{intro-ineq}. We prove the first one in Lemma~\ref{intro-ineq-2}.
Recall that $N_{co}$ is the number of cycles of components of $J_P$ containing
non-(pre)critical branch continua.

\begin{lem}\label{intro-ineq-2}
Suppose that $\B^p_\C=\{Q_i\}$ is a valence stable wandering
collection of continua which consists of $m$ elements contained in
periodic components of $J_P$. Then
\begin{equation}\label{perwand1}
N_{FC}+ N_{co} +\sum_{i=1}^m (\val_{J_P}(Q_i)-2) \le |C_{wr}|.
\end{equation}
\end{lem}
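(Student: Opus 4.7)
The plan is to reduce the disconnected case to the already-proven connected case (Theorem~\ref{maintech}) by decomposing the four quantities on the left of (\ref{perwand1}) according to the cycle of periodic components of $K_P$ in which they live, and summing the connected-case inequality over these cycles.

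First I would partition. Every weakly recurrent critical point in $C_{wr}$ lies by definition in a periodic component of $J_P$; every non-repelling cycle of $P$ lies in a single cycle of periodic components of $K_P$ (its orbit being finite); each $Q_i\in\B^p_\C$ is contained in a periodic component. This gives disjoint unions $C_{wr}=\bigsqcup_{\mathcal O}C_{wr}^{\mathcal O}$, $N_{FC}=\sum_{\mathcal O}N_{FC}^{\mathcal O}$, and $\B^p_\C=\bigsqcup_{\mathcal O}S_{\mathcal O}$ indexed by cycles $\mathcal O$ of periodic components of $K_P$, with $N_{co}=\#\{\mathcal O:S_{\mathcal O}\ne\emptyset\}$. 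For each $\mathcal O$ of period $p$ I would fix $E=E_{\mathcal O}\in\mathcal O$, use the polynomial-like restriction $G=P^p\colon U\to V$ around $E$, and pass via the straightening $h$ to the polynomial $f=f_{\mathcal O}$ with connected filled Julia set $K_f=h(E)$. After replacing each $Q_i\in S_{\mathcal O}$ by its smallest forward iterate $P^{j_i}(Q_i)$ contained in $E$, pushing forward by $h$ yields a wandering collection $\widetilde{\mathcal B}_{\mathcal O}=\{h(P^{j_i}(Q_i))\}_{i\in S_{\mathcal O}}$ of non-(pre)critical branch continua in $J_f$.

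The heart of the argument consists in verifying three correspondences that let one import Theorem~\ref{maintech} from $f_{\mathcal O}$ back to $P$. First, the valences agree: by the valence-stability of $\B^p_\C$, $P^{j_i}(Q_i)$ is wandering and non-(pre)critical, so cases (i) and (ii) of Theorem~\ref{Theorem 1} are vacuous and $\lambda$ is a bijection between external rays of $P$ with principal sets in $P^{j_i}(Q_i)$ and polynomial-like rays to $P^{j_i}(Q_i)$, which under $h$ are exactly the external rays of $f$ to $h(P^{j_i}(Q_i))$. Second, the non-repelling counts agree: the hybrid conjugacy between $P^p|_E$ and $f|_{K_f}$ carries bounded attracting and parabolic Fatou cycles and Cremer cycles of one map to the corresponding objects of the other, and these on the $P$-side are precisely the non-repelling cycles of $P$ with orbit in $\mathcal O$. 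Third, the critical counts agree: by Riemann--Hurwitz, the critical points of $f$ in $K_f$ are in natural bijection (with multiplicity) with the critical points of $P$ in components of $\mathcal O$, and weak recurrence transfers across this bijection because, by Theorem~\ref{Theorem 1'} and Lemma~\ref{disconper}, pre-periodic rational ray pairs of $f$ correspond to pre-periodic rational ray pairs of $P$ landing at points in the boundary of $\mathcal O$.

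Granted these three correspondences, Theorem~\ref{maintech} applied to each $f_{\mathcal O}$ with the wandering collection $\widetilde{\mathcal B}_{\mathcal O}$ gives
$$N_{FC}^{\mathcal O}+\chi(|S_{\mathcal O}|)+\sum_{i\in S_{\mathcal O}}(\val_{J_P}(Q_i)-2)\le |C_{wr}^{\mathcal O}|,$$
the case $S_{\mathcal O}=\emptyset$ collapsing to the classical bound $N_{FC}^{\mathcal O}\le |C_{wr}^{\mathcal O}|$. Summing over all $\mathcal O$ and using $\sum_{\mathcal O}\chi(|S_{\mathcal O}|)=N_{co}$ yields the desired inequality. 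The main technical obstacle is the transfer of weak recurrence: weak separation is defined via finite families of rational ray pairs of $P$ in the \emph{entire} plane, while after straightening only rational ray pairs for $f_{\mathcal O}$ are intrinsically available. The key observation needed is that a weakly recurrent $c\in C_{wr}^{\mathcal O}$ has its whole forward orbit in $\mathcal O$, so separations by rational ray pairs of $P$ landing outside $\mathcal O$ become inconsequential after finitely many iterates, while those landing in $\mathcal O$ match bijectively with rational ray pairs of $f_{\mathcal O}$ via $\lambda$ and $h$.
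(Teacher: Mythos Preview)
Your proposal is correct and follows essentially the same route as the paper: decompose by cycles $\mathcal O$ of periodic components of $K_P$, straighten the polynomial-like map $P^p|_E$ on a chosen $E\in\mathcal O$ to a polynomial $f_{\mathcal O}$ with connected Julia set, apply Theorem~\ref{maintech} to $f_{\mathcal O}$, and sum over $\mathcal O$. The paper's transfer argument is terser than yours---it simply notes that wandering continua, Fatou and Cremer cycles, valence, and weak recurrence are topological invariants of $P^p|_E$ (weak recurrence because it is defined via periodic cutpoints of $J_P$ and their preimages, which lie in $\bd E$ and are carried by the conjugacy)---whereas you invoke Theorem~\ref{Theorem 1}, Theorem~\ref{Theorem 1'}, and Riemann--Hurwitz explicitly; both justifications are valid. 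One small difference: the paper opens with a reduction to a \emph{maximal} collection, so that every cycle admitting a wandering non-(pre)critical branch continuum actually contributes an element of $\B^p_\C$; this is needed if $N_{co}$ is read as the global count of such cycles rather than as $\#\{\mathcal O:S_{\mathcal O}\ne\emptyset\}$ for the given collection, and your write-up should either adopt that reading explicitly or include the same one-line maximization step.
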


\begin{proof}
\emph{Let us show, that we may deal with a valence stable wandering collection
of continua which maximizes $\sum_{i=1}^m (\val_{J_P}(Q_i)-2)$.} Indeed, for
such a collection all cycles of components of $J_P$ which contain some
wandering non-(pre)critical branch continua must be used in the sense that
wandering continua contained in the cycle  should be part of the collection
(otherwise they can be added to the collection increasing the sum in question).
Hence if the collection maximizes $\sum_{i=1}^m (\val_{J_P}(Q_i)-2)$, then it
maximizes $N_{co} +\sum_{i=1}^m (\val_{J_P}(Q_i)-2)$, and it suffices to prove
the inequality for such a maximal collection.

Take a non-degenerate periodic component $E$ of $K_P$ of period $p$.
Suppose that it contains $n_E\ge 0$ elements of a chosen maximal
valence stable collection of wandering continua ($n_E=0$ would mean
that it contains no such elements). By Corollary~\ref{wandiscon} and
the remarks after that, we may assume that only $E$ contains the
continua $Q_i$. By \cite{dh85b}, $P^p|_{E}$ is a polynomial-like
map. In particular, there exists a sufficiently tight neighborhood
$U$ of $E$ such that $P^p|_U$ is conjugate to $f|_V$ for a
polynomial $f$ with connected Julia set $J_f$, filled-in Julia set
$K_f$, and a tight neighborhood $V$ of $K_f$.

Any such conjugacy transports wandering continua, Fatou domains and CS-points
of $P^p$ to wandering continua, Fatou domains and CS-points of $f$ because
these objects are defined topologically. The same holds for critical points of
$P^p|_U$ and the valence of subcontinua of $J_P$. Moreover, weakly recurrent
critical points are also transported by any conjugacy because so are periodic
cutpoints and their preimages, and by definition only the cuts in the Julia set
made by periodic cutpoints and their preimages are necessary to define weakly
recurrent points.

Therefore Theorem~\ref{maintech} implies the inequality
\begin{equation}\label{perwand2}
\chi(n_E)+\sum_{Q_i\subset E} (\val_{J_P}(Q_i)-2)+N_{FC}(P^p|_E) \le
K(C_{wr}(P^p|_E))
\end{equation}
in which by $N_{FC}(P^p|_E)$ we denote the number of cycles of Fatou domains and
Cremer cycles of $P^p|_E$ and by $C_{wr}(P^p|_E)$ we denote all the weakly
recurrent critical points of $P^p|_E$ (recall also, that then
$K(C_{wr}(P^p|_E))$ denotes the number of grand orbits of critical points from
$C_{wr}(P^p|_E)$ under the map $P^p$). It is obvious that $N_{FC}(P^p|_E)$ coincides
with the number $N_{FC}(\orb_P(E))$ of cycles of Fatou domains and Cremer cycles in
the entire (periodic) orbit $\orb_P(E)$ of $E$. Also, it is easy to see that
all critical points of $P^p|_E$ are in fact preimages of critical points of $P$
belonging to $\orb_P(E)$, and weakly recurrent critical points of $P^p|_E$ are
in fact preimages of weakly recurrent critical points of $P$ belonging to $\orb_P(E)$.
Therefore, $K(C_{wr}(P^p|_E))$ coincides with the number $K(C_{wr}\cap
\orb_P(E))$ of grand orbits of weakly recurrent critical points of $P$
belonging to $\orb_P(E)$.

Let us sum up inequality (\ref{perwand2}) over all cycles of components of $J_P$.
The left hand side of the summed up inequality coincides literally with the
left hand side of inequality (\ref{perwand1}). The right hand side will be equal
to the number of grand orbit of weakly recurrent critical points belonging to
periodic components of $P$, and the latter number is obviously less than or
equal to $|C_{wr}|$. This completes the proof of the lemma.
\end{proof}

It remains to make the following observations. The first inequality of
Theorem~\ref{intro-ineq} is inequality (\ref{discineq2}) proven in
Corollary~\ref{Corollary 1}. The second inequality of Theorem~\ref{intro-ineq}
is inequality (\ref{perwand1}) proven in Lemma~\ref{intro-ineq-2}. The sum of
these two inequalities leads to the main inequality of Theorem~\ref{intro-ineq}
(notice that since sets of critical points $C_{wr}, C'_{wr}$ and $C_{esc}$ are
obviously pairwise disjoint we have that $|C_{wr}|+|C'_{wr}|+|C_{esc}|\le d-1$).

\bibliographystyle{amsalpha}

\printindex

\end{document}